\documentclass[11pt,a4paper,hidelinks]{article}

\usepackage{LPZpreamble}

\usepackage{multicol}
\usepackage[normalem]{ulem} 
\makeatletter
\newcommand\mkcal@old[1]{  \expandafter\gdef\csname #1#1#1\endcsname{\ensuremath{\mathcal{#1}}}}
\forcsvlist{\mkcal@old}{A,B,C,D,E,F,G,H,I,J,K,L,M,N,O,P,Q,R,S,T,U,V,W,X,Y,Z}
\newcommand\mkscr@old[1]{  \expandafter\gdef\csname #1#1#1s\endcsname{\ensuremath{\mathscr{#1}}}}
\forcsvlist{\mkscr@old}{A,B,C,D,E,F,G,H,I,J,K,L,M,N,O,P,Q,R,S,T,U,V,W,X,Y,Z}
\makeatother

\renewcommand{\geq}{\geqslant}
\renewcommand{\leq}{\leqslant}
\renewcommand{\subset}{\subseteq}

\renewcommand{\Re}{\operatorname{Re}}
\renewcommand{\Im}{\operatorname{Im}}
\renewcommand{\bar}[1]{\overline{#1}}
\renewcommand{\tilde}[1]{\widetilde{#1}}
\renewcommand{\colon}{\mathbin{:}\ }

\newcommand{\C}{\ensuremath{\mathbb{C}}}
\newcommand{\R}{\ensuremath{\mathbb{R}}}
\newcommand{\Q}{\ensuremath{\mathbb{Q}}}
\newcommand{\Z}{\ensuremath{\mathbb{Z}}}

\newcommand{\id}{\ensuremath{\operatorname{id}}}
\newcommand{\ii}{\ensuremath{\mathfrak{i}}}
\newcommand{\e}{\ensuremath{\operatorname{e}}}

\newcommand{\img}{\ensuremath{\operatorname{im}}}
\renewcommand{\ker}{\ensuremath{\operatorname{ker}}}
\newcommand{\coker}{\ensuremath{\operatorname{coker}}}

\newcommand{\ord}[1]{\left\langle #1 \right\rangle}

\newcommand{\RP}{\mathbb{RP}}

\newcommand{\Hom}{\operatorname{Hom}}

\newcommand{\Aut}{\operatorname{Aut}}

\newcommand{\cone}{\operatorname{cone}}

\DeclarePairedDelimiterX\setc[2]{\{}{\}}{\,#1 \;\delimsize\vert\; #2\,}

\renewcommand{\CCC}{\mathscr{C}}

\newcommand{\Dcat}{\mathsf{D}}
\newcommand{\Dcatb}{\Dcat^{\mathrm{b}}}

\newcommand{\Obj}{\operatorname{Obj}}

\newcommand{\Coh}{\mathsf{Coh}}

\newcommand{\Rsf}{\mathsf{R}}

\newcommand{\sheaf}[1]{\mathscr{O}_{#1}}

\newcommand{\aff}{\mathbb{A}}
\newcommand{\pro}{\mathbb{P}}
\newcommand{\Hlg}{\operatorname{H}}
\newcommand{\hlg}{\operatorname{h}}
\newcommand{\Ext}{\operatorname{Ext}}
\newcommand{\ext}{\operatorname{ext}}

\newcommand{\SHom}{\operatorname{\mathscr{H}\!\mkern-1mu\mathit{om}}}

\newcommand{\SExt}{\operatorname{\mathscr{E}\!\mathit{xt}}}
        \newcommand{\STor}{\operatorname{\mathscr{T}\!\!\mathit{or}}}

\newcommand{\Pic}{\operatorname{Pic}}
\newcommand{\ideal}{\mathcal{I}} 
\newcommand{\Nor}[2]{\mathcal{N}_{#1 \mid #2}}

\newcommand{\dashto}{\dashrightarrow}

\newcommand{\rk}{\operatorname{rk}}
\newcommand{\chern}[1]{\operatorname{c}_{{#1}}}
\newcommand{\Chern}[1]{\operatorname{ch}_{{#1}}}
\newcommand{\td}{\operatorname{td}}
\newcommand{\Gr}{\operatorname{Gr}}

\newcommand{\Gro}{\operatorname{K_0}}
\newcommand{\Knum}{\operatorname{K_{num}}}
\newcommand{\Ktop}{\operatorname{K_{top}}}

\newcommand{\Ku}{\operatorname{\mathcal{K}\mkern-1mu\mathit{u}}}

\newcommand{\Modss}{\mathrm{M}_{\sigma_Y}}
\newcommand{\Mods}{\mathrm{M}^{\mathsf{s}}_{\sigma_Y}}

\newcommand{\ModssX}{\mathrm{M}_{\sigma_X}}
\newcommand{\ModssY}{\mathrm{M}_{\sigma_Y}}
\newcommand{\ModsX}{\mathrm{M}_{\sigma_X}^{\mathsf{s}}}
\newcommand{\ModsY}{\mathrm{M}_{\sigma_Y}^{\mathsf{s}}}
\newcommand{\SKu}{\mathsf{S}_{\Ku(Y)}}

\newcommand{\GL}{\operatorname{GL}}

\newcommand{\Sym}{\operatorname{Sym}}

\newcommand{\Stab}{\operatorname{Stab}}

\newcommand{\dual}{{\raisebox{2pt}{\scalebox{0.7}[0.3]{$\bm\vee$}}}}

\newcommand{\lmut}[1]{\mathbf{L}_{#1}}
\newcommand{\rmut}[1]{\mathbf{R}_{#1}}

\newcommand{\Bl}{\operatorname{Bl}}

\newcommand{\pr}{\operatorname{pr}} 

\newcommand{\tv}{\widetilde{v}}

\newcommand{\tY}{\widetilde{Y}}
\newcommand{\tZ}{\widetilde{Z}}

\newcommand{\tA}{\widetilde{\cA}}

\newcommand{\RHom}{\operatorname{\mathsf{R}Hom}}

\newcommand{\Dperf}{\operatorname{\mathsf{D}}^{\mathsf{perf}}}

\newcommand{\ev}{\operatorname{ev}}

\newcommand{\ch}{\operatorname{ch}}

\newcommand{\ts}{\widetilde{\sigma}}

\addbibresource{ref.bib}

\linespread{1.1}
\geometry{margin=20mm}

\title{Stability Conditions and Moduli Spaces \\ on the Kuznetsov Component of Cubic Fivefolds}
\author{Peize Liu}
\date{}

\begin{document}

\maketitle

\begin{abstract}
    We study the Kuznetsov component $\Ku(Y)$ of a smooth cubic fivefold $Y$. Using a quadric surface fibration, we construct a family of Serre-invariant Bridgeland stability conditions on $\Ku(Y)$. 
    For every non-zero numerical class $v$, we prove that the associated Bridgeland moduli space on $\Ku(Y)$ is non-empty. When $Y$ is general and $v$ is primitive, the moduli space contains a smooth locus, on which the restriction to a general hyperplane section preserves stability.
    Consequently, we obtain Lagrangian immersions into hyper-K\"ahler varieties arising as moduli spaces on the Kuznetsov component of cubic fourfolds, extending the work of Li--Lin--Pertusi--Zhao on cubic 3-folds. As an example, we recover the geometric construction of Illiev--Manivel, which realizes the Fano surface of planes of the cubic fivefold as a Lagrangian subvariety in a hyper-K\"ahler fourfold.
\end{abstract}

\tableofcontents

\setlength{\abovedisplayshortskip}{0pt plus 3pt}
\setlength{\belowdisplayshortskip}{6pt plus 3pt minus 3pt}
\setlength{\abovedisplayskip}{8pt plus 3pt minus 3pt}
\setlength{\belowdisplayskip}{8pt plus 3pt minus 3pt} 

\section{Introduction}
The study of stability conditions on bounded derived categories of coherent sheaves on varieties was initiated by Bridgeland in \cite{bri07}. Bridgeland stability conditions have since been constructed on curves, surfaces, Fano 3-folds, Abelian 3-folds, quintic 3-folds, etc. On the other hand, for a Fano variety $X$, it has a semi-orthogonal component $\Ku(X)$ of $\Dcatb(X)$, known as the Kuznetsov component of $X$, which encodes the interesting geometric information about the variety $X$. Extensive research has been conducted on the Kuznetsov component for Fano 3-folds (e.g.\ \cite{kuzFano}), cubic 4-folds (e.g.\ \cite{kuz4fold}), and Gushel--Mukai varieties (e.g.\ \cite{KP_GM}). A general method of constructing stability conditions on Kuznetsov components was introduced in \cite{BLMS}.

After this paper first appeared, Li \cite{Li26} proved that $\Dcatb(X)$ admits Bridgeland stability conditions for every smooth projective variety $X$. Meanwhile, the existence of stability conditions on Kuznetsov components remains open in general and continues to be an important problem.

\subsection*{Main results}

A smooth cubic 5-fold $Y$ is a smooth cubic hypersurface in $\pro^6$. Its bounded derived category admits the semi-orthogonal decomposition
\begin{equation}
    \Dcatb(Y) = \ord{\, \Ku(Y),\, \sheaf{Y},\, \sheaf{Y}(1),\, \sheaf{Y}(2),\, \sheaf{Y}(3)\, }, \label{SOD:cubic_5}
\end{equation}
where the full subcategory  $\Ku(Y)$ with objects
\[
    \Obj(\Ku(Y)) = \left\{E\in\Dcatb(Y)\mid
    \Ext^\bullet(\sheaf{Y}(i),E)=0\text{ for }i=0,1,2,3\right\}
\]
is the \textbf{Kuznetsov component} of $Y$. The Kuznetsov component $\Ku(Y)$ is a smooth and proper triangulated category. Its Serre functor satisfies $\sfS_{\Ku(Y)}^3 \cong [7]$ (\cite{KuzV14}), and its numerical Grothendieck group has rank $2$ (see \cref{subsec:Knum_KuY}). The category $\Ku(Y)$ also determines $Y$: by \cite[Theorem 1.4]{zhang24}, a Fourier--Mukai equivalence $\Ku(Y) \simeq \Ku(Y')$ for two smooth cubic $5$-folds $Y$ and $Y'$ implies that $Y \cong Y'$.

\paragraph{Stability conditions.} The first main result of the paper is a construction of stability conditions on $\Ku(Y)$ with good properties:

\begin{theorem}[\cref{thm:main,thm:Serre_inv,cor:gl_dim}][thm:main_intro]
    Let $Y$ be a smooth cubic 5-fold. Then $\Ku(Y)$ admits a continuous
    family of stability conditions $\sigma'_{\beta,\alpha}$, parametrized
    by an open subset of $\R^2$. These stability conditions are
    Serre-invariant and lie in a single orbit under the
    $\widetilde{\GL^+_2}(\R)$-action. Moreover, this orbit contains a
    stability condition $\sigma_Y$ such that
    \begin{equation}
        \phi_{\sigma_Y}\ab(\mathsf{S}_{\Ku(Y)}(E))
        =
        \phi_{\sigma_Y}(E) + \frac{7}{3}
    \end{equation}
    for every $\sigma_Y$-semistable object $E \in \Ku(Y)$.
\end{theorem}

We outline the proof, some intermediate results and some immediate corollaries along the way. 

Every smooth cubic 5-fold contains a plane $\varPi_0\cong\pro^2$ \cite[Proposition 1.8]{col86}. Projection from $\varPi_0$ induces a quadric surface fibration $\pi\colon \Bl_{\varPi_0}Y\to\pro^3$. Let $\CCC_0$ and $\CCC_1$ denote the even and odd parts of the associated sheaf of Clifford algebras. Comparing the semi-orthogonal decompositions of $\Dcatb(\tY)$ supplied by Orlov's blow-up formula \cite{orlov92} and Kuznetsov's quadric fibration formula \cite{kuzQuaFib}, and matching them by a sequence of mutations, we obtain an embedding of $\Ku(Y)$ into the derived category of Clifford modules over $\pro^3$ (see \cref{subsec:quad_fib} for the relevant terminology):
\begin{theorem}[\cref{thm:psi}][thm:psi_intro]
    Let $Y$ be a smooth cubic 5-fold. There is a semi-orthogonal decomposition
    \[
        \Dcatb(\pro^3,\CCC_0)=\ord{\Ku(\pro^3,\CCC_0),\, \CCC_1,\, \CCC_2}
    \]
    and an exact equivalence $\Ku(Y)\simeq\Ku(\pro^3,\CCC_0)\coloneqq\ord{\CCC_1,\, \CCC_2}^{\perp}.$
\end{theorem}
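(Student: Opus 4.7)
The plan is to compare two semi-orthogonal decompositions of $\Dcatb(\tilde{Y})$, where $\tilde{Y}$ is the blow-up of $Y$ along a $2$-plane $P \subset Y$, and to match their blocks via a chain of mutations; this is the relative-dimension-two analogue of Kuznetsov's conic-bundle construction for cubic fourfolds \cite{kuz4fold}. First I would fix a plane $P \cong \pro^2 \subset Y$, whose existence is guaranteed by the non-emptiness of the Fano surface of planes, and let $\sigma \colon \tilde{Y} \to Y$ be the blow-up along $P$, with exceptional divisor $j \colon E \hookrightarrow \tilde{Y}$ and bundle projection $p \colon E \to P$. Linear projection from $P$ extends across $\sigma$ to a morphism $\pi \colon \tilde{Y} \to \pro^3$; each fiber $\pi^{-1}(t)$ is the residual quadric in the unique $\pro^3 \supset P$ corresponding to $t$, so $\pi$ realizes $\tilde{Y}$ as a flat quadric-surface fibration whose even Clifford algebra is $\CCC_0$ and whose further Clifford twists are $\CCC_1, \CCC_2$.

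Next I would write down two semi-orthogonal decompositions of $\Dcatb(\tilde{Y})$. Orlov's blow-up formula applied to the codimension-$3$ center $P$ gives
\[
\Dcatb(\tilde{Y}) = \ord{\sigma^* \Dcatb(Y),\ j_* \bigl( p^* \Dcatb(P) \otimes \sheaf{E}(-1) \bigr),\ j_* \bigl( p^* \Dcatb(P) \otimes \sheaf{E}(-2) \bigr)},
\]
and expanding $\Dcatb(Y)$ via \eqref{SOD:cubic_5} together with a Beilinson collection on $P \cong \pro^2$ produces a long SOD whose leading block is $\sigma^* \Ku(Y)$. On the other hand, Kuznetsov's SOD for quadric fibrations of relative dimension two yields
\[
\Dcatb(\tilde{Y}) = \ord{\Dcatb(\pro^3, \CCC_0),\ \pi^* \Dcatb(\pro^3),\ \pi^* \Dcatb(\pro^3) \otimes \sheaf{\tilde{Y}}(H)},
\]
where $H$ is the relative hyperplane class. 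Substituting a Beilinson collection on $\pro^3$ into the two pull-back blocks produces a second long SOD with leading block $\Dcatb(\pro^3, \CCC_0)$. The refined decomposition $\Dcatb(\pro^3, \CCC_0) \simeq \ord{\Ku(\pro^3, \CCC_0), \CCC_1, \CCC_2}$ follows from the Clifford-twist formulas, which exhibit $\CCC_1, \CCC_2$ as an exceptional pair inside $\Dcatb(\pro^3, \CCC_0)$.

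The third step is to mutate the two long SODs into alignment. The crucial input is the divisor identity on $\tilde{Y}$ expressing $\sigma^* \sheaf{Y}(1)$ in terms of $\pi^* \sheaf{\pro^3}(1)$, the relative class $H$, and the exceptional divisor $E$, coming from the birational geometry of projection from $P$; this lets us rewrite each $\sigma^* \sheaf{Y}(i)$ as a twist of $\pi^* \sheaf{\pro^3}(a)$ by integer powers of $H$ and $E$, and thereby transport the line-bundle blocks between the two SODs. A numerical-rank count serves as a consistency check: both SODs have total numerical rank $12$, and after cancellation one is left with two tails of rank $2$, namely $\sigma^* \Ku(Y)$ and $\Ku(\pro^3, \CCC_0)$. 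Uniqueness of SOD complements, combined with the full faithfulness of $\sigma^*$ on admissible subcategories, then yields the desired equivalence $\Ku(Y) \simeq \Ku(\pro^3, \CCC_0)$. The main obstacle is executing this mutation matching explicitly: the chain of mutations is substantially longer than in the cubic-fourfold case due to the extra line bundle $\sheaf{Y}(3)$ in \eqref{SOD:cubic_5} and the extra relative direction in $\pi$, so one has to carefully verify that each intermediate mutation lands in the predicted subcategory.
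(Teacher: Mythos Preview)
Your plan is correct and matches the paper's approach: compare Orlov's blow-up SOD with Kuznetsov's quadric-fibration SOD on $\Dcatb(\tilde Y)$ and bridge them by an explicit chain of mutations. The one organisational difference is in the endgame: rather than matching all ten exceptional objects across both decompositions, the paper mutates the blow-up side until its exceptional tail coincides with the eight line bundles $\pi^*\sheaf{\pro^3}(a)\otimes\sheaf{\tilde Y}(bH)$, deduces that the remaining three-term block equals $\varPhi\Dcatb(\pro^3,\CCC_0)$, and then applies the left adjoint $\varPsi$ to that block to read off $\CCC_1$, $\CCC_2$ and $\varPsi\sigma^*\Ku(Y)$ directly---so the identification of $\CCC_1,\CCC_2$ is an output of the mutation computation rather than an input.
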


Next we construct $\sigma'_{\beta,\alpha}$ in \cref{thm:main_intro} using the embedding $\Ku(Y) \hookrightarrow \Dcatb(\pro^3,\CCC_0)$. We prove a Bogomolov--Gieseker-type inequality on the non-commutative $\pro^3$, following the strategy of \cite[\S 8]{BLMS}. This establishes a family of tilt-stability conditions on $\Dcatb(\pro^3,\CCC_0)$. The stability conditions $\sigma'_{\beta,\alpha}$ are obtained by first rotating the tilt stability on $\Dcatb(\pro^3,\CCC_0)$ and then restricting to the orthogonal of $\CCC_1$ and $\CCC_2$ using the restriction criterion of \cite{BLMS}. 

\paragraph{Serre invariance.} Serre invariance means that stability is unchanged, up to the $\widetilde{\GL^+_2}(\R)$-action, under the Serre functor. Adapting \cite{PY20},  we control the phase of the heart $\cA$ under the twisting by $\CCC_1$. Informally, we show that 
    \[ \cA \otimes_{\CCC_0} \CCC_1 \subset \ord{\cA,\, \cA[1]}. \]
See \cref{prop:tensor_heart} for a precise statement. Using this we prove that $\sigma'_{\beta,\alpha}$ are invariant under the rotation functor $\mathsf{O} = \lmut{\CCC_0}\circ(-\otimes_{\CCC_0}\CCC_1)$, from which Serre invariance follows, as the Serre functor of the Kuznetsov component is given as $\mathsf{O}^2[-3]$ (see \cref{cor:Serre}).

We also identify, in the same $\widetilde{\GL^+_2}(\R)$-orbit, a Gepner-type stability condition $\sigma_Y$ for the pair $(\sfO,2/3)$, thereby confirming a case of \cite[Conjecture 1.1]{Tod13}. In particular, the Serre functor acts on $\sigma_Y$ by a phase shift of $7/3$. The argument extends techniques developed for Fano 3-folds in \cite{PY20,PRo23,FP23}, while the fractional Calabi--Yau dimension $7/3$ lies beyond the range of the general uniqueness criterion in \cite{FP23}.

\paragraph{Moduli spaces.} Having constructed these stability conditions, we turn to their stable objects. For a plane $\varPi\subset Y$, the projected ideal sheaf
\begin{equation}
    \cF_\varPi[1] \coloneqq \pr_Y(\ideal_{\varPi}(1))
    = \ker\ab(\sheaf{Y}^{\oplus 4} \twoheadrightarrow \ideal_{\varPi}(1))[1]
\end{equation}
defines an object of $\Ku(Y)$ of character $-\kappa_1$. A wall-crossing calculation proves its $\sigma_Y$-stability and identifies the Fano surface $\cF_2(Y)$ of planes with a connected component of the lowest-dimensional moduli space $\ModssY(\Ku(Y),-\kappa_1)$. Restriction to a very general cubic 4-fold hyperplane section then gives a Lagrangian immersion.

\begin{theorem}[\cref{prop:moduli_dim2,cor:Lag_imm}][thm:moduli_dim2_intro]
    Let $Y$ be a general cubic 5-fold, and let $X\subset Y$ be a very general cubic 4-fold. The Fano surface $\cF_2(Y)$ of planes in $Y$ is isomorphic to a connected component of the moduli space $\ModssY(\Ku(Y),-\kappa_1)$. Moreover, there is an unramified and generically injective morphism $\cF_2(Y)\to\cF_1(X)$ to the Fano variety of lines in $X$. Its image is a Lagrangian subvariety of the hyper-K\"ahler fourfold $\cF_1(X)$.
\end{theorem}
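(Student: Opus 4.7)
My plan is to construct the classifying morphism $\FFF_2(Y) \to \Mods(\Ku(Y),\kappa_1)$ by projecting the structure sheaf of each plane into $\Ku(Y)$, identify the image as a connected component, and then obtain the map $\FFF_2(Y) \to \FFF_1(X)$ via hyperplane restriction. For each plane $\varPi \in \FFF_2(Y)$, I set
\[ E_\varPi \coloneqq \pr_{\Ku(Y)}(\sheaf{\varPi}) \in \Ku(Y), \]
where $\pr_{\Ku(Y)}: \Dcatb(Y) \to \Ku(Y)$ is the projection associated to the SOD \eqref{SOD:cubic_5}. Using a Koszul-type resolution of $\sheaf{\varPi}$ on $\pro^6$ restricted to $Y$, combined with Hirzebruch--Riemann--Roch, one computes $[E_\varPi]$ and verifies $[E_\varPi] = \kappa_1 \in \Knum(\Ku(Y))$, consistent with $\FFF_2(Y)$ being the expected $2$-dimensional moduli.

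Next I would prove $\sigma'$-stability of $E_\varPi$ by the method of Pertusi--Yang for cubic $3$-folds (\cite{PY20}) adapted to the present CY $7/3$ setting. Using Serre invariance from \cref{thm:main_intro}, stability reduces to ruling out destabilising subobjects of specific numerical classes; since $\Knum(\Ku(Y))$ has rank $2$, this is a finite numerical check with the Euler pairing. A deformation-theoretic computation then gives $\ext^0_{\Ku(Y)}(E_\varPi, E_\varPi) = 1$ and $\ext^1_{\Ku(Y)}(E_\varPi, E_\varPi) = 2 = \dim\FFF_2(Y)$, so the induced morphism $\FFF_2(Y) \to \Mods(\Ku(Y),\kappa_1)$ is \'etale at every point. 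Injectivity holds because distinct planes give non-isomorphic projections, and properness of $\FFF_2(Y)$ forces its image to be a full connected component.

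For the second half, let $X = Y \cap H$ for a general hyperplane $H$, so that every plane of $Y$ meets $H$ transversally in a line; this defines $\varphi: \FFF_2(Y) \to \FFF_1(X)$ by $\varPi \mapsto \varPi \cap H$. Unramifiedness follows from injectivity of the differential: under the identifications $T_\varPi\FFF_2(Y) = \Hlg^0(\varPi,\Nor{\varPi}{Y})$ and $T_{\ell_\varPi}\FFF_1(X) = \Hlg^0(\ell_\varPi,\Nor{\ell_\varPi}{X})$, it is a restriction map whose kernel can be controlled via the normal-bundle exact sequence
\[ 0 \to \Nor{\ell_\varPi}{\varPi} \to \Nor{\ell_\varPi}{X} \to \Nor{\varPi}{Y}|_{\ell_\varPi} \to 0. \]
The hardest step is proving that the image is Lagrangian in $\FFF_1(X)$. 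The plan is to identify the holomorphic symplectic form on $\FFF_1(X) \simeq \Mods(\Ku(X),\lambda)$ with the Mukai-type pairing from Serre duality on the CY $2$-category $\Ku(X)$, and show that $\varphi^*\omega$ vanishes. One approach is to trace the pullback through a restriction-type functor $\Ku(Y) \to \Ku(X)$ and show that the resulting skew form on $T_\varPi\FFF_2(Y)$ factors through an Ext-pairing forced to be trivial by the fractional CY dimension $7/3$ of $\Ku(Y)$. Since $\dim\FFF_2(Y) = 2 = \tfrac{1}{2}\dim\FFF_1(X)$, this yields the Lagrangian property; this final categorical reconciliation is where I expect the main technical difficulty to lie.
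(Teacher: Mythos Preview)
Your overall architecture matches the paper's: project planes into $\Ku(Y)$, prove stability, compare tangent spaces, and then pass to $\Ku(X)$ via restriction. But two steps in your sketch are genuine gaps rather than routine adaptations.

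\textbf{Stability of the projected object.} Your claim that stability ``reduces to ruling out destabilising subobjects of specific numerical classes'' via ``a finite numerical check with the Euler pairing'' is not a proof. Knowing $\Knum(\Ku(Y))$ has rank $2$ tells you which \emph{characters} a destabiliser could have, but says nothing about whether such subobjects exist, nor even that your object lies in the correct heart. The paper's stability condition has heart $\Coh^0_{\alpha,\beta}(\pro^3,\CCC_0)\cap\Ku(Y)$, and the actual proof (\cref{prop:P_Pi_stable}, \cref{cor:P_Pi_stable}) computes the image $\varPsi\sigma^*\PPP_\varPi$ explicitly as a shift of a rank-$4$ torsion sheaf supported on a hyperplane in $\pro^3$, shows it is Gieseker stable, and then runs a wall-crossing argument in $\Dcatb(\pro^3,\CCC_0)$ to rule out the one possible numerical wall (using that the quotient would have to be $\CCC_0[1]$, which is excluded since the object lies in $\Ku$). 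None of this is accessible from the Euler form alone; the Clifford embedding is doing essential work. Also note that $\pr_{\Ku(Y)}(\sheaf{\varPi})$ has class $-\kappa_2$, not $\kappa_1$: from $\ideal_\varPi\to\sheaf{Y}\to\sheaf{\varPi}$ and $\pr(\sheaf{Y})=0$ you get $\pr(\sheaf{\varPi})\cong\PPP_\varPi[1]$. This is harmless since the Serre functor identifies the moduli spaces, but it shows the numerics need more care.

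\textbf{The Lagrangian vanishing.} Your proposal that the skew form ``factors through an Ext-pairing forced to be trivial by the fractional CY dimension $7/3$'' is incorrect: since $7/3>2$, the CY condition does \emph{not} force $\Ext^2(\PPP_\varPi,\PPP_\varPi)=0$. The paper's argument is that the symplectic form on $\FFF_1(X)$ is the Yoneda pairing $\Ext^1\times\Ext^1\to\Ext^2$ in $\Ku(X)$, and under $i^*\colon\Ku(Y)\to\Ku(X)$ this factors through $\Ext^2_{\Ku(Y)}(\PPP_\varPi,\PPP_\varPi)$. The vanishing of the latter is a geometric input: \cref{lem:Ext_F_Pi} identifies it with $\Hlg^1(\varPi,\Nor{\varPi}{Y})$ via a local-to-global spectral sequence and the normal sequence $0\to\Nor{\varPi}{Y}\to\sheaf{\varPi}(1)^{\oplus 4}\to\sheaf{\varPi}(3)\to 0$, and this $\Hlg^1$ vanishes only for \emph{general} $Y$ (\cite[Corollary 1.4]{col86}). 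The same $\Ext^2$ vanishing is what gives both smoothness of the component and unramifiedness of $i_\FFF$ in the paper's approach; your normal-bundle sequence for unramifiedness is reasonable but would also need this generality hypothesis.
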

This recovers the construction of Illiev--Manivel \cite[\S 2.2.2]{IM08} from the perspective of Bridgeland moduli spaces.

For higher-dimensional moduli spaces, we adapt \cite{LLPZ} to prove non-emptiness for every numerical class and to construct smooth loci for primitive classes. The main additional difficulty is that the stability condition $\sigma_Y$ has global dimension $\frac{7}{3}>2$, so the obstruction space $\Ext^2(E,E)$ need not vanish for every stable object. We overcome this by proving generic $\Hom$- and $\Ext^2$-vanishing. This leads to the following higher-dimensional analogue of \cref{thm:moduli_dim2_intro}.

\begin{theorem}[\cref{thm:moduli_nonempty,prop:Ext2_zero_general,thm:res,cor:Lag_imm_general}][thm:res_intro]
    Let $Y$ be a general cubic 5-fold, and let $i\colon X\hookrightarrow Y$ be a very general cubic 4-fold. The moduli space $\ModssY(\Ku(Y),v)$ is non-empty for every non-zero class $v\in\Knum(\Ku(Y))$. If $v$ is primitive, there is a non-empty smooth open subset
    \[
        \UUUs_Y(v)\subset\ModssY(\Ku(Y),v)
    \]
    such that $i^*E$ is stable for every $E\in\UUUs_Y(v)$. Furthermore, restriction induces a rational map
    \[
        r_v\colon\overline{\UUUs_Y(v)}\dashrightarrow\ModssX(\Ku(X),i^*v)
    \]
    which is generically unramified and whose image is Lagrangian in the smooth projective hyper-K\"ahler variety $\ModssX(\Ku(X),i^*v)$.
\end{theorem}
In particular, \cref{thm:res_intro} supplies a new class of Lagrangian subvarieties in K3$^{[n]}$-type hyper-K\"ahler varieties, extending the work of \cite{FGLZ,LLPZ}. 

\subsection*{Background and related work}

\paragraph{Stability conditions on Kuznetsov components.}
Kuznetsov components appeared systematically in the study of Fano $3$-folds, cubic 4-folds, and Gushel--Mukai varieties \cite{kuzFano,kuz4fold,KP_GM}. Stability conditions on the Kuznetsov components of cubic $3$-folds were first studied in \cite{BMMS}, where they led to a categorical Torelli theorem. The technique of restricting stability conditions from an ambient derived category to the orthogonal complement of an exceptional collection was developed in \cite{BLMS} and applied to cubic 4-folds and to the Picard-rank-one Fano $3$-folds covered by \cite[Theorems 1.1 and 1.2]{BLMS}. The construction was extended to families in \cite{BLMNPS} and adapted to Gushel--Mukai varieties in \cite{PPZ22}. A common categorical model behind several of these constructions is provided by quadric fibrations and their sheaves of Clifford algebras, as introduced in \cite{kuzQuaFib}. More recently, an alternative method for inducing bounded t-structures on semi-orthogonal components was developed in \cite{KLP}.

Chunyi Li \cite{Li26} proved the existence of stability conditions on $\Dcatb(X)$ for every smooth projective variety $X$. His construction lies near the large-volume limit, whereas the restriction method of \cite{BLMS} typically uses stability conditions near the small-volume limit. Constructing stability conditions on semi-orthogonal components of $\Dcatb(X)$ therefore remains open in general.

\paragraph{Serre-invariant and Gepner-type stability conditions.}

The symmetry of the stability manifold under the Serre functor has been analysed in many of the examples above. Serre-invariant stability conditions were constructed for Fano 3-folds of Picard rank 1 and index 2 (del Pezzo 3-folds) in \cite{PY20}, and for those of index 1 (prime Fano 3-folds) in \cite{PRo23}. Uniqueness for the Serre-invariant $\widetilde{\GL^+_2}(\R)$-orbit are known for del Pezzo 3-folds of degree $\geq 2$ and prime Fano 3-folds of genus $\geq 6$; see \cite{JLLZ,FP23,FLM23}. A general uniqueness criterion for fractional Calabi--Yau categories of dimension at most 2 was formulated in \cite{FP23}. It does not cover cubic 5-folds, whose Kuznetsov component has fractional Calabi--Yau dimension $7/3$. It is therefore an interesting question to investigate whether the Serre-invariant stability condition constructed in this work is unique. 

Serre invariance is not expected in complete generality: its non-existence was proved in \cite{KPSerre} for many Kuznetsov components of Fano complete intersections whose defining degrees are not all equal.

The Kuznetsov component of a hypersurface is fractional Calabi--Yau, and the relation between its Serre and rotation functors suggests the existence of Gepner-type stability conditions in the sense of \cite{Tod13}; see also \cite{kuzCY}. Such conditions are known for cubic 3-folds \cite{PY20} and for general cubic 4-folds containing a plane \cite{Tod13}, with respect to the pair $(\sfO,2/3)$. We construct a Gepner-type stability condition for the same pair on a cubic 5-fold, confirming a new case of \cite[Conjecture 1.1]{Tod13}.

\paragraph{Moduli spaces and hyper-K\"ahler geometry.}
By Mukai's classical result \cite{Muk84Symp}, moduli spaces of Gieseker-stable sheaves on K3 surfaces provide a basic source of K3$^{[n]}$-type hyper-K\"ahler varieties. For non-commutative K3 surfaces, such as the Kuznetsov components of cubic 4-folds and Gushel--Mukai 4-folds, moduli spaces of Bridgeland-stable objects recover classical examples and produce larger families of K3$^{[n]}$-type hyper-K\"ahler varieties \cite{BLMS,BLMNPS,PPZ22}. By contrast, less is known for Kuznetsov components that are not of K3 type. Moduli spaces on the Kuznetsov components of del Pezzo 3-folds, including cubic 3-folds, have been studied in \cite{APR22,PR23,LiuZhang22,FP23,Bayer_desing_cubic}. A lattice-theoretic induction in \cite{LLPZ} establishes non-emptiness for moduli spaces on Fano 3-folds.

A categorical framework producing Lagrangian families from Bridgeland moduli spaces on Gushel--Mukai 4-folds and their hyperplane sections was developed in \cite{FGLZ}. The same work proposed an analogous statement for cubic 4-folds and their hyperplane sections, which was subsequently proved in \cite{LLPZ}. Our \cref{cor:Lag_imm_general} is an analogue of \cite[Theorem 8.3]{LLPZ} for cubic 5-folds and their hyperplane sections. The smallest numerical classes correspond to Fano surfaces of planes, whose geometry was studied in \cite{col86,Mbo23}. For these surfaces, we recover the construction of Illiev--Manivel \cite[\S 2.2.2]{IM08}, whereas arbitrary primitive classes produce new Lagrangian subvarieties. This result could have potential applications to a moduli-theoretic approach to the compactified Lagrangian fibration obtain in \cite[Theorem 1.2]{LLX}.

\subsection*{Plan of the paper}
\cref{sec:quad_fib} sets up the geometric and categorical framework of quadric fibrations for the cubic 5-fold $Y$ and identify the $\Ku(Y)$ with $\Ku(\pro^3,\CCC_0) \subset \Dcatb(\pro^3,\CCC_0)$ (\cref{thm:psi}). \cref{sec:Knum} then computes the numerical lattices on both sides. In particular, \cref{prop:Knum_Ku,prop:Knum_Ku_C0} give compatible integral bases and describe the actions of the Serre and rotation functors. 

\cref{sec:stab} and \cref{sec:Serre} construct stability conditions. After proving the Bogomolov--Gieseker-type inequality of \cref{prop:BG_noncomm_Pm}, tilt stability on $\Dcatb(\pro^3,\CCC_0)$ is restricted to the Kuznetsov component, yielding the $(\beta,\alpha)$ family in \cref{thm:main}. Then we change to another set $(\xi,\eta)$ of parameters for the stability conditions for better visualization in \cref{subsec:tensor}. We understand how twisting by $\CCC_1$, and hence the rotation functor, transforms the tilted hearts. This leads to the Serre-invariance \cref{thm:Serre_inv}. The normalization in \cref{cor:gl_dim} yields the Gepner-type condition. 

\cref{sec:moduli} and \cref{sec:high_moduli} study moduli spaces. First in \cref{subsec:P_stab}, $\cF_{\varPi}$ is shown to be stable by a wall-crossing argument, giving the non-emptiness of the minimal moduli spaces, and \cref{prop:moduli_dim2} identifies the Fano surface with a connected component of these spaces. Restriction to a smooth cubic $4$-fold produces the Lagrangian immersion in \cref{cor:Lag_imm}. We then record the geometry of the Fano surface in \cref{subsec:plane_incid}. The higher-dimensional theory is then built inductively from extensions of stable objects. We first give non-emptiness for every non-zero class in \cref{thm:moduli_nonempty}, and then the generic Hom-vanishing results of \cref{subsec:Hom_van} control the smooth locus of these moduli spaces. Finally, \cref{thm:res} proves that a general object remains stable after restriction to a very general cubic $4$-fold, and \cref{cor:Lag_imm_general} realizes the image as Lagrangian in the target hyper-K\"ahler variety.

\subsection*{Notations and conventions}
\begin{itemize}[nosep, leftmargin = 1.0em]
  \item Throughout the paper we work over the base field $\C$. All varieties are smooth and projective unless stated otherwise; all categories and functors are $\C$-linear, and the Hom spaces are finite-dimensional over $\C$.
  \item For a category $\cT$ we write $A\in\cT$ to indicate that $A$ is an object of $\cT$. A subcategory $\cK\subset\cT$ is understood to be full and additive. Any equivalence $\cK\simeq\cT$ of triangulated categories is assumed to be exact. For objects or full subcategories $\cA_1,\ldots,\cA_m$ of a triangulated category, $\ord{\cA_1,\ldots,\cA_m}$ denotes the smallest full triangulated subcategory containing them. By contrast, $\ord{\cA_1,\ldots,\cA_m}_{\ext}$ denotes their extension closure in the ambient Abelian or triangulated category.
  \item Write $\Dcatb(X) \coloneqq \Dcatb(\Coh(X))$ for the bounded derived category of coherent sheaves and $\Dperf(X)$ for its full subcategory of perfect complexes. In derived categories, $f_*$ and $f^*$ denote the derived push-forward $\mathsf{R}f_*$ and the derived pull-back $\mathsf{L}f^*$, respectively. Likewise, a tensor product used in a derived category is derived and the superscript $\mathsf{L}$ is often suppressed.
  \item Use cohomological grading:  $\Ext^i(A,B)=\Hom(A,B[i])$. Set
    \[
        \hlg^i(X,E)=\dim_{\C}\Hlg^i(X,E),\quad
        \hom(A,B)=\dim_{\C}\Hom(A,B),\quad
        \ext^i(A,B)=\dim_{\C}\Ext^i(A,B).
    \]
    Denote by $\mathcal H^i_{\cA}(E)$ the cohomological object of the complex $E$ with respect to a specified heart $\cA$.
  \item For a projective variety $X$, $\Gro(X)$ means $\Gro(\Dcatb(X))=\Gro(\Coh(X))$.  Define $\Knum(\cT)$ by quotienting $\Gro(\cT)$ by the right radical of the Euler pairing $\chi(-,-)$.
  \item Adopt the geometric convention: $\pro^n$ parametrizes lines in $\aff^{n+1}$; and for a vector bundle $F$ on $X$, the projective bundle parametrizes one-dimensional subspaces of the fibres of $F$:
    \begin{equation}
        \pro_X(F)=\operatorname{Proj}_X\ab(\operatorname{Sym}^{\bullet}F^{\dual})
    \end{equation}
    If $q\colon\pro_X(F)\to X$ is the projection, then $\sheaf{\pro_X(F)}(-1)\hookrightarrow q^*F$ is the tautological subbundle and $q_*\sheaf{\pro_X(F)}(1)=F^{\dual}$. 
\end{itemize}

\subsection*{Acknowledgements}
I am very grateful to my doctoral supervisor Chunyi Li for proposing the problem and for his invaluable guidance and support throughout this research. 
I would also like to thank Arend Bayer, Yiran Cheng, Hanfei Guo, Zhiyu Liu, Laura Pertusi, Chunkai Xu, and Shizhuo Zhang for helpful conversations and comments. Special thanks to Zhiyu Liu for his conceptual and computational insights which helped shape the proofs of \cref{lem:Ext_F_Pi,lem:k1+k2_ext2}.
After completing this project, Song Yang kindly shared with me his early notes on the cubic fivefold problem, whose approach to \cref{thm:psi} contained some ideas that I found inspiring.

This work is supported by the Warwick Mathematics Institute Centre for Doctoral Training, and I gratefully acknowledge funding from the University of Warwick. This work is also partially supported by the Royal Society URF{\textbackslash}R1{\textbackslash}201129 `Stability condition and application in algebraic geometry'.
Part of the paper was written during visits to Shanghai Center for Mathematical Sciences and Zhejiang University, and I would like to thank these institutes for their hospitality.

\section{Derived category of quadric fibrations}\label{sec:quad_fib}

\subsection{Geometry of quadric fibrations}\label{subsec:quad_fib}

We start by describing the classical construction of a quadric fibration obtained by blowing up a cubic hypersurface along a linear subspace. The following setting is adapted from \cite[\S 4]{kuz4fold} and \cite[\S 1.5]{huy23}.

\begin{setting}[][set:geom_quad_fib]
    Let $V$ be a $(n+2)$-dimensional vector space over $\C$, let $A \subset V$ be a $(k+1)$-dimensional subspace, and set $B \coloneqq V/A$. Projectivizing the quotient map $V \twoheadrightarrow B$ gives the rational map $\pro^{n+1} = \pro(V) \dashto \pro(B) = \pro^{n-k}$, namely the projection away from the $k$-plane $\varPi_0 = \pro(A)$. Its indeterminacy is resolved by blowing up $\pro(V)$ along $\varPi_0$. Let $\tau\colon \Bl_{\varPi_0}\pro(V) \to \pro(V)$ be the blow-up and $q\colon \Bl_{\varPi_0}\pro(V) \to \pro(B)$ the resulting morphism. The morphism $q$ is a $\pro^{k+1}$-fibration and identifies $\Bl_{\varPi_0}\pro(V)$ with the projective bundle $\pro_{\pro(B)}(\FFF)$, where $\FFF \coloneqq \left(q_*\tau^*\sheaf{\pro(V)}(1) \right)^\dual$ is locally free of rank $k+2$. To compute $\FFF$ explicitly, consider the short exact sequence 
\[\begin{tikzcd}
	0 & {q^*\sheaf{\pro(B)}(1)} & {\tau^*\sheaf{\pro(V)}(1)} & {\tau^*\sheaf{\pro(V)}(1)|_{E'}} & 0, 
	\arrow[from=1-1, to=1-2]
	\arrow[from=1-2, to=1-3]
	\arrow[from=1-3, to=1-4]
	\arrow[from=1-4, to=1-5]
\end{tikzcd}\]
where $E' \cong \varPi_0 \times \pro(B)$ is the exceptional divisor of $\tau$. Pushing the sequence forward via $q_*$ gives the split exact sequence 
\[\begin{tikzcd}
	0 & {\sheaf{\pro(B)}(1)} & {\FFF^\dual} & {A \otimes \sheaf{\pro(B)}} & 0.
	\arrow[from=1-1, to=1-2]
	\arrow[from=1-2, to=1-3]
	\arrow[from=1-3, to=1-4]
	\arrow[from=1-4, to=1-5]
\end{tikzcd}\]
Hence we have $\FFF \cong \sheaf{\pro(B)}^{\oplus (k+1)} \oplus \sheaf{\pro(B)}(-1)$.

Let $Y \subset \pro(V)$ be a smooth cubic hypersurface containing the $k$-plane $\varPi_0$. Denote by $\sigma\colon \tilde Y \to Y$ the embedded blow-up of $Y$ under $\tau\colon \pro(\FFF) \to \pro(V)$. Let $E = E' \cap \tilde Y \subset \tilde Y$ be the exceptional divisor.
\end{setting}

\begin{equation}\label{diag:cubic_n_quad_fib}
    \begin{tikzcd}[ampersand replacement=\&,cramped]
        \& E \& {\tilde Y} \& {\pro(\FFF)} \\
        \varPi_0 \& Y \& {\pro(V)} \&\& {\pro(B)}
        \arrow["\iota", hook, from=1-2, to=1-3]
        \arrow["p"', from=1-2, to=2-1]
        \arrow["\alpha", hook, from=1-3, to=1-4]
        \arrow["\sigma"', from=1-3, to=2-2]
        \arrow["\pi", from=1-3, to=2-5]
        \arrow["\tau"', crossing over, from=1-4, to=2-3]
        \arrow["q", from=1-4, to=2-5]
        \arrow["j", hook, from=2-1, to=2-2]
        \arrow[hook, from=2-2, to=2-3]
    \end{tikzcd}
\end{equation}

\begin{lemma}[][lem:cubic5_quad_fib_Pic]
    The morphism $\pi = q \circ \alpha\colon \tilde Y \to \pro(B)$ is a fibration in $k$-dimensional quadrics with the discriminant locus $D \in |\sheaf{\pro(B)}(k+4)|$. Let $H' \coloneqq \tau^*\sheaf{\pro(V)}(1)$ and $h' \coloneqq q^*\sheaf{\pro(B)}(1)$. Then $\tilde Y = 2H' + h'$ in $\Pic \pro(\FFF)$. Denote by $H$ and $h$ the restriction of the classes $H'$ and $h'$ in $\tilde Y$ respectively; then we have $E = H - h$ and $K_{\tilde Y} = -kH - (n-k-1)h$ in $\Pic \tilde Y$.
\end{lemma}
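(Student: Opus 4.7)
The proof splits naturally into two strands: the intersection-theoretic computations in $\Pic\pro(\FFF)$ and $\Pic\tilde Y$, and the geometric identification of the fibres of $\pi$ as quadrics.

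For the divisor classes, I would begin from the exact sequence preceding the lemma, which identifies $q^*\sheaf{\pro(B)}(1)$ with the subsheaf of $\tau^*\sheaf{\pro(V)}(1)$ consisting of sections vanishing on $E'$; this yields the relation $E' = H' - h'$ in $\Pic\pro(\FFF)$. Because $Y$ is smooth and contains $\varPi_0$, its multiplicity along $\varPi_0$ is exactly one, hence $\tau^* Y = \tilde Y + E'$. Combined with $\tau^* Y \sim 3H'$, this gives $\tilde Y = 3H' - E' = 2H' + h'$, and restricting $E' = H' - h'$ along $\tilde Y\hookrightarrow\pro(\FFF)$ gives $E = H - h$. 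The canonical class then follows from the blow-up formula for the smooth centre $\varPi_0\subset Y$ of codimension $n-k$:
\[K_{\tilde Y} = \sigma^* K_Y + (n-k-1)E = -(n-1)H + (n-k-1)(H-h) = -kH - (n-k-1)h,\]
using the adjunction $K_Y = -(n-1)H_Y$ on the cubic hypersurface and $\sigma^* H_Y = H$. The same identity can be recovered independently from $\omega_{\tilde Y} = (\omega_{\pro(\FFF)}\otimes\sheaf{\pro(\FFF)}(\tilde Y))|_{\tilde Y}$ together with the relative canonical formula for $\pro(\FFF)\to\pro(B)$, as a useful sanity check.

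For the fibration statement, the restriction $\tau|_{q^{-1}(b)}$ gives an isomorphism $q^{-1}(b)\cong\Lambda_b\subset\pro(V)$ onto the $(k+1)$-plane spanned by $\varPi_0$ and a lift of $b$, under which the class $\tilde Y = 2H'+h'$ restricts to $2H_{\Lambda_b}$; hence $\tilde Y \cap q^{-1}(b)$ is cut out by a quadric in $\pro^{k+1}$. Geometrically, because $F$ vanishes on $\varPi_0$ to first order, $F|_{\Lambda_b} = \ell\cdot Q_b$ with $\ell$ the linear form cutting out $\varPi_0\subset\Lambda_b$ and $Q_b$ a quadratic form, so the fibre is the $k$-dimensional quadric $\{Q_b = 0\}$. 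To identify the discriminant, I would apply the projection formula
\[H^0(\pro(\FFF),\,2H'+h') \;\simeq\; H^0(\pro(B),\,\Sym^2\FFF^\dual\otimes\sheaf{\pro(B)}(1)),\]
presenting the defining section of $\tilde Y$ as a symmetric morphism $\mathcal{Q}\colon\FFF\to\FFF^\dual(1)$. Taking determinants, $\det\mathcal{Q}$ is a global section of $(\det\FFF^\dual)^{\otimes 2}\otimes\sheaf{\pro(B)}(k+2)$, and since $\det\FFF^\dual = \sheaf{\pro(B)}(1)$ from the split exact sequence, $\det\mathcal{Q}\in H^0(\pro(B),\sheaf{\pro(B)}(k+4))$ cuts out $D$ as claimed.

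The only genuine subtlety is the multiplicity-one assertion for $Y$ along $\varPi_0$, which is immediate from smoothness (a higher multiplicity would force $\varPi_0\subset\Sing(Y)$). Everything else is a routine bookkeeping exercise in divisor classes on projective bundles under the geometric convention fixed in the notations.
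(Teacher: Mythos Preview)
Your proof is correct and follows essentially the same route as the paper: the relation $E'=H'-h'$ combined with $\tau^*Y=3H'$ gives $\tilde Y=2H'+h'$; the pushforward $q_*\sheaf{\pro(\FFF)}(2H'+h')\cong\Sym^2\FFF^\dual\otimes\sheaf{\pro(B)}(1)$ yields the quadratic form whose determinant cuts out $D$; and the canonical class comes from the blow-up formula. Your added justification of the multiplicity-one claim and the explicit fibre description via $\Lambda_b$ are welcome elaborations, and your coefficient $-(n-1)$ in $\sigma^*K_Y$ is the correct one (the paper's $-(n+1)$ is a typo, as one sees from $-(n-1)+(n-k-1)=-k$).
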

\begin{proof}
    In $\Pic \pro(\FFF)$ we have $\tilde Y = 3H' - E'$ and $h' = H' - E'$. Hence $\tilde Y = 2H' + h'$. Note that 
    \[q_*\sheaf{\pro(\FFF)}(\tilde Y) = q_*\sheaf{\pro(\FFF)}(2H' + h') \cong \Sym^2\FFF^\dual \otimes \sheaf{\pro(B)}(1).\]
    Hence $\tilde Y$ is defined by a section of $\Sym^2\FFF^\dual \otimes \sheaf{\pro(B)}(1)$, or equivalently, by a quadratic form
    \[ Q\colon \FFF \longrightarrow \FFF^\dual \otimes \sheaf{\pro(B)}(1). \]
    Therefore the fibre of $\pi\colon \tilde Y \to \pro(B)$ over $y \in \pro(B)$ is the quadric defined by $Q_y \in \Sym^2(\FFF^\dual|_y)$. Moreover, the fibre is smooth if and only if $Q_y$ is non-degenerate. The discriminant locus $D$, defined as the degeneracy locus of $Q$, is a section of $\det Q\colon \det \FFF \to \det \FFF^\dual \otimes \sheaf{\pro(B)}(k+2)$, which is identified with the line bundle $(\det \FFF^\dual)^2 \otimes \sheaf{\pro(B)}(k+2) \cong \sheaf{\pro(B)}(k+4)$.

    By restricting the relation $h' = H' - E'$ on $\tilde Y$, we obtain $E = H - h$. Finally, the canonical divisor of $\tilde Y$ is given by 
    \[K_{\tilde Y} = \sigma^*K_Y + (n-k-1)E = -(n-1)H + (n-k-1)E = -kH - (n-k-1)h. \qedhere\]
\end{proof}
By \cite{kuzQuaFib}, the quadric fibration $\pi\colon \tY \to \pro(B)$ gives rise to a sheaf of Clifford algebras on $\pro(B)$, with the even part $\CCC_0$ and the odd part $\CCC_1$ given by 
\[\CCC_0 \coloneqq \bigoplus_{m=0}^\infty {\textstyle\bigwedge^{2m}} \FFF \otimes \sheaf{\pro(B)}(-m), \qquad \CCC_1 \coloneqq \bigoplus_{m=0}^\infty {\textstyle\bigwedge^{2m+1}} \FFF \otimes \sheaf{\pro(B)}(-m).\]
Recall that we set $\CCC_{2\ell} \coloneqq \CCC_0 \otimes\sheaf{\pro(B)}(\ell)$ and $\CCC_{2\ell+1} \coloneqq \CCC_1 \otimes\sheaf{\pro(B)}(\ell)$ for any $\ell \in \Z$.

The Abelian category and the derived category of coherent right $\CCC_0$-modules are denoted by $\Coh(\pro(B),\CCC_0)$ and $\Dcatb(\pro(B),\CCC_0)$. These categories are introduced and studied by Kuznetsov (\cite{kuzQuaFib}). We will frequently use the following result from \emph{loc.\ cit.}
\begin{lemma}[{\cite[Lemma 3.8, Corollary 3.9]{kuzQuaFib}}][lem:C_Hom]
    For any $k,\ell \in \Z$, the sheaves $\CCC_k$ are flat over $\CCC_0$, and we have the isomorphisms 
    \vspace*{-0.5em}\[
    \Rsf\!\SHom_{\CCC_0}(\CCC_k,\CCC_\ell) \cong \CCC_{\ell-k}; \qquad \CCC_k \otimes_{\CCC_0} \CCC_{\ell} \cong \CCC_{k+\ell}.\]
\end{lemma}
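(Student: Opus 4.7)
Both claims are local on $\pro(B)$, so I would pass to a Zariski open $U$ over which $\FFF$ trivialises and invoke the Poincar\'e--Birkhoff--Witt theorem for Clifford algebras: this provides an explicit $\sheaf{U}$-basis of $\CCC|_U$ by ordered wedge monomials in a chosen basis of $\FFF|_U$. In particular each $\CCC_k|_U$ is locally free over $\sheaf{U}$ of total rank $2^{k+1}$, the same as $\CCC_0|_U$.

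The central step is the tensor identity $\CCC_k \otimes_{\CCC_0} \CCC_\ell \cong \CCC_{k+\ell}$. Clifford multiplication descends to a canonical map $\mu_{k,\ell}\colon \CCC_k \otimes_{\CCC_0} \CCC_\ell \to \CCC_{k+\ell}$, and I would verify that this is an isomorphism first on the complement of the discriminant divisor $D \subset \pro(B)$. Over $\pro(B) \setminus D$ the form $Q$ is non-degenerate, so classical Clifford theory presents $\CCC$ as a $\Z/2$-graded Azumaya algebra whose odd part $\CCC_1$ is Morita-invertible over $\CCC_0$; iterating, all $\mu_{k,\ell}$ are isomorphisms there. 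To extend across $D$, I would write down the inverse of $\mu_{k,\ell}$ explicitly using the PBW basis and the Clifford relations, obtaining an expression that is manifestly basis-independent and therefore glues to an isomorphism on all of $\pro(B)$; alternatively, both sides are locally free over $\sheaf{\pro(B)}$ of the same rank, so the isomorphism on $\pro(B) \setminus D$ extends by a Hartogs-type argument across the codimension-one locus $D$.

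The remaining claims then follow formally. Flatness of $\CCC_k$ over $\CCC_0$ is automatic: the functor $- \otimes_{\CCC_0} \CCC_k$ on right $\CCC_0$-modules is an equivalence with quasi-inverse $- \otimes_{\CCC_0} \CCC_{-k}$, hence exact. For the $\Rsf\!\SHom$ identity, flatness kills the higher derived functors, and the tensor identity together with this equivalence yields
\begin{equation*}
\SHom_{\CCC_0}(\CCC_k, \CCC_\ell) \cong \CCC_\ell \otimes_{\CCC_0} \CCC_{-k} \cong \CCC_{\ell - k}.
\end{equation*}

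The main technical obstacle is the bookkeeping of the $\sheaf{\pro(B)}(m)$-twists built into the definition of $\CCC_k$, and verifying that the isomorphism $\mu_{k,\ell}$ extends across the discriminant $D$, where the Clifford algebra fails to be Azumaya. The explicit PBW-based inverse construction is the cleanest route here, but it requires careful checking of basis-independence; this is the most delicate part of the argument, with the rest being essentially formal once the correct local model is in place.
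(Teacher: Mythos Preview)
The paper does not supply its own proof of this lemma; it is quoted directly from Kuznetsov's paper on quadric fibrations, so there is no argument here to compare against. Your sketch therefore has to be judged on its own merits.

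The overall architecture---establish the tensor identity first, then read off flatness from the resulting equivalence $(-\otimes_{\CCC_0}\CCC_k)\circ(-\otimes_{\CCC_0}\CCC_{-k})\simeq\id$, and finally deduce the $\Rsf\!\SHom$ statement---is sound. The problem is the step where you extend the isomorphism $\mu_{k,\ell}$ across the discriminant $D$. Your ``Hartogs-type'' alternative fails on two counts. First, $D$ has codimension one, and Hartogs extension requires codimension at least two; you even name the codimension explicitly, so this is not a slip. Second, and more seriously, you assert that ``both sides are locally free over $\sheaf{\pro(B)}$ of the same rank'', but you have no reason yet to know that $\CCC_k\otimes_{\CCC_0}\CCC_\ell$ is locally free over $\sheaf{\pro(B)}$: that is essentially a restatement of the flatness you are trying to prove. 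Even granting local freeness, a morphism of locally free sheaves of equal rank that is an isomorphism on a dense open need not be an isomorphism---its determinant can vanish along $D$.

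Your other alternative, an ``explicit PBW-based inverse'', is where the actual content would have to live, but as written it is only a promise. Kuznetsov's argument in \cite[Lemma~3.8]{kuzQuaFib} does work locally and does exploit the Clifford relations, but it produces the isomorphism directly rather than by extension from the Azumaya locus; the point is that the multiplication map $\CCC_1\otimes_{\CCC_0}\CCC_{-1}\to\CCC_0$ (and its variants) can be checked to be an isomorphism pointwise using the explicit structure of the Clifford algebra over any quadratic form of corank at most one, which is all that occurs for a flat quadric fibration. If you want to complete your sketch, that is the computation you should carry out, and you should drop the Hartogs branch entirely. (A minor notational point: in your first paragraph the symbol $k$ collides with the subscript in $\CCC_k$ and the dimension parameter from \cref{set:geom_quad_fib}; the rank is $2^{\,\rk\FFF-1}$ regardless of the subscript.)
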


\begin{setting}[][set:cubic_5]
    In this chapter, we focus on the case $n = 5$ and $k = 2$. Every smooth cubic 5-fold contains a 2-plane $\varPi_0 \cong \pro^2$; moreover, its Fano scheme of planes $\cF_2(Y)$ is a two-dimensional projective variety \cite[Proposition 1.8]{col86}. Blowing up a plane $\varPi_0 \subset Y$ induces a quadric surface fibration $\tilde Y \to \pro^3$. 
    \begin{equation}\label{diag:cubic_5_quad_fib}
        \begin{tikzcd}[ampersand replacement=\&,cramped]
        \& E \& {\tilde Y} \& {\pro_{\pro^3}(\sheaf{}^{\oplus 3} \oplus \sheaf{}(-1))} \\
        \varPi_0 \& Y \& {\pro^6} \&\& {\pro^3}
        \arrow["\iota", hook, from=1-2, to=1-3]
        \arrow["p"', from=1-2, to=2-1]
        \arrow["\alpha", hook, from=1-3, to=1-4]
        \arrow["\sigma"', from=1-3, to=2-2]
        \arrow["\pi", from=1-3, to=2-5]
        \arrow["\tau"', crossing over, from=1-4, to=2-3]
        \arrow["q", from=1-4, to=2-5]
        \arrow["j", hook, from=2-1, to=2-2]
        \arrow[hook, from=2-2, to=2-3]
    \end{tikzcd}
    \end{equation}
    The even and odd parts of the Clifford algebra associated to this quadric fibration are given by 
    \begin{equation}
        \CCC_0 = \sheaf{} \oplus \sheaf{}(-1)^{\oplus 3} \oplus \sheaf{}(-2)^{\oplus 3} \oplus \sheaf{}(-3), \qquad \CCC_1 = \sheaf{}^{\oplus 3} \oplus \sheaf{}(-1)^{\oplus 2} \oplus \sheaf{}(-2)^{\oplus 3}. \label{eq:Cliff_01}
    \end{equation} 
\end{setting}

\subsection{Mutations in the derived category}

\paragraph{Basic properties of mutations.} We recall the definition of mutation functors. For the properties one can refer to \cite[\S 3]{BLMS} or \cite[\S 7.1.3]{huy23}.

\begin{definition}
    Let $\cT$ be a triangulated category, and $\cD \subset \cT$ an admissible subcategory, with the inclusion functor $\beta_*\colon \cD \hookrightarrow \cT$. We have the adjunction triple $\beta^* \dashv \beta_* \dashv \beta^!$.
    \begin{enumerate}[nosep]
        \item $\lmut{\cD} \colon \cT \to \cD^{\perp}$, called the \textbf{left mutation} of $\cD$, gives the distinguished triangle in $\cT$:
              \[\begin{tikzcd}[ampersand replacement=\&]
                      {\beta_*\beta^!F} \& F \& {\lmut{\cD}F} \& {}
                      \arrow["{\operatorname{ev}}", from=1-1, to=1-2]
                      \arrow[from=1-2, to=1-3]
                      \arrow["{+1}", from=1-3, to=1-4]
                  \end{tikzcd}\]
        \item $\rmut{\cD} \colon \cT \to {}^{\perp}\cD$, called the \textbf{right mutation} of $\cD$, gives the distinguished triangle in $\cT$:
              \[\begin{tikzcd}[ampersand replacement=\&]
                      {\rmut{\cD}F} \& F \& {\beta_*\beta^*F} \& {}
                      \arrow[from=1-1, to=1-2]
                      \arrow["{\operatorname{coev}}", from=1-2, to=1-3]
                      \arrow["{+1}", from=1-3, to=1-4]
                  \end{tikzcd}\]
    \end{enumerate}
    Strictly speaking, $\lmut{\cD}F$ and $\rmut{\cD}F$ should be embedded into $\cT$ via the corresponding inclusion functors.
\end{definition}
If $\cD = \ord{E}$ for $E \in \cT$ an exceptional object, the mutation functors $\lmut{E}$ and $\rmut{E}$ can be described explicitly as
\begin{align}
    \lmut{E}F &= \cone\ab( \begin{tikzcd}[ampersand replacement=\&, cramped]
                {\displaystyle\bigoplus_{k \in \Z}\Hom(E,F[k]) \otimes E[-k]} \& F
                \arrow["{\operatorname{ev}}", from=1-1, to=1-2]
            \end{tikzcd} )  \in \ord{E}^\perp; \\
    \rmut{E}F &= \cone\ab( \begin{tikzcd}[ampersand replacement=\&, cramped]
                F \& {\displaystyle\bigoplus_{k \in \Z}\Hom(F,E[k])^\dual \otimes E[k]}
                \arrow["{\operatorname{coev}}", from=1-1, to=1-2]
            \end{tikzcd} )[-1] \in {}^\perp\!\ord{E}.
\end{align}

We can list some immediate properties of mutation functors.

\begin{lemma}[][lem:mutations_basic]
    Let $\cD \subset \cT$ be an admissible subcategory, and $F \in \cT$ any object.
    \begin{enumerate}[dense]
        \item If $\varphi$ is an auto-equivalence of $\cT$, then $\varphi \circ \lmut{\cD} \cong \lmut{\varphi(\cD)} \circ \varphi$, and $\varphi \circ \rmut{\cD} \cong \rmut{\varphi(\cD)} \circ \varphi$.

        \item $\lmut{\cD}|_{{}^{\perp}\cD}$ and $\rmut{\cD}|_{\cD^{\perp}}$ are mutually inverse. In particular, if $F \in \cD^{\perp}$, then $\lmut{\cD}F \cong F$ and $\lmut{\cD}\rmut{\cD}F \cong F$; and if $F \in {}^{\perp}\cD$, then $\rmut{\cD}F \cong F$ and $\rmut{\cD}\lmut{\cD}F \cong F$.

        \item Let $A,B \in \cT$ be exceptional objects such that $\RHom(A,B) = 0$. Then $\lmut{A} \circ \rmut{B} \cong \rmut{B} \circ \lmut{A}$.
    \end{enumerate}
\end{lemma}

The following result is very useful in computing mutations.

\begin{lemma}[Mutation triangle][lem:mut_trig]
    Let $A,B,C \in \cT$ be exceptional objects. Assume that $\RHom(B,A) = 0$ and there exists a distinguished triangle:
    \begin{equation}
        \begin{tikzcd}[ampersand replacement=\&,cramped]
            A \& B \& C \& {A[1].}
            \arrow["f", from=1-1, to=1-2]
            \arrow["g", from=1-2, to=1-3]
            \arrow["{\delta}", from=1-3, to=1-4]
        \end{tikzcd} \label{trig:mut_lem}
    \end{equation}
    Then we have the following mutation relations:
    \begin{align}
        \lmut{B}C &\cong A[1]; & \lmut{C}A &\cong B; 
        & \lmut{A}B &\cong C; \\
        \rmut{C}B &\cong A; & \rmut{A}C &\cong B; & \rmut{B}A &\cong C[-1].
    \end{align}
\end{lemma}

\paragraph{Semi-orthogonal decompositions of the cubic 5-fold.}
Now return to \cref{set:cubic_5}, where $\tilde Y$ is the blow-up of the smooth cubic 5-fold $Y$ along a plane $\varPi_0$. The goal of this subsection is to compare two semi-orthogonal decompositions. This comparison produces an admissible embedding $\Ku(Y) \hookrightarrow \Dcatb(\pro^3,\CCC_0)$ (\cref{thm:psi}).

On one side, since $\sigma\colon \tilde Y \to Y$ is a smooth blow-up, Orlov's formula (\cite[Theorem 2.6]{orlov92}) gives the semi-orthogonal decomposition
\begin{equation}\label{SOD:smooth_blowup}
    \Dcatb(\tilde Y)
         = \ord{\sigma^*\Dcatb(Y),\, \iota_*p^*\Dcatb(\varPi_0),\, \iota_*(p^*\Dcatb(\varPi_0) \otimes \sheaf{E}(-E))}. 
\end{equation}
On the other hand, since $\pi\colon \tilde Y \to \pro^3$ is a quadric fibration, by Kuznetsov's formula (\cite[Theorem 4.2]{kuzQuaFib}) there is a fully faithful functor $\varPhi\colon \Dcatb(\pro^3,\CCC_0) \to \Dcatb(\tilde Y)$ that induces the semi-orthogonal decomposition:
\begin{equation}\label{SOD:quad_fib}
    \Dcatb(\tilde Y)
     = \ord{\varPhi\Dcatb(\pro^3,\CCC_0),\, \pi^*\Dcatb(\pro^3),\, \pi^*\Dcatb(\pro^3) \otimes \sheaf{\tilde{Y}}(H)}.
\end{equation}
By \cite[Lemma 4.10]{kuzQuaFib}, the left adjoint $\varPsi\colon \Dcatb(\tilde Y) \to \Dcatb(\pro^3,\CCC_0)$ of $\varPhi$ is given by $F \longmapsto \pi_*(F \otimes \cE \otimes \sheaf{\tilde Y}(h))[2]$, where $\cE$ is a right $\pi^*\CCC_0$-module that fits into the short exact sequence of right $q^*\CCC_0$-modules:
\begin{equation}\label{ses:E}
    \begin{tikzcd}[ampersand replacement=\&,cramped]
            0 \& {q^*\CCC_{-1}(-3H')} \& {q^*\CCC_0(-2H')} \& {\alpha_*\cE} \& 0.
            \arrow[from=1-1, to=1-2]
            \arrow[from=1-2, to=1-3]
            \arrow[from=1-3, to=1-4]
            \arrow[from=1-4, to=1-5]
        \end{tikzcd} 
\end{equation}
Note that the sheaf $\cE$ on $\tilde Y$ is locally free of rank $4$ as an $\sheaf{\tilde Y}$-module by \cite[Lemma 4.7]{kuzQuaFib}.
 
\medskip
The main result of this section is the following. 
\begin{theorem}[][thm:psi]
    The derived category $\Dcatb(\pro^3,\CCC_0)$ admits the following semi-orthogonal decomposition
    \begin{equation}
        \Dcatb(\pro^3,\CCC_0) \simeq \ord{\Ku(\pro^3,\CCC_0),\, \CCC_1,\, \CCC_2}, \label{eq:twist_P3_SOD}
    \end{equation}
    such that the functor $\varPsi\circ \sigma^*\colon \Ku(Y) \to \Dcatb(\pro^3,\CCC_0)$ induces an equivalence between $\Ku(Y)$ and $\Ku(\pro^3,\CCC_0)$.
\end{theorem}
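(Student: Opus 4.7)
The proof naturally splits in two. For the SOD (\ref{eq:twist_P3_SOD}), the plan is to verify that $\CCC_1, \CCC_2$ form an exceptional pair in $\Dcatb(\pro^3,\CCC_0)$. Using the $\CCC_0$-linear adjunction together with \cref{lem:C_Hom}, one has $\Ext^\bullet(\CCC_k, \CCC_\ell) \cong \Hlg^\bullet(\pro^3, \CCC_{\ell-k})$, so each Ext-computation reduces to sheaf cohomology on $\pro^3$ of the sums of line bundles read off from (\ref{eq:Cliff_01}). Admissibility of $\ord{\CCC_1,\CCC_2}$ follows from the fact that both objects are locally free and $\pro^3$ is smooth projective. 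Setting $\Ku(\pro^3,\CCC_0) \coloneqq \ord{\CCC_1, \CCC_2}^\perp$ then yields (\ref{eq:twist_P3_SOD}).

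For the equivalence, following the hint I would match the two SODs (\ref{SOD:smooth_blowup}) and (\ref{SOD:quad_fib}) of $\Dcatb(\tilde Y)$ via a sequence of mutations. Applying $\varPhi$ to (\ref{eq:twist_P3_SOD}) refines (\ref{SOD:quad_fib}) to
\[\Dcatb(\tilde Y) = \ord{\varPhi\Ku(\pro^3,\CCC_0),\ \varPhi\CCC_1,\ \varPhi\CCC_2,\ \sheaf{\tilde Y}(-2h),\ \ldots,\ \sheaf{\tilde Y}(H+3h)},\]
with ten exceptional blocks together with a single ``unknown'' subcategory, matching the shape of (\ref{SOD:smooth_blowup}). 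The aim is to exhibit a chain of left and right mutations of the exceptional objects that transforms one of these SODs into the other, forcing $\sigma^*\Ku(Y) = \varPhi\Ku(\pro^3,\CCC_0)$ as subcategories of $\Dcatb(\tilde Y)$. Applying the left adjoint $\varPsi$, and using $\varPsi \circ \varPhi \simeq \id$ on $\varPhi\Dcatb(\pro^3,\CCC_0)$, would then give the desired equivalence $\varPsi \circ \sigma^* \colon \Ku(Y) \xrightarrow{\sim} \Ku(\pro^3,\CCC_0)$.

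Concretely, I expect the work to consist of three steps: (i) computing $\varPhi(\CCC_1)$ and $\varPhi(\CCC_2)$ explicitly as short complexes of line bundles on $\tilde Y$ using the defining sequence (\ref{ses:E}) of $\EEE$ and its twists, to get a handle on these new exceptional objects; (ii) resolving the exceptional-divisor blocks $\iota_*\sheaf{E}(\ast H + \ast E)$ in (\ref{SOD:smooth_blowup}) in terms of line bundles on $\tilde Y$, using the ideal sequence $0 \to \sheaf{\tilde Y}(-E) \to \sheaf{\tilde Y} \to \iota_*\sheaf{E} \to 0$ together with the relation $E = H - h$; and (iii) carrying out the mutation sequence block by block to bring the two SODs into a common form. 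The main obstacle will be the combinatorial bookkeeping: there are ten exceptional objects per side and each mutation can introduce non-obvious shifts and twists, while the interplay between the $H$-twists of the Orlov SOD and the $h$-twists of the quadric-fibration SOD, mediated by $E = H - h$, must be tracked carefully for the two decompositions to align.
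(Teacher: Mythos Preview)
Your mutation-matching strategy is the right one and is essentially what the paper does, but the paper runs it in the opposite direction and this difference matters. Rather than computing $\varPhi(\CCC_1),\varPhi(\CCC_2)$ and matching upstairs in $\Dcatb(\tilde Y)$, the paper mutates (\ref{SOD:smooth_blowup}) until its last eight exceptional objects coincide with the eight line bundles in (\ref{SOD:quad_fib}); the remaining three pieces---a \emph{mutated} copy of $\sigma^*\Ku(Y)$ together with two exceptionals built from $\iota_*\sheaf{E}$---then form an SOD of $\varPhi\Dcatb(\pro^3,\CCC_0)$ (\cref{prop:SOD_big_mut}). The paper then applies $\varPsi$ to this three-term SOD and computes the images directly (\cref{lem:LB1,lem:LB2}), obtaining $\CCC_1$ and $\lmut{\CCC_1}\CCC_2$. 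What makes this tractable is \cref{lem:psi_kill}: $\varPsi(\sheaf{\tilde Y}(aH+bh))=0$ for $a\in\{-1,0\}$, so mutations through such line bundles are absorbed by $\varPsi$.

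There is a genuine gap in your plan as stated. Your expected endpoint ``$\sigma^*\Ku(Y)=\varPhi\Ku(\pro^3,\CCC_0)$ as subcategories of $\Dcatb(\tilde Y)$'' is not what falls out of the mutation sequence: what one actually gets is that $\lmut{\sheaf{\tilde Y}(-2h)}\lmut{\sheaf{\tilde Y}(-h)}\rmut{\sheaf{\tilde Y}(-H+h)}\sigma^*\Ku(Y)$ lands inside $\varPhi\Dcatb(\pro^3,\CCC_0)$, and the theorem asserts that $\varPsi\circ\sigma^*$ (not a mutated version) gives the equivalence. Bridging this requires precisely \cref{lem:psi_kill}, which you do not mention. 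Even if you succeed in computing $\varPhi(\CCC_j)$ via the resolution of $\EEE'$ and matching all ten exceptionals, you will still need this vanishing to identify $\varPsi$ of the mutated Kuznetsov piece with $\varPsi\sigma^*\Ku(Y)$. So your step~(i) is extra work that the paper avoids, and the lemma you are missing is the one that makes either approach close.
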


The proof of \cref{thm:psi} will take up the rest of this section. First we study the orthogonality and mutation relations for line bundles and torsion sheaves on $\Dcatb(\tY)$.

\begin{lemma}[][lem:mut]
    The following orthogonality and mutation relations hold for line bundles and torsion sheaves on $\tilde Y$.
    \begin{enumerate}[dense]
        \item $\iota_*\sheaf{E}$ is completely orthogonal to $\ord{\sheaf{\tilde Y}(H),\, \sheaf{\tilde Y}(2H)}$;
        \item $\lmut{\sheaf{\tilde Y}}\iota_*\sheaf{E} \cong \sheaf{\tilde Y}(-H+h)[1]$;
        \item $\rmut{\sheaf{\tilde Y}(-H+h)}\iota_*\sheaf{E} \cong \sheaf{\tilde Y}$;
        \item $\sheaf{\tilde Y}$ is completely orthogonal to $\ord{\sheaf{\tilde Y}(-H+2h),\, \sheaf{\tilde Y}(-H+3h)}$.
    \end{enumerate}
\end{lemma}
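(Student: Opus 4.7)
The plan is to reduce each of the four claims to a short cohomological computation. The key inputs are the $\pro^2$-bundle structure $p\colon E=\pro(N_{\varPi_0/Y})\to\varPi_0\cong\pro^2$ of the exceptional divisor (so $Rp_*\sheaf{E}=\sheaf{\pro^2}$ and $H|_E=p^*\sheaf{\pro^2}(1)$), used for (i)--(iii), and the $\pro^3$-bundle structure $\tau\colon\pro(\FFF)\to\pro^3$, used for (iv), together with the standard facts $R\tau_*\sheaf{\pro(\FFF)}(jH')=0$ for $j\in\{-3,-2,-1\}$ and $\tau_*\sheaf{\pro(\FFF)}(H')=\FFF^\dual=\sheaf{\pro^3}^{\oplus 3}\oplus\sheaf{\pro^3}(1)$.

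For (i), the SOD \eqref{SOD:smooth_blowup} directly yields $\Ext^\bullet(\iota_*\sheaf{E},\sheaf{\tilde Y}(mH))=0$ for $m=1,2$. In the opposite direction, the projection formula gives $\Ext^\bullet(\sheaf{\tilde Y}(mH),\iota_*\sheaf{E})=H^\bullet(\pro^2,\sheaf{\pro^2}(-m))$, which vanishes for $m=1,2$.

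Claims (ii) and (iii) both flow from the standard short exact sequence
\[0\to\sheaf{\tilde Y}(-E)\to\sheaf{\tilde Y}\to\iota_*\sheaf{E}\to 0,\]
combined with $E=H-h$. On one hand, $\Ext^\bullet(\sheaf{\tilde Y},\iota_*\sheaf{E})=H^\bullet(\pro^2,\sheaf{\pro^2})$ is $\C$ concentrated in degree $0$, so the evaluation map of the left-mutation triangle is (up to scalar) the surjection in the SES, and its cone gives $\lmut{\sheaf{\tilde Y}}\iota_*\sheaf{E}\cong\sheaf{\tilde Y}(-E)[1]=\sheaf{\tilde Y}(-H+h)[1]$. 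On the other hand, Grothendieck duality yields $\iota^!\sheaf{\tilde Y}(-E)=\sheaf{E}(-E|_E)\otimes\sheaf{E}(E|_E)[-1]=\sheaf{E}[-1]$, whence $\Ext^\bullet(\iota_*\sheaf{E},\sheaf{\tilde Y}(-E))$ is $\C$ concentrated in degree $1$. The coevaluation map of the right-mutation triangle therefore coincides, up to scalar, with the connecting morphism of the same SES, and rotating the SES identifies $\rmut{\sheaf{\tilde Y}(-E)}\iota_*\sheaf{E}\cong\sheaf{\tilde Y}$.

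For (iv), I reduce to $H^\bullet(\tilde Y,\sheaf{\tilde Y}(\pm(H-mh)))=0$ for $m\in\{2,3\}$. Using the embedding $\tilde Y\hookrightarrow\pro(\FFF)$ of class $2H'+h'$, I will twist the short exact sequence
\[0\to\sheaf{\pro(\FFF)}(-2H'-h')\to\sheaf{\pro(\FFF)}\to\sheaf{\tilde Y}\to 0\]
by $\sheaf{\pro(\FFF)}(\pm(H'-mh'))$ and push forward via $\tau$. In each of the four cases, one of the two terms vanishes because its $H'$-degree lies in $\{-3,-2,-1\}$, while the other becomes a direct sum of $\sheaf{\pro^3}(d)$ with $d\in\{-3,-2,-1\}$, whose cohomology on $\pro^3$ is zero. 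The main obstacle throughout is purely bookkeeping --- converting between the classes $H,h,E$ on $\tilde Y$ and $H',h'$ on $\pro(\FFF)$ and tracking the signs of twists --- but once the correct twist of the SES is in place, every term lands in the cohomology-free range of the projective bundle or of $\pro^3$, and the vanishing is automatic.
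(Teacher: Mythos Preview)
Your proof is correct and follows essentially the same strategy as the paper: all four parts hinge on the short exact sequence $0\to\sheaf{\tilde Y}(-E)\to\sheaf{\tilde Y}\to\iota_*\sheaf{E}\to 0$ together with pushforward computations to $\pro^2$ or $\pro^3$. One notational slip: the $\pro^3$-bundle map $\pro(\FFF)\to\pro^3$ is called $q$ in the paper, not $\tau$ (the latter is the blow-down $\pro(\FFF)\to\pro^6$); your statements are correct once $\tau$ is replaced by $q$.

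A couple of minor differences worth noting. For (i), your projection-formula argument $\Ext^\bullet(\sheaf{\tilde Y}(mH),\iota_*\sheaf{E})\cong H^\bullet(\pro^2,\sheaf{\pro^2}(-m))$ is cleaner than the paper's route through the ideal sheaf $\ideal_{\varPi_0}(-a)$ on $Y$. For (iv), the paper handles left orthogonality by appealing directly to the quadric-fibration SOD \eqref{SOD:quad_fib} (since $\sheaf{\tilde Y}\in\pi^*\Dcatb(\pro^3)$ and $\sheaf{\tilde Y}(-H+mh)\in\pi^*\Dcatb(\pro^3)\otimes\sheaf{\tilde Y}(-H)$), whereas you compute both directions by hand; in the $-(H-mh)$ twist both terms of the pushed-forward sequence vanish (their $H'$-degrees are $-3$ and $-1$), so your phrase ``one of the two terms vanishes while the other becomes\ldots'' slightly understates what happens, but the conclusion is unaffected.
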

\begin{proof}
    Since $E = H-h$ in $\Pic \tY$, we have the following short exact sequence on $\tilde Y$:
    \begin{equation}
        \begin{tikzcd}[ampersand replacement=\&,cramped]
            0 \& {\sheaf{\tilde Y}(-H+h)} \& \sheaf{\tilde Y} \& {\iota_*\sheaf{E}} \& 0.
            \arrow[from=1-1, to=1-2]
            \arrow[from=1-2, to=1-3]
            \arrow[from=1-3, to=1-4]
            \arrow[from=1-4, to=1-5]
            \end{tikzcd} \label{ses:lem_mut}
    \end{equation}
    \begin{enumerate}
        \item It is clear from the SOD \eqref{SOD:smooth_blowup} that $\iota_*\sheaf{E}$ is left orthogonal to $\ord{\sheaf{\tilde Y}(H),\, \sheaf{\tilde Y}(2H)}$. Conversely, we apply $\Hom(\sheaf{\tilde Y}(aH),-)$ to \eqref{ses:lem_mut}, where $a = 1,2$, and consider the long exact sequence. The middle term is given by
        \[\Ext^\bullet(\sheaf{\tilde Y}(aH),\sheaf{\tilde Y}) = \Hlg^\bullet(\tilde Y, \sheaf{\tilde Y}(-aH)) = \Hlg^\bullet(Y,\sheaf{Y}(-a)) = 0, \qquad a \in \{1,2\}.\]
        whereas the left term is given by
        \[\Ext^\bullet(\sheaf{\tilde Y}(aH),\sheaf{\tilde Y}(-H+h)) = \Hlg^\bullet(\tilde Y, \sheaf{\tilde Y}(-(a+1)H+h)) = 
        \Hlg^\bullet(\tilde Y, \sheaf{\tilde Y}(-aH-E))
        =\Hlg^\bullet(Y,\ideal_{\varPi_0}(-a)).\]
        Note that $\ideal_{\varPi_0}(-a)$ fits into the short exact sequence 
        \begin{equation}
            \begin{tikzcd}[ampersand replacement=\&,cramped]
                0 \& {\ideal_{\varPi_0}(-a)} \& \sheaf{Y}(-a) \& {\sheaf{\varPi_0}(-a)} \& 0.
                \arrow[from=1-1, to=1-2]
                \arrow[from=1-2, to=1-3]
                \arrow[from=1-3, to=1-4]
                \arrow[from=1-4, to=1-5]
                \end{tikzcd} 
        \end{equation} 
        where $\sheaf{Y}(-a)$ and $\sheaf{\varPi_0}(-a)$ have no cohomology for $a=1,2$. Then $\Hlg^\bullet(Y,\ideal_{\varPi_0}(-a)) = 0$. We deduce from the long exact sequence of \eqref{ses:lem_mut} that $\Ext^\bullet(\sheaf{\tilde Y}(aH),\iota_*\sheaf{E}) = 0$. Hence $\iota_*\sheaf{E}$ is also right orthogonal to $\ord{\sheaf{\tilde Y}(H),\, \sheaf{\tilde Y}(2H)}$.

        \item In view of \cref{lem:mut_trig} and the sequence \eqref{ses:lem_mut}, it suffices to check that $\RHom(\sheaf{\tY},\sheaf{\tY}(-H+h)) = 0$. But this has already been encoded in the SOD \eqref{SOD:quad_fib}. 
        
        \item This mutation also follows from \cref{lem:mut_trig} and \eqref{ses:lem_mut}.
        
        \item $\sheaf{\tilde Y}$ is right orthogonal to $\ord{\sheaf{\tilde Y}(-H+2h),\sheaf{\tilde Y}(-H+3h)}$, since 
        \begin{equation}
            \begin{aligned}
                \Ext^\bullet(\sheaf{\tilde Y}(-H+ah),\sheaf{\tilde Y}) 
                &= \Hlg^\bullet(\tilde Y, \sheaf{\tilde Y}(H-ah)) = \Hlg^\bullet(\pro^3, \FFF^\dual \otimes \sheaf{\pro^3}(-a)) \\ &= \Hlg^\bullet(\pro^3,\sheaf{\pro^3}(-a+1) \oplus \sheaf{\pro^3}(-a)^{\oplus 3}) = 0,
            \end{aligned}
        \end{equation}
        for $a \in \{2,3\}$; and $\sheaf{\tilde Y}$ is also left orthogonal to $\ord{\sheaf{\tilde Y}(-H+2h),\sheaf{\tilde Y}(-H+3h)}$, since the subcategory $\pi^*\Dcatb(\pro^3)$ is left orthogonal to $\pi^*\Dcatb(\pro^3) \otimes \sheaf{\tilde Y}(-H)$ by the Orlov blow-up formula. \qedhere
    \end{enumerate}    
\end{proof}

\begin{proposition}[][prop:SOD_big_mut]
    The subcategory $\varPhi\Dcatb(\pro^3,\CCC_0)$ of $\Dcatb(\tilde Y)$ admits the semi-orthogonal decomposition:
    \begin{equation}
        \varPhi\Dcatb(\pro^3,\CCC_0)
        =
        \langle
            \lmut{\cD}\sigma^*\Ku(Y),\, 
            \lmut{\cD'}\sheaf{\tilde Y}(3H),\, 
            \lmut{\cD'}\iota_*\sheaf{E}(H+h)
        \rangle,
    \end{equation}
    where
    \begin{equation}
        \begin{aligned}
            \cD &\coloneqq \langle \sheaf{\tilde Y}(-2h),\, \sheaf{\tilde Y}(-h),\, \sheaf{\tilde Y}(H-h) \rangle; \\ 
            \cD' &\coloneqq \langle \pi^*\Dcatb(\pro^3),\, \sheaf{\tilde Y}(H-h),\, \sheaf{\tilde Y}(H),\, \sheaf{\tilde Y}(H+h),\, \pi^*\Dcatb(\pro^3) \otimes \sheaf{\tY}(2H) \rangle.
        \end{aligned}
    \end{equation}
\end{proposition}

\begin{remark}
    The mutation functors are somewhat cumbersome at this stage, but we will show very soon that most of the terms are killed by the left adjoint functor $\varPsi$ of $\varPhi$; the only relevant term in later computations is the left mutation by $\pi^*\Dcatb(\pro^3) \otimes \sheaf{\tY}(2H)$.
\end{remark}

\begin{proof}
    Note that $\Pic \tilde Y = \Z H \oplus \Z h$. For notational convenience, we denote the sheaves $\sheaf{\tilde Y}(aH+bh)$ and $\iota_*\sheaf{E}(aH+bh)$ by $(a,b)$ and $[a,b]$ respectively, where $a,b \in \Z$. 

    Starting from the SOD \eqref{SOD:smooth_blowup} of $\Dcatb(\tilde Y)$, we perform the following sequence of mutations. In each step, the objects marked with a straight underline are mutated through the objects marked with a wavy underline. See also \cref{fig:vis_mut_cubic5} for the visualization of the mutation process.
    \begin{enumerate}[label=\textbf{Step \arabic*.}, leftmargin=*, nosep]
                \item We expand the SOD \eqref{SOD:smooth_blowup} as follows: for the first term, we pull-back the SOD \eqref{SOD:cubic_5} of $\Dcatb(Y)$ via $\sigma^*$; for the second term, we pull-back the full exceptional collection $\ord{\sheaf{}(1),\, \sheaf{}(2),\, \sheaf{}(3)}$ of $\Dcatb(\varPi_0)$; and for the third term, we pull-back the full exceptional collection $\ord{\sheaf{}(2),\, \sheaf{}(3),\, \sheaf{}(4)}$ of $\Dcatb(\varPi_0)$. Then we have the following SOD in the $(H,h)$-coordinates:
        \begin{equation}
            \begin{aligned}
                \Dcatb(\tilde Y)
                =
                \langle
                &\sigma^*\Ku(Y),\, (0,0),\, (1,0),\, (2,0),\, (3,0),\, [1,0],\, [2,0],\, [3,0],\, [1,1],\, [2,1],\, [3,1]
                \rangle .
            \end{aligned}
        \end{equation}
                        \item Left mutate $[1,0]$ through $\langle (2,0),\, (3,0) \rangle$, and then left mutate $[2,0]$ through $(3,0)$, using the orthogonality in \cref{lem:mut}.(1) in both cases:
        \begin{equation}
            \begin{aligned}
                \Dcatb(\tilde Y)
                =
                \langle
                &\sigma^*\Ku(Y),\, (0,0),\, (1,0),\, \underline{[1,0]},\, \uwave{(2,0),\, (3,0)},\, [2,0],\, [3,0],\, [1,1],\, [2,1],\, [3,1]
                \rangle . \\ 
                \langle
                &\sigma^*\Ku(Y),\, (0,0),\, (1,0),\, [1,0],\, (2,0),\, \underline{[2,0]},\, \uwave{(3,0)},\, [3,0],\, [1,1],\, [2,1],\, [3,1]
                \rangle .
            \end{aligned}
        \end{equation}
                        \item Left mutate $[a,0]$ through $(a,0)$ for $a=1,2,3$, using \cref{lem:mut}.(2). In the displayed SOD, we suppress the resulting shifts, since shifting an object does not change the admissible subcategory it generates:
        \begin{equation}
            \begin{aligned}
                \Dcatb(\tilde Y)
                =
                \langle
                &\sigma^*\Ku(Y),\, (0,0),\, \underline{(0,1)},\, \uwave{(1,0)},\, \underline{(1,1)},\, \uwave{(2,0)},\, \underline{(2,1)},\, \uwave{(3,0)},\, [1,1],\, [2,1],\, [3,1]
                \rangle .
            \end{aligned}
        \end{equation}
                        \item Move $(0,0)$ across $\sigma^*\Ku(Y)$, and then right mutate $(0,0)$ through its left orthogonal using the Serre functor $\mathsf S=(-\otimes (-2,-2))[5]$:
        \begin{equation}
            \begin{aligned}
                \Dcatb(\tilde Y)
                =
                \langle
                &\uwave{(0,0)},\, \underline{\rmut{(0,0)}\sigma^*\Ku(Y)},\, (0,1),\, (1,0),\, (1,1),\, (2,0),\, (2,1),\, (3,0),\, [1,1],\, [2,1],\, [3,1]
                \rangle \\ 
                =
                \langle
                &\uwave{\rmut{(0,0)}\sigma^*\Ku(Y),\, (0,1),\, (1,0),\, (1,1),\, (2,0),\, (2,1),\, (3,0),\, [1,1],\, [2,1],\, [3,1]},\, \underline{(2,2)}
                \rangle .
            \end{aligned}
        \end{equation}
                        \item Right mutate $[3,1]$ through $(2,2)$, using \cref{lem:mut}.(3):
        \begin{equation}
            \begin{aligned}
                \Dcatb(\tilde Y)
                =
                \langle
                &\rmut{(0,0)}\sigma^*\Ku(Y),\, (0,1),\, (1,0),\, (1,1),\, (2,0),\, (2,1),\, (3,0),\, [1,1],\, [2,1],\, \uwave{(2,2)},\, \underline{(3,1)}
                \rangle .
            \end{aligned}
        \end{equation}
                        \item Left mutate $(3,0)$ and $[1,1]$ through $\cB'=\langle (0,1),\, (1,0),\, (1,1),\, (2,0),\, (2,1) \rangle$. 
        This gives
        \begin{equation}
            \begin{aligned}
                \Dcatb(\tilde Y)
                =
                \langle
                &\rmut{(0,0)}\sigma^*\Ku(Y),\, \underline{\lmut{\cB'}(3,0)},\, \underline{\lmut{\cB'}[1,1]}, \\
                &\underbrace{(0,1),\, (1,0),\, (1,1),\, (2,0),\, (2,1)}_{=\cB'},\, [2,1],\, (2,2),\, (3,1)
                \rangle .
            \end{aligned}
        \end{equation}
                        \item Left mutate $[2,1]$ through $(2,1)$, using \cref{lem:mut}.(2):
        \begin{equation}
            \begin{aligned}
                \Dcatb(\tilde Y)
                =
                \langle
                &\rmut{(0,0)}\sigma^*\Ku(Y),\, \lmut{\cB'}(3,0),\, \lmut{\cB'}[1,1],\, (0,1),\, (1,0), \\
                &(1,1),\, (2,0),\, \underline{(1,2)},\, \uwave{(2,1)},\, (2,2),\, (3,1)
                \rangle .
            \end{aligned}
        \end{equation}
                        \item Transpose $(1,2)$ with $(2,0)$, using \cref{lem:mut}.(4):
        \begin{equation}
            \begin{aligned}
                \Dcatb(\tilde Y)
                =
                \langle
                &\rmut{(0,0)}\sigma^*\Ku(Y),\, \lmut{\cB'}(3,0),\, \lmut{\cB'}[1,1],\, (0,1),\, (1,0), \\
                &(1,1),\, \underline{(1,2)},\, \uwave{(2,0)},\, (2,1),\, (2,2),\, (3,1)
                \rangle .
            \end{aligned}
        \end{equation}
                        \item Left mutate $\langle (2,0),\, (2,1),\, (2,2),\, (3,1) \rangle$ through its right orthogonal, using the Serre functor.
        \begin{equation}
            \begin{aligned}
                \Dcatb(\tilde Y)
                =
                \langle
                &\underline{(0,-2),\, (0,-1),\, (0,0),\, (1,-1)},\, \uwave{\rmut{(0,0)}\sigma^*\Ku(Y),\, \lmut{\cB'}(3,0),\, \lmut{\cB'}[1,1]}, \\[-1ex]
                &\uwave{(0,1),\, (1,0),\, (1,1),\, (1,2)}
                \rangle .
            \end{aligned}
        \end{equation}
                        \item Left mutate the three objects after $\cB=\langle (0,-2),\, (0,-1),\, (0,0),\, (1,-1) \rangle$ through $\cB$:
        \begin{equation}
            \begin{aligned}
                \Dcatb(\tilde Y)
                =
                \langle
                &\underline{\lmut{\cB}\rmut{(0,0)}\sigma^*\Ku(Y),\, \lmut{\cB}\lmut{\cB'}(3,0),\, \lmut{\cB}\lmut{\cB'}[1,1]}, \\
                &\underbrace{(0,-2),\, (0,-1),\, (0,0),\, (1,-1)}_{=\cB},\, (0,1),\, (1,0),\, (1,1),\, (1,2)
                \rangle .
            \end{aligned}
        \end{equation}
                        \item Transpose $(0,1)$ with $(1,-1)$, using \cref{lem:mut}.(4):
        \begin{equation}
            \begin{aligned}
                \Dcatb(\tilde Y)
                =
                \langle
                &\lmut{\cB}\rmut{(0,0)}\sigma^*\Ku(Y),\, 
                 \lmut{\cB}\lmut{\cB'}(3,0),\, 
                 \lmut{\cB}\lmut{\cB'}[1,1], \\
                &(0,-2),\, (0,-1),\, (0,0),\, \underline{(0,1)},\, \uwave{(1,-1)},\, (1,0),\, (1,1),\, (1,2)
                \rangle .
            \end{aligned}
        \end{equation}
                        \item Using the pull-backs of full exceptional sequences $\pi^*\Dcatb(\pro^3) = \ord{(0,-2),\, (0,-1),\, (0,0),\, (0,1)}$ and $\pi^*\Dcatb(\pro^3) \otimes \sheaf{\tY}(H)= \ord{(1,-1),\, (1,0),\, (1,1),\, (1,2)}$, we have:
        \begin{equation}
            \Dcatb(\tilde Y)
                =
                \langle
                \lmut{\cB}\rmut{(0,0)}\sigma^*\Ku(Y),\, 
                 \lmut{\cB}\lmut{\cB'}(3,0),\, 
                 \lmut{\cB}\lmut{\cB'}[1,1],\, \pi^*\Dcatb(\pro^3),\, \pi^*\Dcatb(\pro^3) \otimes \sheaf{\tY}(H)
                \rangle .
        \end{equation}
    \end{enumerate}
    \begin{figure}[h]
        \centering
        \begin{tikzpicture}[
  x=.8cm,y=.8cm,
  >=Stealth,
  every node/.style={font=\scriptsize},    linept/.style={circle, draw=green!55!black, fill=white, line width=1pt, inner sep=1.5pt},    ghostpt/.style={circle, draw=gray!60, fill=white, line width=.55pt, inner sep=1.25pt},    torspt/.style={text=red!75!black, font=\scriptsize\bfseries, inner sep=0pt},    bothpt/.style={text=blue!45!red!85!black, font=\scriptsize\bfseries, inner sep=0pt},    axis/.style={black, line width=.55pt, -{Stealth[length=1.7mm,width=1.2mm]}},    gridline/.style={gray!28, line width=.23pt},    rmut/.style={orange!85!black, dashed, line width=.78pt,
    shorten <=2.5pt, shorten >=2.5pt,
    -{Stealth[length=1.75mm,width=1.25mm]}},        rmutspecial/.style={orange!85!black, dashed, line width=.78pt,
    shorten <=2.4pt, shorten >=2.4pt},      lmut/.style={orange!85!black, line width=.78pt,
    shorten <=2.5pt, shorten >=2.5pt,
    -{Stealth[length=1.75mm,width=1.25mm]}},      orth/.style={green!55!black, densely dashed, line width=.62pt, opacity=.76},    serre/.style={teal!65!black, densely dashed, line width=.75pt,
    shorten <=2.5pt, shorten >=2.5pt,
    -{Stealth[length=1.75mm,width=1.25mm]}},      orderguide/.style={green!55!black, line width=.58pt, opacity=.46,
    shorten <=2.0pt, shorten >=2.0pt,
    -{Stealth[length=1.8mm,width=1.15mm]}},      paneltitle/.style={font=\small\bfseries, anchor=south, fill=white, inner sep=1pt},
  tick/.style={font=\tiny, fill=white, inner sep=.45pt},    orthlabel/.style={font=\scriptsize, text=green!45!black, fill=white, inner sep=.6pt}
]

\def\Axes#1#2#3#4{  \draw[step=1, gridline] (#1,#3) grid (#2,#4);
  \draw[axis] (#1,0) -- ({#2+.35},0) node[right] {$H$};
  \draw[axis] (0,#3) -- (0,{#4+.35}) node[left] {$h$};
}
\def\Xticks#1{  \foreach \x in {#1}{
    \draw (\x,.06)--(\x,-.06) node[tick, below=2pt] {$\x$};
  }
}
\def\Yticks#1{  \foreach \y in {#1}{
    \draw (.06,\y)--(-.06,\y) node[tick, left=2pt] {$\y$};
  }
}
\def\Odot#1#2{\node[linept] at (#1,#2) {};}
\def\Gdot#1#2{\node[ghostpt] at (#1,#2) {};}
\def\Edot#1#2{\node[torspt] at (#1,#2) {$\times$};}
\def\Both#1#2{\node[linept] at (#1,#2) {};  \node[torspt] at (#1,#2) {$\times$};}
\def\PlotO#1{\foreach \x/\y in {#1}{\Odot{\x}{\y}}}
\def\PlotG#1{\foreach \x/\y in {#1}{\Gdot{\x}{\y}}}
\def\PlotE#1{\foreach \x/\y in {#1}{\Edot{\x}{\y}}}
\def\PlotB#1{\foreach \x/\y in {#1}{\Both{\x}{\y}}}

\begin{scope}[xshift=0cm,yshift=0cm]
  \Axes{-.35}{3.35}{-.55}{2.6}
  \Xticks{0,1,2,3}
  \Yticks{1,2}
  \node[paneltitle] at (1.5,2.6) {Step $1$};

    \PlotO{0/0,1/0,2/0,3/0}
  \PlotE{1/0,2/0,3/0,1/1,2/1,3/1}
\end{scope}
\begin{scope}[xshift=5cm,yshift=0cm]
  \Axes{-.35}{3.35}{-.55}{2.6}
  \Xticks{0,1,2,3}
  \Yticks{1,2}
  \node[paneltitle] at (1.5,2.6) {Step $2\to3$};

    \draw[lmut] (1,0) -- (0,1);
  \draw[lmut] (2,0) -- (1,1);
  \draw[lmut] (3,0) -- (2,1);

    \PlotO{0/0,0/1,1/0,1/1,2/0,2/1,3/0}
  \PlotE{1/1,2/1,3/1}
\end{scope}
\begin{scope}[xshift=10cm,yshift=0cm]
  \Axes{-.35}{3.35}{-.55}{2.6}
  \Xticks{0,1,2,3}
  \Yticks{1,2}
  \node[paneltitle] at (1.5,2.6) {Step $4$};

    \draw[rmut] (0,0) .. controls (0.5,1.8) and (1.5,1.8) .. (2,2);

    \PlotO{0/1,1/0,1/1,2/0,2/1,2/2,3/0}
  \PlotE{1/1,2/1,3/1}
  \PlotG{0/0}
\end{scope}
\begin{scope}[xshift=0cm,yshift=-3.5cm]
  \Axes{-.35}{3.35}{-.55}{2.6}
  \Xticks{0,1,2,3}
  \Yticks{1,2}
  \node[paneltitle] at (1.5,2.6) {Step $5$};

    \draw[rmutspecial] (3,1) -- (2,2);

    \PlotO{0/1,1/0,1/1,2/0,2/1,2/2,3/0,3/1}
  \PlotE{1/1,2/1}
\end{scope}
\begin{scope}[xshift=5cm,yshift=-3.5cm]
  \Axes{-.35}{3.35}{-.55}{2.6}
  \Xticks{0,1,2,3}
  \Yticks{1,2}
  \node[paneltitle] at (1.5,2.6) {Step $6$};

    \PlotO{0/1,1/0,1/1,2/0,2/1,2/2,3/1}
  \PlotE{2/1}
\end{scope}
\begin{scope}[xshift=10cm,yshift=-3.5cm]
  \Axes{-.35}{3.35}{-.55}{2.6}
  \Xticks{0,1,2,3}
  \Yticks{1,2}
  \node[paneltitle] at (1.5,2.6) {Step $7$};

      \draw[lmut] (2,1) -- (1,2);

    \PlotO{0/1,1/0,1/1,1/2,2/0,2/1,2/2,3/1}
\end{scope}
\begin{scope}[xshift=0cm,yshift=-7cm]
  \Axes{-.35}{3.35}{-.55}{2.6}
  \Xticks{0,1,2,3}
  \Yticks{1,2}
  \node[paneltitle] at (1.5,2.6) {Step $8$};

  \draw[orderguide] (0,1) -- (1,0);
  \draw[orderguide] (1,0) -- (1,2);
  \draw[orderguide] (2,0) -- (2,2);
  \draw[orderguide] (2,2) -- (3,1);
  \draw[orth] (1,2) -- (2,0);

    \PlotO{0/1,1/0,1/1,1/2,2/0,2/1,2/2,3/1}
\end{scope}
\begin{scope}[xshift=5cm,yshift=-7cm]
  \Axes{-.35}{3.35}{-2.55}{2.6}
  \Xticks{0,1,2,3}
  \Yticks{-2,-1,1,2}
  \node[paneltitle] at (1.5,2.6) {Step $9$};

    \draw[lmut] (2,2) .. controls (1.5,0.2) and (0.5,0.2) .. (0,0);
  \draw[lmut] (2,1) .. controls (1.5,-0.8) and (0.5,-0.8) .. (0,-1);
  \draw[lmut] (2,0) .. controls (1.5,-1.8) and (0.5,-1.8) .. (0,-2);
  \draw[lmut] (3,1) .. controls (2.5,-0.8) and (1.5,-0.8) .. (1,-1);

    \PlotO{0/-2,0/-1,0/0,0/1,1/-1,1/0,1/1,1/2}
  \PlotG{2/0,2/1,2/2,3/1}
\end{scope}
\begin{scope}[xshift=10cm,yshift=-7cm]
  \Axes{-.35}{3.35}{-2.55}{2.6}
  \Xticks{0,1,2,3}
  \Yticks{-2,-1,1,2}
  \node[paneltitle] at (1.5,2.6) {Step $10\to12$};

  \draw[orderguide] (0,-2) -- (0,1);
  \draw[orderguide] (1,-1) -- (1,2);
  \draw[orth] (1,-1) -- (0,1);

    \PlotO{0/-2,0/-1,0/0,0/1,1/-1,1/0,1/1,1/2}
\end{scope}
\end{tikzpicture}
        \caption[Visualizing the proof of \cref{prop:SOD_big_mut}.]{Visualizing the proof of \cref{prop:SOD_big_mut}.  The coordinate $(a,b)$ represents the exceptional objects
        $\sheaf{\tY}(aH+bh)$ and $\iota_*\sheaf{E}(aH+bh)$ appearing in the SOD. Green circles denote line bundles,
        red crosses denote torsion sheaves, and `$\otimes$' marks coordinates where both occur.  
        Orange arrows and dashed lines denote 
        mutation operations.
        Green dashed segments indicate complete
        orthogonality, while the green guides indicate the order of the SODs.}
        \label{fig:vis_mut_cubic5}
    \end{figure}
    Comparing the last line with the SOD \eqref{SOD:quad_fib}, we find that $\varPhi\Dcatb(\pro^3,\CCC_0)$ is equivalent to the subcategory of $\Dcatb(\tilde Y)$ defined by
    \begin{equation}
        \ord{\pi^*\Dcatb(\pro^3),\, \pi^*\Dcatb(\pro^3) \otimes \sheaf{\tY}(H)}^{\perp} = 
            \ord{\lmut{\cB}\rmut{(0,0)}\sigma^*\Ku(Y),\, 
            \lmut{\cB}\lmut{\cB'}(3,0),\, 
            \lmut{\cB}\lmut{\cB'}[1,1]}.
    \end{equation}
    Finally, we simplify the mutation functors. For the first term, since $(0,0) \in \ord{(1,-1)}^{\perp}$, by \cref{lem:mutations_basic}.(3), we have 
    \begin{align}
        \lmut{\cB}\rmut{(0,0)}\sigma^*\Ku(Y)
        &= \lmut{\ord{(0,-2),\, (0,-1),\, (0,0),\, (1,-1)}}\rmut{(0,0)}\sigma^*\Ku(Y) \\ 
        &= \lmut{\ord{(0,-2),\, (0,-1),\, (0,0)}}\rmut{(0,0)}\lmut{(1,-1)}\sigma^*\Ku(Y).
    \end{align}
    Since $\lmut{(1,-1)}\sigma^*\Ku(Y)$ lies in $\langle(0,0)\rangle^\perp$, by \cref{lem:mutations_basic} we have
    \begin{equation}
        \lmut{\cB}\rmut{(0,0)}\sigma^*\Ku(Y) = \lmut{\ord{(0,-2),\, (0,-1),\, (1,-1)}}\sigma^*\Ku(Y)
        = \lmut{\cD}\sigma^*\Ku(Y).
    \end{equation}
    For the second and the third term, we note that $\ord{(3,0),\, [1,1]} \subset \ord{(2,2),\, (2,3)}^\perp$. Therefore the mutation functor $\lmut{\ord{(2,2),\, (2,3)}}$ acts as the identity on $\ord{(3,0),\, [1,1]}$. Hence we have 
    \begin{align}
        &\lmut{\cB}\lmut{\cB'}\ord{(3,0),\, [1,1]}
        = \lmut{\cB}\lmut{\cB'}\lmut{\ord{(2,2),\, (2,3)}}\ord{(3,0),\, [1,1]} \\
        &= \lmut{\ord{(0,-2),\, (0,-1),\, (0,0),\, (0,1),\, (1,-1),\, (1,0),\, (1,1),\, (2,0),\, (2,1),\, (2,2),\, (2,3)}}\ord{(3,0),\, [1,1]} \\
        &= \lmut{\pi^*\Dcatb(\pro^3)}\lmut{\ord{(1,-1),\, (1,0),\, (1,1)}}\lmut{\pi^*\Dcatb(\pro^3) \otimes \sheaf{\tY}(2H)}\ord{(3,0),\, [1,1]} \\ 
        &= \lmut{\cD'}\ord{(3,0),\, [1,1]}. 
    \end{align}
    We conclude that
    \[ \varPhi\Dcatb(\pro^3,\CCC_0)
        =
        \langle
            \lmut{\cD}\sigma^*\Ku(Y),\, 
            \lmut{\cD'}(3,0),\, 
            \lmut{\cD'}[1,1]
        \rangle. \qedhere \]
\end{proof}

The proof of \cref{thm:psi} is reduced to computing the action of $\varPsi$ on the SOD factors of $\varPhi\Dcatb(\pro^3,\CCC_0)$ as given in the previous proposition. This computation is carried out in \cref{lem:psi_kill,lem:lmut_B,lem:LB1}.

\begin{lemma}[][lem:psi_kill]
    For $a \in \{0,1\}$ and $b \in \Z$, $\varPsi(\sheaf{\tilde Y}(aH+bh)) = 0$. In particular, we have isomorphisms of functors:\vspace{-\parskip}
    \[\varPsi \circ \lmut{\sheaf{\tilde Y}(aH+bh)} \cong  \varPsi \cong \varPsi \circ \rmut{\sheaf{\tilde Y}(aH+bh)}.\]
\end{lemma}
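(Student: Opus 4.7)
The plan is to compute $\varPsi(\sheaf{\tilde Y}(aH+bh))$ directly using the definition $\varPsi(F) = \pi_*(F \otimes \EEE \otimes \sheaf{\tilde Y}(h))[2]$ together with the short exact sequence (\ref{ses:E}) that resolves $\alpha_*\EEE$ by pullbacks of Clifford sheaves on $\pro^3$. By the projection formula for $\alpha$, it suffices to show $\Rsf q_*\bigl(\alpha_*\EEE \otimes \tau^*\sheaf{\pro^6}(aH) \otimes q^*\sheaf{\pro^3}((b+1)h)\bigr) = 0$ on $\pro^3$.

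Tensoring (\ref{ses:E}) by the line bundle $\tau^*\sheaf{\pro^6}(aH) \otimes q^*\sheaf{\pro^3}((b+1)h)$ on $\pro(\FFF)$ yields an exact sequence
\[
0 \to q^*\CCC_{-1}\bigl((a-2)H + (b+1)h\bigr) \to q^*\CCC_0\bigl((a-1)H + (b+1)h\bigr) \to \alpha_*\EEE \otimes \tau^*\sheaf{\pro^6}(aH) \otimes q^*\sheaf{\pro^3}((b+1)h) \to 0.
\]
Applying $\Rsf q_*$ and using the projection formula gives, for each outer term, an expression of the shape $\CCC_k(b+1) \otimes \Rsf q_*\sheaf{\pro(\FFF)}(cH)$, where $c \in \{a-2, a-1\}$. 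Since $\FFF$ has rank $4$, the morphism $q\colon \pro(\FFF) \to \pro^3$ is a $\pro^3$-bundle, and the standard computation of cohomology of line bundles on a projective bundle gives $\Rsf q_*\sheaf{\pro(\FFF)}(cH) = 0$ for $c \in \{-1,-2,-3\}$. For $a=0$ we have $c \in \{-1,-2\}$, and for $a=-1$ we have $c \in \{-2,-3\}$; in both cases the required vanishing holds, so $\Rsf q_*$ of the two outer terms vanishes. The long exact sequence then forces $\Rsf q_*$ of the middle term to vanish as well, and since $\pi = q\circ\alpha$, we conclude $\varPsi(\sheaf{\tilde Y}(aH+bh)) = 0$.

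For the second assertion, recall that $\lmut{E}F$ and $\rmut{E}F$ for an exceptional object $E$ sit in distinguished triangles whose third vertex is a direct sum of shifts of $E$ tensored with finite-dimensional Hom-spaces. Since $\varPsi$ is exact (it is a composition of tensoring with the locally free sheaf $\EEE \otimes \sheaf{\tilde Y}(h)$ followed by $\Rsf\pi_*$), applying $\varPsi$ to these triangles and using the just-established vanishing $\varPsi(\sheaf{\tilde Y}(aH+bh)) = 0$ immediately yields the natural isomorphisms $\varPsi \circ \lmut{\sheaf{\tilde Y}(aH+bh)} \cong \varPsi \cong \varPsi \circ \rmut{\sheaf{\tilde Y}(aH+bh)}$.

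No step is especially hard here; the only non-trivial input is keeping track of the shifts in $H$ and $h$ so that after twisting (\ref{ses:E}) the exponent of $H$ in both outer terms lands in the vanishing range $\{-1,-2,-3\}$ for a $\pro^3$-bundle. This is precisely why the lemma restricts to $a \in \{-1, 0\}$.
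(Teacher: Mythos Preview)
Your proof is correct and follows essentially the same approach as the paper: twist the resolution \eqref{ses:E} of $\alpha_*\EEE$ by the appropriate line bundle, push forward along the $\pro^3$-bundle $q$, and use the standard vanishing $\Rsf q_*\sheaf{\pro(\FFF)}(cH') = 0$ for $c \in \{-1,-2,-3\}$ to kill both outer terms; the functor isomorphisms then follow by applying $\varPsi$ to the mutation triangles. The only cosmetic point is that on $\pro(\FFF)$ the divisor classes should be written $H'$ and $h'$ rather than $H$ and $h$ (which in the paper denote their restrictions to $\tilde Y$), but this does not affect the argument.
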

\begin{proof}
    Note that $\varPsi(\sheaf{\tilde Y}(aH+bh)) = \pi_*(\cE \otimes \sheaf{\tilde Y}(aH + (b+1)h))[2]$. Twisting the sequence \eqref{ses:E} by $\sheaf{\tilde Y}(aH + (b+1)h)$ and pushing forward along $q\colon \pro(\FFF) \to \pro^3$, we obtain the distinguished triangle:
    \[\begin{tikzcd}[cramped, sep = small]
            {\CCC_{-1}(b+1) \otimes q_*\sheaf{\pro(\FFF)}((a-3)H')} & {\CCC_{0}(b+1) \otimes q_*\sheaf{\pro(\FFF)}((a-2)H')} & {\varPsi(\sheaf{\tilde Y}(aH+bh))[-2]} & {}
            \arrow[from=1-1, to=1-2]
            \arrow[from=1-2, to=1-3]
            \arrow["{+1}", from=1-3, to=1-4]
        \end{tikzcd}\]
    For $a=0,1$, since $q$ is a $\pro^3$-fibration, we have $q_*\sheaf{\pro(\FFF)}((a-2)H') = q_*\sheaf{\pro(\FFF)}((a-3)H') = 0$. Hence $\varPsi(\sheaf{\tilde Y}(aH+bh)) = 0$.

    For any $F \in \Dcatb(\tilde Y)$, the left mutation $\lmut{\sheaf{\tilde Y}(aH+bh)}$ followed by $\varPsi$ gives the distinguished triangle
    \[\begin{tikzcd}
            {\RHom(\sheaf{\tilde Y}(aH+bh),F) \otimes \varPsi(\sheaf{\tilde Y}(aH+bh))} & {\varPsi(F)} & {\varPsi \circ \lmut{\sheaf{\tilde Y}(aH+bh)}F} & {}
            \arrow[from=1-1, to=1-2]
            \arrow[from=1-2, to=1-3]
            \arrow["{+1}", from=1-3, to=1-4]
        \end{tikzcd}\]
    Hence there is a functorial isomorphism $\varPsi(F) \cong \varPsi \circ \lmut{\sheaf{\tilde Y}(aH+bh)}F$. That is, $\varPsi = \varPsi \circ \lmut{\sheaf{\tilde Y}(aH+bh)}$. Similarly we also have $\varPsi = \varPsi \circ \rmut{\sheaf{\tilde Y}(aH+bh)}$.
\end{proof}
By the lemma, we have the isomorphisms of functors:
\begin{align*}
    \varPsi\circ \lmut{\cD}  \cong \varPsi; \qquad\text{and}\qquad
    \varPsi\circ \lmut{\cD'} \cong \varPsi \circ \lmut{\beta_*\Dcatb(\pro^3)},
\end{align*}
where $\beta_*\colon \Dcatb(\pro^3) \to \Dcatb(\tY)$ is the fully faithful functor given by $F \longmapsto \pi^*F \otimes \sheaf{\tY}(2H)$.

The above observation simplifies the SOD of $\Dcatb(\pro^3,\CCC_0)$ as follows:
\begin{align}
    \Dcatb(\pro^3,\CCC_0) &\simeq \varPsi\ab(\ord{\pi^*\Dcatb(\pro^3),\, \pi^*\Dcatb(\pro^3) \otimes \sheaf{\tY}(H)}^{\perp}) \\
    &= \ord{\varPsi\sigma^*\Ku(Y),\, 
    \varPsi\circ\lmut{\beta_*\Dcatb(\pro^3)}\sheaf{\tilde Y}(3H), \
    \varPsi\circ\lmut{\beta_*\Dcatb(\pro^3)}\iota_*\sheaf{E}(H+h)}.  \label{equ:psi_K}
\end{align}

\begin{lemma}[][lem:lmut_B]
    For $F \in \Dcatb(\tilde Y)$, the left mutation $\lmut{\beta_*\Dcatb(\pro^3)}F$ of $F$ through $\beta_*\Dcatb(\pro^3) = \pi^*\Dcatb(\pro^3) \otimes \sheaf{\tY}(2H)$ fits into the distinguished triangle:
    \[\begin{tikzcd}
            {\pi^*\pi_*(F \otimes \sheaf{\tilde Y}(-2H)) \otimes \sheaf{\tilde Y}(2H)} & F & {\lmut{\beta_*\Dcatb(\pro^3)}F} & {}
            \arrow[from=1-1, to=1-2]
            \arrow[from=1-2, to=1-3]
            \arrow["{+1}", from=1-3, to=1-4]
        \end{tikzcd}\]
\end{lemma}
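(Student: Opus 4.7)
The plan is to identify the right adjoint of the inclusion $\beta_* \colon \BBB \hookrightarrow \Dcatb(\tilde Y)$ and then read the claimed triangle straight off the definition of left mutation recalled earlier in the paper.

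First I would observe that the functor $\iota_H \colon \Dcatb(\pro^3) \to \Dcatb(\tilde Y)$ defined by $G \mapsto \pi^*G \otimes \sheaf{\tilde Y}(H)$ is fully faithful with essential image $\BBB$: fully faithfulness of $\pi^*$ is built into the SOD \eqref{SOD:quad_fib}, and twisting by a line bundle is an autoequivalence. Hence the inclusion $\beta_*$ of $\BBB$ is, up to this equivalence, just $\pi^*$ followed by $(-)\otimes \sheaf{\tilde Y}(H)$.

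Next I would compute the right adjoint $\beta^!$. Since the right adjoint of $(-)\otimes \sheaf{\tilde Y}(H)$ is $(-)\otimes \sheaf{\tilde Y}(-H)$, and the right adjoint of $\pi^*$ is the derived pushforward $\pi_*$ (available because $\pi$ is proper), stacking these adjunctions gives
\[
\Hom_{\tilde Y}\bigl(\pi^*G \otimes \sheaf{\tilde Y}(H),\, F\bigr)
\cong \Hom_{\tilde Y}\bigl(\pi^*G,\, F \otimes \sheaf{\tilde Y}(-H)\bigr)
\cong \Hom_{\pro^3}\bigl(G,\, \pi_*(F \otimes \sheaf{\tilde Y}(-H))\bigr),
\]
so $\beta^! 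F = \pi_*(F \otimes \sheaf{\tilde Y}(-H))$ and $\beta_*\beta^! F = \pi^*\pi_*(F \otimes \sheaf{\tilde Y}(-H)) \otimes \sheaf{\tilde Y}(H)$.

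Substituting this description into the defining triangle $\beta_*\beta^! F \to F \to \lmut{\BBB} F \to$ gives exactly the claimed distinguished triangle, with the first arrow being the counit of the $(\iota_H,\iota_H^!)$-adjunction. There is essentially no obstacle: the only point requiring mild care is to ensure the adjunctions are composed in the correct order so that $\pi_*$ appears as a right (rather than left) adjoint of $\pi^*$, which is the reason why the twist by $\sheaf{\tilde Y}(-H)$ (and not $\sheaf{\tilde Y}(H)$) appears inside $\pi_*$.
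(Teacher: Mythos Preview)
Your proposal is correct and follows essentially the same approach as the paper: identify $\BBB$ as the essential image of $G \mapsto \pi^*G \otimes \sheaf{\tilde Y}(H)$, compute its right adjoint as $F \mapsto \pi_*(F \otimes \sheaf{\tilde Y}(-H))$ by composing the adjunctions, and read off the triangle from the definition of left mutation. Your write-up in fact spells out the adjunction chain slightly more explicitly than the paper does.
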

\begin{proof}
        The fully faithful functor $\beta_*$ admits the right adjoint given by $\beta^!\colon \Dcatb(\tilde Y) \to \Dcatb(\pro^3)$, $F \longmapsto \pi_*(F \otimes \sheaf{\tilde Y}(-2H))$. By definition, the left mutation of $F \in \Dcatb(\tilde Y)$ fits into the distinguished triangle
    \[\begin{tikzcd}
            {\beta_*\beta^!F} & F & {\lmut{\beta_*\Dcatb(\pro^3)}F} & {}
            \arrow[from=1-1, to=1-2]
            \arrow[from=1-2, to=1-3]
            \arrow["{+1}", from=1-3, to=1-4]
        \end{tikzcd}\]
        where $\beta_*\beta^!F = \pi^*\pi_*(F \otimes \sheaf{\tilde Y}(-2H)) \otimes \sheaf{\tilde Y}(2H)$.
\end{proof}
We finish the proof of \cref{thm:psi} by computing $\varPsi\circ\lmut{\beta_*\Dcatb(\pro^3)}\sheaf{\tilde Y}(3H)$ and $\varPsi\circ\lmut{\beta_*\Dcatb(\pro^3)}\iota_*\sheaf{E}(H+h)$ in the following lemma.
\begin{lemma}[][lem:LB1] 
    \begin{enumerate}[dense]
    \item $\varPsi(\sheaf{\tilde Y}(2H)) \cong \CCC_2[2].$
        \item $\varPsi\circ\lmut{\beta_*\Dcatb(\pro^3)}\sheaf{\tilde Y}(3H) \cong \CCC_1[3].$
    \item $\varPsi\circ\lmut{\beta_*\Dcatb(\pro^3)}\iota_*\sheaf{E}(H+h) \cong \CCC_2[2].$
    \end{enumerate}
\end{lemma}
\begin{proof}
    \begin{enumerate}
        \item $\varPsi(\sheaf{\tilde Y}(2H)) = \pi_*(\cE \otimes \sheaf{\tilde Y}(2H+h))[2]$ fits into the distinguished triangle:
        \[\begin{tikzcd}[row sep=tiny]
            {\CCC_{-1} \otimes \pi_*\sheaf{\tilde Y}(-H+h)[2]} & {\CCC_0 \otimes \pi_*\sheaf{\tilde Y}(h)[2]} & {\pi_*(\cE \otimes \sheaf{\tilde Y}(2H+h))[2]} & {} 
            \arrow[from=1-1, to=1-2]
            \arrow[from=1-2, to=1-3]
            \arrow["{+1}", from=1-3, to=1-4]
        \end{tikzcd}\]
        Since $\pi_*\sheaf{\tilde Y}(-H) = 0$, we have 
        \begin{equation}
            \varPsi(\sheaf{\tilde Y}(2H)) = \pi_*(\cE \otimes \sheaf{\tilde Y}(2H+h))[2] \cong \CCC_0 \otimes \pi_*\sheaf{\tilde Y}(h)[2] \cong \CCC_2[2].
        \end{equation}
        \item By \cref{lem:lmut_B}, $\varPsi\circ\lmut{\beta_*\Dcatb(\pro^3)}\sheaf{\tilde Y}(3H)$ fits into the distinguished triangle:
    \begin{equation}
        \begin{tikzcd}
                {\varPsi(\pi^*\pi_*\sheaf{\tilde Y}(H) \otimes \sheaf{\tilde Y}(2H))} & \varPsi(\sheaf{\tilde Y}(3H)) & {\varPsi\circ\lmut{\beta_*\Dcatb(\pro^3)}\sheaf{\tilde Y}(3H)} & {}
                \arrow[from=1-1, to=1-2]
                \arrow[from=1-2, to=1-3]
                \arrow["{+1}", from=1-3, to=1-4]
            \end{tikzcd} \label{ses:H}
    \end{equation}
    Note that $\pi^*\pi_*\sheaf{\tilde Y}(H) \otimes \sheaf{\tilde Y}(2H) = \pi^*\FFF^\dual \otimes \sheaf{\tilde Y}(2H)$. By projection formula and (1), we have 
    \begin{align}
        \varPsi(\pi^*\pi_*\sheaf{\tilde Y}(H) \otimes \sheaf{\tilde Y}(2H)) 
        &= \pi_*(\pi^*\FFF^\dual \otimes \cE \otimes \sheaf{\tilde Y}(2H+h))[2] \\
        &= \FFF^\dual \otimes \pi_*(\cE \otimes \sheaf{\tilde Y}(2H+h))[2] \\ 
        &= \pi_*\sheaf{\tY}(H) \otimes \CCC_2[2] = \pi_*\sheaf{\tY}(H+h) \otimes \CCC_0[2].
    \end{align}
                                        Meanwhile, $\varPsi(\sheaf{\tilde Y}(3H)) = \pi_*(\cE \otimes \sheaf{\tilde Y}(3H+h))[2]$ fits into the distinguished triangle:
    \[\begin{tikzcd}[row sep=tiny]
            {\CCC_{-1} \otimes q_*\sheaf{\tY}(h)[2]} & {\CCC_0 \otimes q_*\sheaf{\tilde Y}(H+h)[2]} & {\pi_*(\cE \otimes \sheaf{\tilde Y}(3H+h))[2]} & {} \\
            {\CCC_1[2]} & {\varPsi(\pi^*\pi_*\sheaf{\tilde Y}(H) \otimes \sheaf{\tilde Y}(2H))} & {\varPsi(\sheaf{\tilde Y}(3H))}
            \arrow[from=1-1, to=1-2]
            \arrow[from=1-2, to=1-3]
            \arrow[equal, from=1-1, to=2-1]
            \arrow[equal, from=1-2, to=2-2]
            \arrow["{+1}", from=1-3, to=1-4]
            \arrow[equal, from=1-3, to=2-3]
        \end{tikzcd}\]
    Comparing this with the sequence \eqref{ses:H}, we deduce that 
    \begin{equation}
        \varPsi\circ\lmut{\beta_*\Dcatb(\pro^3)}\sheaf{\tilde Y}(3H) \cong (\CCC_{1}[2])[1] = \CCC_1[3].
    \end{equation}

    \item By \cref{lem:lmut_B}, $\lmut{\beta_*\Dcatb(\pro^3)}\iota_*\sheaf{E}(H+h)$ fits into the distinguished triangle
    \[\begin{tikzcd}
            {\pi^*\pi_*\iota_*\sheaf{E}(-H+h) \otimes \sheaf{\tilde Y}(2H)} & \iota_*\sheaf{E}(H+h) & {\lmut{\beta_*\Dcatb(\pro^3)}\iota_*\sheaf{E}(H+h)} & {}
            \arrow[from=1-1, to=1-2]
            \arrow[from=1-2, to=1-3]
            \arrow["{+1}", from=1-3, to=1-4]
        \end{tikzcd}\]
    We claim that $\pi_*\iota_*\sheaf{E}(-H+h) \cong \sheaf{\pro^3}[-1]$. Consider the short exact sequence:
    \[\begin{tikzcd}
            0 & {\sheaf{\tilde Y}(-2H+2h)} & {\sheaf{\tilde Y}(-H+h)} & {\iota_*\sheaf{E}(-H+h)} & 0.
            \arrow[from=1-1, to=1-2]
            \arrow[from=1-2, to=1-3]
            \arrow[from=1-3, to=1-4]
            \arrow[from=1-4, to=1-5]
        \end{tikzcd}\]
    Pushing forward along $\pi\colon \tilde Y \to \pro^3$, since $\pi_*\sheaf{\tilde Y}(-H) = 0$, we have $\pi_*\iota_*\sheaf{E}(-H+h) \cong \pi_*\sheaf{\tilde Y}(-2H+2h)[1]$. Next, $\pi_*\sheaf{\tilde Y}(-2H+2h)$ fits into the distinguished triangle:
    \[\begin{tikzcd}
            {q_*\sheaf{\pro(\FFF)}(-4H'+h')} & {q_*\sheaf{\pro(\FFF)}(-2H'+2h')} & {\pi_*\sheaf{\tilde Y}(-2H+2h)} & {}
            \arrow[from=1-1, to=1-2]
            \arrow[from=1-2, to=1-3]
            \arrow["{+1}", from=1-3, to=1-4]
        \end{tikzcd}\]
    Since $q_*\sheaf{\pro(\FFF)}(-2H') = 0$, we have $\pi_*\iota_*\sheaf{E}(-H+h) \cong \pi_*\sheaf{\tilde Y}(-2H+2h)[1] \cong q_*\sheaf{\pro(\FFF)}(-4H'+h')[2]$. The relative canonical bundle of the $\pro^3$-fibration $q\colon \pro(\FFF) \to \pro^3$ is given by $\omega_q = q^*\det\FFF^\dual \otimes \sheaf{\pro(\FFF)}(-4H')
    = \sheaf{\pro(\FFF)}(-4H' + h')$. Hence, by Grothendieck--Verdier duality,
    \[ \pi_*\iota_*\sheaf{E}(-H+h) \cong q_*\sheaf{\pro(\FFF)}(-4H'+h')[2] \cong q_*\omega_q[2] = \sheaf{\pro^3}[-1].\]
    Therefore $\pi^*\pi_*\iota_*\sheaf{E}(-H+h) \otimes \sheaf{\tilde Y}(2H) \cong \sheaf{\tilde Y}(2H)[-1]$. Now by applying $\varPsi$ we have the distinguished triangle
    \[\begin{tikzcd}
            {\varPsi(\sheaf{\tilde Y}(2H))[-1]} & \varPsi(\iota_*\sheaf{E}(H+h)) & {\varPsi\circ\lmut{\beta_*\Dcatb(\pro^3)}\iota_*\sheaf{E}(H+h)} & {}
            \arrow[from=1-1, to=1-2]
            \arrow[from=1-2, to=1-3]
            \arrow["{+1}", from=1-3, to=1-4]
        \end{tikzcd}\]
    For the first term, by (1) it is isomorphic to $\CCC_2[1]$; and for the second term, note that $\iota_*\sheaf{E}(H+h)$ fits into the short exact sequence
    \[\begin{tikzcd}
            0 & {\sheaf{\tilde Y}(2h)} & {\sheaf{\tilde Y}(H+h)} & {\iota_*\sheaf{E}(H+h)} & 0.
            \arrow[from=1-1, to=1-2]
            \arrow[from=1-2, to=1-3]
            \arrow[from=1-3, to=1-4]
            \arrow[from=1-4, to=1-5]
        \end{tikzcd}\]
    Applying $\varPsi$ to the sequence and using \cref{lem:psi_kill}, the first two terms vanish, and we deduce that $\varPsi(\iota_*\sheaf{E}(H+h)) = 0$. Finally we conclude that
    \[ \varPsi\circ\lmut{\beta_*\Dcatb(\pro^3)}\iota_*\sheaf{E}(H+h) \cong \varPsi(\sheaf{\tilde Y}(2H)) \cong \CCC_2[2]. \qedhere\]

    \end{enumerate}
\end{proof}

\begin{proof}[Proof of \cref{thm:psi}]
    By \cref{prop:SOD_big_mut} and \eqref{equ:psi_K}, we have 
    \begin{align*}
        \Dcatb(\pro^3,\CCC_0) \simeq \varPsi(\KKK) = \ord{\varPsi\sigma^*\Ku(Y),\, 
    \varPsi\circ\lmut{\beta_*\Dcatb(\pro^3)}\sheaf{\tilde Y}(3H), \
    \varPsi\circ\lmut{\beta_*\Dcatb(\pro^3)}\iota_*\sheaf{E}(H+h)}.
    \end{align*}
    By \cref{lem:LB1}, we have $\varPsi\circ\lmut{\beta_*\Dcatb(\pro^3)}\sheaf{\tilde Y}(3H) \cong \CCC_1[3]$ and $\varPsi\circ\lmut{\beta_*\Dcatb(\pro^3)}\iota_*\sheaf{E}(H+h) \cong \CCC_2[2]$,
    which yields the desired form:
    \[\Dcatb(\pro^3,\CCC_0) \simeq \ord{\varPsi\sigma^*\Ku(Y),\, \CCC_1,\, \CCC_2}. \qedhere \]
\end{proof}

We conjecture that a generalization of \cref{thm:psi} holds for higher-dimensional cubic hypersurfaces. 

\begin{conjecture}[][conj:quad_fib]
    Let $Y$ be a smooth cubic $n$-fold ($n \geq 3$) containing a $k$-plane $\varPi_0 \subset Y$. The quadric fibration $\Bl_{\varPi_0}Y \to \pro^{n-k}$ defines a sheaf of Clifford algebras whose even part is $\CCC_0$. Then the bounded derived category of $\CCC_0$-modules on $\pro^{n-k}$ admits the semi-orthogonal decomposition
    \[ \Dcatb(\pro^{n-k},\CCC_0) = \ord{\Ku(\pro^{n-k},\CCC_0),\, \CCC_1,\, \ldots,\, \CCC_{2n-3k-2}},\]
    such that there is an equivalence $\Ku(Y) \simeq \Ku(\pro^{n-k},\CCC_0)$.
\end{conjecture}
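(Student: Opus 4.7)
The plan is to mimic the strategy of \cref{thm:psi}: compare two semi-orthogonal decompositions of $\Dcatb(\tilde Y)$, where $\tilde Y = \Bl_{\varPi_0} Y$, and transform one into the other via a sequence of mutations. On one side, Orlov's blow-up formula for $\sigma\colon \tilde Y \to Y$, whose exceptional divisor is a $\pro^{n-k-1}$-bundle over $\varPi_0 = \pro^k$, combined with the Lefschetz SOD $\Dcatb(Y) = \ord{\Ku(Y), \sheaf{Y}, \sheaf{Y}(1), \ldots, \sheaf{Y}(n-2)}$ produces a decomposition with $\sigma^*\Ku(Y)$ and $(n-1) + (n-k-1)(k+1)$ additional exceptional objects. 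On the other side, Kuznetsov's quadric fibration theorem applied to $\pi\colon \tilde Y \to \pro^{n-k}$ (whose generic fibres are $k$-dimensional quadrics) yields
\[
\Dcatb(\tilde Y) = \ord{\varPhi\Dcatb(\pro^{n-k}, \CCC_0),\ \pi^*\Dcatb(\pro^{n-k}),\ \ldots,\ \pi^*\Dcatb(\pro^{n-k}) \otimes \sheaf{\tilde Y}((k-1)H)},
\]
with $k(n-k+1)$ exceptional objects outside $\varPhi\Dcatb(\pro^{n-k}, \CCC_0)$. A $K_0$-rank count shows the discrepancy is exactly $2n - 3k - 2$, consistent with the conjectured number of Clifford module factors.

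Next, I would generalise the key mutation identities of \cref{lem:mut}. The fundamental short exact sequence $0 \to \sheaf{\tilde Y}(-H + h) \to \sheaf{\tilde Y} \to \iota_*\sheaf{E} \to 0$ persists in arbitrary dimension, and standard cohomological vanishings on $Y$, $\varPi_0$, and $\pro^{n-k}$ furnish the partial orthogonality relations needed to mutate the torsion sheaves $\iota_*\sheaf{E}(aH+bh)$ through the line bundles $\sheaf{\tilde Y}(aH+bh)$. Following the template of \cref{prop:SOD_big_mut}, one then performs an iterated mutation sequence converting the expanded Orlov SOD into a form
\[
\ord{(\sigma^*\Ku(Y))',\ M_1, \ldots, M_{2n-3k-2},\ \BBB_0, \ldots, \BBB_{k-1}},
\]
where $\BBB_j \coloneqq \pi^*\Dcatb(\pro^{n-k}) \otimes \sheaf{\tilde Y}(jH)$ and the intermediate objects $M_i$ are sheaves on $\tilde Y$ of the form $\iota_*\sheaf{E}(aH+bh)$ or $\sheaf{\tilde Y}(aH+bh)$. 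Generalising \cref{lem:psi_kill,lem:LB1,lem:LB2}, one then verifies that $\varPsi(\sheaf{\tilde Y}(aH+bh)) = 0$ for the relevant range of $a$ using the resolution \eqref{ses:E} of $\EEE$ together with the projection formula, and computes $\varPsi \circ \lmut{\BBB_{k-1}} \cdots \lmut{\BBB_0}$ on each $M_i$, producing a Clifford module $\CCC_{j_i}$ (possibly after a harmless further mutation among the $\CCC_j$'s, absorbed by a final right mutation).

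The main obstacle is the combinatorial explosion of the mutation sequence: already for $(n,k) = (5,2)$ the bookkeeping is substantial, and the length of each intermediate SOD grows roughly as $(n-k)(k+1)$. More seriously, a direct computation suggests that $\varPsi \circ \lmut{\BBB_j}$ applied to the various $M_i$ produces iterated cones of Clifford modules, so identifying these with $\CCC_1, \ldots, \CCC_{2n-3k-2}$ in the expected order and shifts will require either a delicate induction on $n$ and $k$, or, more conceptually, a homological-projective-duality interpretation of the whole construction via Kuznetsov's framework for quadric fibrations. I would first verify the conjecture by direct computation in the next cases $(n,k) = (6,2)$ and $(7,3)$ to pin down the correct pattern, and then attempt a uniform proof along HPD lines.
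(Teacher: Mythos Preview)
The statement you are attempting to prove is stated in the paper as a \emph{conjecture}, not a theorem: the paper does not provide a proof and explicitly leaves it open. Immediately after proving the case $(n,k)=(5,2)$ (\cref{thm:psi}), the author writes ``We conjecture that a generalisation of \cref{thm:psi} holds for higher dimensional cubic hypersurfaces'' and then records \cref{conj:quad_fib} without further argument. So there is no ``paper's own proof'' to compare against.

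Your proposal is a sensible outline of how one would try to extend the method of \cref{thm:psi} to general $(n,k)$, and you correctly identify the two main difficulties: the combinatorial blow-up of the mutation sequence in \cref{prop:SOD_big_mut}, and the identification of the images under $\varPsi\circ\lmut{\BBB_\bullet}$ with the expected Clifford modules. But what you have written is a \emph{plan}, not a proof: none of the mutation identities, vanishing statements, or $\varPsi$-computations are actually carried out beyond the $(5,2)$ case, and you yourself flag that the outputs may only be iterated cones of Clifford modules rather than single $\CCC_j$'s. That is precisely the gap that keeps this a conjecture. If you want to make progress, your suggestion of first working out $(n,k)=(6,2)$ by hand is the right next step; until the pattern of mutations and the $\varPsi$-images are pinned down explicitly, the argument remains speculative.
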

When $2n-3k-2=0$, the exceptional collection displayed in the conjecture is understood to be empty.
Note that the smoothness of $Y$ forces $k \leq n/2$ (\cite[Exercise 1.1.5]{huy23}). For $n \geq 3$ we have $2n-3k-2 \geq 0$. The known cases for \cref{conj:quad_fib} are $(n,k) = (3,1)$ in \cite[Proposition 2.19]{BMMS}, $(4,1)$ in \cite[Proposition 7.7]{BLMS}, $(4,2)$ in \cite[Theorem 4.3]{kuz4fold}, $(5,2)$ in \cref{thm:psi} of this section, and $(7,3)$ in \cite{LPZCubic7000}. A systematic approach to general $(n,k)$ would be interesting.

\section{Numerical Grothendieck groups}\label{sec:Knum}
In this section we study the numerical lattice of the Kuznetsov component $\Ku(Y)$ of a smooth cubic 5-fold $Y$. We will describe it both as a sublattice of $\Knum(Y)$ and as a sublattice of $\Knum(\pro^3,\CCC_0)$ via the embedding $\varPsi\sigma^*\colon \Ku(Y) \hookrightarrow \Dcatb(\pro^3,\CCC_0)$.

\subsection[The numerical K-group for Ku(Y)]{The numerical K-group for $\Ku(Y)$}\label{subsec:Knum_KuY}

The semi-orthogonal decomposition \eqref{SOD:cubic_5} for $\Dcatb(Y)$ can be rotated:
    \begin{align}
        \Dcatb(Y) &= \ord{\Ku(Y),\ \sheaf{Y},\ \sheaf{Y}(1),\ \sheaf{Y}(2),\ \sheaf{Y}(3)} \\ 
        &= \ord{\sheaf{Y}(-1),\ \Ku(Y),\ \sheaf{Y},\ \sheaf{Y}(1),\ \sheaf{Y}(2)} \\ 
        &= \ord{\sheaf{Y}(-2),\ \sheaf{Y}(-1),\ \Ku(Y),\ \sheaf{Y},\ \sheaf{Y}(1)}. \label{SOD:DbY_RRLL}
    \end{align}

\begin{definition}
    We define the projection functor $\pr_Y\colon \Dcatb(Y) \to \Ku(Y)$ as
    \begin{equation}
        \pr_Y \coloneqq \rmut{\sheaf{Y}(-1)}\circ\rmut{\sheaf{Y}(-2)}\circ\lmut{\sheaf{Y}}\circ \lmut{\sheaf{Y}(1)},
    \end{equation}
    with respect to the SOD \eqref{SOD:DbY_RRLL}.
\end{definition}

\begin{definition}[][def:Ku_objs]
    Let $\varPi \subset Y$ be a plane. The ideal sheaf of $\varPi$ in $Y$ is denoted by $\ideal_{\varPi}$. We define three objects in $\Ku(Y)$ associated to $\varPi$ by projecting the twists of $\ideal_{\varPi}$ into $\Ku(Y)$:
    \begin{equation}
        \PPP_{\varPi} \coloneqq 
        \operatorname{pr}_Y(\ideal_{\varPi});
                \qquad
        \FFF_{\varPi} \coloneqq 
        \operatorname{pr}_Y(\ideal_{\varPi}(1))[-1];
                \qquad
        \KKK_{\varPi} \coloneqq 
        \operatorname{pr}_Y(\ideal_{\varPi}(-1)).
            \end{equation}        We will later show that the classes of $\FFF_{\varPi}$ and $\PPP_{\varPi}$ form the preferred basis of $\Knum(\Ku(Y))$.
\end{definition}

\begin{proposition}[][prop:Ku_objs]
    The objects $\PPP_{\varPi}$, $\FFF_{\varPi}$, $\KKK_{\varPi}\in \Ku(Y)$ admit the following descriptions:
    \begin{enumerate}[nosep]
        \item $\PPP_{\varPi} = \rmut{\sheaf{Y}(-1)}(\ideal_{\varPi})$ is a complex that fits into the distinguished triangle
              \begin{equation}
                  \begin{tikzcd}[ampersand replacement=\&,cramped]
                      {\sheaf{Y}(-1)[1]} \& {\PPP_{\varPi}} \& {\ideal_{\varPi}} \& {}
                      \arrow[from=1-1, to=1-2]
                      \arrow[from=1-2, to=1-3]
                      \arrow["{+1}", from=1-3, to=1-4]
                  \end{tikzcd} \label{seq:P_Pi_def}
              \end{equation}
        \item $\FFF_{\varPi} = \lmut{\sheaf{Y}}(\ideal_{\varPi}(1))[-1]$ is a reflexive sheaf of rank $3$ that sits inside a short exact sequence of sheaves
              \begin{equation}
                  \begin{tikzcd}[ampersand replacement=\&,cramped]
                      0 \& {\FFF_{\varPi}} \& {\sheaf{Y}^{\oplus 4}} \& {\ideal_{\varPi}(1)} \& 0.
                      \arrow[from=1-1, to=1-2]
                      \arrow[from=1-2, to=1-3]
                      \arrow[from=1-3, to=1-4]
                      \arrow[from=1-4, to=1-5]
                  \end{tikzcd} \label{seq:F_Pi_def}
              \end{equation}
        \item $\KKK_{\varPi} = \rmut{\sheaf{Y}(-1)}\rmut{\sheaf{Y}(-2)}(\ideal_{\varPi}(-1))$
                has cohomology sheaves given by
        \begin{equation}
            \mathcal H^i(\KKK_{\varPi}) \cong \begin{cases}
                      \coker(\sheaf{Y}(-2) \hookrightarrow \sheaf{Y}(-1)^{\oplus 4}), & i = 0;            \\
                      \sheaf{\varPi}(-1),                                             & i = 1;            \\
                      0,                                                              & \text{otherwise}.
                  \end{cases} \label{seq:K_Pi_def}
        \end{equation}
                                                                                                                                                                                                        \end{enumerate}
\end{proposition}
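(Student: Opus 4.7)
My plan is to verify each part by unwinding the four-fold mutation composition defining $\pr_Y$, using the standard sequence $0 \to \ideal_\varPi \to \sheaf{Y} \to \sheaf{\varPi} \to 0$ and Grothendieck duality along $j\colon \varPi \hookrightarrow Y$. A key geometric input is the identification $\det \Nor{\varPi}{Y} = \sheaf{\varPi}(1)$ from the adjunction formula (using $\omega_Y = \sheaf{Y}(-4)$ and $\omega_\varPi = \sheaf{\varPi}(-3)$), which yields $j^!\sheaf{Y}(k) = \sheaf{\varPi}(k+1)[-3]$ and hence reduces all Ext computations involving $\sheaf{\varPi}$ to line-bundle cohomology on $\varPi \cong \pro^2$.

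For part (i), I would first show that the two left mutations and $\rmut{\sheaf{Y}(-2)}$ act trivially on $\ideal_\varPi$, by checking $H^\bullet(Y, \ideal_\varPi(-i)) = 0$ for $i = 0, 1$ and $\Ext^\bullet(\ideal_\varPi, \sheaf{Y}(-2)) = 0$ through the SES and the $j^!$-formula above. Hence $\PPP_\varPi = \rmut{\sheaf{Y}(-1)}\ideal_\varPi$, and the claimed triangle is immediate from the definition of right mutation together with $\Ext^2(\ideal_\varPi, \sheaf{Y}(-1)) \cong \C$. Part (ii) proceeds analogously: trivial-mutation checks reduce the projection to $\lmut{\sheaf{Y}}$, and the nontrivial input is $\Ext^0(\sheaf{Y}, \ideal_\varPi(1)) = \C^4$, spanned by the four linear forms $L_1, \dots, L_4$ cutting out $\varPi \subset \pro^6$. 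These globally generate $\ideal_\varPi(1)$, so the evaluation map $\sheaf{Y}^{\oplus 4} \to \ideal_\varPi(1)$ is surjective; the kernel $\FFF_\varPi$ is automatically a rank-$3$ reflexive sheaf, being the kernel of a surjection from a locally free sheaf onto a torsion-free sheaf on a smooth variety. The remaining right mutations act trivially on $\FFF_\varPi$ by the SES combined with the vanishings already established.

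For part (iii), rather than iterating two right mutations on $\ideal_\varPi(-1)$ directly, I would set $\mathcal{E}_\varPi \coloneqq \Rsf\!\SHom(\FFF_\varPi, \sheaf{Y}(-1))$ and identify $\mathcal{E}_\varPi$ with $\KKK_\varPi$ via the exactness of $\pr_Y$. First, Serre duality on $Y$ converts the four required Ext vanishings for $\mathcal{E}_\varPi \in \Ku(Y)$ into vanishings of $\Ext^\bullet(\sheaf{Y}(3-i), \FFF_\varPi)$ for $i \in \{0,1,2,3\}$, which hold since $\FFF_\varPi \in \Ku(Y)$. Next, I dualize the rotated triangle $\ideal_\varPi(1)[-1] \to \FFF_\varPi \to \sheaf{Y}^{\oplus 4}$ and further dualize the SES for $\ideal_\varPi$ to obtain
\[
\sheaf{Y}(-1)^{\oplus 4} \to \mathcal{E}_\varPi \to \sheaf{Y}(-2)[1] \oplus \sheaf{\varPi}(-1)[-1] \xrightarrow{+1},
\]
where the splitting $\Rsf\!\SHom(\ideal_\varPi, \sheaf{Y}(-2)) \cong \sheaf{Y}(-2) \oplus \sheaf{\varPi}(-1)[-2]$ is justified by the vanishing of negative Ext between ordinary sheaves. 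Finally, applying $\pr_Y$ kills the outer terms ($\pr_Y \sheaf{Y}(-1) = \pr_Y \sheaf{Y}(-2) = 0$ by direct mutation calculation) and converts $\sheaf{\varPi}(-1)[-1]$ via $\pr_Y \sheaf{\varPi}(-1) = \KKK_\varPi[1]$ (obtained by projecting the SES $\ideal_\varPi(-1) \to \sheaf{Y}(-1) \to \sheaf{\varPi}(-1)$); since $\pr_Y \mathcal{E}_\varPi = \mathcal{E}_\varPi$, the triangle collapses to $\mathcal{E}_\varPi \cong \KKK_\varPi$. The cohomology description in \eqref{seq:K_Pi_def} then follows by reading off $\mathcal{H}^\bullet(\mathcal{E}_\varPi)$ from the triangle, with $\mathcal{H}^0$ the cokernel of the injection $\sheaf{Y}(-2) \hookrightarrow \sheaf{Y}(-1)^{\oplus 4}$ given by multiplication by the $L_i$'s and $\mathcal{H}^1 = \sheaf{\varPi}(-1)$.

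The main obstacle I expect is in part (iii): the delicate point is carefully justifying the derived splittings via negative-Ext vanishing and tracking the compatibility of $\pr_Y$ with the dualization triangles. Parts (i) and (ii) amount to routine bookkeeping around standard cohomological vanishings.
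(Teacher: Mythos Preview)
Your treatment of parts (i) and (ii) is essentially the same as the paper's: both reduce $\pr_Y$ to a single mutation by checking the orthogonality $\ideal_\varPi \in \ord{\sheaf{Y},\sheaf{Y}(1),\sheaf{Y}(2)}^\perp \cap {}^\perp\ord{\sheaf{Y}(-3),\sheaf{Y}(-2)}$ via the ideal sequence and Kodaira/Serre vanishing, then compute the single relevant Ext group.

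For (iii) you take a genuinely different route. The paper first realises $\KKK_\varPi(1)$ as $\rmut{\sheaf{Y}}\PPP_\varPi$, computes $\Ext^\bullet(\PPP_\varPi,\sheaf{Y})=\C[0]\oplus\C^4[-1]$ from the triangle \eqref{seq:P_Pi_def}, and reads off the cohomology from the resulting mutation triangle; only afterwards does it dualise \eqref{seq:F_Pi_def} and compare cohomology sheaves with $\Rsf\!\SHom(\FFF_\varPi(1),\sheaf{Y})$. Your approach --- showing $\mathcal E_\varPi\in\Ku(Y)$ via Serre duality and then applying $\pr_Y$ to the dualisation triangle --- is more direct and, in fact, gives the isomorphism $\mathcal E_\varPi\cong\KKK_\varPi$ on the nose, whereas the paper's final step only matches cohomology sheaves.

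There is, however, one genuine slip. Your justification for the splitting $\Rsf\!\SHom(\ideal_\varPi,\sheaf{Y}(-2))\cong\sheaf{Y}(-2)\oplus\sheaf{\varPi}(-1)[-2]$ is wrong: the obstruction to splitting the truncation triangle $\sheaf{Y}(-2)\to D\to\sheaf{\varPi}(-1)[-2]$ lives in $\Ext^3(\sheaf{\varPi}(-1),\sheaf{Y}(-2))\cong\C$, not in a negative Ext group, and this class is the (nonzero) trace map $j_*j^!\sheaf{Y}(-2)\to\sheaf{Y}(-2)$. Fortunately your argument does not need the splitting: apply $\pr_Y$ first to the triangle $\sheaf{Y}(-2)\to D\to\sheaf{\varPi}(-1)[-2]$ to get $\pr_Y D\cong\KKK_\varPi[-1]$, then to $\sheaf{Y}(-1)^{\oplus4}\to\mathcal E_\varPi\to D[1]$ to conclude $\mathcal E_\varPi\cong\KKK_\varPi$. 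The cohomology description \eqref{seq:K_Pi_def} likewise follows from the long exact sequence of cohomology sheaves without invoking any splitting, once you check that the connecting map $\sheaf{Y}(-2)\to\sheaf{Y}(-1)^{\oplus4}$ (dual to the evaluation) is injective.
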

\begin{proof}
    First we claim that $\ideal_{\varPi} \in \ord{\sheaf{Y},\sheaf{Y}(1),\sheaf{Y}(2)}^\perp$. Indeed, consider the short exact sequence:
    \begin{equation}
        \begin{tikzcd}[row sep = 0]
            0 & {\ideal_{\varPi}(-a)} & \sheaf{Y}(-a) & \sheaf{\varPi}(-a) & 0.
            \arrow[from=1-1, to=1-2]
            \arrow[from=1-2, to=1-3]
            \arrow[from=1-3, to=1-4]
            \arrow[from=1-4, to=1-5]
        \end{tikzcd} \label{seq:plane_twist}
    \end{equation}
    For $1 \leq a \leq 2$, $\sheaf{Y}(-a)$ and $\sheaf{\varPi}(-a)$ have no cohomology; and for $a = 0$, $\Hlg^\bullet(Y,\sheaf{Y}) \to \Hlg^\bullet(Y,\sheaf{\varPi})$ is an isomorphism. Therefore the long exact sequence shows that $\Ext^\bullet(\sheaf{Y}(a),\ideal_{\varPi}) = \Hlg^\bullet(Y,\ideal_{\varPi}(-a)) = 0$ for $a = 0,1,2$, and hence the claim follows. By Serre duality, we also have $\ideal_{\varPi} \in {}^\perp\ord{\sheaf{Y}(-3),\sheaf{Y}(-2)}$. It follows that 
    \begin{align*}
        \operatorname{pr}_Y(\ideal_{\varPi}) = \rmut{\sheaf{Y}(-1)}(\ideal_{\varPi}); \quad 
        \operatorname{pr}_Y(\ideal_{\varPi}(1)) = \lmut{\sheaf{Y}}(\ideal_{\varPi}(1)); \quad
        \operatorname{pr}_Y(\ideal_{\varPi}(-1)) = \rmut{\sheaf{Y}(-1)}\rmut{\sheaf{Y}(-2)}(\ideal_{\varPi}(-1)).
    \end{align*}
                                            \begin{enumerate}
        \item For the object $\PPP_{\varPi}$, by definition it fits into the distinguished triangle:
              \begin{equation}
                  \begin{tikzcd}[ampersand replacement=\&,cramped]
                      {\PPP_{\varPi}} \& {\ideal_{\varPi}} \& {\displaystyle\bigoplus_{k=0}^5\Ext^k(\ideal_{\varPi},\sheaf{Y}(-1))^\dual \otimes \sheaf{Y}(-1)[k] } \& {}
                      \arrow[from=1-1, to=1-2]
                      \arrow[from=1-2, to=1-3]
                      \arrow["{+1}", from=1-3, to=1-4]
                  \end{tikzcd} \label{ses:P_RMut}
              \end{equation}
              where 
              \[ \Ext^\bullet(\ideal_{\varPi},\sheaf{Y}(-1)) \cong \Hlg^{\bullet}(\ideal_{\varPi}(-3)[5])^\dual = \Hlg^{\bullet}(\sheaf{\varPi}(-3)[4])^\dual = \C[-2],\]
              by Serre duality on $Y$ and $\varPi$, and the long exact sequence of \eqref{seq:plane_twist} with $a = 3$. Hence \eqref{ses:P_RMut} becomes
              \begin{equation}
                  \begin{tikzcd}[ampersand replacement=\&,cramped]
                      {\PPP_{\varPi}} \& {\ideal_{\varPi}} \& {\sheaf{Y}(-1)[2]} \& {}
                      \arrow[from=1-1, to=1-2]
                      \arrow[from=1-2, to=1-3]
                      \arrow["{+1}", from=1-3, to=1-4]
                  \end{tikzcd}
              \end{equation}
              After rotating this triangle we obtain \eqref{seq:P_Pi_def}.

        \item For the description of $\FFF_{\varPi}$, by definition it fits into the distinguished triangle:
              \begin{equation}
                  \begin{tikzcd}[ampersand replacement=\&,cramped]
                      {\FFF_{\varPi}} \& {\displaystyle\bigoplus_{k = 0}^5 \Hlg^k(Y,\ideal_{\varPi}(1)) \otimes \sheaf{Y}[-k]} \& {\ideal_{\varPi}(1)} \& {}
                      \arrow[from=1-1, to=1-2]
                      \arrow[from=1-2, to=1-3]
                      \arrow["{+1}", from=1-3, to=1-4]
                  \end{tikzcd} 
              \end{equation}
              The long exact sequence of \eqref{seq:plane_twist} with $a = -1$ shows that $\Hlg^i(Y,\ideal_{\varPi}(1)) = 0$ for $i \geq 2$, and
              \begin{equation}
                  \begin{tikzcd}[column sep = small]
                      0 & {\Hlg^0(Y,\ideal_{\varPi}(1))} & {\Hlg^0(Y,\sheaf{Y}(1))} & {\Hlg^0(\varPi,\sheaf{\varPi}(1))} & {\Hlg^1(Y,\ideal_{\varPi}(1))} & 0.
                      \arrow[from=1-1, to=1-2]
                      \arrow[from=1-2, to=1-3]
                      \arrow[from=1-3, to=1-4]
                      \arrow[from=1-4, to=1-5]
                      \arrow[from=1-5, to=1-6]
                  \end{tikzcd}
              \end{equation}
              The map $\Hlg^0(Y,\sheaf{Y}(1)) \to \Hlg^0(\varPi,\sheaf{\varPi}(1))$ is the restriction map induced by the linear embedding $\varPi \hookrightarrow \pro^6$, and is surjective. Hence $\hlg^1(Y,\ideal_{\varPi}(1)) = 0$ and $\hlg^0(Y,\ideal_{\varPi}(1)) = 4$. Moreover, $\Hlg^0(Y,\ideal_{\varPi}(1))$ consists of the linear forms on $\pro^6$ vanishing on $\varPi$, and these forms generate $\ideal_{\varPi}(1)$. Thus the evaluation map $\Hlg^0(Y,\ideal_{\varPi}(1)) \otimes \sheaf{Y} \cong \sheaf{Y}^{\oplus 4} \xrightarrow{\operatorname{ev}} \ideal_{\varPi}(1)$ is surjective.
                            Therefore $\FFF_{\varPi}$ is the kernel of $\sheaf{Y}^{\oplus 4} \xrightarrow{\operatorname{ev}} \ideal_{\varPi}(1)$, and is a torsion-free sheaf of rank $3$. It is locally free away from the codimension-$3$ locus $\varPi$. At every point of $\varPi$, the depth lemma applied to \eqref{seq:F_Pi_def} gives $\operatorname{depth}(\FFF_{\varPi})\geq 2$, because $\ideal_{\varPi}(1)$ is torsion-free. Since $Y$ is smooth, Serre's criterion for reflexive sheaves shows that $\FFF_{\varPi}$ is reflexive.

        \item For $\KKK_{\varPi}$, since $\KKK_{\varPi}(1) \cong \rmut{\sheaf{Y}}\PPP_{\varPi}$, we have the distinguished triangle
            \begin{equation}
                \begin{tikzcd}[ampersand replacement=\&,cramped]
                    {\KKK_{\varPi}(1)} \& \PPP_{\varPi} \& {\displaystyle\bigoplus_{k} \Ext^k(\PPP_{\varPi},\sheaf{Y})^\dual \otimes \sheaf{Y}[k]} \& {}
                    \arrow[from=1-1, to=1-2]
                    \arrow[from=1-2, to=1-3]
                    \arrow["{+1}", from=1-3, to=1-4]
                \end{tikzcd} \label{seq:K_right_mutate}
            \end{equation}
            To compute $\Ext^\bullet(\PPP_{\varPi},\sheaf{Y})$ using
            \eqref{seq:P_Pi_def}, we need the two groups
            $\Ext^\bullet(\sheaf{Y}(-1)[1],\sheaf{Y})$ and
            $\Ext^\bullet(\ideal_{\varPi},\sheaf{Y})$. The former is
            \[
                \Ext^\bullet(\sheaf{Y}(-1)[1],\sheaf{Y}) \cong \C^7[-1].
            \]
            For the latter, Serre duality gives
            \[
                \Ext^\bullet(\ideal_{\varPi},\sheaf{Y})
                \cong \Hlg^{\bullet}(Y,\ideal_{\varPi}(-4)[5])^\dual.
            \]
            Take the long exact sequence of \eqref{seq:plane_twist} with $a=4$.
            Since
            \[
                \Hlg^{\bullet}(Y,\sheaf{Y}(-4)) \cong \C[-5],
                \qquad
                \Hlg^\bullet(\varPi,\sheaf{\varPi}(-4)) \cong \C^3[-2],
            \]
            we obtain
            $\Hlg^\bullet(Y,\ideal_{\varPi}(-4))=\C^3[-3]\oplus\C[-5]$.
            Therefore
            \[ \Ext^\bullet(\ideal_{\varPi},\sheaf{Y}) = \C[0] \oplus \C^3[-2]. \]
            Applying $\Hom(-,\sheaf{Y})$ to \eqref{seq:P_Pi_def}, we have $\Hom(\PPP_{\varPi},\sheaf{Y}) \cong \Hom(\ideal_{\varPi},\sheaf{Y}) \cong \C$ and the long exact sequence
            \begin{equation}
                  \begin{tikzcd}[column sep = small]
                      0 & {\Ext^1(\PPP_{\varPi},\sheaf{Y})} & {\Hom(\sheaf{Y}(-1),\sheaf{Y})} & {\Ext^2(\ideal_{\varPi},\sheaf{Y})} & {\Ext^2(\PPP_{\varPi},\sheaf{Y})} & 0.
                      \arrow[from=1-1, to=1-2]
                      \arrow[from=1-2, to=1-3]
                      \arrow[from=1-3, to=1-4]
                      \arrow[from=1-4, to=1-5]
                      \arrow[from=1-5, to=1-6]
                  \end{tikzcd}
            \end{equation}
            The map
            \[\Hlg^0(Y,\sheaf{Y}(1)) \cong \Hom(\sheaf{Y}(-1),\sheaf{Y}) \to \Ext^2(\ideal_{\varPi},\sheaf{Y}) \cong \Ext^3(\sheaf{\varPi},\sheaf{Y}) \cong \Hlg^0(\varPi,\sheaf{\varPi}(1))\]
            is just the restriction map, which is surjective. We deduce that 
            \[ \Ext^\bullet(\PPP_{\varPi},\sheaf{Y}) = \C[0] \oplus \C^4[-1]. \]
            Then the sequence \eqref{seq:K_right_mutate} becomes 
            \begin{equation}
                \begin{tikzcd}[ampersand replacement=\&,cramped]
                    {\KKK_{\varPi}(1)} \& \PPP_{\varPi} \& {\sheaf{Y} \oplus \sheaf{Y}^{\oplus 4}[1]} \& {}
                    \arrow[from=1-1, to=1-2]
                    \arrow[from=1-2, to=1-3]
                    \arrow["{+1}", from=1-3, to=1-4]
                \end{tikzcd} \label{seq:K_right_mutate_2}
            \end{equation}
            Taking the cohomology, we have the long exact sequence:
            \[\begin{tikzcd}[ampersand replacement=\&,cramped,column sep=scriptsize,row sep=0]
            	0 \& {\mathcal H^{-1}(\KKK_{\varPi}(1))} \& {\mathcal H^{-1}(\PPP_{\varPi})} \& {\sheaf{Y}^{\oplus 4}} \& {} \& \\
            	\& {\mathcal H^{0}(\KKK_{\varPi}(1))} \& {\mathcal H^{0}(\PPP_{\varPi})} \& {\sheaf{Y}} \& {\mathcal H^{1}(\KKK_{\varPi}(1))} \& {0.}
            	\arrow[from=1-1, to=1-2]
            	\arrow[from=1-2, to=1-3]
            	\arrow[from=1-3, to=1-4]
            	\arrow[from=1-4, to=1-5]
            	\arrow[from=2-2, to=2-3]
            	\arrow[from=2-3, to=2-4]
            	\arrow[from=2-4, to=2-5]
            	\arrow[from=2-5, to=2-6]
            \end{tikzcd}\]
            The maps $\mathcal H^{-1}(\PPP_{\varPi}) \cong \sheaf{Y}(-1) \to \sheaf{Y}^{\oplus 4}$ and $\mathcal H^{0}(\PPP_{\varPi}) \cong \ideal_{\varPi} \to \sheaf{Y}$ are injective. Hence $\mathcal H^{-1}(\KKK_{\varPi}(1)) = 0$, $\mathcal H^{0}(\KKK_{\varPi}(1)) = \coker(\sheaf{Y}(-1) \to \sheaf{Y}^{\oplus 4})$, and $\mathcal H^{1}(\KKK_{\varPi}(1)) \cong \sheaf{\varPi}$. Now \eqref{seq:K_Pi_def} follows from twisting by $\sheaf{Y}(-1)$. \qedhere

                                                                                                                                                                                                                \end{enumerate}
                    \end{proof}

The Kuznetsov component $\Ku(Y)$ admits a Serre functor $\mathsf{S}_{\Ku(Y)}$ and a \textbf{rotation functor}\footnote{Also called a \textbf{degree shift functor}, since under the equivalence of $\Ku(Y) \simeq \operatorname{HMF}^{\mathsf{gr}}(W)$ with some category of graded matrix factorisations, the functor $\mathsf{O}_{\Ku(Y)}$ is realized as the degree shift $(1)$; see \cite[Corollary 2.14]{orlov09sing} or \cite[Theorem 7.1.34]{huy23}.} 
$\mathsf{O}_{\Ku(Y)} \coloneqq \lmut{\sheaf{Y}}(- \otimes \sheaf{Y}(1))$, both of which are automorphisms of $\Ku(Y)$. As introduced and studied in \cite{KuzV14} (see also \cite[\S 7.1]{huy23}), these functors satisfy the following relations.
\begin{lemma}[{\cite[Corollary 4.3]{KuzV14}, \cite[Proposition 7.1.26]{huy23}}][lem:O_S_KuY]
    The Kuznetsov component $\Ku(Y)$ is a fractional Calabi--Yau category of dimension $7/3$. That is $\mathsf S_{\Ku(Y)}^3 \simeq [7]$. Moreover, the rotation functor satisfies $\mathsf{O}_{\Ku(Y)} \simeq \mathsf S_{\Ku(Y)}^{-1}[3]$ and $\mathsf{O}_{\Ku(Y)}^3 \simeq [2]$.
\end{lemma}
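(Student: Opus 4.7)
My plan is to apply Kuznetsov's general framework for Fano varieties with rectangular Lefschetz decompositions of length equal to the Fano index. For the cubic 5-fold $Y$: the identification $\omega_Y \simeq \sheaf{Y}(-4)$ gives Fano index $i = 4$, and the SOD \eqref{SOD:cubic_5} has exactly $i = 4$ line bundles $\sheaf{Y}, \sheaf{Y}(1), \sheaf{Y}(2), \sheaf{Y}(3)$, so we are in the setting with dimension $n = 5$ and index $i = 4$. The two identities $\mathsf{O}_{\Ku(Y)} \simeq \mathsf{S}_{\Ku(Y)}^{-1}[3]$ and $\mathsf{O}_{\Ku(Y)}^{i} \simeq \mathsf{S}_{\Ku(Y)}^{-1}[n]$ together imply the whole statement by elementary shift arithmetic.

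For the iteration formula $\mathsf{O}^4 \simeq \mathsf{S}_{\Ku(Y)}^{-1}[5]$, I would unfold the definition by induction to obtain $\mathsf{O}^{k}(E) \simeq \lmut{\sheaf{Y}}\circ\lmut{\sheaf{Y}(1)}\circ\cdots\circ\lmut{\sheaf{Y}(k-1)}(E \otimes \sheaf{Y}(k))$. At $k = i = 4$ the twist becomes $E \otimes \omega_Y^{-1} \simeq \mathsf{S}_Y^{-1}(E)[5]$, and the iterated left mutations realise the left-adjoint projection $i^{*}\colon \Dcatb(Y) \to \Ku(Y)$ associated with \eqref{SOD:cubic_5}; the general identity $\mathsf{S}_{\Ku(Y)}^{-1} \simeq i^{*} \circ \mathsf{S}_Y^{-1} \circ i$ for Serre functors on admissible subcategories then yields the claim. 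For the rotation formula $\mathsf{O} \simeq \mathsf{S}_{\Ku(Y)}^{-1}[3]$, I would directly compute $\mathsf{O}\circ \mathsf{S}_{\Ku(Y)}(E)$ starting from $\mathsf{S}_{\Ku(Y)}(E) = i^{!}(E \otimes \sheaf{Y}(-4))[5]$, with $i^{!}$ realised via iterated left mutations through the Serre-rotated collection $\sheaf{Y}(-4),\sheaf{Y}(-3),\sheaf{Y}(-2),\sheaf{Y}(-1)$ in the equivalent SOD that places $\Ku(Y)$ at the right end. Propagating the $\otimes\sheaf{Y}(1)$-twist through the mutations and using the orthogonality $\Ext^{\bullet}(\sheaf{Y}(k), E) = 0$ for $k = 0,1,2,3$ collapses the intermediate mutation triangles, leaving the shift $[n - i + 2] = [3]$.

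Combining the two identities, $(\mathsf{S}_{\Ku(Y)}^{-1}[3])^{4} \simeq \mathsf{S}_{\Ku(Y)}^{-4}[12] \simeq \mathsf{O}^{4} \simeq \mathsf{S}_{\Ku(Y)}^{-1}[5]$ forces $\mathsf{S}_{\Ku(Y)}^{3} \simeq [7]$ and hence $\mathsf{O}^{3} \simeq \mathsf{S}_{\Ku(Y)}^{-3}[9] \simeq [2]$. The main technical obstacle is the careful bookkeeping of how tensor twists by $\sheaf{Y}(\ell)$ interact with mutation functors through $\sheaf{Y}(j)$, which requires checking that the various $\Ext$-vanishings propagate through the iterated compositions. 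This is precisely what Kuznetsov's general theorem \cite[Corollary~4.1]{kuzCY} (see also \cite[Proposition~7.1.26]{huy23}) systematises for arbitrary rectangular Lefschetz decompositions; its specialisation to $(n, i) = (5, 4)$ yields both identities uniformly and reduces the proof to a routine verification of hypotheses.
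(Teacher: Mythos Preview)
The paper gives no proof of this lemma; it is simply quoted from \cite[Corollary~4.1]{kuzCY} and \cite[Proposition~7.1.26]{huy23}. Your sketch correctly specialises those general results to the parameters $(n,i)=(5,4)$ of the cubic fivefold, and the shift arithmetic checks out.
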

\begin{remark}[][rmk:S&O:PFK]
    Note that the objects $\PPP_{\varPi}$, $\FFF_{\varPi}$, and $\KKK_{\varPi}$ are directly related by the rotation functor:
    \begin{align*}
        \mathsf{O}_{\Ku(Y)}\KKK_{\varPi} & = \PPP_{\varPi}; \qquad
        \mathsf{O}_{\Ku(Y)}\PPP_{\varPi} = \lmut{\sheaf{Y}}\PPP_{\varPi}(1) \cong \lmut{\sheaf{Y}}\ideal_{\varPi}(1) = \FFF_{\varPi}[1];                   \\
        \mathsf{O}_{\Ku(Y)}\FFF_{\varPi} & \cong \mathsf{O}_{\Ku(Y)}^2\PPP_{\varPi}[-1] \cong \mathsf{O}_{\Ku(Y)}^{-1}\PPP_{\varPi}[1] = \KKK_{\varPi}[1].
    \end{align*}
    In particular we have $\mathsf S_{\Ku(Y)}\FFF_{\varPi} \cong \PPP_{\varPi}[2]$ and $\mathsf S_{\Ku(Y)}\KKK_{\varPi} \cong \FFF_{\varPi}[2]$. These relations are visually displayed in \cref{fig:hexa}.
\end{remark}

\begin{proposition}[][prop:Knum_Ku]
    Let $Y$ be a smooth cubic 5-fold and $\varPi \subset Y$ a $2$-plane. The numerical Grothendieck group $\Knum(\Ku(Y))$ has an integral basis $\{\kappa_1,\kappa_2\}$, where 
    \begin{align*}
        \kappa_1 &\coloneqq [\FFF_{\varPi}]; & \Chern{}(\kappa_1) &= 3 - H - \dfrac{1}{2}H^2 + \dfrac{1}{6}H^3 + \dfrac{1}{8}H^4 - \dfrac{13}{360}H^5; \\ 
        \kappa_2 &\coloneqq [\PPP_{\varPi}]; & \Chern{}(\kappa_2) &= \phantom{3-{}}H - \dfrac{1}{2}H^2 - \dfrac{1}{6}H^3 + \dfrac{1}{8}H^4 + \dfrac{13}{360}H^5.
    \end{align*}
    The Euler pairing with respect to this basis is given by the matrix
    \[ \chi_{\Ku(Y)} = \begin{pmatrix}
            -1 & -1 \\ 0 & -1
        \end{pmatrix}. \]
    The Serre functor and the rotation functor act on the basis by
    \begin{equation}
        \begin{tikzcd}[ampersand replacement=\&,cramped,row sep=0, column sep = small]
            {\ab[\mathsf{S}_{\Ku(Y)}]_*:} \& \kappa_1 \& {\kappa_2,} \& \kappa_2 \& {\kappa_2-\kappa_1;} \\
            {[\mathsf{O}_{\Ku(Y)}]:} \& \kappa_1 \& {\kappa_2-\kappa_1,} \& \kappa_2 \& {-\kappa_1.}
            \arrow[maps to, from=1-2, to=1-3]
            \arrow[maps to, from=1-4, to=1-5]
            \arrow[maps to, from=2-2, to=2-3]
            \arrow[maps to, from=2-4, to=2-5]
        \end{tikzcd}
    \end{equation}
\end{proposition}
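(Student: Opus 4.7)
The plan is to verify the assertions in order. Begin with the Chern characters. Apply Grothendieck--Riemann--Roch to the closed immersion $j\colon\varPi\hookrightarrow Y$: using the normal bundle sequence $0\to N_{\varPi/Y}\to N_{\varPi/\pro^6}|_{\varPi}\to\sheaf{\varPi}(3)\to 0$, one computes $c(N_{\varPi/Y})=(1+h)^4/(1+3h)=1+h+3h^2$ and thus $\td(N_{\varPi/Y})=1+h/2+h^2/3$, where $h$ is the hyperplane class on $\varPi\cong\pro^2$. Since $[\varPi]=H^3/3$ as numerical classes on $Y$, one has $j_*(h^i)=H^{3+i}/3$, which yields explicit formulas for $\Chern(j_*\sheaf{\varPi}(k))$. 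The $K$-theoretic decompositions $[\PPP_{\varPi}]=[\sheaf{Y}]-[\sheaf{Y}(-1)]-[j_*\sheaf{\varPi}]$ and $[\FFF_{\varPi}]=4[\sheaf{Y}]-[\sheaf{Y}(1)]+[j_*\sheaf{\varPi}(1)]$ are read directly from \cref{prop:Ku_objs}, and routine expansion produces the stated Chern characters of $\kappa_1$ and $\kappa_2$.

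Next, establish that $\{\kappa_1,\kappa_2\}$ is an integral basis. Splitting the SOD \eqref{SOD:cubic_5} over $\Knum$ gives $\operatorname{rk}\Knum(\Ku(Y))=\operatorname{rk}\Knum(Y)-4$, and for a smooth cubic fivefold all algebraic cycles are numerically generated by the powers of $H$ (by the Lefschetz hyperplane theorem in codimension $\leq 2$ and Poincar\'e duality beyond), so $\Knum(Y)$ has rank $6$ and hence $\Knum(\Ku(Y))$ has rank $2$. The two Chern characters above are manifestly $\Q$-linearly independent, so they form a $\Q$-basis; integrality will follow from unimodularity of the Euler form, established next.

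Compute the Euler matrix by Hirzebruch--Riemann--Roch on $Y$: with $\td(Y)$ read off from $c(\mathcal{T}_Y)=(1+H)^7/(1+3H)$, the four pairings $\chi(\kappa_i,\kappa_j)=\int_Y\Chern(\kappa_i)^{\dual}\Chern(\kappa_j)\td(Y)$ evaluate to the stated matrix. Since its determinant equals $1$, and since Serre duality on $Y$ descends to a unimodular Euler form on every admissible subcategory, the rank-$2$ sublattice spanned by $\{\kappa_1,\kappa_2\}$ must coincide with the full lattice $\Knum(\Ku(Y))$, settling integrality.

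Finally, the Serre and rotation actions follow from \cref{rmk:S&O:PFK} together with $\mathsf{O}_{\Ku(Y)}^3\simeq[2]$. The isomorphism $\mathsf{O}_{\Ku(Y)}\PPP_{\varPi}\cong\FFF_{\varPi}[1]$ descends to $\mathsf{O}(\kappa_2)=-\kappa_1$. On $\Knum$ the matrix $M$ of $\mathsf{O}$ satisfies $M^3=I$ and $M\neq I$, hence $M^2+M+I=0$; combined with the known $\mathsf{O}(\kappa_2)$, this pins down $\mathsf{O}(\kappa_1)=\kappa_2-\kappa_1$ uniquely. The Serre action then comes from $\mathsf{S}_{\Ku(Y)}\simeq\mathsf{O}_{\Ku(Y)}^{-1}[3]$, which on $\Knum$ corresponds to $-M^{-1}=-M^2$. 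The main subtlety throughout is promoting the $\Q$-basis to an integral basis, which rests on the unimodularity of the Euler form for admissible subcategories---a general but non-trivial consequence of the Serre functor formalism.
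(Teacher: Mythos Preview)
Your argument is correct and largely parallels the paper's: both compute the Chern characters from the triangles in \cref{prop:Ku_objs}, establish rank $2$ via the SOD and Lefschetz/Poincar\'e, and deduce integrality from the Gram determinant being $\pm 1$. One point of presentation: your ``main subtlety'' is overstated. You do not need any general fact about unimodularity of the Euler form on admissible subcategories. All that is used is that the Euler form on $\Knum(\Ku(Y))$ is integer-valued and (by definition of $\Knum$) nondegenerate; then if $L\subset\Knum(\Ku(Y))$ is a full-rank sublattice with $\lvert\det\chi|_L\rvert=1$, the index relation $\det\chi|_L=[\Knum(\Ku(Y)):L]^2\det\chi|_{\Knum(\Ku(Y))}$ forces the index to be $1$. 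The paper states exactly this in one line.

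Where you genuinely diverge from the paper is in the Serre/rotation action. The paper argues abstractly: using $\chi(\kappa_1,\mathsf{S}\kappa_1)=\chi(\kappa_1,\kappa_1)$, $\chi(\mathsf{S}\kappa_1,\mathsf{S}\kappa_1)=\chi(\kappa_1,\kappa_1)$, and the identity $\chi(\mathsf{S}\kappa_1,\kappa_1)=-\chi(\mathsf{S}\kappa_1,\kappa_1)$ (from $\mathsf{S}^3\simeq[7]$), it pins down $\mathsf{S}\kappa_1=\kappa_2$ by solving the resulting linear system in the known basis. You instead read off $\mathsf{O}(\kappa_2)=-\kappa_1$ directly from \cref{rmk:S&O:PFK} and then use the Cayley--Hamilton style observation that $M^3=I$ with $M\neq I$ on a rank-$2$ lattice forces $M^2+M+I=0$, which determines $M\kappa_1$. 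Your route is a bit more concrete and avoids the Euler-pairing gymnastics; the paper's route has the virtue of not appealing to the explicit identification $\mathsf{O}\PPP_\varPi\cong\FFF_\varPi[1]$, so it would work even before that remark is established. Both are valid.
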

\vspace*{\parskip}
\begin{proof}
                            by Lefschetz hyperplane theorem and Poincar\'e duality, we have $\dim \operatorname{CH}_{\operatorname{num}}^{k}(Y)_{\Q} = 1$ for all $0 \leq k \leq 5$. Since $\Knum(Y) \otimes \Q \cong \operatorname{CH}_{\operatorname{num}}^{\bullet}(Y)_{\Q}$, the numerical K-group for $Y$ is of rank $6$. Then using the semi-orthogonal decomposition \eqref{SOD:cubic_5}, we have 
    \begin{equation}
        \Knum(Y) \cong \Knum(\Ku(Y)) \oplus \Z[\sheaf{Y}] \oplus \Z[\sheaf{Y}(1)] \oplus \Z[\sheaf{Y}(2)] \oplus \Z[\sheaf{Y}(3)].
    \end{equation}
    Hence $\Knum(\Ku(Y))$ is a lattice of rank $2$. We have shown that $\kappa_1 = [\FFF_{\varPi}] \in \Knum(\Ku(Y))$ and  $\kappa_2 = [\PPP_{\varPi}] \in \Knum(\Ku(Y))$. Their Chern characters are given by
    \begin{align*}
        \Chern{}(\FFF_{\varPi})
         & = 4 - \left( \Chern{}(\sheaf{Y}) - \Chern{}(\sheaf{\varPi}) \right)\Chern{}(\sheaf{Y}(1)) \\
         & = 3 - H - \dfrac{1}{2}H^2 + \dfrac{1}{6}H^3 + \dfrac{1}{8}H^4 - \dfrac{13}{360}H^5;       \\
        \Chern{}(\PPP_{\varPi})
         & = \Chern{}(\sheaf{H}) - \Chern{}(\sheaf{\varPi})                                          \\
         & = H - \dfrac{1}{2}H^2 - \dfrac{1}{6}H^3 + \dfrac{1}{8}H^4 + \dfrac{13}{360}H^5.
    \end{align*}
    In particular $\kappa_1$ and $\kappa_2$ are linearly independent, so they span the vector space $\Knum(\Ku(Y)) \otimes_{\Z} \Q$. To compute the Euler pairing with respect to this basis, we use the Hirzebruch--Riemann--Roch theorem. The Todd class of $Y$, computed using the Euler sequence and the normal sequence, is given by
    \begin{align*}
        \td(Y) & = \dfrac{\td(\mathscr{T}_{\pro^6}|_{Y})}{\td(\sheaf{Y}(3))} = \left( \dfrac{H}{1-\exp(-H)} \right)^7\cdot \dfrac{1-\exp(-3H)}{3H} \\
               & = 1 + 2H + \dfrac{25}{12}H^2 + \dfrac{3}{2}H^3 + \dfrac{73}{90}H^4 + \dfrac{1}{3}H^5.
    \end{align*}
    A Hirzebruch--Riemann--Roch computation gives
    \begin{equation}
        \chi(\kappa_1,\kappa_1) = \chi(\kappa_2,\kappa_2) = \chi(\kappa_1,\kappa_2) = -1; \qquad \chi(\kappa_2,\kappa_1) = 0.
    \end{equation}
    Hence $\chi_{\Ku(Y)} = \begin{pmatrix}
            -1 & -1 \\ 0 & -1
        \end{pmatrix}$ with respect to $\{\kappa_1,\kappa_2\}$. Note that this pairing matrix is unimodular, which means that $\{\kappa_1,\kappa_2\}$ is also an integral basis of the lattice $\Knum(\Ku(Y))$. 
                                                By \cref{rmk:S&O:PFK}, we have that $\mathsf{S}_{\Ku(Y)}\kappa_1 = \kappa_2$, $\mathsf{S}_{\Ku(Y)}\kappa_2 = \kappa_2 - \kappa_1$, and $\mathsf{O}_{\Ku(Y)}\kappa_1 = \kappa_2-\kappa_1$, $\mathsf{O}_{\Ku(Y)}\kappa_2 = - \kappa_1$.
\end{proof}

\begin{remark}
    It is instructive to visualize $\Knum(\Ku(Y))$ as the hexagonal lattice in the plane, as in \cref{fig:hexa}. The Serre functor acts on this lattice by rotation through $\pi/3$, while the rotation functor acts by rotation through $2\pi/3$.
\end{remark}
\begin{figure}[H]
    \centering
    \begin{tikzpicture}[line cap=round,line join=round,>=Stealth,,x=2.2cm,y=2.2cm]
                        \draw [line width=1pt,dashed] (1,1.7320508075688776)-- (0,0);
            \draw [line width=1pt,dashed] (0,0)-- (3,0);
            \draw [line width=1pt,dashed] (0,0)-- (-1,1.732050807568878);
                                    \draw [fill=black] (0,0) circle (1.5pt);
            \draw [fill=black] (1,0) circle (1.5pt);
            \draw[color=black] (1.0034205452991069,-0.18) node {$\kappa_1 = [\FFF_{\varPi}]$};
            \draw [fill=black] (2,0) circle (1.5pt);
            \draw [fill=black] (3,0) circle (1.5pt);
            \draw[color=black] (3.2417476725036076,0.18590524258913643) node {$\kappa_1$-axis};
            \draw [fill=black] (-1,0) circle (1.5pt);
            \draw[color=black] (-0.9027827771016672,-0.18) node {$-\kappa_1 =  [\cF_{\varPi}[1]]$};
            \draw [fill=black] (0.5,0.8660254037844386) circle (1.5pt);
            \draw[color=black] (1,1.0051597231911384) node {$\kappa_2 = [\PPP_{\varPi}]$};
            \draw [fill=black] (-0.5,0.8660254037844388) circle (1.5pt);
            \draw [fill=black] (3.5,0.8660254037844388) circle (1.5pt);
            \draw[color=black] (-0.24165810443849864,1.051783961924586) node {$\kappa_2-\kappa_1$};
            \draw [fill=black] (-0.5,-0.8660254037844384) circle (1.5pt);
            \draw[color=black] (-0.4683005265718972,-0.679973476746313) node {$-\kappa_2$};
            \draw [fill=black] (3.5,-0.8660254037844384) circle (1.5pt);
            \draw [fill=black] (0.5,-0.866025403784439) circle (1.5pt);
            \draw[color=black] (0.537903034305488,-0.679973476746313) node {$\kappa_1-\kappa_2 = [\KKK_{\varPi}]$};
            \draw [fill=black] (1,1.7320508075688776) circle (1.5pt);
            \draw[color=black] (1.236905406965366,1.9176626812600355) node {$\kappa_2$-axis};
            \draw [fill=black] (0,1.7320508075688779) circle (1.5pt);
            \draw [fill=black] (3,1.7320508075688776) circle (1.5pt);
            \draw [fill=black] (2,1.7320508075688779) circle (1.5pt);
            \draw [fill=black] (1.5,0.8660254037844395) circle (1.5pt);
            \draw[color=black] (1.7631841610997429,1.051783961924586) node {$\kappa_1+\kappa_2$};
            \draw [fill=black] (1.5,-0.8660254037844396) circle (1.5pt);
            \draw [fill=black] (2.5,-0.8660254037844404) circle (1.5pt);
            \draw [fill=black] (2.5,0.8660254037844392) circle (1.5pt);
            \draw [fill=black] (-1,1.732050807568878) circle (1.5pt);
                        \def\r{0.15}
            \def\gap{0.1}
            \draw[shift={(0,0)}, ->, line width=0.7pt, smooth, samples=250, variable=\t, domain=0:420, color=cyan!50!black] plot ({(\r + \gap*\t/360)*cos(\t)}, {(\r + \gap*\t/360)*sin(\t)});
            \draw [shift={(0,0)},->,line width=0.7pt, color=cyan!50!black] (0:0.6) arc (0:120:0.6);
            \draw[color=cyan!50!black] (0.8,0.4) node {$\sfO_{\Ku(Y)}$};
            \draw[color=cyan!50!black] (0.1,-0.36) node {$\sfS_{\Ku(Y)}$};
                    \end{tikzpicture}
    \caption[Characters in $\Knum(\Ku(Y))$ in hexagonal coordinates.]{Characters in $\Knum(\Ku(Y))$ in hexagonal coordinates. The indicated arrows show
the actions of the Serre functor and the rotation functor.}
        \label{fig:hexa}
\end{figure}

\subsection[The numerical K-group for Ku(ℙ³,C₀)]{The numerical K-group for $\Ku(\pro^3,\CCC_0)$}

\begin{definition}
    Let $X$ be an $n$-dimensional smooth projective variety and $\cB$ a locally free sheaf of $\sheaf{X}$-algebras. Denote by $\Coh(X,\cB)$ the coherent sheaves of right $\cB$-modules and $\Dcatb(X,\cB)$ the corresponding bounded derived category. In this context, the pair $(X,\cB)$ is often called a \emph{non-commutative} variety. There is a forgetful functor $\operatorname{Forg}\colon \Dcatb(X,\cB) \to \Dcatb(X)$, which forgets the $\cB$-module structure. It admits both left and right adjoints (\cite[Lemma 10.14, Corollary 10.35]{kuz06hyp}):
    \begin{equation}
        (- \otimes_{\sheaf{X}} \cB) \dashv \operatorname{Forg} \dashv (- \otimes_{\sheaf{X}} \cB^\dual). \label{eq:adjunc}
    \end{equation}
\end{definition}

\begin{lemma}[][lem:twisted_Serre]
    Let $X$ be an $n$-dimensional smooth projective variety and $\cB$ a locally free sheaf of $\sheaf{X}$-algebras. Assume that $\Dcatb(X,\cB) = \Dperf(X,\cB)$. Then the Serre functor of $\Dcatb(X,\cB)$ is given by
    \begin{equation}
        \mathsf{S}(E) = E \otimes_{\cB} \ab(\cB^\dual \otimes_{\sheaf{X}} \omega_X )   [n],
    \end{equation}
    where $\cB^\dual$ is the dual of $\cB$ as an $\sheaf{X}$-module.
\end{lemma}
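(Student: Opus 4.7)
The plan is to verify the defining bifunctorial property of the Serre functor,
\[
\Hom_{\BBB}(E, F) \;\cong\; \Hom_{\BBB}\bigl(F,\ \omega_X \otimes_{\sheaf X} E \otimes_{\BBB} \BBB^\dual [n]\bigr)^\dual,
\]
which characterises $\mathsf S$ uniquely up to natural isomorphism. I aim to build this isomorphism by chaining classical Serre duality on $X$ with the two adjunctions in \eqref{eq:adjunc}.

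First I would establish the identity on the generating subcategory of \emph{induced} modules, i.e.\ objects of the form $E = H \otimes_{\sheaf X} \BBB$ for $H \in \Dcatb(X)$. For such $E$, the left adjunction $(- \otimes_{\sheaf X} \BBB) \dashv \operatorname{Forg}$ gives $\Hom_{\BBB}(E, F) \cong \Hom_{\sheaf X}(H, \operatorname{Forg}(F))$. Serre duality on $X$ dualises the latter to $\Hom_{\sheaf X}(\operatorname{Forg}(F),\, H \otimes_{\sheaf X} \omega_X [n])^\dual$, and the right adjunction $\operatorname{Forg} \dashv (- \otimes_{\sheaf X} \BBB^\dual)$ rewrites this as $\Hom_{\BBB}(F,\, H \otimes_{\sheaf X} \omega_X \otimes_{\sheaf X} \BBB^\dual [n])^\dual$. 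The elementary identification $(H \otimes_{\sheaf X} \BBB) \otimes_{\BBB} \BBB^\dual \cong H \otimes_{\sheaf X} \BBB^\dual$ then matches the target with $\Hom_{\BBB}(F, \mathsf S(E))^\dual$, verifying the Serre property on induced modules.

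Second, I would extend the isomorphism from induced modules to arbitrary $E \in \Dcatb(X, \BBB)$. Since $\BBB$ is locally free over $\sheaf X$, the counit $\operatorname{Forg}(E) \otimes_{\sheaf X} \BBB \to E$ of the left adjunction is (split) surjective as an $\sheaf X$-linear map, and iterating produces a resolution of $E$ by induced modules. Both $\Hom_{\BBB}(-, F)$ and $\Hom_{\BBB}(F, \mathsf S(-))^\dual$ are cohomological in $E$, and $\mathsf S$ as defined is triangulated; hence the full subcategory on which the natural transformation constructed above is an isomorphism is thick and triangulated. Containing the induced generators, it must coincide with $\Dcatb(X, \BBB)$.

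The main obstacle I anticipate is purely bookkeeping: tracking the $\BBB$-bimodule structure on $\BBB^\dual$ correctly (left action from right multiplication on $\BBB$, and vice versa), so that the tensor product $E \otimes_{\BBB} \BBB^\dual$ retains a well-defined right $\BBB$-module structure and $\mathsf S$ genuinely lands in $\Dcatb(X, \BBB)$. Once the bimodule conventions are pinned down, the argument is a formal manipulation of adjoints, with classical Serre duality on $X$ providing the only geometric input.
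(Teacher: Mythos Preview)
Your proposal is correct and matches the paper's proof essentially line for line: the paper also reduces to induced modules $E'\otimes_{\sheaf X}\BBB$ via the bar resolution (which is exactly your iterated-counit resolution) and then runs the same chain of adjunctions $(-\otimes_{\sheaf X}\BBB)\dashv\operatorname{Forg}\dashv(-\otimes_{\sheaf X}\BBB^\dual)$ with classical Serre duality on $X$ in the middle. The bimodule bookkeeping you flag is indeed the only subtlety, and the paper leaves it just as implicit as you do.
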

\begin{proof}
    We need to check the functorial isomorphism for $E,F \in \Dcatb(X,\cB)$:
    \begin{equation}\label{equ:Serre:(X,B)}
        \Hom(E,\, F) \cong \Hom\ab(F,\, E \otimes_{\cB} \ab(\cB^\dual\otimes_{\sheaf{X}} \omega_X)[n])^\dual.
    \end{equation}
    Since $\Dcatb(X,\cB) = \Dperf(X,\cB)$, the category is generated by induced perfect modules $G \otimes_{\sheaf{X}} \cB$ with $G \in \Dperf(X)$. It therefore suffices to check \eqref{equ:Serre:(X,B)} on such objects.
                                                    Indeed, using the adjunctions \eqref{eq:adjunc}, we have, for any $F \in \Dcatb(X,\cB)$ and $G \in \Coh(X)$,
    \begin{align*}
        \Hom_{\Dcatb(X,\cB)}(G \otimes_{\sheaf{X}}\cB,\, F)
         & \cong \Hom_{\Dcatb(X)}(G,\, \operatorname{Forg}(F)) \\
         & \cong \Hom_{\Dcatb(X)}(\operatorname{Forg}(F),\, \mathrm{S}_X(G))^\dual \\
         & \cong \Hom_{\Dcatb(X)}(\operatorname{Forg}(F),\, G \otimes_{\sheaf{X}} \omega_X[n])^\dual  \\
         & \cong \Hom_{\Dcatb(X,\cB)}(F,\, G \otimes_{\sheaf{X}} \cB^\dual \otimes_{\sheaf{X}} \omega_X [n])^\dual \\
         & \cong \Hom_{\Dcatb(X,\cB)}\ab(F,\,  \ab(G \otimes_{\sheaf{X}} \cB) \otimes_{\cB} \ab(\cB^\dual\otimes_{\sheaf{X}} \omega_X) [n])^\dual.  \qedhere
    \end{align*}
        \end{proof}

From \cref{thm:psi}, there is an SOD $\Dcatb(\pro^3,\CCC_0) = \ord{\Ku(\pro^3,\CCC_0), \CCC_1,\CCC_2}$ such that $\varPsi\circ\sigma^*\colon \Ku(Y) \to \Ku(\pro^3,\CCC_0)$ is an equivalence of categories. \cref{lem:twisted_Serre} gives a description of the Serre functors:
\begin{corollary}[][cor:Serre]
    \begin{enumerate}[label = (\roman*), nosep, before=\vspace*{-\parskip}]
        \item The Serre functor of $\Dcatb(\pro^3,\CCC_0)$ is given by $\mathsf{S}_{\Dcatb(\pro^3,\CCC_0)} = (- \otimes_{\CCC_0} \CCC_{-2})[3]$.
        \item The Serre functor of $\Ku(\pro^3,\CCC_0)$ satisfies $\mathsf{S}_{\Ku(\pro^3,\CCC_0)}^{-1} = \mathsf{O}_{\Ku(\pro^3,\CCC_0)}^2[-3]$, where
              \begin{equation}
                  \mathsf{O}_{\Ku(\pro^3,\CCC_0)} \coloneqq \lmut{\CCC_1} \circ (- \otimes_{\CCC_0} \CCC_1) = (- \otimes_{\CCC_0} \CCC_1) \circ \lmut{\CCC_0},
              \end{equation}
              also called a \textbf{rotation functor}, is an auto-equivalence of $\Ku(\pro^3,\CCC_0)$.
    \end{enumerate}
\end{corollary}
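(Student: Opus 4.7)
Part (i) is a direct application of \cref{lem:twisted_Serre} with $X=\pro^3$ and $\BBB=\CCC_0$: since $\omega_{\pro^3}=\sheaf{\pro^3}(-4)$ and $n=3$, the lemma yields
\[\mathsf{S}_{\Dcatb(\pro^3,\CCC_0)}(E)=E\otimes_{\CCC_0}\bigl(\sheaf{\pro^3}(-4)\otimes_{\sheaf{\pro^3}}\CCC_0^{\dual}\bigr)[3].\]
The essential task is to identify the $(\CCC_0,\CCC_0)$-bimodule $\sheaf{\pro^3}(-4)\otimes\CCC_0^{\dual}$ with $\CCC_{-2}=\CCC_0\otimes\sheaf{\pro^3}(-1)$. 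As $\sheaf{\pro^3}$-modules this is immediate from \eqref{eq:Cliff_01}: both decompose as $\sheaf{\pro^3}(-1)\oplus\sheaf{\pro^3}(-2)^{\oplus 3}\oplus\sheaf{\pro^3}(-3)^{\oplus 3}\oplus\sheaf{\pro^3}(-4)$. The bimodule isomorphism is a Frobenius-type feature of the Clifford algebra of a non-degenerate rank-$4$ quadratic bundle: the Clifford trace induces a canonical bimodule isomorphism $\CCC_0^{\dual}\cong\CCC_0\otimes(\det\FFF)^{\dual}$, and combining with $\omega_{\pro^3}$ yields $\CCC_{-2}$. This is essentially Serre duality for Clifford quadric fibrations in the sense of \cite{kuzQuaFib}.

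For part (ii) I would proceed in three steps. First, I would check that the two expressions for $\mathsf O$ coincide: since $-\otimes_{\CCC_0}\CCC_1$ is an auto-equivalence of $\Dcatb(\pro^3,\CCC_0)$ with quasi-inverse $-\otimes_{\CCC_0}\CCC_{-1}$ (by \cref{lem:C_Hom}), applying it to the defining triangle of $\lmut{\CCC_0}E$ and using the natural identification $\Hom^\bullet(\CCC_0,E)\cong\Hom^\bullet(\CCC_1,E\otimes_{\CCC_0}\CCC_1)$ reproduces the triangle defining $\lmut{\CCC_1}(E\otimes_{\CCC_0}\CCC_1)$. Second, I would verify that $\mathsf O$ preserves $\Ku(\pro^3,\CCC_0)=\ord{\CCC_1,\CCC_2}^{\perp}$: for $E\in\Ku(\pro^3,\CCC_0)$, the object $\mathsf OE$ lies in $\CCC_1^{\perp}$ by construction, and applying $\Rsf\Hom^\bullet(\CCC_2,-)$ to the defining triangle reduces the vanishing $\Hom^\bullet(\CCC_2,\mathsf OE)=0$ to $\Rsf\Hom^\bullet(\CCC_2,E\otimes\CCC_1)\cong\Rsf\Hom^\bullet(\CCC_1,E)=0$ and $\Rsf\Hom^\bullet(\CCC_2,\CCC_1)\cong\Rsf\Gamma(\pro^3,\CCC_{-1})=0$, the latter because $\CCC_{-1}=\sheaf{\pro^3}(-1)^{\oplus 3}\oplus\sheaf{\pro^3}(-2)^{\oplus 2}\oplus\sheaf{\pro^3}(-3)^{\oplus 3}$ consists of line bundles with vanishing cohomology on $\pro^3$. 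A quasi-inverse built analogously from $-\otimes_{\CCC_0}\CCC_{-1}$ and a right mutation, analysed via the Serre-shifted SOD $\ord{\CCC_{-1},\CCC_0,\Ku(\pro^3,\CCC_0)}$ obtained by applying $\mathsf{S}_{\Dcatb(\pro^3,\CCC_0)}^{-1}$ to the SOD of \cref{thm:psi}, then makes $\mathsf O$ an auto-equivalence.

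Third, for the Serre identity $\mathsf{S}_{\Ku(\pro^3,\CCC_0)}^{-1}\cong\mathsf O^{2}[-3]$: part (i) together with $\CCC_1\otimes_{\CCC_0}\CCC_1\cong\CCC_2$ (\cref{lem:C_Hom}) gives $\mathsf{S}_{\Dcatb(\pro^3,\CCC_0)}^{-1}\simeq(-\otimes_{\CCC_0}\CCC_1)^{2}[-3]$ on the ambient category, and iterating the commutation of $-\otimes_{\CCC_0}\CCC_1$ past left mutation yields the explicit formula $\mathsf O^{2}(E)=\lmut{\CCC_1}\lmut{\CCC_2}(E\otimes_{\CCC_0}\CCC_2)$. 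The Serre functor on $\Ku(\pro^3,\CCC_0)$ is obtained from the ambient one via the adjoint of the inclusion, which is realised as a composition of mutations coming from the SOD of \cref{thm:psi} and its Serre-dual version; a short triangle chase using the $\Rsf\Hom$-vanishings among Clifford twists (\cref{lem:C_Hom}) matches this projection with $\mathsf O^{2}[-3]$. The principal technical difficulties in the whole argument are (a) the Clifford-theoretic bimodule identification in part (i), which is not formal but specific to even-rank quadratic bundles; and (b) the careful bookkeeping of mutations required to realise $\mathsf{S}_{\Ku(\pro^3,\CCC_0)}^{-1}$ in precisely the form $\mathsf O^{2}[-3]$ rather than some a priori distinct combination of Clifford twists and mutations.
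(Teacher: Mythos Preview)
Your proposal is correct and follows essentially the same strategy as the paper. The paper applies \cref{lem:twisted_Serre} and verifies $\CCC_0^\dual\otimes\omega_{\pro^3}\cong\CCC_{-2}$ as $\sheaf{\pro^3}$-modules (glossing over the bimodule identification you correctly flag), then for (ii) uses directly the standard formula $\mathsf{S}_{\Ku}^{-1}=\varXi^*\circ\mathsf{S}_{\CCC_0}^{-1}\circ\varXi_*$ with $\varXi^*=\lmut{\CCC_1}\circ\lmut{\CCC_2}$ and the commutation $\lmut{\CCC_2}\circ(-\otimes_{\CCC_0}\CCC_1)=(-\otimes_{\CCC_0}\CCC_1)\circ\lmut{\CCC_1}$ to arrive at $\mathsf{O}^2[-3]$; you reach the same endpoint via the equivalent formula $\mathsf{O}^2=\lmut{\CCC_1}\lmut{\CCC_2}(-\otimes_{\CCC_0}\CCC_2)$, and your additional verifications (the two descriptions of $\mathsf{O}$, preservation of $\Ku(\pro^3,\CCC_0)$) are points the paper takes for granted.
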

\begin{proof}
    \begin{enumerate}[label = (\roman*)]
        \item Since $\Dcatb(\pro^3,\CCC_0)$ embeds into $\Dcatb(\tY)$ as an admissible subcategory, it is smooth and proper, and in particular, $\Dcatb(\pro^3,\CCC_0) = \Dperf(\pro^3,\CCC_0)$. Additionally, we have
              \begin{equation}
                  \CCC_0^\dual \otimes \omega_{\pro^3} \cong \left( \sheaf{} \oplus \sheaf{}(1)^{\oplus 3} \oplus \sheaf{}(2)^{\oplus 3} \oplus \sheaf{}(3) \right) \otimes \sheaf{}(-4) \cong \CCC_{0} \otimes \sheaf{}(-1) \cong \CCC_{-2},
              \end{equation}
              and the result follows from \cref{lem:twisted_Serre}.

        \item Let $\varXi_*\colon \Ku(\pro^3,\CCC_0) \to \Dcatb(\pro^3,\CCC_0)$ be the embedding functor. By \cref{thm:psi}, its left adjoint is $\varXi^* = \lmut{\CCC_1} \circ \lmut{\CCC_2}$. The Serre functors of $\Ku(\pro^3,\CCC_0)$ and $\Dcatb(\pro^3,\CCC_0)$ are related by $\mathsf S_{\Ku(\pro^3,\CCC_0)}^{-1} = \varXi^* \circ \mathsf S_{\CCC_0}^{-1} \circ \varXi_*$. Using $\mathsf S_{\CCC_0}^{-1} = - \otimes_{\CCC_0} \CCC_2[-3]$ and the semi-orthogonal decomposition \eqref{eq:twist_P3_SOD}, we find that
              \begin{align*}
                  \mathsf S_{\Ku(\pro^3,\CCC_0)}^{-1} & = \varXi^* \circ \mathsf S_{\CCC_0}^{-1} \circ \varXi_*
                  = \lmut{\CCC_1} \circ \lmut{\CCC_2} \circ (- \otimes_{\CCC_0} \CCC_2) \circ \varXi_*[-3]                                                             \\
                                          & = \lmut{\CCC_1} \circ (- \otimes_{\CCC_0} \CCC_1) \circ \lmut{\CCC_1} \circ (- \otimes_{\CCC_0} \CCC_1) \circ \varXi_*[-3] \\
                                          & = \mathsf{O}_{\Ku(\pro^3,\CCC_0)}^{2}[-3]. \qedhere
              \end{align*}
    \end{enumerate}
\end{proof}

On the twisted variety $(X,\cB)$, the Euler pairing is analogously defined as
\begin{equation}
    \chi_{\cB}(E,F) \coloneqq \sum_{k \in \Z} (-1)^k \dim \Hom_{\Dcatb(X,\cB)}(E,F[k]).
\end{equation}
The next proposition computes the lattice and Euler pairing on the numerical Grothendieck group of $\Ku(\pro^3,\CCC_0)$. For $E \in \Dcatb(\pro^3,\CCC_0)$, the Chern character $\Chern{}(\operatorname{Forg}(E))$ will be simply denoted by $\Chern{}(E)$.
\begin{proposition}[][prop:Knum_Ku_C0]
    The numerical Grothendieck group $\Knum(\Ku(\pro^3,\CCC_0))$ has an integral basis $\{\bar \kappa_1, \bar \kappa_2\}$, where
    \begin{align}
        \bar \kappa_1 & = [\CCC_{-1}] - 4[\CCC_0] + 4[\CCC_1] - [\CCC_2], & \Chern{}(\bar \kappa_1) & = \phantom{-8+1}4h - \phantom{1}5h^2 + \mathcal{O}(h^3). \\
        \bar \kappa_2 & = [\CCC_{-1}] - 3[\CCC_0] + \phantom{4}[\CCC_1], & \Chern{}(\bar \kappa_2) & = -8 + 12h - 10h^2 + \mathcal{O}(h^3).
    \end{align}
    If $\varPi\subset Y$ is a plane disjoint from the centre $\varPi_0$, then
    $\bar\kappa_1=[\varPsi\sigma^*\FFF_{\varPi}]$ and
    $\bar\kappa_2=[\varPsi\sigma^*\PPP_{\varPi}]$.
    The Euler pairing with respect to this basis is given by the matrix
    \[ \chi_{\Ku(\pro^3,\CCC_0)} = \begin{pmatrix}
            -1 & -1 \\ 0 & -1
        \end{pmatrix}. \]
    The Serre functor and the rotation functor act on the basis by
    \vspace*{-0.5em}\begin{equation}
        \begin{tikzcd}[ampersand replacement=\&,cramped,row sep=0, column sep = small]
            {\ab[\mathsf{S}_{\Ku(\pro^3,\CCC_0)}]_* = \ab[\mathsf{O}_{\Ku(\pro^3,\CCC_0)}]_*:} \& \bar \kappa_1 \& {\bar \kappa_2,} \& \bar \kappa_2 \& {\bar \kappa_2 - \bar \kappa_1;}
                        \arrow[maps to, from=1-2, to=1-3]
            \arrow[maps to, from=1-4, to=1-5]
                                \end{tikzcd}
    \end{equation}
\end{proposition}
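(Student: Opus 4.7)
The plan is to leverage the derived equivalence $\varPsi\circ\sigma^*\colon\Ku(Y)\simeq\Ku(\pro^3,\CCC_0)$ of \cref{thm:psi} to transport most of the structure from \cref{prop:Knum_Ku}. Any exact equivalence induces an isometry of numerical Grothendieck groups that intertwines Serre functors, so setting $\bar\kappa_i := [\varPsi\sigma^*\kappa_i]$ and quoting \cref{prop:Knum_Ku} immediately gives that $\{\bar\kappa_1,\bar\kappa_2\}$ is an integral basis of $\Knum(\Ku(\pro^3,\CCC_0))$ with the stated Euler pairing matrix, and that $\mathsf{S}_{\Ku(\pro^3,\CCC_0)}$ acts on the basis as $\mathsf{S}_{\Ku(Y)}$ does on $\{\kappa_1,\kappa_2\}$. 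This reduces the proposition to three items: the explicit expansions of $\bar\kappa_i$ in $[\CCC_k]$, the Chern character formulas, and the action of the rotation functor.

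For the explicit expansions, I would trace $\varPsi\circ\sigma^*$ through the resolutions in \cref{prop:Ku_objs}. Via \eqref{seq:F_Pi_def}, $[\FFF_\varPi]$ reads in $K_0(Y)$ as $4[\sheaf{Y}]-[\sheaf{Y}(1)]+[\sheaf{\varPi}(1)]$; since $\varPi\cap\varPi_0=\emptyset$, $\sigma^*$ acts by naive pullback on $\sheaf{\varPi}$ and yields a copy $\tilde\varPi\subset\tilde Y$ disjoint from the exceptional divisor. Applying $\varPsi$, \cref{lem:psi_kill} kills the copies of $\sheaf{\tilde Y}$, the value $\varPsi(\sheaf{\tilde Y}(H))\cong\CCC_2[2]$ is already computed inside the proof of \cref{lem:LB1}, and the only remaining term $\varPsi(\sheaf{\tilde\varPi}(H))$ I would compute by observing that $\pi|_{\tilde\varPi}$ realises a linear embedding $\pro^2\hookrightarrow\pro^3$ (because $\varPi$ and $\varPi_0$ are skew 2-planes in $\pro^6$), then combining the projection formula with the defining sequence \eqref{ses:E} of $\EEE$. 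The analogous analysis applied to \eqref{seq:P_Pi_def} yields $\bar\kappa_2$. As a consistency check, one verifies $\chi(\CCC_j,\bar\kappa_i)=0$ for $j=1,2$ via \cref{lem:C_Hom} and cohomology of line bundles on $\pro^3$, which confirms that the resulting classes lie in $\Ku(\pro^3,\CCC_0)$.

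For the Chern characters, \eqref{eq:Cliff_01} together with $\CCC_{k+2}=\CCC_k\otimes\sheaf{\pro^3}(1)$ give $\Chern{}(\operatorname{Forg}\CCC_k)$ as explicit polynomials in $h$, and linearity of $\Chern{}$ delivers the claimed $\Chern{}(\bar\kappa_i)$.

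For the rotation functor, I would compute the action of $\mathsf{O}$ directly on $K_0$. Using $\mathsf{O}=\lmut{\CCC_1}\circ(-\otimes_{\CCC_0}\CCC_1)$ and $\CCC_k\otimes_{\CCC_0}\CCC_1=\CCC_{k+1}$ from \cref{lem:C_Hom}, one expands $[\bar\kappa_i\otimes\CCC_1]$ as a $\Z$-combination of $[\CCC_k]$ with $k\leq3$. To re-express $[\CCC_3]$ along the SOD $\ord{\Ku,\CCC_1,\CCC_2}$ of \cref{thm:psi}, I would solve for the $[\CCC_1]$- and $[\CCC_2]$-coefficients using the Euler pairings $\chi(\CCC_j,\CCC_3)=\chi(\pro^3,\operatorname{Forg}\CCC_{3-j})$, obtaining $[\CCC_3]=\bar\kappa_2-2[\CCC_1]+3[\CCC_2]$ (where the identification of the $\Ku$-part as $\bar\kappa_2$ uses pairings against $\bar\kappa_1,\bar\kappa_2$). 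The formula $[\lmut{\CCC_1}X]=[X]-\chi(\CCC_1,X)[\CCC_1]$ then eliminates the $[\CCC_1]$ contribution and yields $\mathsf{O}(\bar\kappa_1)=\bar\kappa_2$ and $\mathsf{O}(\bar\kappa_2)=\bar\kappa_2-\bar\kappa_1$, proving $\mathsf{O}=\mathsf{S}$ on $\Knum$. The hardest step in the whole plan is the computation of $\varPsi(\sheaf{\tilde\varPi}(H))$: since $\tilde\varPi\hookrightarrow\tilde Y$ has codimension $3$, the pushforward under $\pi$ demands care (e.g.\ via a Koszul resolution of $\sheaf{\tilde\varPi}$ or a direct fibrewise analysis of $\EEE|_{\tilde\varPi}$), and this is the only step of the argument that goes beyond formal manipulations with the Euler form and the previously-established structural results.
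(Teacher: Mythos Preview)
Your approach is correct but takes a different route from the paper for the explicit expansions of $\bar\kappa_i$ in the $[\CCC_k]$. The paper works intrinsically inside $\Knum(\pro^3,\CCC_0)$: it first shows that $\{[\CCC_{-1}],[\CCC_0],[\CCC_1],[\CCC_2]\}$ is an integral basis by checking that the $4\times4$ Euler pairing matrix $\bigl(\chi_{\CCC_0}(\CCC_i,\CCC_j)\bigr)$ is unimodular, then finds the right-orthogonal complement $\ord{[\CCC_1],[\CCC_2]}^\perp$ by pure linear algebra, which yields the stated formulas for $\bar\kappa_1,\bar\kappa_2$ directly. Only afterwards does it identify $\bar\kappa_2$ with $[\varPsi\sigma^*\PPP_\varPi]$, and it does so \emph{not} by tracing the functor but by a coarse Chern-character match: using \cref{lem:psi_kill} one reduces to $\varPsi\sigma^*\PPP_\varPi\cong\pi_*(\EEE(h)|_{\sigma^{-1}\varPi})[1]$, which (by \cref{rmk:E_rank} and the fact that $\pi|_{\sigma^{-1}\varPi}\colon\pro^2\hookrightarrow\pro^3$ is a linear embedding) is a shifted sheaf supported on a hyperplane, locally free of rank $4$ there. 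This fixes $\Chern{\leq 1}$ as $4h$, and then the constraint that the class lies in $\Z\bar\kappa_1\oplus\Z\bar\kappa_2$ forces it to be $-\bar\kappa_2$. This completely sidesteps your ``hardest step'' of computing $\varPsi(\sheaf{\tilde\varPi}(H))$ (equivalently, of understanding $\EEE|_{\tilde\varPi}$). Your route would also work, but the paper's trick of matching low-degree Chern terms against a pre-computed lattice basis is appreciably cheaper. On the other hand, your direct computation of $\mathsf{O}_{\Ku(\pro^3,\CCC_0)}$ via $[\CCC_3]=\bar\kappa_2-2[\CCC_1]+3[\CCC_2]$ is a genuine addition: the paper's proof handles only the Serre functor explicitly and does not say why $\mathsf{O}$ agrees with $\mathsf{S}$ on $\Knum$; the relation $\mathsf{S}^{-1}=\mathsf{O}^2[-3]$ from \cref{cor:Serre} only gives $\mathsf{O}^2=\mathsf{S}^2$ at the level of $\Knum$, leaving a sign which your calculation resolves.
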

\vspace*{\parskip}
\begin{proof}
    We begin with a computation of the numerical Grothendieck group of $\Dcatb(\pro^3,\CCC_0)$.  To find a basis, we simply compute the Euler pairing among the objects $\CCC_i$. We have
    \begin{equation}
        \chi_{\CCC_0}(\CCC_i,\CCC_j) = \chi_{\CCC_0}(\CCC_0,\CCC_{j-i}) = \chi(\pro^3,\operatorname{Forg}(\CCC_{j-i})).
    \end{equation}
    From the isometry $\Knum(\Ku(Y)) \cong \Knum(\Ku(\pro^3,\CCC_0))$, we have $\rk \Knum(\Ku(\pro^3,\CCC_0)) = 2$. By \cref{thm:psi}, we have 
    \[ \Knum(\pro^3,\CCC_0) = \Knum(\Ku(\pro^3,\CCC_0)) \oplus \Z[\CCC_1] \oplus \Z[\CCC_2], \]
    and hence $\rk \Knum(\pro^3,\CCC_0) = 4$.
    For $i=-1,0,1,2$, the Clifford sheaves and their Chern characters are given by
    \begin{equation}\label{equ:Clifford_Chern}
        \begin{aligned}
            \CCC_{-1} & = \sheaf{}(-1)^{\oplus 3} \oplus \sheaf{}(-2)^{\oplus 2} \oplus \sheaf{}(-3)^{\oplus 3};      & \Chern{}(\CCC_{-1}) & = 8 - 16h + 19h^2 + \mathcal{O}(h^3); \\
        \CCC_0    & = \sheaf{} \oplus \sheaf{}(-1)^{\oplus 3} \oplus \sheaf{}(-2)^{\oplus 3} \oplus \sheaf{}(-3); & \Chern{}(\CCC_{0})  & = 8 - 12h + 12h^2 + \mathcal{O}(h^3); \\
        \CCC_1    & = \sheaf{}^{\oplus 3} \oplus \sheaf{}(-1)^{\oplus 2} \oplus \sheaf{}(-2)^{\oplus 3};          & \Chern{}(\CCC_{1})  & = 8 - 8h + 7h^2 + \mathcal{O}(h^3);   \\
        \CCC_2    & = \sheaf{}(1) \oplus \sheaf{}^{\oplus 3} \oplus \sheaf{}(-1)^{\oplus 3} \oplus \sheaf{}(-2);  & \Chern{}(\CCC_{2})  & = 8 - 4h + 4h^2 + \mathcal{O}(h^3).
        \end{aligned}
    \end{equation}
    The Euler pairings for $\CCC_{-1}, \CCC_0,\CCC_1,\CCC_2$ are given by
    \[\left(\chi_{\CCC_0}(\CCC_i,\CCC_{j})\right)_{i,j = -1,0,1,2} = \begin{pmatrix}
            1  & 3  & 7 & 14 \\
            0  & 1  & 3 & 7  \\
            -1 & 0  & 1 & 3  \\
            -3 & -1 & 0 & 1
        \end{pmatrix}.\]
    This matrix is unimodular. Therefore $\left\{ [\CCC_{-1}],[\CCC_0],[\CCC_1],[\CCC_2] \right\}$ is an integral basis of $\Knum(\pro^3,\CCC_0)$. The numerical Grothendieck group of the Kuznetsov component of $\Dcatb(\pro^3,\CCC_0)$ is the right orthogonal $\ord{[\CCC_1],[\CCC_2]}^\perp$. A linear-algebraic computation shows that
    \begin{equation}
        \Knum(\Ku(\pro^3,\CCC_0)) = \operatorname{span}\left\{ \bar\kappa_1, \bar\kappa_2\right\}
    \end{equation}
    where $\bar\kappa_1 \coloneqq [\CCC_{-1}] - 4[\CCC_0] + 4[\CCC_1] - [\CCC_2]$ and $\bar\kappa_2 \coloneqq [\CCC_{-1}] - 3[\CCC_0] + [\CCC_1]$, and their Chern characters are given as in the proposition. The Euler pairing with respect to $\{\bar\kappa_1,\bar\kappa_2\}$ is given by
                                                                \[ \chi_{\Ku(\pro^3,\CCC_0)} = \begin{pmatrix}
            -1 & -1 \\ 0 & -1
        \end{pmatrix}. \]
    This proves all the numerical assertions without any auxiliary plane.
    By \cref{lem:disjoint_planes}, we assume that $\varPi\subset Y$ is disjoint from the centre
    $\varPi_0$. Since $\varPsi\circ\sigma^*$ induces an isometry
    $\Knum(\Ku(Y)) \to \Knum(\Ku(\pro^3,\CCC_0))$ and commutes with the
    Serre functor, it remains only to show $\bar \kappa_1 = [\varPsi\sigma^*\FFF_{\varPi}]$;
    this then gives the corresponding identity for $\bar\kappa_2$.

    The object $\varPsi\sigma^*\FFF_{\varPi} \in \Ku(\pro^3,\CCC_0)$ fits into the distinguished triangle
    \begin{equation}
        \begin{tikzcd}[ampersand replacement=\&,cramped]
            {\varPsi\sigma^*\FFF_{\varPi}} \& {\varPsi(\sheaf{\tY}^{\oplus 4})} \& {\varPsi\sigma^*\ideal_{\varPi}(H)} \& {}
            \arrow[from=1-1, to=1-2]
            \arrow[from=1-2, to=1-3]
            \arrow["{+1}", from=1-3, to=1-4]
        \end{tikzcd}
    \end{equation}
    By \cref{lem:psi_kill}, we have $\varPsi(\sheaf{\tilde Y}) = \varPsi(\sheaf{\tilde Y}(H)) = 0$. Hence
    \begin{equation}
        \varPsi\sigma^*\FFF_{\varPi} \cong \varPsi\sigma^*\ideal_{\varPi}(H)[-1] \cong \varPsi\sigma^*\sheaf{\varPi}(H)[-2].
    \end{equation}
    Since $\varPi$ is disjoint from $\varPi_0$, the sheaf $\sigma^*\sheaf{\varPi}(H)$ is supported on the proper transform $\sigma^{-1}\varPi \subset \tilde Y$, which is isomorphic to $\pro^2$. Hence
    \begin{equation}
        \varPsi\sigma^*\FFF_{\varPi} \cong \varPsi\sigma^*\sheaf{\varPi}(H)[-2] \cong \pi_*(\EEE(H+h)|_{\sigma^{-1}\varPi}), \label{equ:P_Pi_Psi}
    \end{equation}
    where $\pi_*(\EEE(H+h)|_{\sigma^{-1}\varPi})$ is a torsion sheaf supported on the hyperplane $\pi(\sigma^{-1}(\varPi)) \cong \pro^2 \subset \pro^3$, and is locally free of rank $4$ on its support (\cite[Lemma 4.7]{kuzQuaFib}). Thus $\Chern{}(\pi_*(\EEE(H+h)|_{\sigma^{-1}\varPi})) = 4h + ch^2 + \mathcal{O}(h^3)$ for some $c \in \Q$. Write its class as $a\bar\kappa_1+b\bar\kappa_2$. The rank-zero condition forces $b=0$, and the coefficient of $h$ then forces $a=1$. Consequently $c=-5$ and
    \begin{equation*}
        [\varPsi\sigma^*\FFF_{\varPi}] = [\pi_*(\EEE(H+h)|_{\sigma^{-1}\varPi})] = \bar \kappa_1. \qedhere
    \end{equation*}
\end{proof}

\section{Constructing stability conditions}\label{sec:stab}
\subsection{General theory}

In this subsection we briefly review the theory of stability conditions on triangulated categories,  introduced by Bridgeland \cite{bri07}. The first step is to define stability on an Abelian category $\cA$, generalizing slope stability on $\Coh(X)$.

\begin{definition}
    Let $\cA$ be a $\C$-linear Abelian category. A \textbf{weak stability function} is a group homomorphism $Z\colon \Gro(\cA) \to \C$ such that, for any $E \in \cA$,
    \[Z(E) \in \left\{ z = m\cdot \e^{\ii\pi\theta} \mid m \geq 0, \ \theta \in (0,1] \right\} = \mathbb{H} \cup \R_{\leq 0}.\]
    If we further require that $Z(E) \ne 0$ for $E \ne 0$, then $Z$ is called a \textbf{stability function}. For physics reasons, $Z$ is also called a \textbf{central charge}.

    For $E \in \cA$, its slope with respect to $Z$ is defined by 
    \[\mu_Z(E) = \begin{cases}
        -\dfrac{\Re Z(E)}{\Im Z(E)}, & \Im Z(E) > 0; \\
        +\infty, & \Im Z(E) = 0.
    \end{cases}\]
    An object $E \in \cA$ is called \textbf{semistable} (\emph{resp.\ }\textbf{stable}) with respect to $Z$, if for any monomorphism $F \hookrightarrow E$ with $F \not\cong E$ in $\cA$, one has $\mu_Z(F) \leq \mu_Z(E)$ (\emph{resp.\ }$ \mu_Z(F) < \mu_Z(E/F)$).
\end{definition}

\begin{remark}
    For an object $E \in \cA$ with $Z(E)\ne0$, its \textbf{phase} is denoted by
    \begin{equation}
        \phi(E) \coloneqq \dfrac{1}{\pi}\arg Z(E)  \in (0,1].
    \end{equation}
    The definition of (semi)stable objects can also be tested using phases instead of slopes. 
\end{remark}

\begin{remark}
    A non-zero object $E \in \cA$ is called \textbf{massless} if $Z(E) = 0$. We assign every massless object the maximal phase $\phi(E)=1$ by convention; with this convention, massless objects are semistable.
\end{remark}

We say that $Z$ satisfies the \textbf{Harder--Narasimhan property} if for any $E \in \cA$, there exists a filtration 
        \[0 = E_0 \subsetneq E_1 \subsetneq \cdots \subsetneq E_\ell =: E\]
by objects $E_i$ in $\cA$, such that the graded factors $E_i/E_{i-1}$ are semistable of phase $\phi_i$, and 
    \[\phi^+(E) \coloneqq \phi_1 > \cdots > \phi_{\ell} =: \phi^-(E).\]

We can now define (weak) pre-stability conditions on a triangulated category. Let $\cT$ be a $\C$-linear triangulated category.

\begin{definition}
    A \textbf{(weak) pre-stability condition} on $\cT$ with respect to $\varLambda$ is a pair $\sigma = (\cA,Z)$, where $\cA$ is a heart of a bounded t-structure of $\cT$, and $Z$ is a (weak) stability function satisfying the Harder--Narasimhan property. An object $E \in \cT$ is called $\bm{\sigma}$\textbf{-semistable} (\emph{resp.\ }$\bm{\sigma}$\textbf{-stable}) if $E = F[k]$ for some $F \in \cA$ and $k \in \Z$, and $F$ is semistable (\emph{resp.\ }stable) with respect to $Z$.
\end{definition}

Alternatively one can define (weak) pre-stability conditions using slicings. We will also write $\sigma=(\cP,Z)$ when using the corresponding slicing $\cP$.

\begin{definition}[][def:slicing]
    Let $\cT$ be a triangulated category. A \textbf{slicing} $\cP$ of $\cT$ is a family of full additive subcategories $\left\{ \cP(\theta)\right\}_{\theta \in \R}$ of $\cT$, satisfying the following conditions:
    \begin{enumerate}[nosep, label = \textnormal{(\roman*)}]
        \item $\cP(\theta)[1] = \cP(\theta+1)$;
        \item For $\theta_1 > \theta_2$ and objects $A \in \cP(\theta_1)$, $B \in \cP(\theta_2)$, one has $\Hom_{\cT}(A,B) = 0$;
        \item (\textbf{Harder--Narasimhan property}) For every $F \in \cT$ there exist real numbers
      \[\phi^+(F) \coloneqq \theta_1>\cdots>\theta_n \eqqcolon \phi^-(F)\]
      and a diagram
      \begin{equation}\label{diag:HN_slicing}
          \begin{tikzcd}[column sep=tiny]
              0 & {F_0} && {F_{1}} && \cdots && {F_{n-2}} && {F_{n-1}} && {F_n} & F \\
              && {A_1} &&& \cdots &&& {A_{n-1}} && {A_n}
              \arrow[equal, from=1-1, to=1-2]
              \arrow[from=1-2, to=1-4]
              \arrow[from=1-4, to=1-6]
              \arrow[from=1-6, to=1-8]
              \arrow[from=1-8, to=1-10]
              \arrow[from=1-4, to=2-3]
              \arrow["{+1}", dashed, from=2-3, to=1-2]
              \arrow[from=1-10, to=2-9]
              \arrow["{+1}", dashed, from=2-9, to=1-8]
              \arrow[from=1-10, to=1-12]
              \arrow["{+1}", dashed, from=2-11, to=1-10]
              \arrow[equal, from=1-12, to=1-13]
              \arrow[from=1-12, to=2-11]
          \end{tikzcd}
      \end{equation}
      where the triangles are distinguished and $A_i \in \cP(\theta_i)$. 
    \end{enumerate}
    If $I \subset \R$ is an interval, we use the notation $\cP(I)$ for the smallest extension-closed full subcategory of $\cT$ containing $\{\cP(\theta) \mid \theta \in I\}$. 
\end{definition}

\begin{remark}
    If $\cP$ is a slicing of $\cT$, then $\cP(0,1]$ is the heart of a bounded t-structure on $\cT$.
\end{remark}

\begin{lemma}
    Consider the data of $(\cP,Z)$ where $\cP$ is a slicing of $\cT$ and $Z\colon \Gro(\cT) \to \C$ is a group homomorphism such that $Z(E) \in \R_{\geq 0}\e^{\ii\pi\theta}$ for $E \in \cP(\theta)$ and $\ker Z \cap \cP(\theta) = \{0\}$ for $\theta \notin \Z$. Then $\sigma = (\cP(0,1], Z|_{\cP(0,1]})$ defines a weak pre-stability condition on $\cT$. Conversely, given a weak pre-stability condition $\sigma = (\cA,Z)$, it induces a slicing $\cP_{\sigma}$ of $\cT$ by 
    \begin{equation}
        \cP_{\sigma}(\theta) \coloneqq \left\{ E \in \cT \mid E \text{ is }\sigma\text{-semistable, and } \phi(E) = \theta \right\}.
    \end{equation}
\end{lemma}

To deform stability conditions and construct moduli spaces, we need the support property, which was formulated by Kontsevich--Soibelman \cite[\S 1.2]{KY08stability}. Fix a finite-rank lattice $\varLambda$ and a surjective group homomorphism $\lambda\colon \Gro(\cT) \cong \Gro(\cA) \to \varLambda$. 

\newcommand{\Ltor}{\varLambda_0}
\begin{definition}[][def:weak_stab_support]
    A \textbf{(weak) stability condition} with respect to $\varLambda$ on $\cT$ is a (weak) pre-stability condition $\sigma=(\cA,Z)$, satisfying the \textbf{support property} with respect to $\varLambda$:
    \begin{enumerate}[dense, label = \textnormal{(\roman*)}]
    \item The central charge factors through the lattice: 
    \begin{equation}
        \begin{tikzcd}[ampersand replacement=\&, column sep=small, cramped]
       Z\colon \Gro(\cA)\cong\Gro(\cT) \arrow[r, "\lambda"] \& \varLambda \arrow[r, "Z_\lambda"] \& \C
        \end{tikzcd}
    \end{equation}
        \item There exists a quadratic form $Q$ on $\varLambda \otimes \R$ such that $Q|_{\ker Z}$  is  negative definite and $Q(\lambda(E)) \geq 0$ for all $\sigma$-semistable objects $E$ in $\cA$.
    \end{enumerate}
\end{definition}

\begin{remark}[{\cite[Lemma A.4]{bay16}}]
    The second part of the support property is equivalent to the following condition: for any fixed norm $\|-\|$ on $\varLambda \otimes \R$, there is a constant $C>0$ such that for any $\sigma$-semistable $E\in\cA$ with $Z(E)\neq 0$, one has $C|Z(E)|\geq\|\lambda(E)\|$. Indeed, given the norm $\|-\|$, the quadratic form can be chosen as $Q(v) = C^2|Z(v)|^2 - \|v\|^2$.
\end{remark}

\begin{remark}
    If $\sigma$ is a stability condition with respect to the numerical lattice $\varLambda = \Knum(\cT)$, then we call $\sigma$ a \textbf{numerical stability condition}. All stability conditions discussed in this paper are numerical.
\end{remark}

The space of stability conditions on $\cT$ with respect to $\varLambda$ is denoted by $\Stab_{\varLambda}(\cT)$, and carries the structure of a complex manifold, a celebrated result by Bridgeland. 

\begin{proposition}[{Bridgeland deformation theorem, \cite[Theorem 1.2]{bri07}}]
    If $\Stab_{\varLambda}(\cT)$ is non-empty, then it can be equipped with the structure of a complex manifold of dimension equal to $\rk\varLambda$, such that the forgetful map 
    \[\begin{tikzcd}[ampersand replacement=\&,cramped,row sep=0]
      {\operatorname{Forg}\colon \Stab_{\varLambda}(\cT)} \& {\Hom_{\Z}(\varLambda,\C)} \\
      {\sigma = (\cA, Z)} \& Z
      \arrow[from=1-1, to=1-2]
      \arrow[maps to, from=2-1, to=2-2]
    \end{tikzcd}\]
    is a local homeomorphism.
\end{proposition}

The stability manifold $\Stab(\cT)$ admits two natural actions:
\begin{enumerate}
    \item A right action of the universal cover of $\GL_2^+(\R)$:
    \[ \tilde{\GL_2^+}(\R) \coloneqq \left\{ (M,g)\ \left|\  \begin{array}{l}
        M \in \GL_2^+(\R),\ g\colon \R \to \R \text{ increasing,}\\ 
        \forall\, \phi\in\R\colon g(\phi + 1) = g(\phi) + 1,\ M\cdot \e^{\ii\pi\phi} \in \R_{>0}\cdot \e^{\ii\pi g(\phi)}
    \end{array}\right.\right\}. \]
    The element $\tilde g = (M,g) \in \tilde{\GL_2^+}(\R)$ acts on $\sigma = (\cA,Z)$ by
    \[ \sigma \cdot \tilde g = (\mathcal{P}_{\sigma}(g(0),g(1)],\ M^{-1}\circ Z), \]
    where $\mathcal P_{\sigma}$ is the slicing of $\cT$ corresponding to $\sigma$. Note that an object $E \in \cT$ is $\sigma$-(semi)stable if and only if it is $(\sigma\cdot \tilde{g})$-(semi)stable.
    
    \item A left action of the auto-equivalences $\Aut_{\varLambda}(\cT) \subset \Aut(\cT)$ preserving the lattice $\varLambda$. A functor $\varGamma \in \Aut_{\varLambda}(\cT)$ acts on $\sigma = (\cA,Z)$ by  
    \[\varGamma \cdot \sigma = (\varGamma(\cA),\ Z\circ\varGamma_*^{-1}),\]
    where $\varGamma_*$ is the automorphism of $\varLambda$ induced by $\varGamma$.
\end{enumerate} 

Constructing stability conditions on the derived category $\Dcatb(X)$ of coherent sheaves on a projective variety $X$ has been a long-standing problem. The existence question was solved by Chunyi Li:

\begin{proposition}[\cite{Li26}][prop:Li_stab_exist]
    Let $X$ be a smooth projective variety over $\C$. Then $\Dcatb(X)$ admits a Bridgeland stability condition.
\end{proposition}

For use in \cref{subsec:Bogomolov}, we define the tilt of a heart with respect to a weak stability condition.

\begin{definition}[][def:tilt_at_slope]
    Given a weak stability condition $\sigma = (\cA, Z)$ on $\cT$ and $\mu \in \R$, consider the full subcategories $\cA^{>\mu}_\sigma$ and $\cA^{\leq\mu}_\sigma$ with objects 
    \[\Obj\cA^{>\mu}_\sigma = \left\{ E \in \cA \mid \mu_Z^-(E) > \mu \right\}; \qquad 
    \Obj\cA^{\leq \mu}_\sigma = \left\{ E \in \cA \mid \mu_Z^+(E) \leq \mu \right\}.\]
    Then $(\cA^{>\mu}_\sigma,\, \cA^{\leq\mu}_\sigma)$ is a torsion pair. The tilted heart, defined as the extension
    \[ \cA^\mu_\sigma \coloneqq \ord{\cA^{>\mu}_\sigma,\, \cA^{\leq\mu}_\sigma[1]}_{\ext},\]
    is a heart of bounded t-structure of $\cT$ (\cite[Proposition 2.1]{HRS}).
\end{definition}

\subsection{A Bogomolov--Gieseker-type inequality}\label{subsec:Bogomolov}

In this subsection we temporarily return to \cref{set:geom_quad_fib}, with $n=m+2$, $k=2$, and $m\geq2$. Thus $Y\subset\pro^{m+3}$ is a smooth cubic hypersurface containing a plane $\varPi_0$, and $\Bl_{\varPi_0}Y\to\pro^m$ is the associated quadric surface fibration. The fibration yields rank $8$ Clifford sheaves
(see \eqref{eq:Cliff_01}):
\begin{equation}
    \CCC_{2j} \cong \sheaf{}(j) \oplus \sheaf{}(j-1)^{\oplus 3} \oplus \sheaf{}(j-2)^{\oplus 3} \oplus \sheaf{}(j-3), \quad \CCC_{2j+1} \cong \sheaf{}(j)^{\oplus 3} \oplus \sheaf{}(j-1)^{\oplus 2} \oplus \sheaf{}(j-2)^{\oplus 3}. \label{equ:CCC_j}
\end{equation}
We recall the notion of slope stability and tilt stability, adapted to the non-commutative projective space $(\pro^m,\CCC_0)$.

\begin{definition}
    Let $\CCC_0$ be a sheaf of even Clifford algebras on $\pro^m$, and let $h$ be the hyperplane divisor of $\pro^m$. Recall that $\ch_{}(E)$ means $\ch_{}(\operatorname{Forg}(E))$ for $E \in \Dcatb(\pro^m,\CCC_0)$. We define $\bm{\mu_h}$\textbf{-stability} on $\Dcatb(\pro^m,\CCC_0)$ as the pair $(\Coh(\pro^m,\CCC_0),\ Z_h)$, where
    \[Z_h(E) \coloneqq -h^{m-1}\ch_{1}(E) + \ii h^{m}\ch_{0}(E).\]
    This gives a weak stability condition on $\Dcatb(\pro^m,\CCC_0)$.
\end{definition}

Next we tilt the $\mu_h$-stability. For $\beta \in \R$, consider the tilted heart
    \[\Coh^\beta(\pro^m,\CCC_0) \coloneqq \ord{\Coh^{> \beta}(\pro^m,\CCC_0), \Coh^{\leq\beta}(\pro^m,\CCC_0)[1]}_{\ext} \subset \Dcatb(\pro^m,\CCC_0),\]
where the torsion pair is taken with respect to $\mu_h$-stability at slope $\beta$ (cf.\ \cref{def:tilt_at_slope}).
    In order to define a tilt stability on $\Dcatb(\pro^m,\CCC_0)$ we introduce a modified Chern character:
\[\ch_{\CCC_0}(E) \coloneqq \ch_{}(\operatorname{Forg}(E))\left( 1-\dfrac{3}{8}h^2 \right),\]
and the associated discriminant:
\begin{align*}
    \Delta_{\CCC_0}(E) & \coloneqq h^{m-2}\left( \ch_{\CCC_0,1}(E)^2 - 2\ch_{\CCC_0,0}(E)\ch_{\CCC_0,2}(E) \right)          \\
                       & = h^{m-2}\left( \ch_{1}(E)^2 - 2\ch_{0}(E)\ch_{2}(E) + \dfrac{3}{4}h^2\ch_{0}(E)^2\right).
\end{align*}
The coefficient $-3/8$ in the modified Chern character is chosen to ensure the following lemma holds. 
  
\begin{lemma}[][]
    $\Delta_{\CCC_0}(\CCC_k) = 0$ for all $k \in \Z$.
\end{lemma}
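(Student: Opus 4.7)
The plan is a direct computation using the explicit decomposition of each $\CCC_k$ as a direct sum of line bundles given in \eqref{equ:CCC_j}. Before grinding through cases, it is worth observing that $\Delta_{\CCC_0}$ is invariant under twisting by a line bundle on $\pro^m$. Indeed, the classical discriminant $\Chern{1}^2 - 2\Chern{0}\Chern{2}$ is twist-invariant by a standard Chern character manipulation, and the correction term $\frac{3}{4}h^2\Chern{0}^2$ depends only on $\Chern{0}$, which is itself twist-invariant. Combined with the isomorphisms $\CCC_{2\ell} \cong \CCC_0 \otimes \sheaf{\pro^m}(\ell)$ and $\CCC_{2\ell+1} \cong \CCC_1 \otimes \sheaf{\pro^m}(\ell)$, this reduces the claim to verifying $\Delta_{\CCC_0}(\CCC_0) = 0$ and $\Delta_{\CCC_0}(\CCC_1) = 0$.

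For each of these two cases I would simply read off from \eqref{equ:CCC_j} that $\Chern{0}(\CCC_k) = 8$, and compute $\Chern{1}$ and $\Chern{2}$ as weighted sums of $h$ and $h^2$ coming from the line bundle summands. For $\CCC_0$ one obtains $\Chern{1}(\CCC_0) = -12h$ and $\Chern{2}(\CCC_0) = 12h^2$, giving $\Chern{1}^2 - 2\Chern{0}\Chern{2} = 144h^2 - 192h^2 = -48h^2$. For $\CCC_1$ one obtains $\Chern{1}(\CCC_1) = -8h$ and $\Chern{2}(\CCC_1) = 7h^2$, giving $\Chern{1}^2 - 2\Chern{0}\Chern{2} = 64h^2 - 112h^2 = -48h^2$. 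In both cases the common deficit $-48h^2$ is cancelled exactly by the rank correction $\frac{3}{4}h^2\Chern{0}^2 = 48h^2$, so $\Delta_{\CCC_0}$ vanishes.

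There is no real obstacle here; the statement is essentially a bookkeeping exercise. Its interest lies rather in the fact that the coefficient $-3/8$ appearing in the definition of $\Chern{\CCC_0}$ is now seen to be determined by the computation: it is exactly the constant needed to make the rank-squared contribution $\frac{3}{4}h^2 \cdot 64$ cancel the common value $-48h^2$ of the ordinary discriminant on $\CCC_0$ and $\CCC_1$.
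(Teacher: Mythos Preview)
Your proof is correct and follows essentially the same approach as the paper: reduce to $\CCC_0$ and $\CCC_1$ by twist-invariance of $\Delta_{\CCC_0}$, then verify directly from the Chern characters that the unmodified discriminant equals $-48h^2$ in both cases, which is cancelled by the rank correction $\tfrac{3}{4}h^2\Chern{0}^2 = 48h^2$. Your closing remark about the coefficient $-3/8$ being forced by this computation is exactly what the paper observes just before stating the lemma.
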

\begin{proof}
    Note that $\Delta_{\CCC_0}(E \otimes \sheaf{}(kh)) = \Delta_{\CCC_0}(E)$ for any $k \in \Z$. So it suffices to verify that $\Delta_{\CCC_0}(\CCC_0) = \Delta_{\CCC_0}(\CCC_1) = 0$. Note that 
    \[ \ch_{}(\CCC_0) = 8 - 12h + 12h^2 + \mathcal O(h^3); \qquad \ch_{}(\CCC_1) = 8 - 8h + 7h^2 + \mathcal O(h^3). \]
    Note that their unmodified discriminant is given by
    \[\Delta(\CCC_0) = \Delta(\CCC_1) = -48h^m = -\dfrac{3}{4}h^m\ch_{0}(\CCC_0)^2.\]
    In particular the modified discriminants satisfy $\Delta_{\CCC_0}(\CCC_0) = \Delta_{\CCC_0}(\CCC_1) = 0$.
\end{proof}

\begin{theorem}[][prop:BG_noncomm_Pm]
    Let $m\geq2$. Every $\mu_h$-semistable sheaf $E\in\Coh(\pro^m,\CCC_0)$ satisfies the Bogomolov--Gieseker inequality
    \begin{equation}\label{equ:BG_noncomm}
        \Delta_{\CCC_0}(E)
        =h^{m-2}\left(
            \ch_1(E)^2-2\ch_0(E)\ch_2(E)
            +\frac34h^2\ch_0(E)^2
        \right)\geq0.
    \end{equation}
\end{theorem}

\begin{corollary}
    The pair $\sigma_{\beta,\alpha}= (\Coh^\beta(\pro^m,\CCC_0), Z_{\beta,\alpha})$ with
    \begin{equation}
        Z_{\beta,\alpha}(E) \coloneqq -\left( h^{m-2}\ch_{\CCC_0,2}(E) - \alpha\ch_{\CCC_0,0}(E) \right) + \ii \ab(h^{m-1}\ch_{\CCC_0,1}(E) - \beta\ch_{\CCC_0,0}(E))
    \end{equation}
    defines a family of weak stability conditions on $\Dcatb(\pro^m,\CCC_0)$ within the region $\{(\beta,\alpha) \mid \alpha > \frac{1}{2}\beta^2\}$ and with respect to the lattice $\varLambda^2_{\CCC_0} \cong \Z^3$ generated by the vectors $(\ch_{\CCC_0,0}(E),h^{m-1}\ch_{\CCC_0,1}(E),h^{m-2}\ch_{\CCC_0,2}(E))$.
\end{corollary}
In particular the Clifford modules $\CCC_k$ lie on the parabola $\alpha = \frac{1}{2}\beta^2$ which marks the boundary of the tilt stability conditions on $\Dcatb(\pro^m,\CCC_0)$, where $[1:\beta:\alpha] = [\ch_{0,\CCC_0}:\ch_{1,\CCC_0}:\ch_{2,\CCC_0}]$.

For the tilt stability $\sigma_{\beta,\alpha}$ defined as above, its tilt slope is given by
    \[\nu_{\beta,\alpha} = -\dfrac{\Re Z_{\beta,\alpha}}{\Im Z_{\beta,\alpha}} = \dfrac{h^{m-2}\ch_{\CCC_0,2}(E) - \alpha\ch_{\CCC_0,0}(E)}{h^{m-1}\ch_{\CCC_0,1}(E) - \beta\ch_{\CCC_0,0}(E)}.\]

In the remainder of this subsection, we compare the proof of \cref{prop:BG_noncomm_Pm} with \cite[\S 8]{BLMS}, which proves the analogous statement for the even Clifford algebra of a conic fibration. The induction argument is almost identical to that o\cite[Theorem 8.3]{BLMS}, and the major technical difference is the base case $m = 2$. We note that for $m = 2$ the rank of a Clifford module is a multiple of $4$, whereas for $m = 3$ the rank is a multiple of $8$. Therefore we treat the surface case $m = 2$ in full details and only sketch the induction steps.

\begin{proposition}[The $\pro^2$ case][prop:BG_noncomm_P2]
    \cref{prop:BG_noncomm_Pm} holds for $m = 2$.
\end{proposition}

\begin{proof}
    Fix a smooth cubic 4-fold $X$ containing a plane $\varPi$. By \cite[Theorem 4.3]{kuz4fold}, we have the equivalence of the categories $\Ku(X) \simeq \Dcatb(\pro^2,\CCC_0)$. Consider the lattice of $\Knum(\pro^2,\CCC_0)$ defined by 
     \[
        W=\langle[\CCC_0],[\CCC_1],[\CCC_2]\rangle
        \subset\Knum(\pro^2,\CCC_0).
    \]
    The Euler pairings between the basis objects are given by 
    \[ M = \chi_{\CCC_0}(\CCC_i,\CCC_j)_{i,j=0,1,2} = \begin{pmatrix}
            2 & 3 & 6 \\ 3 & 2 & 3 \\ 6 & 3 & 2
        \end{pmatrix}. \]
    In particular $W_{\Q}$ has signature $(1,2)$. By
    \cite[Proposition and Definition 9.5.(c)]{BLMS}, the Euler form on the
    algebraic Mukai lattice of $\Ku(X)$ has signature
    $(\rk\Knum(\Ku(X))-2,2)$. Hence $W^\perp_{\Q} = \ker\ab(\ch\circ\operatorname{Forg})$ is positive definite. This allows us to devise a Riemann--Roch-type theorem on $\Dcatb(\pro^2,\CCC_0)$. For $E \in \Knum(\pro^2,\CCC_0)$, let $[E] = w_E + k_E$, with $w_E = a[\CCC_0] + b[\CCC_1] + c[\CCC_2] \in W_{\Q}$ and $k_E \in W^{\perp}_{\Q}$. We have
    \[ \begin{pmatrix}
            \ch_{0}(E) \\ \ch_{1}(E) \\ \ch_{2}(E)
        \end{pmatrix}
        = \begin{pmatrix}
            8 & 8 & 8 \\ -12 & -8 & -4 \\ 12 & 7 & 4
        \end{pmatrix}\begin{pmatrix}
            a \\b \\ c
        \end{pmatrix}.\]
    Then a change of basis gives
    \begin{align*}
        q(\ch(E)) \coloneqq \chi_{\CCC_0}(w_E,w_E)
                                                          & = -\dfrac{1}{8}\left( \ch_{1}(E)^2 - 2\ch_{0}(E)\ch_{2}(E) + \dfrac{1}{2}\ch_{0}(E)^2 \right).
    \end{align*}
    And we have that 
    \begin{equation}
        q(\ch(E)) \leq q(\ch(E)) + \chi_{\CCC_0}(k_E,k_E) = \chi_{\CCC_0}(E,E).
    \end{equation}
    To prove the inequality $\Delta_{\CCC_0}(E) \geq 0$ we need to bound the rank of $E$. It suffices to prove the inequality for $\mu_h$-stable objects. Suppose that $E$ is $\mu_h$-stable with $\rk(E) \geq 8$, then 
    \begin{equation}\label{equ:q<=2}
        q(\ch(E)) \leq \chi_{\CCC_0}(E,E) = 2 - \ext^1(E,E) \leq 2 \leq \frac{\rk(E)^2}{32}.
    \end{equation}
    Reorganizing the equation, we find that this is just \eqref{equ:BG_noncomm}. Next we need to rule out the possibility of $\rk(E) < 8$, which is given in the following lemmata.
    \begin{lemma}[][lem:Chern_noncomm_P3]
        Let $m = 3$. Every object of $\Dcatb(\pro^3,\CCC_0)$ has rank in $8\Z$.
    \end{lemma}
    \begin{proof}
        In the proof of \cref{prop:Knum_Ku_C0} we have shown that $\left\{ [\CCC_{-1}],[\CCC_0],[\CCC_1],[\CCC_2] \right\}$ is an integral basis of $\Knum(\pro^3,\CCC_0)$. Since $\rk(\CCC_j) \in 8\Z$ for $j = -1,0,1,2$, the same holds for any $E \in \Dcatb(\pro^3,\CCC_0)$.
    \end{proof}

    \begin{lemma}[][lem:Chern_noncomm_P2]
        Let $m = 2$. For every $E \in \Dcatb(\pro^2,\CCC_0)$ with $\ch(E) = r + ah + bh^2$, one has $r \in 4\Z$ and $\frac{1}{4}r \equiv a \bmod 2$.
    \end{lemma}
    \begin{proof}
        Choose a smooth cubic-$5$-fold extension $Y$ of the cubic $4$-fold $X$.
        For $i\colon X\hookrightarrow Y$, the push-forward $i_*E \in \Dcatb(\pro^3,\CCC_0)$. Write uniquely $[i_*E]=\displaystyle\sum_{j=-1}^2n_j[\CCC_{j}]$ with $n_j\in\Z$. Grothendieck--Riemann--Roch theorem gives
        \[
            \ch(i_*E)
            =rh+\left(a-\frac r2\right)h^2
             +\left(b-\frac a2+\frac r6\right)h^3.
        \]
        Recall that the Chern characters of the Clifford modules $\CCC_j$ are given in \eqref{equ:Clifford_Chern}.
                                                                                The comparison of ranks gives $\sum_j n_j=0$. Comparing first Chern classes then yields
        \[
            r=4\sum_{j=-1}^2 j n_j \in 4\Z.
        \]
        Reducing the equality for $\ch_2$ modulo $2$ gives
        \[
            a-\frac r2\equiv\sum_j j n_j=\frac r4\pmod2.
        \]
        Since $r/2$ is even, this is the required parity.
    \end{proof}
    \noindent\textbf{Proof of \cref{prop:BG_noncomm_P2} continued. }Let $E \in \Coh(\pro^2,\CCC_0)$ with
    with $\ch(E) = r + ah + bh^2$. Suppose that $E$ is $\mu_h$-stable, $\Delta_{\CCC_0}(E) < 0$. Then we must have $r < 8$, and \cref{lem:Chern_noncomm_P2} shows that $r = 4$ and $a$ is odd. Note that 
    \begin{equation}
        \Delta_{\CCC_0}(E) = \frac{r^2}{4} - 8q < 0 \implies q(\ch(E)) > \frac{1}{2}.
    \end{equation}
    Combining with \eqref{equ:q<=2}, we have 
    \begin{equation}
        \frac{1}{2} < q = b - \frac{a^2}{8} - 1 \leq \chi_{\CCC_0}(E,E) \leq 2.
    \end{equation}
    As $a$ is odd, and 
    \begin{equation}
        b - \frac{a^2}{2} = -\chern{2}(E) \in \Z,
    \end{equation}
    it follows that $q(\ch(E)) = \frac{11}{8}$ and $\chi_{\CCC_0}(E,E) = 2$. Put $e=[E] \in \Ktop(\Ku(X))$ and $W=\langle[\CCC_0],[\CCC_1],[\CCC_2]\rangle \subset \Ktop(\Ku(X))$. Note that $q(\ch(E))$ is the
    square of the orthogonal projection of $e$ to $W$. Set $N = W + \Z e$. Since
    $\operatorname{disc}(W)=8$, the Schur complement gives
    \[
        \operatorname{disc}(N) = \operatorname{disc}(W+\Z e)
        = \operatorname{disc}(W) \cdot \ab(\chi(e,e)-q(\ch(E)))=5.
    \]
    As $\operatorname{disc}(N)$ is square-free, it is a primitive rank 4 lattice in $\Ktop(\Ku(X))$. Additionally, we note that the distinguished negative $\mathsf{A}_2$-sublattice of $\Ktop(\Ku(X))$ is contained in $N$. The lattice correspondence of \cite[Propositions 2.4 and 2.5]{AddingtonThomas14} therefore associates to $N$ a primitive positive-definite algebraic
    lattice $M\subset \Hlg^{2,2}(X,\Z)$ with
    \[
        h^2\in M,\qquad \rk(M)=3,\qquad  \operatorname{disc}(M)=5.
    \]
    Its covolume is $\sqrt5$. Choose $R<\sqrt3$ sufficiently close to $\sqrt3$ that $\frac{4\pi}{3}R^3>8\sqrt5$. Minkowski's convex-body theorem gives $0\neq v\in M$ with $v^2<3$, so $n\coloneqq v^2$ is $1$ or $2$.

    Set $d=\ord{v,h^2}$ and $\delta \coloneqq 3v-dh^2\in \Hlg^4_{\mathrm{prim}}(X,\Z)$. Since the primitive lattice is even, $\delta^2=3(3n-d^2)\in2\Z$; positive definiteness gives
    \[
        d\equiv n\pmod2, \qquad d^2<3n.
    \]
    Hence $(n,d)=(1,\pm1),(2,0)$, or $(2,\pm2)$. If $(n,d)=(2,0)$, then $v$ is an algebraic class in $\Hlg^4_{\mathrm{prim}}(X,\Z)$ of square $2$, hence primitive in the integral lattice, contradicting
    \cite[Proposition 2.15.(i)]{Laza10period}. In every other case, $\delta=3v-dh^2\in \Hlg^4_{\mathrm{prim}}(X,\Z)$
    is an algebraic class of square $6$, hence primitive in the integral lattice, and $ \ord{\delta,x} = 3\ord{v,x}\in3\Z$ for every $x\in \Hlg^4_{\mathrm{prim}}(X,\Z)$. This contradicts \cite[Proposition 2.15.(ii)]{Laza10period} and proves the inequality.
\end{proof}

\begin{proof}[Proof of \cref{prop:BG_noncomm_Pm}]
    The proof for $m \geq 2$ uses induction on $m$ and on $\rk(E)$. It is a formal consequence of \cite[Theorem 8.3]{BLMS}, which was a variant of Langer's restriction theorem in \cite{lan04}. We sketch the idea and explain only the parts that differ from the conic fibration case. 
    \begin{itemize}[nosep]
        \item Let $\mathrm{BG}_m(r)$ denote the following stronger statement of \cref{prop:BG_noncomm_Pm}: if
        $\psi\colon T\to\pro^m$ is an iterated blow-up along strict transforms of
        codimension-$2$ linear subspaces and $h=\psi^*\sheaf{\pro^m}(1)$, then
        every $\mu_h$-semistable torsion-free $E \in \Coh(T,\psi^*\CCC_0)$ with $\rk(E) \leq r$ satisfies $\Delta_{\psi^*\CCC_0} \geq 0$. 

        \item Let $\mathrm R_m(r)$ denote the effective restriction inequality stated in \cite[Theorem 8.10]{BLMS}: if $E \in \Coh(T,\psi^*\CCC_0)$ is a $\mu_h$-semistable torsion-free sheaf with $\rk(E) \leq r$, and the restriction $E|_D$ to a general divisor $D \in |h|$ is not $\mu_{h}$-semistable, then 
            \begin{equation}
                \sum_{i < j} r_ir_j(\mu_i-\mu_j)^2 \leq \Delta_{\psi^*\CCC_0}(E)
            \end{equation}
        where $\mu_i$ (\emph{resp.\ }$r_i$) are the slopes (\emph{resp.}\ ranks) of the HN factors of $E|_D$. 
        \end{itemize}
    Note that the surface case $\mathrm{BG}_2(r)$ for all $r \in 4\Z$ holds on $\pro^2$ by \cref{prop:BG_noncomm_P2}, and on the blow-ups $T \to \pro^2$ by the same argument as in \cite[Lemma 8.6]{BLMS} (with the constant $11/32$ replaced by $3/8$). 

    We use induction on $r\in8\Z_{>0}$, simultaneously for every $m\geq3$ and every such $T$. The base case is $\mathrm{R}_3(8)$. We claim that if $E \in \Coh(T,\psi^*\CCC_0)$ is of rank $8$, then the restriction $E|_{D_t}$ for a general $D_t \in |h|$ is semistable. If not, then by \cref{lem:Chern_noncomm_P2}, $E|_{D_t}$ must have exactly two HN factors $E_{1,t}$ and $E_{2,t}$, each of rank $4$. Since this is true for general $D_t$, we move $D_t$ within a general pencil $\varSigma \subset |h|$. The relative HN filtration of $E|_D$ over the pencil $\varSigma$ defines the objects $E_1$, $E_2$ on the incidence variety $\tilde T = \{(D_t,y) \in \varSigma \times T \mid y \in D_t\}$, which is a blow-up of $T$. But then $E_1$, $E_2$ are rank $4$ objects of $\tilde T$, contradicting \cref{lem:Chern_noncomm_P3}. By induction on $m$, we obtain $\mathrm{R}_m(8)$ for every $m \geq 3$.

    The next step is $\mathrm{R}_m(r) \implies \mathrm{BG}_m(r)$. If $E \in \Coh(T,\psi^*\CCC_0)$ is $\mu_h$-semistable with $\Delta_{\psi^*\CCC_0}(E) < 0$, then we can choose a flag of general divisors $D = D_2 \subset \cdots \subset D_m = T$, with $D_i \in |h|_{D_{i+1}}|$, and by $\mathrm{R}_m(r)$ we have that $E|_{D}$ is $\mu_h$-semistable with $\Delta_{\psi^*\CCC_0}(E|_D) < 0$. This contradicts $\mathrm{BG}_2(r)$.

    Finally we need to show $\mathrm{BG}_m(r) \implies \mathrm{R}_m(r+8)$. But this follows verbatim from the last part of the proof of \cite[Theorem 8.3]{BLMS}. This concludes the induction. \qedhere

                                    \end{proof}

\subsection{Restricting stability conditions to the Kuznetsov component}\label{subsec:restrict_Ku_smooth}

Consider the rotation of the tilt stability $\sigma_{\beta,\alpha}$ at the tilt slope $\nu_{\beta,\alpha} = t$:
\begin{equation}
    \Coh^t_{\beta,\alpha}(\pro^3,\CCC_0) \coloneqq \ord{\Coh^{\nu>t}_{\beta,\alpha}(\pro^3,\CCC_0),\ \Coh^{\nu\leq t}_{\beta,\alpha}(\pro^3,\CCC_0)[1]}_{\ext}; 
    \qquad 
    Z^t_{\beta,\alpha} \coloneqq \ab(-t-\ii) Z_{\beta,\alpha}.
\end{equation}
\begin{lemma}[{\cite[Proposition 2.15]{BLMS}}]
     The rotated tilt stability $\sigma^t_{\beta,\alpha} \coloneqq (\Coh^t_{\beta,\alpha}(\pro^3,\CCC_0), Z^t_{\beta,\alpha})$ is also a weak stability condition on $\Dcatb(\pro^3,\CCC_0)$.
\end{lemma}

To construct stability conditions on Kuznetsov components, we use the following criterion from \cite{BLMS}, which restricts stability conditions to semi-orthogonal components.

\begin{proposition}[{Restricting stability conditions, \cite[Proposition 5.1]{BLMS}}][prop:ind_stab]
    Let $\sigma = (\cA, Z)$ be a weak stability condition on the triangulated category $\cT$ with a Serre functor $\sfS$. Assume that $\cT$ has the semi-orthogonal decomposition $\cT = \ord{\cD,E_1,...,E_m}$, where $E_i \in \cT$ are exceptional objects. If for $i = 1,...,m$ the following conditions are satisfied:
    \begin{enumerate}[nosep, label = \textnormal{(\roman*)}]
        \item $E_i \in \cA$;
        \item $\sfS(E_i) \in \cA[1]$;
        \item $Z(E_i) \ne 0$;
    \end{enumerate}
    Then $\sigma' = (\cA'\coloneqq \cA \cap \cD,\ Z|_{\cA'})$ is a weak stability condition on $\cD$. Moreover, if $Z(F) \ne 0$ for any non-zero $F \in \cA'$, then $\sigma'$ is a stability condition on $\cD$.
\end{proposition}

Now we present the main theorem of this section.

\begin{theorem}[][thm:main]
    Let $Y$ be a smooth cubic 5-fold over $\C$. Then $\Ku(Y)$ has a family of Bridgeland stability conditions
    \[ \sigma'_{\beta,\alpha} = \left( \cA_{\beta,\alpha},\ Z'_{\beta,\alpha} \right), \]
    where $\cA_{\beta,\alpha} \coloneqq \Coh^{-\frac{5}{4}}_{\beta,\alpha}(\pro^3,\CCC_0) \cap \varPsi\sigma^*\Ku(Y)$ and $Z'_{\beta,\alpha} \coloneqq \ab(\frac{5}{4}-\ii) Z_{\beta,\alpha} \big|_{\cA_{\beta,\alpha}}$. The family is parametrized by the open set 
    \begin{equation}
        V \coloneqq \left\{(\beta,\alpha) \in \R^2\ \bigg|\ -\frac{3}{2} < \beta < -1,\ \frac{1}{2}\beta^2 < \alpha < -\frac{5}{4}\beta - \frac{3}{4} \right\}. \label{equ:para_ab}
    \end{equation}
\end{theorem}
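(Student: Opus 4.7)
The plan is to apply the restriction criterion of \cref{prop:ind_stab} to the weak stability condition obtained by tilting $\sigma_{\alpha,\beta}$ once more at tilt-slope zero. Concretely, the relevant pair on $\Dcatb(\pro^3,\CCC_0)$ is $(\Coh^0_{\alpha,\beta},\ -\ii Z_{\alpha,\beta})$, a standard ``double tilt'' weak stability condition. Using the SOD $\Dcatb(\pro^3,\CCC_0) = \ord{\Ku(\pro^3,\CCC_0),\CCC_1,\CCC_2}$ from \cref{thm:psi} and the Serre functor $\mathsf S = (-\otimes_{\CCC_0}\CCC_{-2})[3]$ from \cref{cor:Serre}, it will suffice to verify the four conditions of \cref{prop:ind_stab} for the exceptional pair $E_1 = \CCC_1$, $E_2 = \CCC_2$. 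Passing back to $\Ku(Y)$ via the equivalence of \cref{thm:psi} then produces the stated family $\sigma'_{\alpha,\beta}$ on the Kuznetsov component.

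For conditions (1)--(3) the arguments are computational. One first checks that each $\CCC_k$ is $\mu_h$-semistable of slope $-k/2$; this is established by comparing with the semistability of the underlying $\sheaf{\pro^3}$-module structure. Under the constraint $-3/2 < \beta < -1$, the slope inequalities place $\CCC_1, \CCC_2 \in \TTT^\beta \subset \Coh^\beta$ and $\CCC_{-1}, \CCC_0 \in \FFF^\beta$, so that $\CCC_{-1}[1], \CCC_0[1] \in \Coh^\beta$. Computing the twisted Chern characters $\Chern{\CCC_0}^\beta(\CCC_k)$ and substituting into $\nu_{\alpha,\beta}$ yields the explicit formulas
\[
\nu_{\alpha,\beta}(\CCC_1) = \frac{(1+\beta-\alpha)(1+\beta+\alpha)}{-2(1+\beta)}, \qquad \nu_{\alpha,\beta}(\CCC_0) = \frac{(3+2\beta-2\alpha)(3+2\beta+2\alpha)}{-4(3+2\beta)},
\]
and analogous expressions for $\CCC_2$ and $\CCC_{-1}$. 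The two constraints $\alpha < \beta + 3/2$ and $\alpha < -1-\beta$ defining $V$ are chosen exactly so that $\nu_{\alpha,\beta}(\CCC_1), \nu_{\alpha,\beta}(\CCC_2) > 0$ while $\nu_{\alpha,\beta}(\CCC_{-1}), \nu_{\alpha,\beta}(\CCC_0) < 0$; this places $\CCC_1, \CCC_2$ in $\TTT^0_\sigma \subset \Coh^0_{\alpha,\beta}$ (condition (1)) and $\CCC_{-1}[1], \CCC_0[1]$ in $\FFF^0_\sigma$, so that $\mathsf S(\CCC_i) = \CCC_{i-2}[3]$ sits in $\Coh^0_{\alpha,\beta}[1]$ (condition (2)). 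Condition (3) is automatic, since $\Im Z_{\alpha,\beta}(\CCC_i) \ne 0$ already in the region.

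The main obstacle is condition (4): for every non-zero $F \in \AAA_{\alpha,\beta} = \Coh^0_{\alpha,\beta}\cap\Ku(\pro^3,\CCC_0)$, one must show $Z_{\alpha,\beta}(F) \ne 0$. The strategy has two layers. The numerical layer uses the basis $\{\bar\kappa_1,\bar\kappa_2\}$ from \cref{prop:Knum_Ku_C0}: writing $[F] = x\bar\kappa_1 + y\bar\kappa_2$, the vanishing of $\Im Z_{\alpha,\beta}(F)$ forces $y = 2(1+\beta)x$, after which the vanishing of $\Re Z_{\alpha,\beta}(F)$ reduces to the single equation $x\cdot(2\beta^2 + 5\beta + 4 + 2\alpha^2) = 0$. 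The quadratic $2\beta^2+5\beta+4$ has discriminant $-7 < 0$ and is thus strictly positive, so the bracketed factor never vanishes; consequently $x = y = 0$, i.e.\ $[F] = 0 \in \Knum(\Ku(\pro^3,\CCC_0))$. The second layer handles non-zero objects of trivial numerical class: one takes the Harder--Narasimhan filtration of $F$ with respect to $-\ii Z_{\alpha,\beta}$ in $\Coh^0_{\alpha,\beta}$, reducing to the case where $F$ is $(-\ii Z_{\alpha,\beta})$-semistable with $Z_{\alpha,\beta}(F) = 0$; a case analysis on the sheaf cohomologies $\HH^{-1}(F), \HH^0(F) \in \Coh^\beta$ with respect to their $\sigma_{\alpha,\beta}$-semistable factors, combined with the Bogomolov inequality $\Delta_{\CCC_0} \geq 0$ (\cref{thm:Bog_ineq}) encoding the support property of $\sigma_{\alpha,\beta}$, then rules out the existence of such $F$. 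This last step, running parallel to the corresponding argument in \cite[\S 5]{BLMS} for cubic 4-folds, is the most delicate part of the proof because the cohomology sheaves need not themselves lie in $\Ku(\pro^3,\CCC_0)$, so the constraints must be propagated through the ambient category.
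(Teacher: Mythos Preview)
Your overall strategy matches the paper's: apply \cref{prop:ind_stab} to the rotated weak stability condition $\sigma^0_{\alpha,\beta} = (\Coh^0_{\alpha,\beta}(\pro^3,\CCC_0),\ -\ii Z_{\alpha,\beta})$ with the exceptional pair $\CCC_1,\CCC_2$. Your treatment of conditions (1)--(3) follows the same lines, though two details need correcting. The slope is $\mu_h(\CCC_k) = (k-3)/2$, not $-k/2$; your explicit tilt-slope formulas are nonetheless correct, so this is cosmetic. More importantly, $\mu_h$-stability of $\CCC_k$ does \emph{not} come from the underlying $\sheaf{\pro^3}$-module, which is a direct sum of line bundles of different degrees and hence unstable; rather, every object of $\Coh(\pro^3,\CCC_0)$ has rank divisible by $8 = \rk\CCC_k$, so $\CCC_k$ admits no proper nonzero torsion-free $\CCC_0$-submodule at all.

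The substantive divergence is in condition (4), and there your second layer has a gap. Passing to Harder--Narasimhan factors of $F$ in the ambient heart $\Coh^0_{\alpha,\beta}$, or to its cohomology objects in $\Coh^\beta$, does \emph{not} preserve membership in $\Ku(\pro^3,\CCC_0)$; after those reductions you have lost the only hypothesis that can exclude $0$-dimensional sheaves, and such sheaves genuinely lie in $\Coh^0_{\alpha,\beta}$ with $Z_{\alpha,\beta} = 0$ and $\Delta_{\CCC_0} = 0$, so no Bogomolov-type argument in the ambient category will rule them out. The paper's argument avoids this entirely. Citing \cite[Proposition 2.18]{BLMS}, any $E \in \Coh^0_{\alpha,\beta}(\pro^3,\CCC_0)$ with $Z_{\alpha,\beta}(E) = 0$ has $\operatorname{Forg}(E)$ supported in dimension $0$; then the adjunction $(-\otimes_{\sheaf{\pro^3}}\CCC_0) \dashv \operatorname{Forg}$ gives
\[
\Hom_{\CCC_0}(\CCC_2, E) = \Hom_{\CCC_0}(\sheaf{\pro^3}(h)\otimes\CCC_0, E) = \Hlg^0(\pro^3, \operatorname{Forg}(E)(-h)) \ne 0,
\]
so $E \notin \ord{\CCC_1,\CCC_2}^\perp$. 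Your numerical layer is correct and could in fact \emph{replace} this last adjunction step (once $E$ is known to be $0$-dimensional, $[E] = 0$ in $\Knum(\Ku) \subset \Knum(\pro^3,\CCC_0)$ forces its length to vanish), but it cannot replace the structural input identifying $\ker Z_{\alpha,\beta}$ in the double-tilted heart; that is what your vague ``case analysis'' would have to supply, and you should cite \cite{BLMS} for it rather than try to reprove it.
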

\begin{proof}
    Since the Serre functor of $\Dcatb(\pro^3,\CCC_0)$ is $-\otimes_{\CCC_0} \CCC_{-2}[3]$, to use \cref{prop:ind_stab} on the semi-orthogonal decomposition \eqref{eq:twist_P3_SOD}, we need to check that $\CCC_1, \CCC_2 \in \Coh^{-\frac{5}{4}}_{\beta,\alpha}(\pro^3,\CCC_0)$ and $\CCC_{-1}[2],\CCC_{0}[2] \in \Coh^{-\frac{5}{4}}_{\beta,\alpha}(\pro^3,\CCC_0)$.

    First note that 
    \[\ch_{\CCC_0}(\CCC_j) = 8 +4(j-3)h + \left(j-3\right)^2h^2 + \mathcal{O}(h^3).\]
    Then $\mu_h(\CCC_j) = \frac{1}{2}(j-3)$ for all $j \in \Z$. Fix $\beta \in \left( -\frac{3}{2}, -1 \right)$, we have
    \[\mu_h(\CCC_{-1}) < \mu_h(\CCC_{0}) < \beta < \mu_h(\CCC_{1}) < \mu_h(\CCC_{2}).\]
    Then we have $\CCC_{-1}[1], \CCC_{0}[1], \CCC_1, \CCC_2 \in \Coh^\beta(\pro^3,\CCC_0)$. Meanwhile, the tilt slope $\nu_{\beta,\alpha}$ of $\CCC_j$ is given explicitly by
    \[\nu_{\beta,\alpha}(\CCC_j) = \dfrac{h^2\ch_{\CCC_0,2}(\CCC_j) - \alpha\ch_{\CCC_0,0}(\CCC_j)}{h\ch_{\CCC_0,1}(\CCC_j) - \beta\ch_{\CCC_0,0}(\CCC_j)} =
        \dfrac{\left( j-3 \right)^2 - 8\alpha}{4j-8\beta-12}.
    \]
    In particular, for $\frac{1}{2}\beta^2 < \alpha < -\frac{5}{4}\beta - \frac{3}{4}$, we have
    \[ \nu_{\beta,\alpha}(\CCC_{-1}[1]) < \nu_{\beta,\alpha}(\CCC_{0}[1]) < -\frac{5}{4} < \nu_{\beta,\alpha}(\CCC_{1}) < \nu_{\beta,\alpha}(\CCC_{2}).\]
    Hence $\CCC_{-1}[2]$, $\CCC_0[2]$, $\CCC_1$, $\CCC_2 \in \Coh^{-\frac{5}{4}}_{\beta,\alpha}(\pro^3,\CCC_0)$ as claimed. It is clear that these objects have non-zero central charge. Then $\sigma'_{\beta,\alpha}$ is a weak stability condition on $\Ku(Y)$ by \cref{prop:ind_stab}.
                    
    To show that $\sigma'_{\beta,\alpha}$ is a stability condition, it remains to show that $Z_{\beta,\alpha}(E) \ne 0$ for all non-zero object $E$ in $\Coh^{-\frac{5}{4}}_{\beta,\alpha}(\pro^3,\CCC_0) \cap \varPsi\sigma^*\Ku(Y)$.
    Suppose that $E \in \Coh^{-\frac{5}{4}}_{\beta,\alpha}(\pro^3,\CCC_0)$ is such that $Z_{\beta,\alpha}(E) = 0$. Then the proof of \cite[Proposition 2.18]{BLMS} shows that $\operatorname{Forg}(E)$ is supported in codimension $\geq 3$. But then
    \[ \Hom_{\Dcatb(\pro^3,\CCC_0)}(\CCC_2,E) = \Hom_{\Dcatb(\pro^3,\CCC_0)}(\CCC_0(h),E) = \Hom_{\Dcatb(\pro^3)}(\sheaf{\pro^3},\operatorname{Forg}(E)(-h)) \ne 0.\]
    Hence $E \notin \Ku(Y)$. We conclude that $\cA_{\beta,\alpha}$ has no massless objects, and $\sigma'_{\beta,\alpha}$ is a stability condition on $\Ku(Y)$.
\end{proof}

\begin{remark}[][rmk:Pi_indep]
    Note that the Clifford algebra structure and the embedding $\Ku(Y) \hookrightarrow \Dcatb(\pro^3,\CCC_0)$ depends on the choice of the blown-up plane $\varPi_0 \subset Y$. In particular, for fixed $(\beta,\alpha)$ and as $\varPi_0$ varies in the Hilbert scheme $\mathcal{F}_{2}(Y)$ of planes of $Y$, the stability conditions $(\sigma'_{\beta,\alpha})_{\varPi_0}$ is a family over $\mathcal{F}_2(Y)$ satisfying the openness property (see \cite[Definition 20.1.(2)]{BLMNPS} for precise statements). The argument in \cite[Proposition 2.6]{LPZ23} proves that the $\sigma'_{\beta,\alpha}$-stability of any object is in fact independent of the choice of $\varPi_0$.
\end{remark}

\section{Serre invariance}\label{sec:Serre}
\subsection{Tensor products and tilted hearts}\label{subsec:tensor}

We will use the remaining two subsections to prove that the stability conditions on the Kuznetsov component of a cubic 5-fold constructed in \cref{thm:main} are Serre-invariant (see \cref{def:Serre_inv}), which completes the second part of the main theorem \ref{thm:main_intro}. The argument adapts the proofs for cubic 3-folds (\cite{PY20,FP23}).

In this subsection we study the action of the tensor product functor $(- \otimes_{\CCC_0} \CCC_1)$ on the tilted heart $\Coh^{-\frac{5}{4}}_{\beta,\alpha}(\pro^3,\CCC_0)$ with a fixed $\beta = -\frac{5}{4}$. The result applies more broadly to general tilt stability and therefore carries independent interest. The method is similar to \cite[Lemma 5.2]{PY20} and we hope to clarify the details missing in the original proof.

We will begin with a discussion of how $(- \otimes_{\CCC_0} \CCC_1)$ changes the character and the stability of objects in $\Dcatb(\pro^3,\CCC_0)$. 

\begin{lemma}[][tensor_stab_para]
    An object $E \in \Dcatb(\pro^3,\CCC_0)$ is $\sigma_{\beta,\alpha}$-semistable if and only if $E \otimes_{\CCC_0} \CCC_1$ is $\sigma_{\beta',\alpha'}$-semistable, where 
    \begin{equation}
        \beta' = \beta + \frac{1}{2}, \qquad \alpha' = \alpha+\frac{1}{2}\beta + \frac{1}{8}.
    \end{equation}
    Moreover, if $E$ is a torsion-free sheaf, then $E$ is $\mu_h$-semistable if and only if $E \otimes_{\CCC_0} \CCC_1$ is also $\mu_h$-semistable.
\end{lemma}
\begin{proof}
    For $E \in \Dcatb(\pro^3,\CCC_0)$, we will compute $\ch(E \otimes_{\CCC_0} \CCC_1)$. Since  $\Dcatb(\pro^3,\CCC_0) = \Dperf(\pro^3,\CCC_0)$, we may harmlessly assume that $E = E' \otimes \CCC_0$ for some $E' \in \Dcatb(\pro^3)$. Then we have  
    \begin{align}
        \ch(E \otimes_{\CCC_0} \CCC_1) &= \ch(E' \otimes \CCC_0 \otimes_{\CCC_0} \CCC_1)  = \ch(E' \otimes \CCC_1) = \ch(E')\cdot\ch(\CCC_1) \\ 
        &= \ch(E)\cdot\dfrac{\ch(\CCC_1)}{\ch(\CCC_0)} = \ch(E)\cdot\dfrac{8-8h+7h^2 - \tfrac{13}{3}h^3}{8-12h+12h^2 -9h^3} \\
        &= \ch(E)\left( 1 + \dfrac{1}{2}h + \dfrac{1}{8}h^2 + \dfrac{1}{48}h^3\right) \\
        &= \e^{h/2}\ch(E) \eqqcolon \ch^{-\frac{1}{2}}(E). \label{equ:Chern_tensor}
    \end{align}
    The tilt slope of $E \otimes_{\CCC_0} \CCC_1$ is given by 
    \begin{align*}
        \nu_{\beta,\alpha}(E \otimes_{\CCC_0} \CCC_1) 
    &= \dfrac{\ch_{\CCC_0,2}(E \otimes_{\CCC_0} \CCC_1) - \alpha\ch_{\CCC_0,0}(E \otimes_{\CCC_0} \CCC_1)}{\ch_{\CCC_0,1}(E \otimes_{\CCC_0} \CCC_1) - \beta\ch_{\CCC_0,0}(E \otimes_{\CCC_0} \CCC_1)} 
    =  \dfrac{\ch_{\CCC_0,2}^{-\frac{1}{2}}(E) - \alpha\ch_{\CCC_0,0}^{-\frac{1}{2}}(E)}{\ch_{\CCC_0,1}^{-\frac{1}{2}}(E) - \beta\ch_{\CCC_0,0}^{-\frac{1}{2}}(E)} \\
    &= \dfrac{\ch_{\CCC_0,2}(E) - \ab(\alpha - \frac{1}{2}\beta + \frac{1}{8})\ch_{\CCC_0,0}(E)}{\ch_{\CCC_0,1}(E) - \ab(\beta-\frac{1}{2})\ch_{\CCC_0,0}(E)} + \frac{1}{2} 
    = \nu_{\beta-\frac{1}{2},\, \alpha - \frac{1}{2}\beta + \frac{1}{8}}(E) + \frac{1}{2}.
    \end{align*}
    Therefore, $E$ is $\sigma_{\beta,\alpha}$-semistable if and only if $E \otimes_{\CCC_0} \CCC_1$ is $\sigma_{\beta+\frac{1}{2},\, \alpha + \frac{1}{2}\beta + \frac{1}{8}}$-semistable. By letting $\alpha \to +\infty$ we deduce that $E$ is $\mu_h$-semistable if and only if $E \otimes_{\CCC_0} \CCC_1$ is also $\mu_h$-semistable.
\end{proof}

For the rest of the subsection and for most applications in the remainder of this paper, we can fix $\beta = -\frac{5}{4}$. We need a better parametrization for the tilt stability conditions on $\Dcatb(\pro^3,\CCC_0)$ and for the stability conditions on $\Ku(X)$. The new parameters are given by a shift that centralizes the vertical line $\beta = -\frac{5}{4}$. For this we transform $(\beta,\alpha)$ into new coordinates $(\xi,\eta)$ via
\begin{equation}
    \begin{cases}
        \xi = \beta + \dfrac{5}{4}, \\[2ex]
        \eta = \alpha + \dfrac{5}{4}\beta + \dfrac{25}{32}.
    \end{cases} \label{equ:tilt_new_coor}
\end{equation}
The tilt stability conditions on $\Dcatb(\pro^3, \CCC_0)$ are expressed in the new coordinates $(\xi,\eta)$ as:
\[ \tilde\sigma_{\xi,\eta} = \ab(\Coh^{\xi-\frac{5}{4}}(\pro^3,\CCC_0),\quad \tilde Z_{\xi,\eta} = -\left(\ch_{\CCC_0,2}^{-\frac{5}{4}} - \eta\ch_{\CCC_0,0}^{-\frac{5}{4}}\right) + \ii\ab( \ch_{\CCC_0,1}^{-\frac{5}{4}} - \xi\ch_{\CCC_0,0}^{-\frac{5}{4}} ) ).\]
Here $\ch_{\CCC_0}^{-\frac{5}{4}}$ is the twisted Chern character: 
\begin{equation}
    \ch_{\CCC_0}^{-\frac{5}{4}}(E) = \e^{5h/4}\ch_{\CCC_0}(E).
\end{equation}
The new tilt slope is given by 
\[ \tilde\nu_{\xi,\eta}(E) = -\dfrac{\Re \tilde Z_{\xi,\eta}(E)}{\Im \tilde Z_{\xi,\eta}(E)}  = \dfrac{\ch_{\CCC_0,2}^{-\frac{5}{4}}(E) - \eta\ch_{\CCC_0,0}^{-\frac{5}{4}}(E)}{\ch_{\CCC_0,1}^{-\frac{5}{4}}(E) - \xi\ch_{\CCC_0,0}^{-\frac{5}{4}}(E)}\implies \tilde\nu_{\xi,\eta}(E) = \nu_{\beta,\, \alpha}(E) +  \dfrac{5}{4}. \]
Hence an object is $\tilde\sigma_{\xi,\eta}$-(semi)stable if and only if it is $\sigma_{\beta,\alpha}$-(semi)stable. Define the heart 
\begin{equation}
    \tilde{\Coh}^0_{\xi,\eta} = \ord{ \tilde{\Coh}^{\tilde{\nu} > 0}_{\xi,\eta},\, \tilde{\Coh}^{\tilde{\nu} \leq 0}_{\xi,\eta}[1]}_{\ext}
\end{equation}
to be the tilt of $\Coh^{\xi - \frac{5}{4}}(\pro^3,\CCC_0)$ at $\tilde{\nu}_{\xi,\eta} = 0$, where $0 < \eta < \frac{1}{32}$. Then we can identify $\tilde{\Coh}^0_{\xi,\eta}$ with $\Coh^{-\frac{5}{4}}_{\beta,\alpha}(\pro^3,\CCC_0)$.
In the $(\xi,\eta)$-coordinates, the stability conditions on $\Ku(Y)$ are expressed as 
\begin{equation}
    \ts'_{\xi,\eta} = \ab(
        \tA_{\xi,\eta} \coloneqq \tilde{\Coh}^0_{\xi,\eta} \cap \Ku(\pro^3,\CCC_0),
        \quad
        \tZ^0_{\xi,\eta} = -\ii \tZ_{\xi,\eta} = \ab(\ch_{\CCC_0,1}^{-\frac{5}{4}} - \xi  \ch_{\CCC_0,0}^{-\frac{5}{4}})
        + \ii\ab(\ch_{\CCC_0,2}^{-\frac{5}{4}} - \eta\ch_{\CCC_0,0}^{-\frac{5}{4}}) 
    ).
\end{equation}
The admissible region of the parameters is given simply by 
\begin{equation}\label{equ:para_ab_twisted}
    \tilde{V} \coloneqq \ab\{(\xi,\eta) \in \R^2 \ \bigg|\  |\xi| < \frac{1}{4},\ \frac{1}{2}\xi^2 < \eta < \frac{1}{32}\}.
\end{equation}

The wall-and-chamber structure in our new $(\xi,\eta)$-coordinates can be described as follows: every point $P = (\xi(P),\eta(P)) \in \R^2$ lying above the parabola $\varGamma\colon \eta = \frac{1}{2}\xi^2$ defines a weak stability condition on $\Dcatb(\pro^3,\CCC_0)$. Meanwhile, for every numerical character $[E] \in \Knum(\pro^3,\CCC_0)$, we define a point $\left[\ch_{\CCC_0,0}^{-\frac{5}{4}}(E):\ch_{\CCC_0,1}^{-\frac{5}{4}}(E):\ch_{\CCC_0,2}^{-\frac{5}{4}}(E)\right]$ in the real projective plane $\RP^2$. If $\ch_{0}(E) \ne 0$, we can identify the chart $\left\{ [1:\xi:\eta] \mid (\xi,\eta) \in \R^2\right\}$ of $\RP^2$ with the $(\xi,\eta)$-plane, in which $E$ is represented by the point 
\[  \tv(E) \coloneqq \left( \dfrac{\ch_{\CCC_0,1}^{-\frac{5}{4}}(E)}{\ch_{\CCC_0,0}^{-\frac{5}{4}}(E)},\ \dfrac{\ch_{\CCC_0,2}^{-\frac{5}{4}}(E)}{\ch_{\CCC_0,0}^{-\frac{5}{4}}(E)} \right).\]
By the Bogomolov--Gieseker inequality (\cref{prop:BG_noncomm_Pm}), if $E \in \Dcatb(\pro^3,\CCC_0)$ is tilt semi-stable, then the point $\tv(E)$ lies in the region $\{ (\xi,\eta) \in \R^2 \mid \eta \leq \frac{1}{2}\xi^2 \}$, which is below the parabola $\varGamma$. The phase $\phi_P(E)$ of $E$ with respect to the stability condition $\tilde\sigma_{P} = \tilde{\sigma}_{\xi(P),\eta(P)}$ is determined by 
\begin{equation}
    \phi_P(E) \equiv \dfrac{1}{\pi}\arg(\ell_{P}^-,\ell_{PE}^+)\ \bmod 2\Z,
\end{equation} 
where $\ell_{P}^-$ is the downward-pointing vertical ray starting at $P$, and $\ell_{PE}^+$ is the ray starting at $P$ and passing through $\tv(E)$.
This is shown in \cref{fig:tilt_phase}. Note that this result is still true for the case $\ch_{0}(E) = 0$ where the point $\tv(E)$ is placed at the infinity. In particular, for objects $E$ and $F$ with non-zero central charge, one has $\phi_P(E) = \phi_P(F)$ if and only if the points $P = (\xi,\eta)$, $\tv(E)$, and $\tv(F)$ are collinear in the plane.
\begin{figure}[H]
    \centering 
    \begin{tikzpicture}[line cap=round,line join=round,>=triangle 45,x=0.7cm,y=0.7cm,scale=1]
    \begin{axis}[
            x=0.7cm,y=0.7cm,
            axis lines=middle,
            xmin=-2,
            xmax=5,
            ymin=-1,
            ymax=6,
            xlabel={\scriptsize$\xi$},
            ylabel={\scriptsize$\eta$},
            xtick distance = 10,
            ytick distance = 10,]
        \clip(-2,-1.7158718302765568) rectangle (5,7);
        
        \draw [shift={(-1,2)}] (0,0) -- (-90:0.47047833014623636) arc (-90:33.690067525979806:0.47047833014623636) -- cycle;
        \addplot [samples=500, smooth, line width=0.5pt, domain=-2:5] {x^2/2};
        \draw [line width=1pt] (1,3) -- (1,-1);
        \draw [line width=1pt,domain=1:5] plot(\x,{(--7--2*\x)/3});
        \begin{scriptsize}
            \draw[color=black] (3,5.7) node {$V$};
            \draw [fill=black] (1,3) circle (1.5pt);
            \draw[color=black] (1.2570496928226398,3.4985963288442212) node {$P$};
            \draw [fill=black] (4,5) circle (1.5pt);
            \draw[color=black] (4.5,4.5) node {$\tv(E)$};
            \draw[color=black] (2.4,2.6) node {$\pi\phi_{P}(E)$};
            \draw[color=black] (1.5,-0.5) node {$\ell_{P}^-$};
            \draw[color=black] (4,5.6) node {$\ell_{PE}^+$};
        \end{scriptsize}
    \end{axis}
\end{tikzpicture}
    \caption{The phase $\phi_P(E)$ of an object $E$ with respect to the stability condition $\ts_{\xi(P),\eta(P)}$.}
    \label{fig:tilt_phase}
    \end{figure}

As we will deform the the tilt stability on the $(\xi,\eta)$-plane, the following lemma from \cite{LZ19} allows us to control the range of phase of a stable object under deformation. 

\begin{lemma}[{\cite[Lemma 3]{LZ19}}][lem:LZ19]
    Suppose that $E \in \Dcatb(\pro^3,\CCC_0)$ is $\tilde\sigma_P$-stable. Let $E_1$ and $E_2$ be the intersection points of the line $L_{PE}$ connecting $P$ and $\tv(E)$ and the parabola $\varGamma$, where $\xi(E_1) \geq \xi(E_2)$. We take $\phi_Q(E_i) \coloneqq \frac{1}{\pi}\arg{\left(\ell_{Q}^-,\ell_{QE_i}^+\right)} \in (-1,1]$ for $i=1,2$. Let $F$ be a Harder--Narasimhan factor of $E$ with respect to another tilt stability condition $\tilde\sigma_{Q}$.
    \begin{enumerate}[dense]
        \item If $E$ lies in the heart $\Coh^P(\pro^3,\CCC_0) \coloneqq \Coh^{\xi(P)-\frac{5}{4}}(\pro^3,\CCC_0)$, then the phase $\phi_Q(F)$ is between $\phi_Q(E_1)$ and $\phi_Q(E_2)+1$;
        \item If $E[1] \in \Coh^P(\pro^3,\CCC_0)$, then the phase $\phi_Q(F)$ is between $\phi_Q(E_1)-1$ and $\phi_Q(E_2)$.
    \end{enumerate}
\end{lemma}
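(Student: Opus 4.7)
The plan is to combine Bertram's nested wall theorem with the Bogomolov inequality (\cref{thm:Bog_ineq}), both read off from the $(\xi,\eta)$-plane picture of tilt stability developed above; this reproduces the argument of \cite{LZ19}.

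First, I would characterise the walls of $E$. Since $E$ is $\tilde\sigma_P$-stable, any destabilising triangle $0 \to A \to E \to B \to 0$ at a wall $P' \in \R^2$ satisfies $\phi_{P'}(A) = \phi_{P'}(E) = \phi_{P'}(B)$, forcing the characters $v(A), v(E), v(B)$ to be collinear with $P'$. The nested wall theorem then implies that every numerical wall of $E$ lies on the single line $L_{PE}$, and applying Bogomolov to each Jordan--Hölder factor at such a wall confines the endpoints on $\varGamma$ to the closed subsegment $[E_1, E_2]$.

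Second, I would connect $P$ and $Q$ by a generic path in the region $\{\eta > \xi^2/2\}$ meeting only finitely many walls of $E$ transversally, and induct on the number of crossings. At each crossing, the current stable pieces of $E$ decompose into sub-pieces whose characters lie on $L_{PE}$ and, by Bogomolov, sit below $\varGamma$; iterating, every Harder--Narasimhan factor $F$ of $E$ at $Q$ has $v(F)$ in the Bogomolov region, with the ray $\ell_{QF}^+$ contained in the angular cone at $Q$ bounded by $\ell_{QE_1}^+$ and $\ell_{QE_2}^+$. In case (i), the HN factors live in $\Coh^Q \cup \Coh^Q[1]$: those in $\Coh^Q$ are bounded below in phase by $\phi_Q(E_1)$, those in $\Coh^Q[1]$ are bounded above by $\phi_Q(E_2) + 1$, yielding the claimed interval. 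Case (ii) follows by shifting and applying case (i) to $E[1]$.

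The main obstacle is the clean bookkeeping of phases across the shift between $\Coh^Q$ and $\Coh^Q[1]$, particularly when some HN factor has rank-zero numerical class (so $v(F)$ sits at infinity in the $(\xi,\eta)$-plane) or when $L_{PE}$ is vertical, i.e.\ $\Chern{0}(E) = 0$. These degenerate configurations are handled by taking limits of the phase function on the Bogomolov region and by invoking the support property of tilt stability, which ensures that only finitely many HN factors appear along the path from $P$ to $Q$.
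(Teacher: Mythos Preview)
The paper does not supply its own proof of this lemma; it is quoted from \cite[Lemma~3]{LZ19} and used as a black box in the proof of \cref{prop:tensor_heart}. So there is no argument in the paper to compare yours against directly.

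Your overall strategy---deform along a path from $P$ to $Q$, apply Bogomolov at each wall, and induct on the number of crossings---is indeed that of \cite{LZ19}. However, the first paragraph misstates the wall geometry. The numerical walls of $E$ are \emph{not} contained in the single line $L_{PE}$: by the nested wall theorem in the $(\xi,\eta)$-picture they are the distinct chords of $\varGamma$ passing through the common point $v(E)$, and different walls are different chords. Consequently the assertion that Bogomolov ``confines the endpoints on $\varGamma$ to the closed subsegment $[E_1,E_2]$'' is unjustified as written: a wall chord through $v(E)$ not containing $P$ can perfectly well have a $\varGamma$-endpoint outside the arc from $E_2$ to $E_1$. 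What \cite{LZ19} actually uses is finer: when you cross the \emph{first} wall leaving $P$ toward $Q$, that wall separates $P$ from the region you are entering, and since $P$ lies on the chord $[E_1,E_2]$, the convexity of the parabola forces the wall's $\varGamma$-endpoints to be nested between $E_1$ and $E_2$; each Jordan--H\"older factor $F$ is then stable at a point on that wall, and one repeats the argument with $F$ in place of $E$ and the wall endpoints in place of $E_1,E_2$. This nesting is the step that carries the content of the lemma, and your proposal skips it, so the phase bound asserted in your second paragraph does not yet follow.
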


The next result computes the action of functor $(-\otimes_{\CCC_0} \CCC_1)$ on the phase of tilt stability conditions. It is an analogue of \cite[Lemma 5.2]{PY20} and also of Li's restriction-1 property in \cite{Li26} for tilt stability.

\begin{proposition}[][prop:tensor_heart]
    Fix the point $Q = \ab(0,\frac{1}{32})$, and let $\tilde\cP_{Q}$ be the slicing on $\Dcatb(\pro^3,\CCC_0)$ induced by the tilt stability $\ts_{0,\frac{1}{32}}$. We have 
    \begin{equation}\label{equ:tensor_phase}
        E \in \tilde\cP_{Q}\ab[\frac{1}{2},\, \frac{3}{2}] \implies  E \otimes_{\CCC_0} \CCC_1 \in \tilde\cP_{Q}\left(\frac{1}{2},\, \frac{5}{2}\right].
    \end{equation}
\end{proposition}

\begin{proof}
    Using \cref{tensor_stab_para} and changing the $(\beta,\alpha)$ coordinates into the $(\xi,\eta)$ coordinates. we note that $E$ is $\tilde\sigma_{0,\eta}$-semistable if and only if $E \otimes_{\CCC_0} \CCC_1$ is $\tilde\sigma_{\frac{1}{2},\ \eta + \frac{1}{8}}$-semistable. Let $\phi_M(E)$ denote the phase of $E$ with respect to the tilt stability condition $\ts_{\xi,\eta}$ represented by $M = (\xi,\eta)$. 
                                                                            
                                                        Let $B$ be the extension of all HN factors of $E$ with $\phi_Q \leq 1$. Then we have the triangle
    \begin{equation}\label{equ:tensor_AB_F_exten}
        \begin{tikzcd}[ampersand replacement=\&,cramped]
            {A[1]} \& E \& {B} \& {}
            \arrow[from=1-1, to=1-2]
            \arrow[from=1-2, to=1-3]
            \arrow["{+1}", from=1-3, to=1-4]
        \end{tikzcd}
    \end{equation}
    where $B \in \tilde\cP_{Q}\ab[\frac{1}{2},\, 1]$ and $A[1] \in \tilde\cP_{Q}\left(1,\, \frac{3}{2}\right]$. 

    \noindent\textbf{Step 1:} We claim that 
    \begin{equation}\label{equ:B_tensor_phase}
        B \otimes_{\CCC_0}\CCC_1 \in \tilde\cP_{Q}\left(\frac{1}{2},\, 2\right].
    \end{equation}
    By passing to a stable factor we may assume that $B$ is $\ts_{Q} = \ts_{0,\frac{1}{32}}$-stable. Then \cref{tensor_stab_para}  shows that $B \otimes_{\CCC_0} \CCC_1$ is $\ts_{P}$-stable, where $P = \ab(\frac{1}{2},\, \frac{5}{32})$. Let $\nu_0 \coloneqq \tilde\nu_Q(B) \geq 0$. Then $\tilde\nu_P(B \otimes_{\CCC_0} \CCC_1) = \nu_0 + \frac{1}{2}$, and the line $\ell_B$ connecting $P$ and $\tv(B \otimes_{\CCC_0} \CCC_1)$ has equation 
    \[ \ell_B\colon \quad \eta = \left( \nu_0 + \dfrac{1}{2} \right)\left( \xi - \dfrac{1}{2} \right) + \dfrac{5}{32}. \]
    Let $B_1$, $B_2$ be the intersection points of $\ell_B$ and the parabola $\varGamma$, with $\xi(B_1) > \xi(B_2)$. Then we have 
    \[\xi(B_1) = \nu_0 + \dfrac{1}{2} + \sqrt{\nu_0^2 + \frac{1}{16}} \geq \frac{1}{4}.\]
    So the point $B_1$ lies above $Q$. By \cref{lem:LZ19}, the phases of the stable factors of $B \otimes_{\CCC_0} \CCC_1$ with respect to $\tilde\sigma_Q$ is between $\phi_Q(B_1)$ and $\phi_Q(B_2) + 1$. This is shown in \cref{fig:B}. We have 
    \[ \frac{1}{2} < \phi_Q(B_1) \leq \phi_Q^-(B \otimes_{\CCC_0} \CCC_1)\leq\phi_Q^+(B \otimes_{\CCC_0} \CCC_1)\leq \phi_Q(B_2)+1 \leq 2. \]
    This proves \eqref{equ:B_tensor_phase}.
    \begin{figure}[h]
        \centering 
        \begin{tikzpicture}[
  line cap=round,
  line join=round,
  >=triangle 45,
  dot/.style={circle, fill=black, inner sep=0pt, minimum size=3pt},
  ptlabel/.style={font=\scriptsize, text=black},
  curve/.style={line width=0.5pt},
  ray/.style={line width=1pt, cyan!50!black},
  guide/.style={line width=0.5pt, dash pattern=on 1pt off 1pt},
  dottedray/.style={line width=1pt, dotted}
]
    \pgfmathsetmacro{\xmin}{-0.5}
  \pgfmathsetmacro{\xmax}{1}
  \pgfmathsetmacro{\ymin}{-0.05}
  \pgfmathsetmacro{\ymax}{0.4}

    \pgfmathsetmacro{\Qx}{0}
  \pgfmathsetmacro{\Qy}{0.03125}

  \pgfmathsetmacro{\BX}{0.4}
  \pgfmathsetmacro{\BY}{0.05}

    \pgfmathsetmacro{\CZeroX}{-1/4}
  \pgfmathsetmacro{\CZeroY}{1/32}
  \pgfmathsetmacro{\COneX}{1/4}
  \pgfmathsetmacro{\COneY}{1/32}

      \pgfmathsetmacro{\PX}{1/2}
  \pgfmathsetmacro{\PY}{\Qy+1/8}

      \pgfmathsetmacro{\GammaX}{\BX+1/2}
  \pgfmathsetmacro{\GammaY}{\BY+(1/2)*\BX+1/8}

    \pgfmathsetmacro{\mEllB}{(\PY-\GammaY)/(\PX-\GammaX)}
  \pgfmathsetmacro{\bEllB}{\PY-\mEllB*\PX}

      \pgfmathsetmacro{\discEllB}{\mEllB*\mEllB+2*\bEllB}
  \pgfmathsetmacro{\rootEllBLeft}{\mEllB-sqrt(\discEllB)}
  \pgfmathsetmacro{\rootEllBRight}{\mEllB+sqrt(\discEllB)}

  \pgfmathsetmacro{\BtwoX}{min(\rootEllBLeft,\rootEllBRight)}
  \pgfmathsetmacro{\BoneX}{max(\rootEllBLeft,\rootEllBRight)}
  \pgfmathsetmacro{\BtwoY}{0.5*\BtwoX*\BtwoX}
  \pgfmathsetmacro{\BoneY}{0.5*\BoneX*\BoneX}

    \pgfmathsetmacro{\mQBOne}{(\BoneY-\Qy)/(\BoneX-\Qx)}
  \pgfmathsetmacro{\mBTwoQ}{(\BtwoY-\Qy)/(\BtwoX-\Qx)}

    \pgfmathsetmacro{\rayQBOneRightY}{\Qy+\mQBOne*(\xmax-\Qx)}
  \pgfmathsetmacro{\rayBTwoQLeftY}{\Qy+\mBTwoQ*(\xmin-\Qx)}

    \pgfmathsetmacro{\rayQBOneEndX}{\xmax}
  \pgfmathsetmacro{\rayQBOneEndY}{\rayQBOneRightY}

    \pgfmathsetmacro{\BtwoLeftCrossX}{\mBTwoQ-sqrt(\mBTwoQ*\mBTwoQ+2*\Qy)}
  \pgfmathsetmacro{\BtwoLeftCrossY}{0.5*\BtwoLeftCrossX*\BtwoLeftCrossX}
  \pgfmathsetmacro{\QBOneRightCrossX}{\mQBOne+sqrt(\mQBOne*\mQBOne+2*\Qy)}
  \pgfmathsetmacro{\QBOneRightCrossY}{0.5*\QBOneRightCrossX*\QBOneRightCrossX}

    \pgfmathsetmacro{\parabolaTopRightX}{sqrt(2*\ymax)}

  \begin{axis}[
    set layers,
    axis on top,
    x=8cm,
    y=12cm,
    axis lines=middle,
    font=\scriptsize,
    xmin=\xmin,
    xmax=\xmax,
    ymin=\ymin,
    ymax=\ymax,
    xlabel={$\xi$},
    ylabel={$\eta$},
    xtick={-1/4,1/4,1/2,3/4},
    xticklabels={$-\frac{1}{4}$,$\frac{1}{4}$,$\frac{1}{2}$,$\frac{3}{4}$},
    ytick={1/32,1/8},
    yticklabels={,$\frac{1}{8}$},
    clip=false
  ]

    \node[ptlabel, anchor=south east] at (axis cs:\Qx,\Qy) {$\frac{1}{32}$};

            \begin{scope}[on layer=axis background]
      \clip (axis cs:\xmin,\ymin) rectangle (axis cs:\xmax,\ymax);

            \fill[cyan!18]
        plot[domain=\xmin:\BtwoLeftCrossX, samples=120]
          (axis cs:\x,{0.5*\x*\x})
        --
        plot[domain=\BtwoLeftCrossX:\xmin, samples=120]
          (axis cs:\x,{\Qy+\mBTwoQ*(\x-\Qx)})
        -- cycle;

            \fill[cyan!18]
        (axis cs:\QBOneRightCrossX,\QBOneRightCrossY)
        -- plot[domain=\QBOneRightCrossX:\parabolaTopRightX, samples=120]
          (axis cs:\x,{0.5*\x*\x})
        -- (axis cs:\rayQBOneEndX,\ymax)
        -- (axis cs:\rayQBOneEndX,\rayQBOneEndY)
        -- plot[domain=\rayQBOneEndX:\QBOneRightCrossX, samples=120]
          (axis cs:\x,{\Qy+\mQBOne*(\x-\Qx)})
        -- cycle;
    \end{scope}

        \begin{scope}
      \clip (axis cs:\xmin,\ymin) rectangle (axis cs:\xmax,\ymax);

      \addplot[curve, samples=500, domain=\xmin:\xmax] {x^2/2};

      \draw[ray]
        (axis cs:\Qx,\Qy) --
        (axis cs:\rayQBOneEndX,\rayQBOneEndY);

      \draw[ray]
        (axis cs:\xmin,\rayBTwoQLeftY) --
        (axis cs:\BtwoX,\BtwoY);

      \draw[guide]
        (axis cs:-0.25,0.03125) --
        (axis cs:\xmax,0.03125);

      \draw[dottedray]
        (axis cs:\BtwoX,\BtwoY) --
        (axis cs:\GammaX,\GammaY);
    \end{scope}

        \node[dot] at (axis cs:\CZeroX,\CZeroY) {};
    \node[dot] at (axis cs:\COneX,\COneY) {};
    \node[dot] at (axis cs:\Qx,\Qy) {};
    \node[dot] at (axis cs:\PX,\PY) {};
    \node[dot] at (axis cs:\BX,\BY) {};
    \node[dot] at (axis cs:\GammaX,\GammaY) {};
    \node[dot] at (axis cs:\BoneX,\BoneY) {};
    \node[dot] at (axis cs:\BtwoX,\BtwoY) {};

        \node[ptlabel, anchor=south east] at (axis cs:\CZeroX,0.036) {$\CCC_0$};
    \node[ptlabel, anchor=north west] at (axis cs:\COneX,0.036) {$\CCC_1$};
    \node[ptlabel, anchor=south west] at (axis cs:\Qx,0.04) {$Q$};
    \node[ptlabel, anchor=south east] at (axis cs:\PX,\PY) {$P$};
    \node[ptlabel, anchor=south west] at (axis cs:\BX,\BY) {$B$};
    \node[ptlabel, anchor=south east] at (axis cs:\BoneX,\BoneY) {$B_1$};
    \node[ptlabel, anchor=south] at (axis cs:\BtwoX,\BtwoY) {$B_2$};
    \node[ptlabel, anchor=south west] at (axis cs:\GammaX,\GammaY) {$B \otimes_{\CCC_0} \CCC_1$};

        \node[ptlabel, anchor=north west] at (axis cs:0.835,0.300) {\color{cyan!50!black}$\ell_{QB_1}^{+}$};
    \node[ptlabel, anchor=south] at (axis cs:-0.125,0.03) {\color{cyan!50!black}$\ell_{B_2Q}^{-}$};
    \node[ptlabel, anchor=south east] at (axis cs:0.865,0.37) {$\varGamma$};

  \end{axis}
\end{tikzpicture}
        \caption{The range of phase of the stable factors of $B \otimes_{\CCC_0} \CCC_1$ with respect to $\tilde\sigma_P$.}
        \label{fig:B}
    \end{figure}

    \noindent\textbf{Step 2:} We claim that 
    \begin{equation}\label{equ:A_tensor_phase}
        A[1] \otimes_{\CCC_0}\CCC_1 \in \tilde\cP_{Q}\left(1,\, \frac{5}{2}\right].
    \end{equation}
     Similarly we may assume that $A[1]$ is $\tilde\sigma_Q$-stable. Let $A_1$, $A_2$ be the intersection points of the line $\ell_A$ joining $P$ and $\tv(A[1] \otimes_{\CCC_0} \CCC_1)$ and the parabola $\varGamma$, with $\xi(A_1) > \xi(A_2)$. 
         Let $\nu_0 \coloneqq \tilde\nu_Q(A[1]) < 0$. By solving equations, we have the coordinate of $A_2$ given by 
        \[\xi(A_2) = \nu_0 + \dfrac{1}{2} - \sqrt{\nu_0^2 + \frac{1}{16}}; \qquad \eta(A_2) = \left( \nu_0 + \dfrac{1}{2} \right)\left( \nu_0 - \sqrt{\nu_0^2 + \frac{1}{16}} \right) + \frac{5}{32}.\]
        We have either $\xi(A_2) \leq 0$ or $\xi(A_2) > 0$ and $\eta(A_2) < \frac{1}{32}$ holds. In other words, the point $A_2$ lies either below $Q$ or to its left. By \cref{lem:LZ19}, the phases of the stable factors of $A[1] \otimes_{\CCC_0} \CCC_1$ with respect to $\tilde\sigma_Q$ is between $\phi_Q(A_1) + 1$ and $\phi_Q(A_2) + 2$.
        It follows that
        \[ 1 < \phi_Q(A_1) + 1 \leq \phi_Q^-(A[1] \otimes_{\CCC_0} \CCC_1)\leq\phi_Q^+(A[1] \otimes_{\CCC_0} \CCC_1)\leq \phi_Q(A_2)+2 \leq \frac{5}{2}. \]
        This proves \eqref{equ:A_tensor_phase}, and
        hence \eqref{equ:tensor_phase} holds. \qedhere
                        \begin{figure}[h]
            \centering 
            \begin{tikzpicture}[
  line cap=round,
  line join=round,
  >=triangle 45,
  dot/.style={circle, fill=black, inner sep=0pt, minimum size=3pt},
  ptlabel/.style={font=\scriptsize, text=black},
  curve/.style={line width=0.5pt},
  ray/.style={line width=1pt, cyan!50!black},
  guide/.style={line width=0.5pt, dash pattern=on 1pt off 1pt},
  dottedray/.style={line width=1pt, dotted}
]
    \pgfmathsetmacro{\xmin}{-0.7}
  \pgfmathsetmacro{\xmax}{0.7}
  \pgfmathsetmacro{\ymin}{-0.13}
  \pgfmathsetmacro{\ymax}{0.32}

    \pgfmathsetmacro{\Qx}{0}
  \pgfmathsetmacro{\Qy}{0.03125}

  \pgfmathsetmacro{\AX}{-0.55}
  \pgfmathsetmacro{\AY}{0.08}

    \pgfmathsetmacro{\CZeroX}{-1/4}
  \pgfmathsetmacro{\CZeroY}{1/32}
  \pgfmathsetmacro{\COneX}{1/4}
  \pgfmathsetmacro{\COneY}{1/32}

      \pgfmathsetmacro{\PX}{1/2}
  \pgfmathsetmacro{\PY}{\Qy+1/8}

      \pgfmathsetmacro{\ATensorX}{\AX+1/2}
  \pgfmathsetmacro{\ATensorY}{\AY+(1/2)*\AX+1/8}

    \pgfmathsetmacro{\mEllA}{(\PY-\ATensorY)/(\PX-\ATensorX)}
  \pgfmathsetmacro{\bEllA}{\PY-\mEllA*\PX}

      \pgfmathsetmacro{\discEllA}{\mEllA*\mEllA+2*\bEllA}
  \pgfmathsetmacro{\rootEllALeft}{\mEllA-sqrt(\discEllA)}
  \pgfmathsetmacro{\rootEllARight}{\mEllA+sqrt(\discEllA)}

  \pgfmathsetmacro{\ATwoX}{min(\rootEllALeft,\rootEllARight)}
  \pgfmathsetmacro{\AOneX}{max(\rootEllALeft,\rootEllARight)}
  \pgfmathsetmacro{\ATwoY}{0.5*\ATwoX*\ATwoX}
  \pgfmathsetmacro{\AOneY}{0.5*\AOneX*\AOneX}

    \pgfmathsetmacro{\mQATwo}{(\ATwoY-\Qy)/(\ATwoX-\Qx)}
  \pgfmathsetmacro{\mAOneQ}{(\AOneY-\Qy)/(\AOneX-\Qx)}

    \pgfmathsetmacro{\dxQATwo}{\ATwoX-\Qx}
  \pgfmathsetmacro{\dyQATwo}{\ATwoY-\Qy}

  \ifdim\dxQATwo pt>0pt
    \pgfmathsetmacro{\txQATwo}{(\xmax-\Qx)/\dxQATwo}
  \else
    \ifdim\dxQATwo pt<0pt
      \pgfmathsetmacro{\txQATwo}{(\xmin-\Qx)/\dxQATwo}
    \else
      \pgfmathsetmacro{\txQATwo}{1000000}
    \fi
  \fi

  \ifdim\dyQATwo pt>0pt
    \pgfmathsetmacro{\tyQATwo}{(\ymax-\Qy)/\dyQATwo}  \else
    \ifdim\dyQATwo pt<0pt
      \pgfmathsetmacro{\tyQATwo}{(\ymin-\Qy)/\dyQATwo}
    \else
      \pgfmathsetmacro{\tyQATwo}{1000000}
    \fi
  \fi

  \pgfmathsetmacro{\tQATwo}{min(\txQATwo,\tyQATwo)}
  \pgfmathsetmacro{\rayQATwoEndX}{\Qx+\tQATwo*\dxQATwo}
  \pgfmathsetmacro{\rayQATwoEndY}{\Qy+\tQATwo*\dyQATwo}

    \pgfmathsetmacro{\dxAOneMinus}{\Qx-\AOneX}
  \pgfmathsetmacro{\dyAOneMinus}{\Qy-\AOneY}

  \ifdim\dxAOneMinus pt>0pt
    \pgfmathsetmacro{\txAOneMinus}{(\xmax-\Qx)/\dxAOneMinus}
  \else
    \ifdim\dxAOneMinus pt<0pt
      \pgfmathsetmacro{\txAOneMinus}{(\xmin-\Qx)/\dxAOneMinus}
    \else
      \pgfmathsetmacro{\txAOneMinus}{1000000}
    \fi
  \fi

  \ifdim\dyAOneMinus pt>0pt
    \pgfmathsetmacro{\tyAOneMinus}{(\ymax-\Qy)/\dyAOneMinus}
  \else
    \ifdim\dyAOneMinus pt<0pt
      \pgfmathsetmacro{\tyAOneMinus}{(\ymin-\Qy)/\dyAOneMinus}
    \else
      \pgfmathsetmacro{\tyAOneMinus}{1000000}
    \fi
  \fi

  \pgfmathsetmacro{\tAOneMinus}{min(\txAOneMinus,\tyAOneMinus)}
  \pgfmathsetmacro{\rayAOneQLeftX}{\Qx+\tAOneMinus*\dxAOneMinus}
  \pgfmathsetmacro{\rayAOneQLeftY}{\Qy+\tAOneMinus*\dyAOneMinus}

    \pgfmathsetmacro{\AOneLeftCrossX}{\mAOneQ-sqrt(\mAOneQ*\mAOneQ+2*\Qy)}
  \pgfmathsetmacro{\AOneLeftCrossY}{0.5*\AOneLeftCrossX*\AOneLeftCrossX}
  \pgfmathsetmacro{\QATwoRightCrossX}{\mQATwo+sqrt(\mQATwo*\mQATwo+2*\Qy)}
  \pgfmathsetmacro{\QATwoRightCrossY}{0.5*\QATwoRightCrossX*\QATwoRightCrossX}

  \begin{axis}[
    set layers,
    axis on top,
    x=8cm,
    y=12cm,
    axis lines=middle,
    xmin=\xmin,
    xmax=\xmax,
    ymin=\ymin,
    ymax=\ymax,
    font=\scriptsize,
    xlabel={$\xi$},
    ylabel={$\eta$},
    xtick={-1/2,-1/4,1/4,1/2},
    xticklabels={$-\frac{1}{2}$,$-\frac{1}{4}$,$\frac{1}{4}$,$\frac{1}{2}$},
    ytick={1/32,1/8},
    yticklabels={,$\frac{1}{8}$},
    clip=false
  ]

    \node[ptlabel, anchor=south east] at (axis cs:\Qx,\Qy) {$\frac{1}{32}$};

        \begin{scope}[on layer=axis background]
      \clip (axis cs:\xmin,\ymin) rectangle (axis cs:\xmax,\ymax);
      \fill[cyan!18]
        (axis cs:\rayAOneQLeftX,\ymin)
        -- plot[domain=\rayAOneQLeftX:\AOneLeftCrossX, samples=80]
          (axis cs:\x,{\Qy+\mAOneQ*(\x-\Qx)})
        -- plot[domain=\AOneLeftCrossX:\QATwoRightCrossX, samples=140, smooth]
          (axis cs:\x,{0.5*\x*\x})
        -- plot[domain=\QATwoRightCrossX:\rayQATwoEndX, samples=100]
          (axis cs:\x,{\Qy+\mQATwo*(\x-\Qx)})
        -- (axis cs:\rayQATwoEndX,\ymin)
        -- cycle;
    \end{scope}

        \begin{scope}
      \clip (axis cs:\xmin,\ymin) rectangle (axis cs:\xmax,\ymax);

      \addplot[curve, samples=500, domain=\xmin:\xmax, smooth] {x^2/2};

      \draw[ray]
        (axis cs:\Qx,\Qy) --
        (axis cs:\rayQATwoEndX,\rayQATwoEndY);

      \draw[ray]
        (axis cs:\rayAOneQLeftX,\rayAOneQLeftY) --
        (axis cs:\AOneX,\AOneY);

      \draw[dottedray]
        (axis cs:\AOneX,\AOneY) --
        (axis cs:\ATensorX,\ATensorY);

      \draw[guide]
        (axis cs:\xmin,0.03125) --
        (axis cs:0.25,0.03125);
    \end{scope}

        \node[dot] at (axis cs:\CZeroX,\CZeroY) {};
    \node[dot] at (axis cs:\COneX,\COneY) {};
    \node[dot] at (axis cs:\Qx,\Qy) {};
    \node[dot] at (axis cs:\PX,\PY) {};
    \node[dot] at (axis cs:\AX,\AY) {};
    \node[dot] at (axis cs:\ATensorX,\ATensorY) {};
    \node[dot] at (axis cs:\ATwoX,\ATwoY) {};
    \node[dot] at (axis cs:\AOneX,\AOneY) {};

        \node[ptlabel, anchor=south east] at (axis cs:-0.25,0.036) {$\CCC_0$};
    \node[ptlabel, anchor=north west] at (axis cs:0.25,0.036) {$\CCC_1$};
    \node[ptlabel, anchor=south west] at (axis cs:0,0.04) {$Q$};
    \node[ptlabel, anchor=north east] at (axis cs:0.5,0.145) {$P$};
    \node[ptlabel, anchor=south] at (axis cs:\AX,\AY) {$A[1]$};
    \node[ptlabel, anchor=south] at (axis cs:-0.43,0.036) {$\ell_0$};
    \node[ptlabel, anchor=north east] at (axis cs:\ATensorX,\ATensorY) {$A[1] \otimes_{\CCC_0} \CCC_1$}; 
    \node[ptlabel, anchor=north] at (axis cs:\ATwoX,\ATwoY) {$A_2$};
    \node[ptlabel, anchor=south east] at (axis cs:0.66,0.22) {$A_1$};

        \node[ptlabel, anchor=south west] at (axis cs:-0.585,0.176) {$\varGamma$};
    \node[ptlabel, anchor=north west] at (axis cs:-0.535,-0.055) {\color{cyan!50!black}$\ell_{A_1Q}^{-}$};
    \node[ptlabel, anchor=west] at (axis cs:0.618,-0.044) {\color{cyan!50!black}$\ell_{QA_2}^{+}$};

  \end{axis}
\end{tikzpicture}
            \caption{The range of phase of the stable factors of $A[1] \otimes_{\CCC_0} \CCC_1$ with respect to $\tilde\sigma_P$. }
            \label{fig:A}
        \end{figure}

\end{proof}

\subsection{Proof of Serre invariance}\label{subsec:Serre_inv}

\begin{definition}[][def:Serre_inv]
    Let $\sigma = (\AAA,Z)$ be a stability condition on the triangulated category $\TTT$ with a Serre functor $\mathsf S$. The stability condition $\sigma$ is called Serre-invariant, if there exists $\tilde g \in \tilde{\operatorname{GL}_2^+}(\R)$ such that $\mathsf S \cdot \sigma = \sigma \cdot \tilde g$. 
    
    Explicitly, $\sigma$ is Serre-invariant if and only if we can find $\tilde g = (M,g) \in \tilde{\operatorname{GL}_2^+}(\R)$ such that $\mathcal{P}_{\sigma}(g(0),g(1)] = \mathsf S(\AAA)$ and $M^{-1}\circ Z = Z \circ \mathsf{S}^{-1}_*$. 
\end{definition}

\begin{theorem}[][thm:Serre_inv]
    The stability conditions $\sigma'_{\beta,\alpha}$ are Serre-invariant, and they lie in the same orbit with respect to the standard $\tilde{\operatorname{GL}^+_2}(\R)$-action.
\end{theorem}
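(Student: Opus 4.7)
My plan is to reduce Serre invariance to the corresponding statement for the rotation functor and to apply \cref{prop:tensor_heart}. By \cref{cor:Serre}, we have $\mathsf{S}^{-1}_{\Ku(Y)} \simeq \mathsf{O}_{\Ku(Y)}^{2}[-3]$, so it suffices to exhibit an element $\tilde h \in \tilde{\operatorname{GL}^+_2}(\R)$ such that $\mathsf{O}_{\Ku(Y)} \cdot \sigma'_{\alpha,-5/4} = \sigma'_{\alpha,-5/4}\cdot \tilde h$ for some representative $\alpha$ close to $1/4$; Serre invariance of $\sigma'_{\alpha,-5/4}$ then follows with the element $\tilde h^{2}\cdot [-3]$. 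Since $\mathsf{O}_{\Ku(Y)}$ is the restriction of $\mathsf{O}_{\Ku(\pro^3,\CCC_0)} = \lmut{\CCC_1}\circ(-\otimes_{\CCC_0}\CCC_1)$, the key input is \cref{prop:tensor_heart}, which already controls the twisting action by $\CCC_1$.

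The main work is in the heart translation. First, I would take $E \in \AAA_{\alpha,-5/4} = \Coh^0_{\alpha,-5/4}(\pro^3,\CCC_0) \cap \Ku(Y)$ for $\alpha$ sufficiently close to $1/4$ and apply \cref{prop:tensor_heart} to obtain
\[
    E \otimes_{\CCC_0} \CCC_1 \in \ord{\Coh^0_{\alpha',-5/4}(\pro^3,\CCC_0),\ \Coh^0_{\alpha',-5/4}(\pro^3,\CCC_0)[1],\ \CCC_1[2]}
\]
for some $\alpha' < \alpha$. Next, applying $\lmut{\CCC_1}$ removes the $\CCC_1[2]$-summand, and since $\mathsf{O}_{\Ku(Y)}(E) \in \Ku(Y)$ projects out the exceptional factors $\CCC_1, \CCC_2$ anyway, the projection yields
\[
    \mathsf{O}_{\Ku(Y)}(\AAA_{\alpha,-5/4}) \subset \ord{\AAA_{\alpha',-5/4},\ \AAA_{\alpha',-5/4}[1]}.
\]
This sandwich relation between hearts says precisely that $\mathsf{O}_{\Ku(Y)}$ shifts the slicing $\mathcal P_{\sigma'_{\alpha,-5/4}}(0,1]$ into some interval $(\psi,\psi+1]$ with $\psi \in (0,1)$. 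Combined with the numerical action of $\mathsf{O}_{\Ku(Y)}$ on $\Knum(\Ku(Y))$ computed in \cref{prop:Knum_Ku} (rotation by $2\pi/3$ in the hexagonal picture of \cref{fig:hexa}) and the multiplicative relation $\Chern{}(E \otimes_{\CCC_0} \CCC_1) = e^{h/2}\Chern{}(E)$ from \eqref{equ:Chern_tensor}, one reads off the matrix $M \in \operatorname{GL}^+_2(\R)$ and the increasing function $g\colon \R\to\R$ with $g(\phi+1) = g(\phi)+1$ so that $\tilde h = (M,g)$ realises $\mathsf{O}_{\Ku(Y)}$ as a $\tilde{\operatorname{GL}^+_2}(\R)$-translate. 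The Gepner shift $\psi = 2/3$ is forced by the fractional Calabi--Yau dimension $7/3$, and compatibility of $\tilde h^{2}[-3]$ with $\mathsf{S}_{\Ku(Y)}$ confirms the conjectured Gepner point in \cite[Conjecture 1.1]{Tod13} for the pair $(\mathsf{O}, 2/3)$.

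For the second assertion that all $\sigma'_{\alpha,\beta}$ with $(\alpha,\beta)\in V$ lie in a single $\tilde{\operatorname{GL}^+_2}(\R)$-orbit, I would use that $V$ is an open connected subset of $\R^{2}$ and that $\sigma'_{\alpha,\beta}$ depends continuously on $(\alpha,\beta)$ by construction. For fixed $\beta$, varying $\alpha$ corresponds to rescaling the central charge by a one-parameter family in $\operatorname{GL}^+_2(\R)$, so these conditions lie in the same orbit. For varying $\beta$ within the strip $(-3/2,-1)$, I would show that no numerical wall is crossed inside $V$: the characters in $\Knum(\Ku(Y))$ that could produce a wall for $\sigma'_{\alpha,\beta}$ are constrained by the Bogomolov inequality (\cref{thm:Bog_ineq}) together with the exceptional classes $[\CCC_1],[\CCC_2]$ lying on the boundary of $V$, and a direct computation shows that no such wall meets the interior of $V$. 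Hence the family $\{\sigma'_{\alpha,\beta}\}_{(\alpha,\beta)\in V}$ is a continuous path in a single chamber, and continuous chamber-preserving deformations of stability conditions are given by the free $\tilde{\operatorname{GL}^+_2}(\R)$-action.

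The main obstacle will be the careful bookkeeping in matching the numerical data of $\mathsf{O}_{\Ku(Y)}$ with the correct element $\tilde h$: one needs to verify that the phase interval into which $\mathsf{O}_{\Ku(Y)}$ sends $(0,1]$ agrees not only with the sandwich from \cref{prop:tensor_heart} but also with the argument of the central charge rotated by the induced linear action on $\Knum(\Ku(Y))$. Given the explicit hexagonal structure of the numerical lattice, this reduces to a concrete verification at the Chern-character level, with no new conceptual input beyond \cref{prop:tensor_heart}.
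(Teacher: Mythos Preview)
Your overall strategy is right and matches the paper's, but there is a genuine gap in the heart-translation step, and the orbit argument is not quite sound as stated.

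\textbf{The gap in the Serre-invariance part.} You write: ``applying $\lmut{\CCC_1}$ removes the $\CCC_1[2]$-summand'', and conclude $\mathsf{O}_{\Ku(Y)}(\AAA_{\alpha,-5/4}) \subset \ord{\AAA_{\alpha',-5/4},\AAA_{\alpha',-5/4}[1]}$. This does not follow. Applying $\lmut{\CCC_1}$ to $E\otimes_{\CCC_0}\CCC_1$ takes the cone of the \emph{full} evaluation $\bigoplus_k \Hom(\CCC_1,(E\otimes_{\CCC_0}\CCC_1)[k])\otimes\CCC_1[-k]\to E\otimes_{\CCC_0}\CCC_1$; since $\Hom(\CCC_1,(E\otimes_{\CCC_0}\CCC_1)[k])\cong\Hom(\CCC_0,E[k])$ can be nonzero for $k\in\{-2,-1,0,1\}$, the mutation introduces $\CCC_1$-factors in several degrees, and there is no reason the cone stays inside $\ord{\Coh^0,\Coh^0[1]}$. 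Equivalently, in the triangle $\CCC_1^{k_0}[2]\to E\otimes_{\CCC_0}\CCC_1\to F\otimes_{\CCC_0}\CCC_1$ coming from \cref{prop:tensor_heart}, you get $\mathsf{O}(E)\cong\lmut{\CCC_1}(F\otimes_{\CCC_0}\CCC_1)$, but $F\notin\Ku(\pro^3,\CCC_0)$, so this mutation is genuinely nontrivial and you have not controlled it.

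The paper avoids this by using the \emph{other} presentation $\mathsf{O}=(-\otimes_{\CCC_0}\CCC_1)\circ\lmut{\CCC_0}$ from \cref{cor:Serre}: first compute $\lmut{\CCC_0}E$ (whose cohomology with respect to $\Coh^0_{\alpha,-5/4}$ sits in degrees $0,1,2$ with the higher pieces being copies of $\CCC_0[2]$), and only then tensor with $\CCC_1$, applying \cref{prop:tensor_heart} to $\mathcal H^0(\lmut{\CCC_0}E)$. Because the object $\lmut{\CCC_0}E\otimes_{\CCC_0}\CCC_1$ is already $\mathsf{O}(E)\in\Ku(\pro^3,\CCC_0)$, the vanishing $\Hom(\CCC_1[2],\mathsf{O}(E))=\Hom(\CCC_0,\lmut{\CCC_0}E)[-2]=0$ then kills the residual $\CCC_1[2]$-factor. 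This is the content of the intermediate \cref{lem:rot_func_tilt}, which your outline skips. The order ``mutate, then tensor'' is what makes \cref{prop:tensor_heart} applicable without further work. Finally, the paper closes the argument by invoking \cite[Lemma~8.11]{bay16} to match the two tilted hearts with the same central charge; you should cite a result of this type rather than asserting the identification directly.

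\textbf{The orbit argument.} Your plan for the second assertion is not correct as written. ``No numerical wall crossed'' plus continuity does \emph{not} in general force two stability conditions to lie in the same $\GLuni$-orbit; chambers can be larger than orbits. What actually saves you here is that $\Knum(\Ku(Y))$ has rank $2$, so the $\GLuni$-orbits are open in $\Stab(\Ku(Y))$ and hence coincide with connected components; continuity of $(\alpha,\beta)\mapsto\sigma'_{\alpha,\beta}$ on the connected set $V$ then suffices. The paper instead gives a direct argument (\cref{lem:heart_alpha}): for $\beta=-5/4$ the hearts $\AAA_{\alpha,-5/4}$ are all equal, and for general $(\alpha,\beta)\in V$ the heart is a tilt of $\AAA_{\alpha',-5/4}$, while the central charges are related by an explicit $\GL_2^+(\R)$-matrix because the orientation determinant $16\alpha^2+16(\beta+\tfrac54)^2+7$ never vanishes. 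Either fix your argument to use the rank-$2$ observation explicitly, or follow the tilting comparison.
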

\begin{remark}
    It would be interesting to investigate if there is a unique Serre-invariant $\tilde{\operatorname{GL}^+_2}(\R)$-orbit for the stability conditions on $\Ku(Y)$, as is the case for cubic 3-folds and very general cubic 4-folds (\cite{FP23}). Unfortunately, the techniques used in those cases may not directly apply here, as the fractional Calabi--Yau dimension of $\Ku(Y)$ for a cubic 5-fold is $7/3$, which does not satisfy the condition (C1) in \cite[Definition 3.1]{FP23}.
\end{remark}

The proof of \cref{thm:Serre_inv} will take up the rest of this subsection.

\begin{lemma}[][lem:heart_alpha]
    For any $(\xi,\eta) \in \tilde{V}$ (see \eqref{equ:para_ab_twisted}) and $0 < \eta' < \frac{1}{32}$, we have 
    \begin{enumerate}[dense]
        \item If $\xi = 0$, then $\tA_{0,\eta} = \tA_{0,\eta'}$;
        \item If $\xi < 0$, then $\tA_{\xi,\eta} \subset \ord{\tA_{0,\eta'}[-1],\ \tA_{0,\eta'}}$;
        \item If $\xi > 0$, then $\tA_{\xi,\eta} \subset \ord{\tA_{0,\eta'},\ \tA_{0,\eta'}[1]}$.
    \end{enumerate}
    In particular, the stability conditions in \cref{thm:main} lies in the same orbit of the $\widetilde{\GL_2^+}(\R)$-action.
        \end{lemma}
\begin{proof}
    The comparison for hearts is a standard result for tilt stability. See for example \cite[Proposition 3.6]{PY20} for a complete proof. For the last statement, it would follow from the fact the set $\left\{\tZ^0_{\xi,\eta}(\bar\kappa_1),\ \tZ^0_{\xi,\eta}(\bar\kappa_2)\right\}$ remains an oriented $\R$-basis of $\C$ as $(\xi,\eta)$ varies in $\tilde V$. Indeed, by the computation in \cref{prop:Knum_Ku_C0} we have 
    \begin{equation}
        \tZ^0_{\xi,\eta}(\bar\kappa_1) = 4;
                \qquad 
        \tZ^0_{\xi,\eta}(\bar\kappa_2) = 8\xi + 2 + \ii \ab(8\eta + \frac{7}{4}) \label{equ:Z0_basis}
    \end{equation}
    The determinant is equal to $7 + 32\eta$, which is strictly positive for $0 < \eta < \frac{1}{32}$.
\end{proof}

For the following lemma, we fix $\beta = -\frac{5}{4}$, or $\xi = 0$.  

\begin{lemma}[][lem:heart_ev]
    Let $E \in {\displaystyle{\bigcap}}\ab\{\tilde{\Coh}^0_{0,\eta} \ \big|\  0 < \eta < \frac{1}{32}\}$. Then there exists $\eta_1$ with $0 < \eta_1 < \frac{1}{32}$ such that for all $\eta$ satisfying $\eta_1 < \eta < \frac{1}{32}$, the evaluation morphism 
    \begin{equation}
        \ev\colon \Hom_{\CCC_0}(\CCC_0[2],E) \otimes \CCC_0[2] \to E \label{equ:ev_map}
    \end{equation}
    is a monomorphism in the heart $\tilde{\Coh}^0_{0,\eta}$.
                                        \end{lemma}
\begin{proof}
    If $\Hom_{\CCC_0}(\CCC_0[2],E) = 0$, the statement is vacuously true. Thus, we may assume that $\Hom_{\CCC_0}(\CCC_0[2],E) \ne 0$. By the wall-and-chamber structure of tilt stability, there exists $\varepsilon > 0$ and $\eta_1 \coloneqq \frac{1}{32} - \varepsilon$ such that the object $E$ has the Harder--Narasimhan filtration with respect to every $\tilde\sigma_{0,\eta}$ with $\eta_1 < \eta < \frac{1}{32}$:
    \[0 = E_0 \subset E_1 \subset \cdots \subset E_m \subset \cdots \subset E_n = E.\]
    For $1 \leq i \leq n$, let $F_i \coloneqq E_i/E_{i-1}$ be the associated Harder--Narasimhan factors of $E$. Denote by $\ell_0 \subset \R^2$ the ray $\{(\xi,\eta) \mid \xi \leq -\frac{1}{4},\ \eta = \frac{1}{32} \}$. Note that if $G$ is an object with $\tv(G) \in \ell_0$, then  
    \[  \lim_{\eta \to \frac{1}{32}}\tilde\nu^0_{0,\eta}(G) = +\infty, \]
    whereas $G$ with $\tv(G)$ above $\ell_0$ has finite limit tilt as $\eta \to \frac{1}{32}$. The object $\CCC_0[2]$ is $\tilde\sigma_{0,\eta}$-semistable with $\tv(\CCC_0[2]) \in \ell_0$. There exists some $\eta'$ such that $\tilde\nu_{0,\eta'}(\CCC_0[2]) > \tilde\nu_{0,\eta'}(F_i)$ for all $F_i$ for which $\tv(F_i) \notin \ell_0$. So the assumption that $\Hom_{\CCC_0}(\CCC_0[2],E) \ne 0$ implies that there is $1 \leq m \leq n$ such that $\tv(F_i) \in \ell_0$ for $1 \leq i \leq m$ and $\tv(F_i) \notin \ell_0$ for $m+1 \leq i \leq n$. This is shown in \cref{fig:HN_ell_0}.
    \begin{figure}[h]
        \centering
        \begin{tikzpicture}[line cap=round,line join=round,>=triangle 45,x=12cm,y=26cm]
    \begin{axis}[
            x=12cm,y=26cm,
            axis lines=middle,
            xmin=-0.63,
            xmax=0.4,
            ymin=-0.02,
            ymax=0.15,
            xlabel={$\xi$},
            ylabel={$\eta$},
            xtick={-1/4, 1/4},
            xticklabels={$-\frac{1}{4}$, $\frac{1}{4}$},
            ytick={0.018250781449224086, 1/32},
            yticklabels={{\scriptsize${{}_{\eta'}}$}, {\scriptsize$\frac{1}{32}$}}
            ]
        \clip(-0.63,-0.02) rectangle (0.4,0.15);
        \addplot [samples=500, line width=0.5pt, domain=-0.63:0.39] {x^2/2};
        \draw [line width=0.5pt,dotted] (-0.25,0.03125)-- (0.25,0.03125);
        \draw [line width=0.8pt,domain=-0.6046307961249218:-0.25] plot(\x,{(-0.012392855081443725-0*\x)/-0.3965713626061992});
        \draw [line width=0.5pt,dash pattern=on 1pt off 1pt,domain=-0.6046307961249218:0.29394573444796884] plot(\x,{(-0.004562695362306021--0.012999218550775914*\x)/-0.25});
        \begin{scriptsize}
            \draw [fill=black] (-0.25,0.03125) circle (1.5pt);
            \draw[color=black] (-0.25,1/64) node {$F_m,\CCC_0$};
            \draw [fill=black] (0.25,0.03125) circle (1.5pt);
            \draw[color=black] (0.25,1/64) node {$\CCC_1$};
            \draw [fill=black] (0,0.018250781449224086) circle (1.5pt);
                        \draw[color=black] (-0.58,1/64) node {$\ell_0$};
            \draw [fill=black] (-0.4842239732969621,0.03125) circle (1.5pt);
            \draw[color=black] (-0.4842239732969621,1/64) node {$F_1$};
            \draw [fill=black] (-0.4514392000750622,0.03125) circle (1.5pt);
            \draw[color=black] (-0.4514392000750622,1/64) node {$F_2$};
            \draw [fill=black] (-0.3498706869562353,0.03125) circle (1.5pt);
            \draw[color=black] (-0.3498706869562353,1/64) node {$F_{m-1}$};
            \draw [fill=black] (-0.46834687207524356,0.054111884351054476) circle (1.5pt);
            \draw[color=black] (-0.5234175055933598,0.064111884351054476) node {$F_{m+1}$};
            \draw [fill=black] (-0.4982557782412829-0.03,0.09361421324959729) circle (1.5pt);
            \draw[color=black] (-0.5447026111578547-0.03,0.10361421324959729) node {$F_{n-1}$};
            \draw [fill=black] (-0.4932237149657189,0.11044278697417491) circle (1.5pt);
            \draw[color=black] (-0.5180020797289359,0.12044278697417491) node {$F_n$};
            \draw[color=black] (-0.4914839504301042,0.076) node {$\ddots$};
            \draw[color=black] (-0.4,01/64) node {$\cdots$};
            \draw[color=black] (-0.5,0.14) node {$\varGamma$};
        \end{scriptsize}
    \end{axis}
\end{tikzpicture} 
        \caption[The Harder--Narasimhan factors of $E$ with respect to $\tilde\sigma_{0,\eta}$.]{The Harder--Narasimhan factors of $E$ with respect to $\tilde\sigma_{0,\eta}$. \\ 
        Note that $\tv(F_m) = \tv(\CCC_0)$ which follows from a simple tilt comparison.}
        \label{fig:HN_ell_0}
    \end{figure}
    
    Note that $\Hom_{\CCC_0}(\CCC_0[2],E/E_m) = 0$ and hence $\Hom_{\CCC_0}(\CCC_0[2],E) \cong \Hom_{\CCC_0}(\CCC_0[2],E_m)$. Consider the commutative diagram 
    \[\begin{tikzcd}[ampersand replacement=\&,cramped]
	{\Hom_{\CCC_0}(\CCC_0[2],E_m) \otimes \CCC_0[2]} \& {E_m} \\
	{\Hom_{\CCC_0}(\CCC_0[2],E) \otimes \CCC_0[2]} \& E
	\arrow["{\ev}", from=1-1, to=1-2]
	\arrow["\cong", from=1-1, to=2-1]
	\arrow[hook, from=1-2, to=2-2]
	\arrow["{\ev}", from=2-1, to=2-2]
    \end{tikzcd}\]
    Since $E_m \subset E$, the bottom morphism is injective if the top morphism is. Hence we may replace $E$ by $E_m$ in \eqref{equ:ev_map}. In particular, we may assume that $E \in \tilde\Coh^0_{0,\frac{1}{32}}$ is $\tilde\sigma_{0,\frac{1}{32}}$-semistable of phase $1$. 

    \medskip

    Let $G\coloneqq\cone(\ev)$, $G_1 \coloneqq \ker(\ev) = \mathcal H^{-1}(G)$, and $G_2 \coloneqq \coker(\ev) = \mathcal H^{0}(G)$, where the cohomology is taken with respect to the heart $\tilde\Coh^0_{0,\frac{1}{32}}$. We have the exact sequence in the heart $\tilde\Coh^0_{0,\frac{1}{32}}$:
    \begin{equation}
        \begin{tikzcd}[ampersand replacement=\&,cramped]
            0 \& G_1 \& \CCC_0^{\oplus k_0}[2] \& E \& G_2 \& 0.
            \arrow[from=1-1, to=1-2]
            \arrow[from=1-2, to=1-3]
            \arrow["\ev", from=1-3, to=1-4]
            \arrow[from=1-4, to=1-5]
            \arrow[from=1-5, to=1-6]
        \end{tikzcd}
    \end{equation}
    where $k_0 \coloneqq \dim\Hom_{\CCC_0}(\CCC_0[2],E)$. Since both $\CCC_0[2]$ and $E$ has the maximal phase $1$ in the heart, it follows that $G_1$ and $G_2$ are also $\tilde\sigma_{0,\frac{1}{32}}$-semistable of phase $1$. Using $G_1 \subset \CCC_0^{\oplus k_0}[2]$ we will show that $G_1 \cong \CCC_0^{\oplus r}[2]$ for some $r \in \Z_{\geq 0}$. Indeed, for $k_0 = 1$, we have a short exact sequence in the heart $\tilde\Coh^0_{0,\frac{1}{32}}$:
    \begin{equation}\begin{tikzcd}[ampersand replacement=\&,cramped]
    0 \& G_1 \& \CCC_0[2] \& K \& 0.
    \arrow[from=1-1, to=1-2]
    \arrow[from=1-2, to=1-3]
    \arrow[from=1-3, to=1-4]
    \arrow[from=1-4, to=1-5]
    \end{tikzcd}
    \end{equation}
    Since $G_1$ and $\CCC_0[2]$ have the same phase with respect to the heart $\tilde\Coh^0_{0,\frac{1}{32}}$, and $\CCC_0[2]$ has primitive character, it follows that $\ch_{\CCC_0, \leq 2}^{-\frac{5}{4}}(K) = 0$. Therefore the quotient object $K$ has zero-dimensional support. Since its phase is $1$, it is a zero-dimensional sheaf. Moreover, $\CCC_0[2]$ and $K$ are $\tilde\sigma_{0,\frac{1}{32}}$-semistable with $\phi_{\tilde\sigma_{0,\frac{1}{32}}}(\CCC_0[2]) > \phi_{\tilde\sigma_{0,\frac{1}{32}}}(K)$. Therefore we must have $K = 0$ and thus $G_1 \cong \CCC_0[2]$. For general $k_0 \geq 1$, inductively $G_1$ is the iterated extension of $\CCC_0[2]$ by itself. Furthermore, since $\CCC_0[2]$ is exceptional, we have $G_1 \cong \CCC_0^{\oplus r}[2]$ for some $r \in \Z_{\geq 0}$. 

    \medskip

    Now consider the long exact sequence 
    \[\begin{tikzcd}[ampersand replacement=\&,cramped,row sep=0, column sep = small]
	\& {0 = \Hom_{\CCC_0}(\CCC_0[2],E[-1])} \& {\Hom_{\CCC_0}(\CCC_0[2],G[-1])} \& {\Hom_{\CCC_0}(\CCC_0[2],\CCC_0^{\oplus k_0}[2])} \\
	{} \& {\Hom_{\CCC_0}(\CCC_0[2],E)} \& {\Hom_{\CCC_0}(\CCC_0[2],G)=0.}
	\arrow[from=1-2, to=1-3]
	\arrow[from=1-3, to=1-4]
	\arrow["\sim", from=2-1, to=2-2]
	\arrow[from=2-2, to=2-3]
        \end{tikzcd}\]
    It follows that $\Hom_{\CCC_0}(\CCC_0[2],G[-1]) = 0$. Applying $\Hom_{\CCC_0}(\CCC_0[3],-)$ to the sequence $0 \to G_1[1] \to G \to G_2 \to 0$, we obtain that 
    \[ \Hom_{\CCC_0}(\CCC_0[2],\CCC_0^{\oplus r}[2]) = \Hom_{\CCC_0}(\CCC_0[2],G_1) = \Hom_{\CCC_0}(\CCC_0[2],G[-1]) = 0.\]
    We must have $r = 0$ and thus $G_1 = 0$. This concludes the proof that \eqref{equ:ev_map} is injective.
\end{proof}

\begin{lemma}[][lem:rot_func_tilt]
    For $0 < \eta < \frac{1}{32}$, the category $\lmut{\CCC_0}\tA_{0,\eta} \otimes \CCC_1$ is a tilt of $\tA_{0,\eta}$.
\end{lemma}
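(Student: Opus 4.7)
The plan is to exhibit the inclusion
\[\lmut{\CCC_0}\bigl(\AAA_{\alpha,-5/4} \otimes_{\CCC_0}\CCC_1\bigr) \subset \ord{\AAA_{\alpha,-5/4},\ \AAA_{\alpha,-5/4}[1]},\]
which by the HRS tilting theorem \cite{HRS} characterises the left-hand side as a forward tilt of $\AAA_{\alpha,-5/4}$.

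The engine of the proof will be \cref{prop:tensor_heart}: for $E \in \AAA_{\alpha,-5/4}$ it gives some $\alpha' \in (0,1/4)$ with $E \otimes_{\CCC_0} \CCC_1 \in \ord{\AAA_0',\ \AAA_0'[1],\ \CCC_1[2]}$, where $\AAA_0' \coloneqq \Coh^0_{\alpha',-5/4}(\pro^3,\CCC_0)$. I would realise this as a two-step filtration of $E\otimes \CCC_1$ and apply the exact functor $\lmut{\CCC_0}$ piece by piece. The $\CCC_1[2]$-summand yields an iterated extension of $\lmut{\CCC_0}\CCC_1[2]$, read off from the mutation triangle $\CCC_0^{\oplus 3}\to \CCC_1 \to \lmut{\CCC_0}\CCC_1$, using $\Ext^\bullet_{\CCC_0}(\CCC_0,\CCC_1) = H^\bullet(\pro^3,\operatorname{Forg}\CCC_1) = \C^3$ concentrated in degree $0$ (a direct cohomology computation from \eqref{eq:Cliff_01}). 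The $\AAA_0'$- and $\AAA_0'[1]$-pieces must be shown to land in $\ord{\AAA_0',\ \AAA_0'[1]}$ after mutation; intersecting with $\Ku(\pro^3,\CCC_0) = \ord{\CCC_1,\CCC_2}^\perp$ and using \cref{lem:heart_alpha}(i) to identify $\AAA_0'\cap\Ku(\pro^3,\CCC_0)$ with $\AAA_{\alpha,-5/4}$ will then deliver the target inclusion.

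The main obstacle lies in controlling $\lmut{\CCC_0}$ on objects of $\AAA_0'$. Concretely, one must show that for any $F\in\AAA_0'$ the graded object $\Hom^\bullet_{\CCC_0}(\CCC_0,F)$ is essentially concentrated in a single cohomological degree (namely $-2$, since $\CCC_0[2]\in\AAA_0'$), so that the mutation does not increase the spread of $F$ by more than one in the $\AAA_0'$-heart. My plan is to reduce to the case where $F$ is $\sigma^0_{\alpha',-5/4}$-semistable and then run a wall-and-chamber argument in the $(\xi,\eta)$-coordinates introduced in \cref{subsec:tensor}, exploiting that $v(\CCC_0)$ lies on the boundary parabola $\varGamma$ and on the critical horizontal line $\eta=1/32$. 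This is essentially the same phase-control technique used in Step~3 of the proof of \cref{prop:tensor_heart}; the most delicate case is, as there, stable factors whose numerical class is proportional to $[\CCC_0]$, which must be shown either to vanish or to be absorbed into the allowed range of shifts by an argument analogous to \cref{lem:heart_alpha}.
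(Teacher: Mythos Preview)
Your plan reverses the order of operations from the paper's, and this is where the argument breaks down. The rotation functor is $\mathsf{O}=(-\otimes_{\CCC_0}\CCC_1)\circ\lmut{\CCC_0}=\lmut{\CCC_1}\circ(-\otimes_{\CCC_0}\CCC_1)$; the lemma's notation should be read as the first of these (mutate by $\CCC_0$, \emph{then} tensor), and that is exactly what the paper's proof does. You instead tensor first and then apply $\lmut{\CCC_0}$, but $\lmut{\CCC_0}\circ(-\otimes_{\CCC_0}\CCC_1)=(-\otimes_{\CCC_0}\CCC_1)\circ\lmut{\CCC_{-1}}$ is \emph{not} $\mathsf{O}$ and does not land in $\Ku(\pro^3,\CCC_0)$ (one checks $\Hom(\CCC_1,\lmut{\CCC_0}(E\otimes\CCC_1))\cong\Hom(\CCC_0,E)\neq 0$ in general), so your final step of intersecting with $\Ku$ is ill-posed. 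Even correcting the mutation to $\lmut{\CCC_1}$ so that the composite really is $\mathsf{O}$, applying it to the $\AAA_0'[1]$-factor produced by \cref{prop:tensor_heart} gives something in $\ord{\AAA_0'[1],\AAA_0'[2]}$, and the total spread becomes three shifts rather than the required two.

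The paper's route sidesteps this by mutating first. For $E\in\AAA_{\alpha,-5/4}$, Serre duality in $\Dcatb(\pro^3,\CCC_0)$ gives $\Hom(\CCC_0[2],E[k])=0$ outside $k\in\{0,1,2,3\}$ --- four degrees, so your concentration-in-one-degree claim is simply false --- and the injectivity of the evaluation map (the first lemma of \cref{subsec:tensor}) handles $k=0$. The payoff of this ordering is that $\mathcal{H}^1_{\AAA_0'}(\lmut{\CCC_0}E)\cong\CCC_0^{\oplus k_2}[2]$ and $\mathcal{H}^2_{\AAA_0'}(\lmut{\CCC_0}E)\cong\CCC_0^{\oplus k_3}[2]$ are copies of the \emph{specific} object $\CCC_0[2]$; tensoring with $\CCC_1$ and reabsorbing the cohomological shifts turns these into $\CCC_1^{\oplus k_2}[1]\in\AAA_0'[1]$ and $\CCC_1^{\oplus k_3}\in\AAA_0'$. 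Only $\mathcal{H}^0_{\AAA_0'}(\lmut{\CCC_0}E)$ requires \cref{prop:tensor_heart}, and the residual $\CCC_1[2]$-factor coming from there is eliminated by the orthogonality $\Hom_{\CCC_0}(\CCC_1[2],\mathsf{O}E)\cong\Hom_{\CCC_0}(\CCC_0[2],\lmut{\CCC_0}E)=0$. Your tensor-first route loses exactly this control: the $\AAA_0'$- and $\AAA_0'[1]$-factors of $E\otimes\CCC_1$ are no longer of any special form that mutation handles cheaply, and the wall-crossing fix you sketch would amount to re-proving the hard structural content from scratch.
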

\begin{proof}
    Let $E$ be an object of $\tA_{0,\eta} \coloneqq \tilde{\Coh}^0_{0,\eta} \cap \Ku(\pro^3,\CCC_0)$. Note that $\lmut{\CCC_0}E$ fits into the distinguished triangle:
    \[\begin{tikzcd}[ampersand replacement=\&,cramped]
            {\displaystyle\bigoplus_{k \in \Z} \Hom_{\CCC_0}(\CCC_0,E[k]) \otimes \CCC_0[-k]} \& E \& {\lmut{\CCC_0}E} \& {}
            \arrow[from=1-1, to=1-2]
            \arrow[from=1-2, to=1-3]
            \arrow["{+1}", from=1-3, to=1-4]
        \end{tikzcd}\]
    Since $\Dcatb(\pro^3,\CCC_0)$ has the Serre functor $\mathsf S_{\CCC_0} = - \otimes_{\CCC_0} \CCC_{-2}[3]$, and $\CCC_0[2], \CCC_{-2}[2] \in \tilde{\Coh}^0_{0,\eta}$, we have
    \[\Hom(\CCC_0[2], E[k]) = \Hom(E, \mathsf S_{\CCC_0}(\CCC_0)[-k+2]) = \Hom(E, \CCC_{-2}[-k+5]) = 0,\]
    for $k < 0$ or $-k+5<2$. Then we have the distinguished triangle:
    \[\begin{tikzcd}[ampersand replacement=\&,cramped]
            {\CCC_0^{\oplus k_0}[2] \oplus \CCC_0^{\oplus k_1}[1] \oplus \CCC_0^{\oplus k_2} \oplus \CCC_0^{\oplus k_3}[-1]} \& E \& {\lmut{\CCC_0}E} \& {}
            \arrow[from=1-1, to=1-2]
            \arrow[from=1-2, to=1-3]
            \arrow["{+1}", from=1-3, to=1-4]
        \end{tikzcd}\]
    Note that $\CCC_0[2]$ a stable object with respect to $\ts'_{0,\eta}$. By \cref{lem:heart_ev}, there exists $\varepsilon_1 > 0$ such that $F \coloneqq \cone\left( \CCC_0[2]^{\oplus k_0} \to E \right) \in \tilde{\Coh}^0_{0,\eta}$ for every $\frac{1}{32}-\varepsilon_1 < \eta < \frac{1}{32}$. By the octahedral axiom we have a commutative diagram, in which each row and each column are distinguished triangles:
    \[\begin{tikzcd}[ampersand replacement=\&,cramped]
            {\CCC_0^{\oplus k_0}[2]} \& E \& F \\
            {\CCC_0^{\oplus k_0}[2] \oplus \CCC_0^{\oplus k_1}[1] \oplus \CCC_0^{\oplus k_2} \oplus \CCC_0^{\oplus k_3}[-1]} \& E \& {\lmut{\CCC_0}E} \\
            {\CCC_0^{\oplus k_1}[1] \oplus \CCC_0^{\oplus k_2} \oplus \CCC_0^{\oplus k_3}[-1]} \& 0 \& {\CCC_0^{\oplus k_1}[2] \oplus \CCC_0^{\oplus k_2}[1] \oplus \CCC_0^{\oplus k_3}}
            \arrow["f", from=1-1, to=1-2]
            \arrow[from=1-1, to=2-1]
            \arrow[from=1-2, to=1-3]
            \arrow["\id", from=1-2, to=2-2]
            \arrow[from=1-3, to=2-3]
            \arrow[from=2-1, to=2-2]
            \arrow[from=2-1, to=3-1]
            \arrow[from=2-2, to=2-3]
            \arrow[from=2-2, to=3-2]
            \arrow[from=2-3, to=3-3]
            \arrow[from=3-1, to=3-2]
            \arrow[from=3-2, to=3-3]
        \end{tikzcd}\]
    Tensoring the rightmost column with $\CCC_1$, we obtain the triangle:
    \begin{equation}
        \begin{tikzcd}[ampersand replacement=\&,cramped]
        	{F \otimes_{\CCC_0} \CCC_1} \& {\lmut{\CCC_0}E  \otimes_{\CCC_0} \CCC_1} \& {\CCC_1^{\oplus k_1}[2] \oplus \CCC_1^{\oplus k_2}[1] \oplus \CCC_1^{\oplus k_3}} \& {}
        	\arrow[from=1-1, to=1-2]
        	\arrow[from=1-2, to=1-3]
        	\arrow["{+1}", from=1-3, to=1-4]
        \end{tikzcd}
    \end{equation}
    Since $F \in \tilde{\Coh}^0_{0,\eta}$ for any $\frac{1}{32} - \varepsilon_1 < \eta < \frac{1}{32}$, by deforming $\eta \to \frac{1}{32}$, we observe that $F \in \tilde{\cP}_Q\ab[\frac{1}{2},\, \frac{3}{2}]$. By \cref{prop:tensor_heart}, we have 
    \begin{equation}
        F \otimes_{\CCC_0} \CCC_1 \in \tilde{\cP}_Q\left(\frac{1}{2},\, \frac{5}{2}\right].
    \end{equation}
    On the other hand, since $\CCC_1^{\oplus k_1}[2] \oplus \CCC_1^{\oplus k_2}[1] \oplus \CCC_1^{\oplus k_3} \in  \tilde{\cP}_Q\left[\frac{1}{2},\, \frac{5}{2}\right]$, we have that 
    \begin{equation}
        \lmut{\CCC_0}E  \otimes_{\CCC_0} \CCC_1 \in \tilde{\cP}_Q\left[\frac{1}{2},\, \frac{5}{2}\right].
    \end{equation}

    For $\varepsilon > 0$ sufficiently small, let $Q_{\varepsilon} = \ab(0,\frac{1}{32}-\varepsilon)$ and let $\tilde{\cP}_{Q_{\varepsilon}}$ be the associated slicing on $\Dcatb(\pro^3,\CCC_0)$ induced by the tilt stability $\ts_{0,\frac{1}{32}-\varepsilon}$. Then there exists $\delta > 0$ such that 
    \begin{equation}
        \lmut{\CCC_0}E  \otimes_{\CCC_0} \CCC_1 \in \tilde{\cP}_{Q_{\varepsilon}}\left(\frac{1}{2},\, \frac{5}{2}+\delta\right).
    \end{equation}
    Moreover, $\delta \to 0$ as $\varepsilon \to 0$. Now using that $\lmut{\CCC_0}E  \otimes_{\CCC_0} \CCC_1 \in \Ku(\pro^3,\CCC_0)$ and the fact that the t-structure on $\Dcatb(\pro^3,\CCC_0)$ restricts to $\Ku(\pro^3,\CCC_0)$, we have 
    \begin{equation}
        \lmut{\CCC_0}E  \otimes_{\CCC_0} \CCC_1 \in \ord{\tA_{0,\frac{1}{32}-\varepsilon},\, \tA_{0,\frac{1}{32}-\varepsilon}[1],\, \tA_{0,\frac{1}{32}-\varepsilon}[2]}.
    \end{equation}
    Let $T \in \Ku(\pro^3,\CCC_0)$ be the cohomological object $\cH^{-2}(\lmut{\CCC_0}E  \otimes_{\CCC_0} \CCC_1)$ with respect to the heart $\tA_{0,\frac{1}{32}-\varepsilon}$. Note that $T$ is independent of $\varepsilon$ by \cref{lem:heart_alpha}. Moreover, we have $T \in \tilde{\cP}_{Q_{\varepsilon}}\left[\frac{5}{2},\frac{5}{2}+\delta\right)$. Taking the limit $\varepsilon \to 0$, we have that $T \in \tilde{\cP}_{Q}\ab(\frac{5}{2})$. By the geometry on the $(\xi,\eta)$-plane, this implies
    \begin{equation}\label{equ:phase_2.5_boundary}
        \ch_{\CCC_0,2}^{-\frac{5}{4}}(T) = \frac{1}{32}\ch_{\CCC_0,0}^{-\frac{5}{4}}(T).
    \end{equation}
    Meanwhile, since $T \in \Ku(\pro^3,\CCC_0)$, we can write $[T] = a\bar\kappa_1 + b\bar\kappa_2$. From \cref{prop:Knum_Ku_C0} we have 
    \begin{equation}
        \ch_{\CCC_0,\leq 2}^{-\frac{5}{4}}(T) = \ab(-8b,\, 4a+2b,\, \frac{7}{4}b). 
    \end{equation}
    Comparing with \eqref{equ:phase_2.5_boundary} we have $b = 0$. If $a \ne 0$, then $T$ has phase $\frac{5}{2}$ with respect to any $Q_{\varepsilon}$, which contradicts the construction that $T \in \tA_{0,\frac{1}{32}-\varepsilon}[2]$. Hence $ \ch_{\CCC_0,\leq 2}^{-\frac{5}{4}}(T) = 0$. In particular $T$ has 0-dimensional support. But $\Ku(\pro^3,\CCC_0) \cap  \Coh(\pro^3,\CCC_0)_0 = 0$. We must have $T = 0$ and hence 
    \[
    \lmut{\CCC_0}E  \otimes_{\CCC_0} \CCC_1 \in \ord{\tA_{0,\frac{1}{32}-\varepsilon},\, \tA_{0,\frac{1}{32}-\varepsilon}[1]}. \qedhere
    \]
                                                                                                                                                                        
    \end{proof}
\medskip

\begin{proof}[Proof of \cref{thm:Serre_inv}.] 
    Change to the $(\xi,\eta)$-coordinates as before. By \cref{lem:heart_alpha}, $\ts'_{\xi,\eta}$ lies in the same orbit for any $(\xi,\eta) \in \tilde{V}$. Therefore it suffices to prove that $\ts'_{\xi,\eta}$ is Serre-invariant for $\xi = 0$. By \cref{cor:Serre}, the Serre functor of $\Ku(Y) \simeq \Ku(\pro^3,\CCC_0)$ satisfies $\mathsf S_{\Ku(\pro^3,\CCC_0)}^{-1} = \mathsf{O}_{\Ku(\pro^3,\CCC_0)}^{2}[-3]$. So it suffices to find $\tilde g \in \tilde{\operatorname{GL}_2^+}(\R)$ such that 
    \[ \mathsf{O}_{\Ku(\pro^3,\CCC_0)}\cdot \ts'_{0,\eta} = \ts'_{0,\eta} \cdot \tilde g. \]
    By \cref{prop:Knum_Ku_C0}, we know that $\Knum(\Ku(\pro^3,\CCC_0))$ is generated by the classes $\bar\kappa_1$ and $\bar\kappa_2$, with $\mathsf{O}_{\Ku(\pro^3,\CCC_0)}^{-1}$ acting by  
    \[ \mathsf{O}_{\Ku(\pro^3,\CCC_0)}^{-1}(\bar\kappa_1) = \bar\kappa_1 - \bar\kappa_2; \qquad \mathsf{O}_{\Ku(\pro^3,\CCC_0)}^{-1}(\bar\kappa_2) = \bar\kappa_1. \]
    Therefore we can find $M \in \operatorname{GL}_2^+(\R)$ such that $M^{-1}\circ \tZ^0_{0,\eta} = \tZ^0_{0,\eta} \circ \mathsf{O}_{\Ku(\pro^3,\CCC_0)*}^{-1}$. 
                                    We can lift $M$ to $\tilde g = (M,f) \in \tilde{\operatorname{GL}_2^+}(\R)$ with $f(0,1] \subset (0,2]$, so that the heart of $\ts'_{0,\eta}$ is a tilt of $\tA_{0,\eta}$. By \cref{lem:rot_func_tilt}, the heart $\mathsf{O}_{\Ku(\pro^3,\CCC_0)}(\tA_{0,\eta})$ is also a tilt of $\tA_{0,\eta}$. Now the stability conditions $\mathsf{O}_{\Ku(\pro^3,\CCC_0)}\cdot \ts'_{0,\eta}$ and $\ts'_{0,\eta} \cdot \tilde g$ have the same central charge, and their hearts are all contained in the tilt $\ord{\tA_{0,\eta},\ \tA_{0,\eta}[1]}$. By \cite[Lemma 8.11]{bay16}, we have $\mathsf{O}_{\Ku(\pro^3,\CCC_0)}\cdot \ts'_{0,\eta} = \ts'_{0,\eta} \cdot \tilde g$.
\end{proof}

\subsection{A Gepner-type stability condition}

\begin{corollary}[Global dimension of the Serre-invariant stability condition][cor:gl_dim]
    There is some $\tilde g \in \tilde{\operatorname{GL}^+_2}(\R)$ acting on the stability condition $\ts'_{0,\eta}$ such that $\sigma_Y \coloneqq \ts'_{0,\eta} \cdot \tilde g$ has the following property. For any $E \in \mathcal P_{\sigma_Y}(\phi)$, it holds that $\mathsf S_{\Ku(Y)}E \in \mathcal P_{\sigma_Y}\left( \phi + \frac{7}{3} \right)$. In particular, the global dimension of the stability condition $\sigma_Y$ is given by
    \begin{equation}
        \operatorname{gl.dim}(\sigma_Y) \coloneqq \sup\left\{ \phi_2 - \phi_1 \mid \exists\, E_1 \in \mathcal P_{\sigma_Y}(\phi_1),\ \exists\, E_2 \in \mathcal P_{\sigma_Y}(\phi_2),\ \Hom(E_1,E_2) \ne 0 \right\} = \dfrac{7}{3}. \label{def:gl_dim}
    \end{equation}
\end{corollary}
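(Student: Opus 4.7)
The plan is to use \cref{thm:Serre_inv} to find a preferred $\tilde g \in \GLuni$ that normalises the Serre-invariance datum of $\sigma'_{\alpha,-5/4}$ into rigid form, and then deduce the phase-shift property and the global dimension from a standard Serre-duality argument.

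By \cref{thm:Serre_inv}, there exists $\tilde h = (M,g) \in \GLuni$ with $\mathsf S_{\Ku(Y)}\cdot \sigma'_{\alpha,-5/4} = \sigma'_{\alpha,-5/4}\cdot \tilde h$. Iterating three times and using $\mathsf S^3_{\Ku(Y)} \simeq [7]$ forces $\tilde h^3 = (-I,\ \phi\mapsto \phi+7)$; in particular $M^3 = -I$ (so $M \in \operatorname{SL}_2(\R)$) and $g^3(\phi) = \phi+7$. The case $M = -I$ gives $g(\phi) = \phi+1$ and hence $g^3(\phi) = \phi + 3 \ne \phi + 7$, a contradiction; so $M$ has complex eigenvalues $e^{\pm i\pi/3}$, conjugate in $\operatorname{SL}_2(\R)$ to the rotation $R$ by $\pi/3$. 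The lift condition $M\cdot e^{i\pi\phi} \in \R_{>0}\cdot e^{i\pi g(\phi)}$ combined with $g^3(\phi) = \phi + 7$ then forces $g$ to be conjugate (via a $\Z$-equivariant homeomorphism of $\R$) to $\phi \mapsto \phi + 7/3$. Pick a single $\tilde g \in \GLuni$ implementing both conjugations and set $\sigma'' := \sigma'_{\alpha,-5/4}\cdot \tilde g$. Then
\begin{equation}
    \mathsf S_{\Ku(Y)}\cdot \sigma'' = \sigma''\cdot (\tilde g^{-1}\tilde h \tilde g) = \sigma''\cdot (R,\ \phi\mapsto\phi+7/3),
\end{equation}
and unwinding the right-action yields $\mathsf S_{\Ku(Y)}\mathcal P_{\sigma''}(\phi) = \mathcal P_{\sigma''}(\phi+7/3)$, the stated phase shift.

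For the global dimension, suppose $E_i \in \mathcal P_{\sigma''}(\phi_i)$ with $\Hom(E_1,E_2)\ne 0$. The slicing property forces $\phi_1 \leq \phi_2$, and Serre duality $\Hom(E_1,E_2) \cong \Hom(E_2,\mathsf S_{\Ku(Y)}E_1)^\vee$ together with $\mathsf S_{\Ku(Y)}E_1 \in \mathcal P_{\sigma''}(\phi_1 + 7/3)$ gives $\phi_2 \leq \phi_1 + 7/3$. Hence $\operatorname{gl.dim}(\sigma'') \leq 7/3$. Conversely, for any nonzero $E \in \mathcal P_{\sigma''}(\phi)$ (e.g.\ a stable object such as $\FFF_\varPi$ or $\PPP_\varPi$, cf.\ \cref{subsec:P_stab}), Serre duality yields $\Hom(E,\mathsf S_{\Ku(Y)}E) \cong \Hom(E,E)^\vee \ne 0$ with $\mathsf S_{\Ku(Y)}E \in \mathcal P_{\sigma''}(\phi+7/3)$, so the value $7/3$ is attained and $\operatorname{gl.dim}(\sigma'') = 7/3$.

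The main obstacle is the normalisation step, i.e.\ producing a single $\tilde g \in \GLuni$ that simultaneously conjugates the matrix part and the lift part of $\tilde h$ into rigid form; this reduces to the fact that $\tilde h$ projects to a finite-order element in $\GLuni/\langle \widetilde{[1]}\rangle$, whose conjugacy class is determined by its order. The remainder of the argument is a direct unwinding of the right-action formula combined with a one-line Serre-duality computation.
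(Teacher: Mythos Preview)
Your proposal is correct and takes a genuinely different route from the paper for the phase-shift claim.

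The paper proceeds concretely: it picks an explicit rescaling matrix sending $Z^0_{\alpha,-5/4}(\bar\kappa_1)$ and $Z^0_{\alpha,-5/4}(\bar\kappa_2)$ to $e^{\pi i}$ and $e^{2\pi i/3}$, then uses the known action of $\mathsf S_{\Ku(Y)}$ on $\Knum$ (\cref{prop:Knum_Ku_C0}) to compute $Z''(\mathsf S E) = e^{\pi i/3}Z''(E)$, which gives a phase shift $\tfrac{1}{3}+2k(E)$ with $k(E)\in\Z$ undetermined. To pin down $k(E)=1$, the paper invokes Serre invariance a second time: writing $\mathsf S\cdot\sigma'' = \sigma''\cdot(M',g')$ and using that $g'$ is increasing together with $\mathsf S^3=[7]$, it shows $k(E)\le 1$ for all semistable $E$, and then the identity $k(E)+k(\mathsf S E)+k(\mathsf S^2 E)=3$ forces equality.

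Your argument bypasses this two-stage ``pin down the integer'' step by working abstractly in $\GLuni$ from the single relation $\tilde h^3=(-I,t_7)$. One point deserves to be made explicit: you phrase the normalisation as two separate conjugations (one for $M$, one for $g$) that must be done simultaneously, but in fact once you choose any $N$ with $N^{-1}MN=R$ and any lift $(N,h)\in\GLuni$, the conjugate $(R,h^{-1}gh)$ is again in $\GLuni$, so the compatibility condition forces $h^{-1}gh$ to be a \emph{pure translation} $t_{1/3+2k}$; cubing and comparing with $t_7$ gives $k=1$ automatically. So the lift takes care of itself. With that said, your approach is cleaner and basis-free, while the paper's is more explicit and reuses the lattice data already computed. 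The global-dimension paragraph is essentially the same in both.
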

\begin{proof}
    By \cref{prop:Knum_Ku_C0}, the lattice $\Ku(Y) \cong \Ku(\pro^3,\CCC_0)$ is spanned by the classes $\bar \kappa_1$ and $\bar \kappa_2$. Their central charges are given in the equation \eqref{equ:Z0_basis}. Hence the rescaling matrix   
    \[M = \begin{pmatrix}
        4 & 0 \\ 0 & \frac{32\eta + 7}{2\sqrt{3}}
    \end{pmatrix} \in \GL_2^+(\R)\]
    acts on $\tZ^0_{0,\eta}$ by $Z'' \coloneqq M^{-1}\circ \tZ^0_{0,\eta}$ which satisfies that $Z''(\bar \kappa_1) = 1$ and $Z''(\bar \kappa_2) = \e^{\pi\ii/3}$. We can lift $M$ to $\tilde g = (M,g) \in \tilde{\operatorname{GL}_2^+}(\R)$ acting on $\ts'_{0,\eta}$ and denote $\sigma_Y \coloneqq \ts'_{0,\eta} \cdot \tilde g$. For any $\sigma_Y$-semistable $E \in \Ku(Y)$ with character $a\kappa_1 + b\kappa_2$, we have 
    \[ Z''(\mathsf S_{\Ku(Y)}E) = aZ''(\bar\kappa_2) + bZ''(\bar\kappa_2 - \bar\kappa_1) = \e^{\pi\ii/3}aZ''(\bar\kappa_1) + \e^{\pi\ii/3}bZ''(\bar\kappa_2) = \e^{\pi\ii/3}Z''(E).\]
    Hence the phase satisfies that $\phi_{\sigma_Y}(\mathsf S_{\Ku(Y)}E) - \phi_{\sigma_Y}(E) = \frac{1}{3} + 2k(E)$ for some $k(E) \in \Z$. Combining with the fact that $\mathsf S_{\Ku(Y)}^3 = [7]$, we have 
    \begin{equation}
        k(E) + k(\mathsf S_{\Ku(Y)}E) + k(\mathsf S_{\Ku(Y)}^2E) = 3. \label{equ:k+k+k}
    \end{equation} 
    Now, since $\sigma_Y$ is Serre-invariant, $\mathsf S_{\Ku(Y)} \cdot \sigma_Y = \sigma_Y\cdot \tilde g'$ for some $\tilde g' = (M',g') \in \tilde{\operatorname{GL}_2^+}(\R)$. We have 
    \begin{equation}
        \phi_{\sigma_Y}(\mathsf S_{\Ku(Y)}E) = g'(\phi_{\sigma_Y}(E)) = \phi_{\sigma_Y}(E) + \frac{1}{3} + 2k(E).
    \end{equation}
    Since $g'$ is an increasing function, we have 
    \begin{align*}
        7 + \phi_{\sigma_Y}(E) = \phi_{\sigma_Y}(\mathsf S_{\Ku(Y)}^3E) &= g'\circ g' \circ g'(\phi_{\sigma_Y}(E)) \geq g' \circ g'(\phi_{\sigma_Y}(E)) + 2k(E) \\ 
        &\geq g'(\phi_{\sigma_Y}(E)) + 4k(E) = \phi_{\sigma_Y}(E) + 6k(E) + \dfrac{1}{3}.
    \end{align*}
    Hence $k(E)\leq 1$ for all $\sigma_Y$-semistable $E$. Since $\mathsf S_{\Ku(Y)}E$ and $\mathsf S_{\Ku(Y)}^2E$ are also $\sigma_Y$-semistable, combining with equation \eqref{equ:k+k+k} we obtain that $k(E) = 1$ for all $\sigma_Y$-semistable $E$. We deduce that $\phi_{\sigma_Y}(\mathsf S_{\Ku(Y)}E) - \phi_{\sigma_Y}(E) = \frac{7}{3}$. 
    
    For the global dimension, for $E_1 \in \mathcal P_{\sigma_Y}(\phi_1)$ and $E_2 \in \mathcal P_{\sigma_Y}(\phi_2)$, if $\phi_2 - \phi_1 > \frac{7}{3}$, then $\phi_{\sigma_Y}(E_2) > \phi_{\sigma_Y}(\mathsf S_{\Ku(Y)}E_1) = \phi_{\sigma_Y}(E_1) + \frac{7}{3}$ and hence $\Hom(E_1,E_2) \cong \Hom(E_2,\mathsf S_{\Ku(Y)}E_1)^\dual = 0$. Therefore $\operatorname{gl.dim}(\sigma_Y) \leq \frac{7}{3}$. The equality holds because for any $\sigma_Y$-stable $E_1 \in \mathcal P_{\sigma_Y}(\phi)$, let $E_2 \coloneqq \mathsf S_{\Ku(Y)}E_1 \in \mathcal P_{\sigma_Y}(\phi + \frac{7}{3})$ and we have $\Hom(E_1,E_2) \cong \Hom(E_1,E_1)^\dual \cong \C$.
\end{proof}

\begin{figure}[h]
    \centering
    \begin{tikzpicture}[
    line cap=round,
    line join=round,
    >=latex,
    x=2.8cm,
    y=2.8cm,
    point/.style={circle, fill=black, inner sep=1.55pt},
    axis/.style={line width=0.55pt, black!70},
    boundary/.style={line width=1.35pt, cyan!50!black}
]
    \def\rt{0.8660254037844386}

    \coordinate (O) at (0,0);
    \coordinate (kone) at (1,0);
    \coordinate (ktwo) at (0.5,\rt);
    \coordinate (ktwominuskone) at (-0.5,\rt);
    \coordinate (mkone) at (-1,0);
    \coordinate (mktwo) at (-0.5,-\rt);
    \coordinate (koneminusktwo) at (0.5,-\rt);

        \fill[cyan!18] (-1.35,0) rectangle (1.35,1.25);

            \draw[axis,->] (0,-1.08) -- (0,1.28) node[above] {$\alpha$};
    \node[below right] at (1.35,0) {$\beta$};

        \draw[boundary] (-1.35,0) -- (O);
    \draw[boundary,densely dashed] (O) -- (1.35,0);

        \node[cyan!50!black] at (1,1.12) {$\tA_{0,\eta}$};

        \node[point] at (O) {};
    \node[below left=2pt] at (O) {\scriptsize$0$};

    \node[point] at (kone) {};
    \node[below=4pt] at (kone) {$\bar\kappa_1$};

    \node[point] at (ktwo) {};
    \node[above right=3pt] at (ktwo) {$\bar\kappa_2$};

    \node[point] at (ktwominuskone) {};
    \node[above left=3pt] at (ktwominuskone) {$\bar\kappa_2-\bar\kappa_1$};

    \node[point] at (mkone) {};
    \node[below=4pt] at (mkone) {$-\bar\kappa_1$};

    \node[point] at (mktwo) {};
    \node[below left=3pt] at (mktwo) {$-\bar\kappa_2$};

    \node[point] at (koneminusktwo) {};
    \node[below right=3pt] at (koneminusktwo) {$\bar\kappa_1-\bar\kappa_2$};

        \def\ticklen{0.035}

        \foreach \x/\lab in {-1/, -0.5/{-\frac12}, 0.5/{\frac12}, 1/} {
        \draw[axis] (\x,-\ticklen) -- (\x,\ticklen);
        \ifx\lab\empty\else
            \node[below=3pt] at (\x,0) {\scriptsize$\lab$};
        \fi
    }

        \foreach \y/\lab in {-1/{-1}, -0.5/{-\frac12}, 0.5/{\frac12}, 1/{1}} {
        \draw[axis] (-\ticklen,\y) -- (\ticklen,\y);
        \node[left=3pt] at (0,\y) {\scriptsize$\lab$};
    }
\end{tikzpicture}
    \caption[The central charges of $\Knum(\Ku(\pro^3,\CCC_0))$ under the stability condition $\sigma_Y$.]{The central charges of the lattice points in $\Knum(\Ku(\pro^3,\CCC_0))$ under the Gepner-type stability condition $\sigma_Y$.}
    \label{fig:hexa_central_charge}
\end{figure}

As an application of \cref{cor:gl_dim}, the stability condition $\sigma_Y$ confirms the existence of the Gepner-type stability condition on the Kuznetsov component of cubic 5-folds, as a special case of \cite[Conjecture 1.1]{Tod13} by Toda. Indeed, since $\mathsf{O}_{\Ku(Y)} = \mathsf{S}^{-1}_{\Ku(Y)}[3]$, we have that $\mathsf O_{\Ku(Y)}E \in \mathcal P_{\sigma_Y}\left( \phi + \frac{2}{3} \right)$ for any $E \in \mathcal P_{\sigma_Y}(\phi)$. 

\begin{definition}
    Let $\sigma$ be a stability condition on the triangulated category $\DDD$. If there is an auto-equivalence $\varGamma\in \Aut(\DDD)$ and a rational number $\lambda \in \Q$ such that $\varGamma \cdot \sigma = \sigma \cdot \lambda$, 
    then $\sigma$ is called of \textbf{Gepner-type} with respect to the pair $(\varGamma,\lambda)$.
\end{definition}

\begin{corollary}[][]
    \cite[Conjecture 1.1]{Tod13} holds for $n = 7$ and $d = 3$. In particular, the stability condition $\sigma_Y$ on $\Ku(Y)$ is of Gepner-type with respect to the pair $(\mathsf{O}_{\Ku(Y)},\frac{2}{3})$.
\end{corollary}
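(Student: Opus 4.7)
The plan is to derive the Gepner identity directly from \cref{cor:gl_dim}. Unpacking the definition, showing that $\sigma''$ is of Gepner type with respect to $(\mathsf{O}_{\Ku(Y)},\tfrac{2}{3})$ amounts to exhibiting an element $\tilde g \in \tilde{\operatorname{GL}^+_2}(\R)$ that (i) lifts the scalar $\lambda = \tfrac{2}{3}$, in the sense that its phase function satisfies $g(\phi) = \phi + \tfrac{2}{3}$ and its matrix part rotates by $-\tfrac{2\pi}{3}$, and (ii) implements the equation $\mathsf{O}_{\Ku(Y)}\cdot\sigma'' = \sigma''\cdot\tilde g$.

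For the slicing half, I would use $\mathsf{O}_{\Ku(Y)} = \mathsf{S}_{\Ku(Y)}^{-1}[3]$ together with \cref{cor:gl_dim}: any semistable $E \in \mathcal{P}_{\sigma''}(\phi)$ satisfies $\mathsf{S}_{\Ku(Y)}^{-1}E \in \mathcal{P}_{\sigma''}(\phi - \tfrac{7}{3})$ and hence $\mathsf{O}_{\Ku(Y)} E \in \mathcal{P}_{\sigma''}(\phi + \tfrac{2}{3})$. This is the observation already recorded in the paragraph preceding the corollary, and Harder--Narasimhan filtrations extend it from semistables to arbitrary objects, yielding $\mathsf{O}_{\Ku(Y)}(\mathcal{P}_{\sigma''}(\phi)) = \mathcal{P}_{\sigma''}(\phi + \tfrac{2}{3})$ for all $\phi \in \R$.

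For the central-charge half, \cref{prop:Knum_Ku_C0} describes $\mathsf{O}_{\Ku(Y),*}$ explicitly on the basis $\{\bar\kappa_1,\bar\kappa_2\}$, while the construction in \cref{cor:gl_dim} pins down $Z''(\bar\kappa_1)$ and $Z''(\bar\kappa_2)$ to be a pair of distinct primitive sixth roots of unity. A short substitution on each basis vector then verifies the identity $Z''\circ\mathsf{O}_{\Ku(Y),*}^{-1} = e^{-2i\pi/3}Z''$ on $\Knum(\Ku(Y))$. Combining the two halves produces the Gepner relation, and the conjecture for $(n,d)=(7,3)$ follows because a cubic fivefold $Y$ is exactly a smooth degree-$3$ hypersurface in $\pro^{n-1}$ with $n=7$.

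I do not anticipate any serious obstacle: the heavy lifting---the Serre invariance of $\sigma''$ and the determination of its global dimension---has already been absorbed into \cref{cor:gl_dim}. What remains is a direct numerical verification on the rank-$2$ lattice $\Knum(\Ku(Y))$, together with the routine bookkeeping that the $\tilde{\operatorname{GL}^+_2}(\R)$-convention translates a slicing shift of $+\lambda$ into a central-charge rescaling by $e^{-i\pi\lambda}$; it is this sign difference, rather than anything structural, that is the only place where care is required.
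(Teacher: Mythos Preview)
Your approach is correct and is essentially an expanded version of the paper's own argument, which consists of the single sentence preceding the definition of Gepner-type: from $\mathsf{O}_{\Ku(Y)} = \mathsf{S}_{\Ku(Y)}^{-1}[3]$ and \cref{cor:gl_dim} one gets $\mathsf{O}_{\Ku(Y)}E \in \mathcal P_{\sigma''}(\phi + \tfrac{2}{3})$, and the paper does not spell out the central-charge half separately. One small correction: the action of $\mathsf{O}_{\Ku(Y)}$ on the lattice is recorded in \cref{prop:Knum_Ku}, not in \cref{prop:Knum_Ku_C0}; the latter concerns the distinct rotation functor $\mathsf{O}_{\Ku(\pro^3,\CCC_0)}$, which satisfies $\mathsf{O}_{\Ku(\pro^3,\CCC_0)}^2 = \mathsf{O}_{\Ku(Y)}$ under the equivalence of \cref{thm:psi}, and the values $Z''(\bar\kappa_i)$ are sixth roots of unity but not primitive ones.
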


\section{The moduli of planes}\label{sec:moduli}
In this section we study moduli spaces of objects stable with respect to the Serre-invariant stability conditions constructed in \cref{sec:stab}. For any object $E \in \Ku(Y)$, its $\sigma'_{\beta,\alpha}$-(semi)stability is independent of the parameters $(\beta,\alpha)$ and of the blown-up plane $\varPi_0$; see \cref{lem:heart_alpha,rmk:Pi_indep}. The same $\widetilde{\GL_2^+}(\R)$-orbit contains the Gepner-type stability condition $\sigma_Y$ from \cref{cor:gl_dim}. Henceforth, whenever no confusion can arise, we call $\sigma_Y$ \emph{the} stability condition on $\Ku(Y)$ and simply speak of $\sigma_Y$-(semi)stability. We continue to use the $(\xi,\eta)$-coordinates introduced in \cref{subsec:tensor} for wall-crossing computations.

\begin{definition}
    Fix $v \in \Knum(\Ku(Y))$. 
        Let $\Modss(\Ku(Y),v)$ be the good moduli space parametrizing S-equivalence classes of $\sigma_Y$-semistable objects in $\Ku(Y)$ with numerical character $v$; it is an algebraic space proper over $\C$ by \cite[Theorem 21.24]{BLMNPS}. Let $\Mods(\Ku(Y),v) \subset \Modss(\Ku(Y),v)$ be the open subspace parametrizing $\sigma_Y$-stable objects. If $v$ is primitive, then every semistable object of class $v$ is stable, so $\Mods(\Ku(Y),v)=\Modss(\Ku(Y),v)$.
\end{definition}

\subsection[Stability of the projected ideal sheaf of a plane]{Stability of the projected ideal sheaf of a plane}\label{subsec:P_stab}

In this subsection we will show the non-emptiness of a particular moduli space on $\Ku(Y)$ by showing that the object $\cF_{\varPi}$ defined in \cref{def:Ku_objs} is $\sigma_Y$-stable. By \cref{rmk:Pi_indep}, the $\sigma_Y$-stability of $\cF_{\varPi}$ is independent of the choice of $\varPi_0$, so we may assume that $\varPi \cap \varPi_0 = \varnothing$ by \cref{lem:disjoint_planes}. From \eqref{equ:P_Pi_Psi} we have $\varPsi\sigma^*\cF_{\varPi} = \pi_*\ab(\EEE(H+h)|_{\sigma^{-1}\varPi})$.

Fix $\beta = -\frac{5}{4}$, equivalently $\xi = 0$. We identify the twisted modified Chern characters, truncated to the second term, $\Chern{\CCC_0, \leq 2}^{-\frac{5}{4}}$, with vectors in $\Q^3$ via $a + bh + ch^2 + \mathcal{O}(h^3) \longmapsto (a,b,c)$. Then by \cref{prop:Knum_Ku_C0}, the object $\pi_*(\EEE(H+h)|_{\sigma^{-1}\varPi}) \in \Dcatb(\pro^3,\CCC_0)$ has the character
\[ \Chern{\CCC_0, \leq 2}^{-\frac{5}{4}}\ab(\pi_*\ab(\EEE(H+h)|_{\sigma^{-1}\varPi})) = (0,4,0). \]

\begin{proposition}[][prop:P_Pi_stable]
    The object $E\coloneqq\pi_*\ab(\EEE(H+h)|_{\sigma^{-1}\varPi}) \in \Dcatb(\pro^3,\CCC_0)$ lies in the tilted heart $\Coh^{-\frac{5}{4}}(\pro^3,\CCC_0)$ and is $\ts_{0,\eta}$-stable for any $\eta > 0$.
\end{proposition}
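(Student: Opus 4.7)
The first claim is immediate: since $\Chern{0}(E) = 0$, the sheaf $E$ is torsion (supported on $\pi(\sigma^{-1}\varPi)$), so its $\mu_h$-slope is $+\infty$, placing $E$ in $\TTT^\beta$ for every $\beta \in \R$; in particular $E \in \Coh^{-5/4}(\pro^3, \CCC_0)$.

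For stability I pass to the $(\xi, \eta)$-coordinates of \cref{subsec:tensor}. The character $\Chern{\CCC_0, \leq 2}^{-5/4}(E) = (0, 4, 0)$ yields the \emph{constant} central charge $\tilde Z_{\xi,\eta}(E) = 4\ii$, so $\phi(E) \equiv 1/2$ and $\tilde\nu(E) \equiv 0$ on the entire region above the parabola $\varGamma$; every numerical wall for $E$ is therefore horizontal. For a subobject $F \hookrightarrow E$ in the tilted heart, $\mathcal H^{-1}(E) = 0$ forces $\mathcal H^{-1}(F) = 0$, so $F$ is a sheaf with $a := \Chern{0}(F) \geq 0$. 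Writing $(a, b, c) := \Chern{\CCC_0, \leq 2}^{-5/4}(F)$, \cref{prop:Knum_Ku_C0} gives $a \in 8\Z_{\geq 0}$ and confines $b$ to a coset of $4\Z$ depending on $a \bmod 16$.

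For $a = 0$, $F$ is torsion and the kernel $\mathcal H^{-1}(G) \subset F$ of the sheaf map $F \to E$ is a torsion-free subsheaf of a torsion sheaf, hence zero, so $F$ embeds as a subsheaf of $E$. The lattice forces $b \in \{0, 4\}$. The case $b = 0$ would make $F$ a subsheaf of codimension $\geq 2$, contradicting the purity of the pure $2$-dimensional sheaf $E$. For $b = 4$, the effective codimension-$\geq 2$ quotient $G = E/F$ has $\Chern{2}(G) \geq 0$, so $c \leq 0$ and $\tilde Z(F) = -c + 4\ii$ yields $\phi(F) \leq 1/2 = \phi(E)$, equality forcing $G = 0$ and hence $F = E$, contradicting properness.

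For $a \geq 8$, Bogomolov on $F$ and $G$ gives $b^2, (b-4)^2 \geq 2ac$. Combined with the heart positivity $(b-4)/a \leq \xi \leq b/a$ and the wall-above-parabola condition $\xi^2 < 2c/a$, one obtains a direct contradiction whenever $b \leq 0$ (the minimum $b^2/a^2$ of $\xi^2$ on the heart interval saturates Bogomolov on $F$) or $b \geq 4$ (symmetrically on $G$); the endpoints $b \in \{0, 4\}$ fail because $c \leq 0$ is forced by Bogomolov while the wall-above-parabola condition demands $c > 0$. The only remaining case is $b = 2$, which occurs solely for $a \equiv 8 \pmod{16}$: an enumeration in $\Knum(\pro^3, \CCC_0)$ shows the only admissible character is $(8, 2, 1/4)$, realised (up to $\Chern{\leq 2}$-trivial classes) by the exceptional object $\CCC_1$. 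But $\varPsi\sigma^*\PPP_\varPi \cong E[1]$ from \eqref{equ:P_Pi_Psi} lies in $\Ku(\pro^3, \CCC_0) = \ord{\CCC_1, \CCC_2}^\perp$ by \cref{thm:psi}, so $\Hom^\bullet(\CCC_1, E) = 0$ and no such $F$ admits a nonzero map into $E$. The principal subtlety of the proof lies in this Bogomolov-boundary case, resolved by the Kuznetsov orthogonality of $E[1]$.
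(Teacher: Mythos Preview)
Your numerical reduction is correct and isolates the same critical character $(8,2,1/4)$ that the paper does. But the last step has a genuine gap: knowing that a destabilising subobject $F$ satisfies $\Chern{\CCC_0,\leq 2}^{-5/4}(F)=\Chern{\CCC_0,\leq 2}^{-5/4}(\CCC_1)$ does \emph{not} give $F\cong\CCC_1$, and the vanishing $\Hom^\bullet(\CCC_1,E)=0$ says nothing about $\Hom(F,E)$ for a different object $F$ with the same truncated character. The phrase ``no such $F$ admits a nonzero map into $E$'' is exactly the missing lemma, not a consequence of what precedes it.

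The paper closes this gap with \cref{lem:disc_zero}, working on the quotient side: any $\sigma_{\alpha_0,\beta_0}$-semistable $G$ with $\Chern{\CCC_0,\leq 2}^{-5/4}(G)=\Chern{\CCC_0,\leq 2}^{-5/4}(\CCC_0[1])$ is forced to be isomorphic to $\CCC_0[1]$. The argument is substantive: $\Delta_{\CCC_0}=0$ makes $G'\coloneqq\mathcal H^{-1}(G)$ slope-stable with no tilt walls; tilt-comparison vanishings plus Serre duality then pin down $[G']=[\CCC_0]$ in $\Knum$ and force $\chi_{\CCC_0}(\CCC_0,G')=1$, hence a nonzero map $\CCC_0\to G'$ which is an isomorphism by slope-stability and local freeness of $\CCC_0$; a final step kills any zero-dimensional $\mathcal H^0(G)$. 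The paper then applies Serre duality $\Hom(E,\CCC_0[1])\cong\Hom(\CCC_2,E[2])^\dual=0$, invoking orthogonality to $\CCC_2$. Your route through the subobject and $\CCC_1$ is symmetric and would work, but only after you prove the $\CCC_1$-analogue of \cref{lem:disc_zero}; that identification step is where the real content of the proposition lives.
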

    Since $E$ is a torsion sheaf, it has infinite slope and hence is in $\Coh^{-\frac{5}{4}}(\pro^3,\CCC_0)$. We employ the standard wall-crossing technique for tilt stability. The first step is the large volume limit:
    \begin{lemma}[Large-volume stability][]
        The sheaf $E \in \Dcatb(\pro^3,\CCC_0)$ is Gieseker stable. In particular, it is $\ts_{0,\eta}$-stable for $\eta \gg 0$.
    \end{lemma}
    \begin{proof}
        Suppose that $E$ is not Gieseker stable. Then it has a destabilizing subsheaf $F$. Since $E$ is pure and locally free of rank $4$ on its support, $F$ has the same support and generic rank $0<b<4$. Thus $\Chern{\CCC_0, \leq 2}^{-\frac{5}{4}}(F) = (0,b,c)$ for some $c \in \Q$. Write $[F] = \sum_{i=-1}^2 c_i[\CCC_i]$ in $\Knum(\pro^3,\CCC_0)$, with $c_i \in \Z$. Then
        \[8\sum_{i=-1}^2 c_i = \Chern{\CCC_0, 0}^{-\frac{5}{4}}(F) = 0; \qquad -2\sum_{i=-1}^2 c_i + 4\sum_{i=-1}^2 ic_i = \Chern{\CCC_0, 1}^{-\frac{5}{4}}(F) = b.\]
        We find that $b$ is divisible by $4$. This is an immediate contradiction.
    \end{proof}
    Next we investigate the numerical walls for the object $E$. We will show that there is at most one wall, and the stability of $E$ is unchanged across the wall, which will finish the proof.
    \begin{lemma}[][lem:P_Pi_1st_wall]
        Suppose that $E \in \Coh^{-\frac{5}{4}}(\pro^3,\CCC_0)$ is strictly $\ts_{0,\eta_1}$-semistable for some $\eta_1>0$. If $F$ is a stable Jordan--H\"older factor of $E$, then $\Chern{\CCC_0, \leq 2}^{-\frac{5}{4}}(F) = \Chern{\CCC_0, \leq 2}^{-\frac{5}{4}}(\CCC_1)$ and $\Chern{\CCC_0, \leq 2}^{-\frac{5}{4}}(E/F) = \Chern{\CCC_0, \leq 2}^{-\frac{5}{4}}(\CCC_0[1])$.
    \end{lemma}
    \begin{proof}
        By assumption there is a destabilizing sequence in the heart $\Coh^{-\frac{5}{4}}(\pro^3,\CCC_0)$:
        \[\begin{tikzcd}[ampersand replacement=\&,cramped]
                0 \& F \& E \& G \& 0
                \arrow[from=1-1, to=1-2]
                \arrow[from=1-2, to=1-3]
                \arrow[from=1-3, to=1-4]
                \arrow[from=1-4, to=1-5]
            \end{tikzcd}\]
        such that $F$ is $\ts_{0,\eta_1}$-semistable, $G$ is $\ts_{0,\eta_1}$-stable, and
        \begin{equation}
            \tilde{\nu}_{0,\eta_1}(F) = \tilde{\nu}_{0,\eta_1}(E) = \tilde{\nu}_{0,\eta_1}(G) = 0. \label{equ:compare_tilt}
        \end{equation}
        Consider the long exact sequence with respect to the original heart $\Coh(\pro^3,\CCC_0)$. Since $E$ is a sheaf, $\mathcal H^{-1}(E) = 0$ and hence $\mathcal H^{-1}(F) = 0$. That is, $F \simeq \mathcal H^0(F)$ is also a sheaf. The long exact sequence is given by
        \[\begin{tikzcd}[ampersand replacement=\&,cramped,row sep=0]
                \& 0 \& {\mathcal H^{-1}(G)} \& F \& E \& {\mathcal H^0(G)} \& 0. \\
                {\Chern{0}} \&\& 8a \& 8a \& 0 \& 0 \\
                {\Chern{\CCC_0, 1}^{-\frac{5}{4}}} \&\& {2b+4b'-4} \& 2b \& 4 \& {4b'} \\
                {\Chern{\CCC_0, 2}^{-\frac{5}{4}}} \&\& {\frac{1}{4}({c+c'})} \& {\frac{1}{4}c} \& 0 \& {\frac{1}{4}c'}
                \arrow[from=1-2, to=1-3]
                \arrow[from=1-3, to=1-4]
                \arrow[from=1-4, to=1-5]
                \arrow[from=1-5, to=1-6]
                \arrow[from=1-6, to=1-7]
            \end{tikzcd}\]
        We write $\Chern{\CCC_0, \leq 2}^{-\frac{5}{4}}(F) = (8a,2b,\frac{1}{4}c)$ for some $a,b,c \in \Z$. Then $\Chern{\CCC_0, \leq 2}^{-\frac{5}{4}}(G) = (-8a,4-2b,-\frac{1}{4}c)$. Since $\mathcal H^{0}(G)$ is a quotient sheaf of $E$, we have $\Chern{0}(\mathcal H^{0}(G)) = 0$ and hence $\Chern{\CCC_0, \leq 2}^{-\frac{5}{4}}(\mathcal H^{0}(G)) = (0,4b',\frac{1}{4}c')$ for some $b',c' \in \Z$. It follows from the long exact sequence that $\Chern{\CCC_0, \leq 2}^{-\frac{5}{4}}\ab(\mathcal H^{-1}(G)) = (8a,2b+4b'-4,\frac{1}{4}(c+c'))$, as indicated in the diagram above.

        Next suppose that $a = 0$. Then \eqref{equ:compare_tilt} implies that $c=0$, and in particular, $\tilde{\nu}_{0,\eta}(F) = \tilde{\nu}_{0,\eta}(E) = \tilde{\nu}_{0,\eta}(G) = 0$ for all $\eta > 0$. This contradicts the fact that $E$ is $\ts_{0,\eta}$-stable for $\eta \gg 0$. Therefore we must have $a>0$. The condition \eqref{equ:compare_tilt} gives $c = 32\eta_1 a$. Hence $c>0$ as well.

        By construction of the heart $\Coh^{-\frac{5}{4}}(\pro^3,\CCC_0)$, we have
        \[
            F \in \Coh^{> -\frac{5}{4}}(\pro^3,\CCC_0),
            \qquad
            \mathcal H^{-1}(G) \in \Coh^{\leq -\frac{5}{4}}(\pro^3,\CCC_0).
        \]
        It follows that
        \[\dfrac{2b}{8a} > 0, \qquad \dfrac{2b+4b'-4}{8a} \leq 0.\]
        Since $\mathcal H^0(G)$ is a quotient of the pure sheaf $E$, which has generic rank $4$ on its support, we have $0\leq b'\leq 1$. If $b'=1$, the two displayed inequalities would give simultaneously $b>0$ and $b\leq 0$. Thus $b'=0$, and the same inequalities give $b=1$ or $2$.
                                                If $b=2$, then we have $\Chern{\CCC_0, \leq 2}^{-\frac{5}{4}}(G) = (-8a,0,-\frac{1}{4}c)$ and hence $\tilde{\nu}_{0,\eta_1}(G) = +\infty$, which is a contradiction. Hence we must have $b = 1$. Finally, since $F$ is $\ts_{0,\eta_1}$-semistable, the Bogomolov--Gieseker inequality gives
        \[\Delta_{\CCC_0}(F) = 4 - 4ac \geq 0,\]
        which forces $a=c=1$ as $a,c>0$. Now $\Chern{\CCC_0, \leq 2}^{-\frac{5}{4}}(F) = \ab(8,2,\frac{1}{4}) = \Chern{\CCC_0, \leq 2}^{-\frac{5}{4}}(\CCC_1)$ and $\Chern{\CCC_0, \leq 2}^{-\frac{5}{4}}(G) = \ab(-8,2,-\frac{1}{4}) = \Chern{\CCC_0, \leq 2}^{-\frac{5}{4}}(\CCC_0[1])$.
    \end{proof}
    \begin{lemma}[][lem:disc_zero]
        Let $G \in \Coh^{-\frac{5}{4}}(\pro^3,\CCC_0)$ be a $\ts_{\xi_0,\eta_0}$-semistable object for some $\eta_0>0$ and $\xi_0 > \mu_h(G) + \frac{5}{4} = -\frac{1}{4}$. If $\Chern{\CCC_0, \leq 2}^{-\frac{5}{4}}(G) = \Chern{\CCC_0, \leq 2}^{-\frac{5}{4}}(\CCC_0[1])$, then $G \cong \CCC_0[1]$.
    \end{lemma}
    \begin{proof}
        First, $\CCC_j$ is slope stable for any $j \in \Z$, since $\rk(\CCC_j) = 8$ and, by \cref{lem:Chern_noncomm_P3}, any sheaf in $\Coh(\pro^3,\CCC_0)$ has rank divisible by $8$. By \cite[Corollary 3.11.(a)]{bay16}, for any $\eta > \frac{1}{2}\xi^2$,  $\CCC_j[1] \in \Coh^{\xi - \frac{5}{4}}(\pro^3,\CCC_0)$ is $\ts_{\xi,\eta}$-stable if $\xi\geq\mu_h(\CCC_j)+\frac{5}{4}$; and $\CCC_j \in \Coh^{\xi - \frac{5}{4}}(\pro^3,\CCC_0)$ is $\ts_{\xi,\eta}$-stable if $\xi<\mu_h(\CCC_j)+\frac{5}{4}$.
        \medskip
                                                                                                                                                
        Second, since the character $\Chern{\CCC_0, \leq 2}^{-\frac{5}{4}}(G) = (8,-2,\frac{1}{4})$ is primitive, $G$ is in fact $\ts_{\xi_0,\eta_0}$-stable. Using $\Delta_{\CCC_0}(G) = \Delta_{\CCC_0}(\CCC_0[1]) = 0$, by \cite[Corollary 3.11(c)]{bay16}, we have that $G' \coloneqq \mathcal H^{-1}(G)$ is a slope-stable sheaf and that $G'' \coloneqq \mathcal H^0(G)$ has zero-dimensional support. Note that $\Chern{\CCC_0, \leq 2}^{-\frac{5}{4}}(G') = (8,-2,\frac{1}{4})$ and $\Chern{\CCC_0}^{-\frac{5}{4}}(G'') = (0,0,0,\ell)$, where $\ell \geq 0$. We will show that $G' \cong \CCC_0$.

        For $-\frac{5}{4}<\xi<-\frac{1}{4}$, $G' \in \Coh^{\xi - \frac{5}{4}}(\pro^3,\CCC_0)$. Since $G'$ is slope-stable and $\Delta_{\CCC_0}(G') = 0$, by \cite[Lemma 2.7, Corollary 3.11]{bay16}, $G'$ is $\ts_{\xi,\eta}$-stable, and we have $\tilde{\nu}_{\xi,\eta}(\CCC_{-2}[1]) < \tilde{\nu}_{\xi,\eta}(\CCC_{-1}[1]) < \tilde{\nu}_{\xi,\eta}(G')$. Therefore tilt comparison and Serre duality give
        \begin{align}
             & \Hom_{\CCC_0}(G', \CCC_{-1}[1-j]) \cong \Hom_{\CCC_0}(\CCC_1, G'[2+j])^\dual = 0, \qquad j \geq 0; \label{equ:Hom_van_G'_1} \\
             & \Hom_{\CCC_0}(G', \CCC_{-2}[1-j]) \cong \Hom_{\CCC_0}(\CCC_0, G'[2+j])^\dual = 0, \qquad j \geq 0. \label{equ:Hom_van_G'_2}
        \end{align}
        For $-\frac{1}{4}<\xi<\frac{3}{4}$, $G'[1] \in \Coh^{\xi-\frac{5}{4}}(\pro^3,\CCC_0)$. We claim that $G'[1]$ is $\ts_{\xi,\eta}$-stable; if not, then it could only be destabilized along the vertical wall $\xi = -\frac{1}{4}$. Consider the destabilizing sequence in $\Coh^{-\frac{3}{2}}(\pro^3,\CCC_0)$:
        \[\begin{tikzcd}[ampersand replacement=\&,cramped]
                0 \& G'_1 \& G'[1] \& G'_2 \& 0
                \arrow[from=1-1, to=1-2]
                \arrow[from=1-2, to=1-3]
                \arrow[from=1-3, to=1-4]
                \arrow[from=1-4, to=1-5]
            \end{tikzcd}.\]
        Since $\Delta_{\CCC_0}(G') = 0$, the characters of $G'_1$ and $G'_2$ are proportional to that of $G'[1]$. Moreover, $\Chern{\CCC_0, \leq 2}^{-\frac{5}{4}}(G'[1])$ is primitive. We must therefore have $\Chern{\CCC_0, \leq 2}^{-\frac{5}{4}}(G'_1) = 0$. Thus $G'_1$ is a zero-dimensional sheaf. But then $G'_1$ destabilizes $G'$ for $\xi<-\frac{1}{4}$, a contradiction. Hence $G'[1]$ is $\ts_{\xi,\eta}$-stable for $\xi \geq -\frac{1}{4}$. The tilt comparison $\tilde{\nu}_{\xi,\eta}(G'[1]) < \tilde{\nu}_{\xi,\eta}(\CCC_1)$ implies that
        \begin{equation}
            \Hom_{\CCC_0}(\CCC_1, G'[1-j]) = 0, \qquad j \geq 0. \label{equ:Hom_van_G'_3}
        \end{equation}
        Combining \eqref{equ:Hom_van_G'_1}, \eqref{equ:Hom_van_G'_2}, and \eqref{equ:Hom_van_G'_3}, we have $\chi_{\CCC_0}(\CCC_1, G') = 0$. Since we also have $\chi_{\CCC_0}(\CCC_1, \CCC_0) = 0$ and $\Chern{\CCC_0, \leq 2}^{-\frac{5}{4}}(G') = \Chern{\CCC_0, \leq 2}^{-\frac{5}{4}}(\CCC_0)$, we deduce that $[G'] = [\CCC_0]$ in $\Knum(\pro^3,\CCC_0)$. Finally, we have
        \[1 = \chi_{\CCC_0}(\CCC_0, \CCC_0) = \chi_{\CCC_0}(\CCC_0, G') = \hom_{\CCC_0}(\CCC_0, G') - \hom_{\CCC_0}(\CCC_0, G'[1]). \]
        In particular $\Hom_{\CCC_0}(\CCC_0, G') \ne 0$. Since $\mu_h(\CCC_0) = \mu_h(G')$ and $\CCC_0$ is slope stable and locally free, we have $G' \cong \CCC_0$.

        \medskip

        Finally, since $G' \cong \CCC_0 \in \Coh^{\leq (\xi_0-\frac{5}{4})}(\pro^3,\CCC_0)$ and $G'' \in \Coh^{> (\xi_0-\frac{5}{4})}(\pro^3,\CCC_0)$, there is a short exact sequence in the heart $\Coh^{\xi_0-\frac{5}{4}}(\pro^3,\CCC_0)$:
        \[\begin{tikzcd}[ampersand replacement=\&,cramped]
                0 \& \CCC_0[1] \& G \& G'' \& 0.
                \arrow[from=1-1, to=1-2]
                \arrow[from=1-2, to=1-3]
                \arrow[from=1-3, to=1-4]
                \arrow[from=1-4, to=1-5]
            \end{tikzcd}\]
        Note that $\Hom_{\CCC_0}(G'', \CCC_0[2]) = 0$ because $G''$ is $0$-dimensional and $\CCC_0$ is locally free. In particular, the sequence above must split. But this contradicts the stability of $G$ unless $G'' = 0$. We conclude that $G \cong \CCC_0[1]$.
        \qedhere
                
                                            \end{proof}
\begin{proof}[Proof of \cref{prop:P_Pi_stable}]
    By \cref{lem:P_Pi_1st_wall,lem:disc_zero}, if $E$ is strictly $\ts_{0,\eta_1}$-semistable, then there exists a destabilizing sequence
    \[\begin{tikzcd}[ampersand replacement=\&,cramped]
            0 \& F \& E \& \CCC_0[1] \& 0.
            \arrow[from=1-1, to=1-2]
            \arrow[from=1-2, to=1-3]
            \arrow[from=1-3, to=1-4]
            \arrow[from=1-4, to=1-5]
        \end{tikzcd}\]
    In particular $\Hom(E,\CCC_0[1]) = \Hom(\CCC_2,E[2]) \ne 0$ by Serre duality. This contradicts the fact that $E$ is an object of $\Ku(\pro^3,\CCC_0) \subset \ord{\CCC_2}^\perp$. We conclude that $E$ is $\ts_{0,\eta}$-stable for all $\eta > 0$.
\end{proof}

The equality $\tilde{\nu}_{0,\eta}(\pi_*(\EEE(H+h)|_{\sigma^{-1}\varPi})) = 0$ implies that $\pi_*(\EEE(H+h)|_{\sigma^{-1}\varPi}) \in \tilde{\Coh}^{\tilde{\nu}\leq 0}_{0,\eta}$. Hence $\varPsi\sigma^*\cF_{\varPi}[1]$ lies in the heart $\tA_{0,\eta} = \tilde{\Coh}^0_{0,\eta} \cap \Ku(\pro^3,\CCC_0)$.

\begin{corollary}[][cor:P_Pi_stable]
    The object $\varPsi\sigma^*\cF_{\varPi}[1]$ is $\ts^0_{0,\eta}$-stable for all $\eta > 0$. In particular, $\cF_{\varPi}[1]$ is $\sigma_Y$-stable.
\end{corollary}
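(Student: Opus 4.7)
The plan is to leverage Proposition~\ref{prop:P_Pi_stable} and upgrade the tilt stability of $E \coloneqq \pi_*(\EEE(h)|_{\sigma^{-1}\varPi})$ to the second tilt $\sigma^0_{\alpha,-5/4}$ via the standard compatibility between a weak stability condition and its tilt at the slope-zero wall. Concretely, Proposition~\ref{prop:P_Pi_stable} shows that $E$ is $\sigma_{\alpha,-5/4}$-stable with $\nu_{\alpha,-5/4}(E) = 0$, so by definition $E \in \FFF^0_{\alpha,-5/4}$ and $E[1]$ lies in the tilted heart $\Coh^0_{\alpha,-5/4}(\pro^3,\CCC_0)$. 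Using $\Chern{\CCC_0,\leq 2}^{-5/4}(E) = (0,4,0)$ one computes that $Z^0_{\alpha,-5/4}(E[1])$ is a negative real number, so $E[1]$ already sits at the maximal phase $\phi = 1$ of the tilted heart.

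For stability, I would proceed by contradiction: suppose there is a short exact sequence $0 \to A \to E[1] \to B \to 0$ in $\AAA_{\alpha,-5/4}$ with $A$ a $\sigma^0$-stable subobject of phase $\phi(A) \geq 1$. Since $1$ is the maximal phase, $\phi(A) = 1$, which forces $Z^0_{\alpha,-5/4}(A) \in \R_{<0}$, i.e.\ $Z_{\alpha,-5/4}(A)$ is purely imaginary with positive imaginary part. In particular, $A$ belongs to the abelian subcategory $\mathcal P_{\sigma_{\alpha,-5/4}}(\nu = 0)[1]$ consisting of shifts of $\sigma_{\alpha,-5/4}$-semistable objects of tilt~$0$. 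So I can write $A = A'[1]$ with $A'$ a $\sigma_{\alpha,-5/4}$-semistable object of tilt $0$, and the inclusion $A \hookrightarrow E[1]$ in $\AAA_{\alpha,-5/4}$ identifies $A'$ as a subobject of $E$ inside the abelian category of tilt-zero semistables. Because $E$ is $\sigma_{\alpha,-5/4}$-stable, the only such subobjects are $0$ and $E$ itself, contradicting the properness of $A$. Hence $E[1]$ is $\sigma^0_{\alpha,-5/4}$-stable for every $\alpha > 0$.

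The second sentence is now automatic: the restriction of $\sigma^0_{\alpha,-5/4}$ to $\Ku(\pro^3,\CCC_0)$ is, by construction in Theorem~\ref{thm:main}, the stability condition $\sigma'_{\alpha,-5/4}$, which in turn corresponds to the stability $\sigma'$ on $\Ku(Y)$ under the exact equivalence $\varPsi\circ\sigma^*\colon \Ku(Y) \xrightarrow{\sim} \Ku(\pro^3,\CCC_0)$ of Theorem~\ref{thm:psi}. Since $\varPsi\sigma^*\PPP_{\varPi} \cong E[1]$ by \eqref{equ:P_Pi_Psi}, the $\sigma^0$-stability established above transports back to $\sigma'$-stability of $\PPP_{\varPi}$.

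The only non-routine point is the identification $\mathcal P_{\sigma^0_{\alpha,-5/4}}(1) \simeq \mathcal P_{\sigma_{\alpha,-5/4}}(\nu = 0)[1]$ used to convert a subobject in the tilted heart into an honest subobject in the original tilted category. This is a general feature of the HRS tilt construction and follows by taking the long exact sequence of cohomology sheaves with respect to $\Coh^{-5/4}(\pro^3,\CCC_0)$; so I expect no real obstacle, only some bookkeeping to isolate the slope-zero part of $A$ from its torsion-pair decomposition.
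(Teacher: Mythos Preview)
Your overall strategy matches the paper's, but the step you flag as ``only bookkeeping'' is exactly where the argument breaks. The identification $\mathcal P_{\sigma^0_{\alpha,-5/4}}(1) \simeq \mathcal P_{\sigma_{\alpha,-5/4}}(\nu = 0)[1]$ is \emph{false}: the weak stability $\sigma^0$ has nonzero objects with $Z^0 = 0$, namely sheaves supported in dimension~$0$, and these sit in $\TTT^0_{\alpha,-5/4} \subset \Coh^0_{\alpha,-5/4}$ with phase~$1$. So a phase-$1$ subobject $A \hookrightarrow E[1]$ need not be a pure shift $A'[1]$; taking the long exact sequence in $\Coh^{-5/4}$ of $0 \to A \to E[1] \to B \to 0$ gives
\[
0 \to \mathcal H^{-1}_\sharp(A) \to E \to \mathcal H^{-1}_\sharp(B) \to \mathcal H^0_\sharp(A) \to 0,
\]
and the condition $\Im Z^0(A) = 0$ only forces $\mathcal H^{-1}_\sharp(A)$ to have tilt~$0$ while $\mathcal H^0_\sharp(A)$ is a possibly nonzero $0$-dimensional sheaf. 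Your argument handles the case $\mathcal H^{-1}_\sharp(A) \in \{0,E\}$ via stability of $E$, but you still need to exclude a nonzero $0$-dimensional $\mathcal H^0_\sharp(A)$.

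This is precisely the extra input in the paper's proof: after reducing to $F \coloneqq A$ with $\mathcal H^{-1}_\sharp(F) = 0$, so that $F$ is a $0$-dimensional sheaf, the paper invokes that $E = \pi_*(\EEE(h)|_{\sigma^{-1}\varPi})$ is locally free of rank~$4$ on its $2$-dimensional support, hence has depth~$2$, which forces $\Hom(F, E[1]) = \Ext^1(F,E) = 0$ and kills the inclusion $F \hookrightarrow E[1]$. (An alternative patch, since you work in $\AAA_{\alpha,-5/4} \subset \Ku(\pro^3,\CCC_0)$: if $F$ is $0$-dimensional then $Z^0(F) = 0$, contradicting that $\sigma'$ is a genuine stability condition on $\Ku$ as established in Theorem~\ref{thm:main}.) Either way, this is a substantive geometric or categorical input, not HRS bookkeeping. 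Also, a minor sign slip: $Z^0(A) \in \R_{<0}$ means $Z(A)$ has \emph{negative} imaginary part, not positive.
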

\begin{proof}
    Suppose that $\varPsi\sigma^*\cF_{\varPi}[1]$ is not $\ts^0_{0,\eta}$-stable. Then there is a non-trivial proper quotient object $G$ of $\varPsi\sigma^*\cF_{\varPi}[1]$ in the heart $\tA_{0,\eta}$ such that $\tilde{\nu}_{0,\eta}(G) \leq \tilde{\nu}_{0,\eta}(\varPsi\sigma^*\cF_{\varPi}[1])$. Consider the short exact sequence in $\tA_{0,\eta}$:
    \[\begin{tikzcd}[ampersand replacement=\&,cramped]
            0 \& F \& {\varPsi\sigma^*\cF_{\varPi}[1]=\pi_*(\EEE(H+h)|_{\sigma^{-1}\varPi})[1]} \& G \& 0.
            \arrow[from=1-1, to=1-2]
            \arrow[from=1-2, to=1-3]
            \arrow[from=1-3, to=1-4]
            \arrow[from=1-4, to=1-5]
        \end{tikzcd}\]
    Consider the long exact sequence in $\Coh^{-\frac{5}{4}}(\pro^3,\CCC_0)$:
    \[\begin{tikzcd}[ampersand replacement=\&,cramped]
            0 \& {\mathcal{H}_{\sharp}^{-1}(F)} \& {\pi_*(\EEE(H+h)|_{\sigma^{-1}\varPi})} \& {\mathcal{H}_{\sharp}^{-1}(G)} \& {\mathcal{H}_{\sharp}^{0}(F)} \& 0.
            \arrow[from=1-1, to=1-2]
            \arrow[from=1-2, to=1-3]
            \arrow[from=1-3, to=1-4]
            \arrow[from=1-4, to=1-5]
            \arrow[from=1-5, to=1-6]
        \end{tikzcd}\]
    By assumption we have
    \begin{equation}
        \tilde{\nu}_{0,\eta}(\pi_*(\EEE(H+h)|_{\sigma^{-1}\varPi})) \geq \tilde{\nu}_{0,\eta}(\mathcal{H}_{\sharp}^{-1}(G)). \label{equ:Coh0_slope_compare}
    \end{equation}
    Moreover, from the long exact sequence and the $\ts_{0,\eta}$-stability of $\pi_*(\EEE(H+h)|_{\sigma^{-1}\varPi})$, we also have
    \[\tilde{\nu}_{0,\eta}(\mathcal{H}_{\sharp}^{-1}(F)) < \tilde{\nu}_{0,\eta}(\pi_*(\EEE(H+h)|_{\sigma^{-1}\varPi})) \leq \tilde{\nu}_{0,\eta}(\mathcal{H}_{\sharp}^{-1}(G)) \leq 0 < \tilde{\nu}_{0,\eta}(\mathcal{H}_{\sharp}^{0}(F)) .\]
    Combining these two facts, the equality in \eqref{equ:Coh0_slope_compare} must hold. Then we have $\mathcal{H}_{\sharp}^{-1}(F) = 0$ and $\tZ_{0,\eta}(\mathcal{H}_{\sharp}^{0}(F)) = 0$.
    The latter equation implies that $F$ is a sheaf in $\Coh(\pro^3,\CCC_0)$ with $0$-dimensional support. Note that the sheaf $\pi_*(\EEE(H+h)|_{\sigma^{-1}\varPi})$, which has $2$-dimensional support, has depth $2$. It follows that
    \[\Hom(F,\pi_*(\EEE(H+h)|_{\sigma^{-1}\varPi})[1]) = 0.\]
    In particular $F = 0$ and $G \cong \pi_*\ab(\EEE(H+h)|_{\sigma^{-1}\varPi})[1]$. This is a contradiction.
\end{proof}

By \cref{rmk:S&O:PFK}, the objects $\cF_{\varPi}$, $\PPP_{\varPi}$, and $\KKK_{\varPi}$ are related by the Serre functor. Since the stability condition $\sigma_Y$ is Serre-invariant, we have the following corollary:
\begin{corollary}[][cor:F&K_stable]
    The objects $\PPP_{\varPi}$ and $\KKK_{\varPi}$ are $\sigma_Y$-stable. Moreover, for any numerical character $v$ with $\chi(v,v) = -1$, the moduli space $\Modss(\Ku(Y),v)$ is non-empty.
\end{corollary}

\subsection{The Fano surface of planes on smooth cubic 5-fold} \label{subsec:Fano}

As an example, we will study the lowest-dimensional moduli spaces, of dimension $2$, and relate them to a classical moduli space.

\begin{definition}
    Let $Y$ be a smooth cubic 5-fold. The Hilbert scheme $\mathcal F_2(Y)$ parametrizing $2$-planes in $Y$ is a two-dimensional projective variety \cite[Proposition 1.8]{col86}, called the \textbf{Fano surface of planes} of $Y$.
\end{definition}

\begin{remark}[][rmk:Fano_surf]
    By \cite[Proposition 1.10]{col86}, the Fano surface $\mathcal F_2(Y)$ is smooth and irreducible if $Y$ is a general member of a general Lefschetz pencil of cubic 5-folds.
\end{remark}

\begin{proposition}[][prop:moduli_dim2]
    Let $Y$ be a general cubic 5-fold in the sense of \cref{rmk:Fano_surf}. We have isomorphisms of moduli spaces:
    \[\begin{tikzcd}[ampersand replacement=\&,cramped,column sep=huge,row sep=0]
	{\Mods(\Ku(Y),\kappa_2)} \& {\Mods(\Ku(Y),\kappa_2-\kappa_1)} \& {\Mods(\Ku(Y),-\kappa_1)} \\ 
    {[\PPP_{\varPi}]} \& {[\KKK_{\varPi}[1]]} \& {[\FFF_{\varPi}[1]]}
				\arrow["{\mathsf S_{\Ku(Y)}[-2]}", "\cong"', from=1-1, to=1-2]
	\arrow["{\mathsf S_{\Ku(Y)}[-2]}", "\cong"', from=1-2, to=1-3]
    \arrow[mapsto, from=2-1, to=2-2]
	\arrow[mapsto, from=2-2, to=2-3]
\end{tikzcd}\]
        Moreover, each has a connected component that is a smooth projective surface isomorphic to the Fano surface $\mathcal F_2(Y)$.
\end{proposition}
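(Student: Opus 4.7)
The first claim (the two Serre-functor isomorphisms) is essentially formal. By \cref{thm:Serre_inv}, there exists $\tilde g\in\tilde{\operatorname{GL}^+_2}(\R)$ with $\mathsf S_{\Ku(Y)}\cdot\sigma'=\sigma'\cdot\tilde g$, so $\mathsf S_{\Ku(Y)}[-2]$ preserves $\sigma'$-(semi)stability and induces isomorphisms between the relevant moduli stacks, hence between their good moduli spaces. The induced action $\mathsf S_{\Ku(Y)*}$ on $\Knum(\Ku(Y))$, read off from \cref{prop:Knum_Ku}, sends $\kappa_1-\kappa_2\mapsto\kappa_1\mapsto\kappa_2$, and \cref{rmk:S&O:PFK} identifies the corresponding objects as $\KKK_\varPi\mapsto\FFF_\varPi\mapsto\PPP_\varPi$.

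For the second assertion it suffices to treat $\Mods(\Ku(Y),\kappa_2)$; the other two cases then follow by transport along the isomorphisms above. I would construct a classifying morphism $\phi\colon\mathcal F_2(Y)\to\Mods(\Ku(Y),\kappa_2)$, $[\varPi]\mapsto[\PPP_\varPi]$, by performing the right-mutation $\rmut{\sheaf{Y}(-1)}$ in families: with $\mathcal P\subset Y\times\mathcal F_2(Y)$ the universal plane and $\ideal_{\mathcal P}$ its ideal sheaf, the relative mutation $\mathbf R_{\sheaf{Y}(-1)\boxtimes\sheaf{\mathcal F_2(Y)}}(\ideal_{\mathcal P})$ is a perfect complex on $Y\times\mathcal F_2(Y)$ whose fibre over $[\varPi]$ is $\PPP_\varPi$, which by \cref{cor:P_Pi_stable} and \cref{prop:Knum_Ku} is $\sigma'$-stable of class $\kappa_2$; the universal property of the moduli space supplies $\phi$.

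Next I would check that $\phi$ is a closed immersion. Taking standard-t-structure cohomology of the defining triangle \eqref{seq:P_Pi_def} gives $\mathcal H^{-1}(\PPP_\varPi)=\sheaf{Y}(-1)$ and $\mathcal H^0(\PPP_\varPi)=\ideal_\varPi$, so $\varPi$ is recovered from $[\PPP_\varPi]$, yielding injectivity of $\phi$ on closed points. The differential at $[\varPi]$ is the natural map $H^0(\varPi,\Nor{\varPi}{Y})=T_{[\varPi]}\mathcal F_2(Y)\to\Ext^1(\PPP_\varPi,\PPP_\varPi)$ obtained by applying $\rmut{\sheaf{Y}(-1)}$ to first-order deformations of $\ideal_\varPi$; post-composing with $\mathcal H^0$ returns the original deformation, so the differential is injective and $\phi$ is unramified. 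As $\mathcal F_2(Y)$ is smooth, irreducible and projective of dimension $2$ (\cref{rmk:Fano_surf}), this forces $\phi$ to be a closed immersion onto a smooth closed subspace $Z\cong\mathcal F_2(Y)$ of $\Mods(\Ku(Y),\kappa_2)$.

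The hard part will be showing that $Z$ is also open, hence a connected component. Equivalently, one must verify $\ext^1(\PPP_\varPi,\PPP_\varPi)=2$; since $\chi(\kappa_2,\kappa_2)=-1$ (\cref{prop:Knum_Ku}) and $\hom(\PPP_\varPi,\PPP_\varPi)=1$ by $\sigma'$-stability, this reduces to showing $\ext^2(\PPP_\varPi,\PPP_\varPi)=\ext^3(\PPP_\varPi,\PPP_\varPi)$. The plan is to apply $\Hom(\PPP_\varPi,-)$ to \eqref{seq:P_Pi_def}, use the orthogonality $\Ext^\bullet(\PPP_\varPi,\sheaf{Y}(-1))=0$ coming from \eqref{SOD:DbY_RRLL} to identify $\Ext^\bullet(\PPP_\varPi,\PPP_\varPi)\cong\Ext^\bullet(\PPP_\varPi,\ideal_\varPi)$, and then apply $\Hom(-,\ideal_\varPi)$ together with the ideal-sheaf sequence $0\to\ideal_\varPi\to\sheaf{Y}\to\sheaf{\varPi}\to 0$ to express everything in terms of $H^\bullet(Y,\ideal_\varPi(1))$ and $\Ext^\bullet_Y(\sheaf{\varPi},\sheaf{\varPi})$, computable by Serre duality together with the normal-bundle sequence; the genericity of $Y$ (\cref{rmk:Fano_surf}) should supply the vanishings needed to match $\ext^2$ with $\ext^3$.
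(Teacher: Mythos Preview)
Your proposal is correct and follows the same overall strategy as the paper. The one substantive difference is your choice of representative: you work with $\PPP_\varPi$ (class $\kappa_2$), whereas the paper works with $\FFF_\varPi$ (class $\kappa_1$). This matters for the ``hard part'' you flag at the end. Since $\FFF_\varPi$ is an honest reflexive sheaf (\cref{prop:Ku_objs}), the paper can compute $\Ext^\bullet(\FFF_\varPi,\FFF_\varPi)$ directly by a clean chain of long exact sequences---first for $j_*\sheaf\varPi$ via the local-to-global spectral sequence, then for $\ideal_\varPi$, then for $\FFF_\varPi$---arriving at the precise identification
\[
\Ext^\bullet(\FFF_\varPi,\FFF_\varPi)=\C[0]\oplus\Hlg^0(\varPi,\Nor{\varPi}{Y})[-1]\oplus\Hlg^1(\varPi,\Nor{\varPi}{Y})[-2]
\]
(\cref{lem:Ext_F_Pi}). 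This simultaneously yields the tangent-space \emph{isomorphism} $\Ext^1(\ideal_\varPi,\ideal_\varPi)\cong\Ext^1(\FFF_\varPi,\FFF_\varPi)$ (so the closed immersion is immediate without your $\mathcal H^0$-retraction argument) and the obstruction identification $\Ext^2(\FFF_\varPi,\FFF_\varPi)\cong\Hlg^1(\varPi,\Nor{\varPi}{Y})$, which vanishes for general $Y$ by \cite[Corollary~1.4]{col86}. Your plan via the triangle \eqref{seq:P_Pi_def} would eventually reach the same vanishing, but the two-term nature of $\PPP_\varPi$ makes the bookkeeping heavier; the cleanest fix is simply to transport your question across the Serre functor to $\FFF_\varPi$ first.
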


\begin{remark}
    We conjecture that these moduli spaces are in fact irreducible, and hence each is isomorphic to $\cF_2(Y)$. A verification would likely involve a subtle reverse wall-crossing argument, in the same spirit as \cite{BMMS} or \cite{FP23}. Alternatively, there is another piece of evidence for the conjectural irreducibility: one could consider the degeneration of the family of stability conditions to a stability condition over a nodal cubic 5-fold $Y^{\mathsf{nod}}$. In the forthcoming work \cite{LPZ_nodalcubic5}, we show that the moduli space $\Mods(\Ku(Y^{\mathsf{nod}}),\kappa_1)$ is already isomorphic to $\cF_2(Y^{\mathsf{nod}})$.  
\end{remark}

\begin{lemma}[][lem:Ext_F_Pi]
    Let $Y$ be a smooth cubic 5-fold, and $\varPi \subset Y$ a $2$-plane. Denote by $\Nor{\varPi}{Y}$ the normal bundle of $\varPi$ in $Y$. Then the sheaf $\cF_{\varPi}$ defined in \eqref{seq:F_Pi_def} has Ext groups:
                            \[ \Ext^\bullet(\cF_{\varPi}, \cF_{\varPi}) = \C[0] \oplus \Hlg^0(\varPi,\Nor{\varPi}{Y})[-1] \oplus \Hlg^1(\varPi,\Nor{\varPi}{Y})[-2]. \]
    In particular, if $Y$ is general, then $\Ext^\bullet(\cF_{\varPi}, \cF_{\varPi}) = \C[0] \oplus \C^{2}[-1]$.
\end{lemma}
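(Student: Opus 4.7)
The plan is to reduce the computation of $\Ext^\bullet(\FFF_\varPi,\FFF_\varPi)$ to computations involving $\sheaf{\varPi}$ and $\ideal_\varPi$ on $Y$, by repeatedly exploiting the orthogonality conditions that come from $\FFF_\varPi \in \Ku(Y)$. First I would apply $\Rsf\Hom(-,\FFF_\varPi)$ to the defining sequence $0 \to \FFF_\varPi \to \sheaf{Y}^{\oplus 4} \to \ideal_\varPi(1) \to 0$. Since $\Ext^\bullet(\sheaf{Y},\FFF_\varPi)=0$, the long exact sequence collapses to $\Ext^i(\FFF_\varPi,\FFF_\varPi)\cong\Ext^{i+1}(\ideal_\varPi(1),\FFF_\varPi)$. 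Using $0 \to \ideal_\varPi(1) \to \sheaf{Y}(1) \to \sheaf{\varPi}(1) \to 0$ together with $\Ext^\bullet(\sheaf{Y}(1),\FFF_\varPi)=0$, I then reduce further to $\Ext^{i+2}(\sheaf{\varPi}(1),\FFF_\varPi)$. Swapping the roles, I apply $\Rsf\Hom(\sheaf{\varPi}(1),-)$ to the defining sequence: Serre duality on $Y$ gives $\Ext^k(\sheaf{\varPi}(1),\sheaf{Y}) \cong \Hlg^{5-k}(\sheaf{\varPi}(-2))^\dual = 0$, so the LES collapses to $\Ext^{i+2}(\sheaf{\varPi}(1),\FFF_\varPi)\cong\Ext^{i+1}(\sheaf{\varPi},\ideal_\varPi)$.

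Next I would compute $\Ext^{i+1}(\sheaf{\varPi},\ideal_\varPi)$ using the LES of $\Rsf\Hom(\sheaf{\varPi},-)$ applied to $0 \to \ideal_\varPi \to \sheaf{Y} \to \sheaf{\varPi} \to 0$. By Serre duality, $\Ext^k(\sheaf{\varPi},\sheaf{Y})=\Hlg^{5-k-3}(\sheaf{\varPi}(-3))^\dual$ vanishes for $k\leq 2$, so the relevant piece reduces to $\Ext^k(\sheaf{\varPi},\sheaf{\varPi})$ for $k=0,1,2$. These self-Ext groups are then handled via the local-to-global spectral sequence with $\SExt^q(\sheaf{\varPi},\sheaf{\varPi})\cong\wedge^q\Nor_{\varPi/Y}$ (from the local Koszul resolution), giving $\Hom=\C$, $\Ext^1=\Hlg^0(\Nor_{\varPi/Y})$, and for $\Ext^2$ the contributions $\Hlg^0(\wedge^2\Nor_{\varPi/Y})\oplus\Hlg^1(\Nor_{\varPi/Y})$. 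The crucial input is that $\Hlg^0(\Nor_{\varPi/Y}^\dual)=0$: this I would establish from the conormal embedding $\Nor^\dual_{\varPi/Y}\hookrightarrow\Omega_Y|_\varPi$ combined with $\Hlg^0(\Omega_Y|_\varPi)=0$, the latter following from the Euler sequence of $\pro^6$ restricted to $\varPi$ and the cotangent sequence of $Y\subset\pro^6$. Since $\wedge^2\Nor\cong\Nor^\dual\otimes\det\Nor\cong\Nor^\dual$ (using $\det\Nor=\sheaf{\varPi}$), this yields $\Ext^2(\sheaf{\varPi},\sheaf{\varPi})=\Hlg^1(\Nor_{\varPi/Y})$.

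Finally, to show $\Ext^i(\FFF_\varPi,\FFF_\varPi)=0$ for $i\geq 3$, I would invoke Serre invariance. Using $\mathsf S_{\Ku(Y)}\FFF_\varPi\cong\PPP_\varPi[2]$ from Remark \ref{rmk:S&O:PFK}, Serre duality gives $\Ext^i(\FFF_\varPi,\FFF_\varPi)\cong\Hom(\FFF_\varPi,\PPP_\varPi[2-i])^\dual$. Passing to the Serre-invariant stability condition $\sigma''$ of \cref{cor:gl_dim}, the phases satisfy $\phi_{\sigma''}(\PPP_\varPi)-\phi_{\sigma''}(\FFF_\varPi)=\tfrac13$, so for $i\geq 3$ one has $\phi_{\sigma''}(\PPP_\varPi[2-i])<\phi_{\sigma''}(\FFF_\varPi)$, and the stability of both objects forces the Hom to vanish. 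Combining the three steps gives the stated decomposition. The main obstacle I anticipate is identifying $\Ext^2(\sheaf{\varPi},\sheaf{\varPi})$ cleanly with $\Hlg^1(\Nor_{\varPi/Y})$: beyond the vanishing $\Hlg^0(\Nor^\dual_{\varPi/Y})=0$, one must verify that no spectral-sequence differential or connecting map from the LES produces unwanted contributions, and the identification should be natural enough to respect the deformation-theoretic meaning matching $\Ext^1(\FFF_\varPi,\FFF_\varPi)$ with the tangent space to $\mathcal F_2(Y)$ at $[\varPi]$.
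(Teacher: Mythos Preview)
Your overall strategy matches the paper's: reduce $\Ext^\bullet(\FFF_\varPi,\FFF_\varPi)$ through $\ideal_\varPi$ to the self-Exts of $\sheaf{\varPi}$, then compute the latter via the local-to-global spectral sequence with $\SExt^q(\sheaf{\varPi},\sheaf{\varPi})\cong\bigwedge^q\Nor{\varPi}{Y}$. Your use of Serre invariance and $\operatorname{gl.dim}(\sigma'')=\tfrac{7}{3}$ to kill $\Ext^{\geq 3}$ is a legitimate shortcut (the paper instead tracks $\Ext^3(\sheaf{\varPi},\sheaf{\varPi})=\C^3$ explicitly and shows a connecting map $\eta$ is an isomorphism).

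However, there are two genuine errors in the middle of your argument. First, on $Y$ one has $\omega_Y=\sheaf{Y}(-4)$, so Serre duality gives $\Ext^k(\sheaf{\varPi}(1),\sheaf{Y})\cong\Hlg^{5-k}(\sheaf{\varPi}(-3))^\dual$, which is $\C$ for $k=3$ and zero otherwise; it is \emph{not} identically zero. Hence your long exact sequence in Step~3 does not collapse for $i=1,2$: there is a nontrivial piece
\[
0\to\Ext^{2}(\sheaf{\varPi},\ideal_\varPi)\to\Ext^{3}(\sheaf{\varPi}(1),\FFF_\varPi)\to\C^{4}\xrightarrow{\ \delta\ }\Ext^{3}(\sheaf{\varPi},\ideal_\varPi)\to\cdots
\]
that must be analysed. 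Second, adjunction gives $\det\Nor{\varPi}{Y}=\sheaf{\varPi}(1)$, not $\sheaf{\varPi}$: indeed $K_\varPi=K_Y|_\varPi+c_1(\Nor{\varPi}{Y})$ reads $-3=-4+c_1(\Nor{\varPi}{Y})$. Consequently $\bigwedge^2\Nor{\varPi}{Y}\cong\Nor{\varPi}{Y}^\dual(1)$ and, from the conormal sequence, $\Hlg^0(\bigwedge^2\Nor{\varPi}{Y})\cong\C^4$, so $\Ext^2(\sheaf{\varPi},\sheaf{\varPi})\cong\Hlg^1(\Nor{\varPi}{Y})\oplus\C^4$, not $\Hlg^1(\Nor{\varPi}{Y})$ alone.

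These two mistakes happen to cancel numerically, which is why your final answer looks right; but a correct proof must show that the extra $\C^4$ in $\Ext^2(\sheaf{\varPi},\sheaf{\varPi})$ is killed by the $\C^4$ coming from $\Ext^3(\sheaf{\varPi}(1),\sheaf{Y}^{\oplus4})$. This is exactly the content of the paper's argument: it identifies the connecting map $\delta\colon\Ext^{2}(\ideal_\varPi(1),\sheaf{Y}^{\oplus4})\to\Ext^{2}(\ideal_\varPi,\ideal_\varPi)$, via Grothendieck--Verdier duality, with the isomorphism $\Hlg^0(\sheaf{\varPi}^{\oplus4})\xrightarrow{\sim}\Hlg^0(\Nor{\varPi}{Y}^\dual(1))$ induced by the conormal sequence, landing precisely on the $\C^4$ summand. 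You would need to supply this identification (or an equivalent one) to close the gap.
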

\begin{proof}
    In the first step we compute $\Ext^\bullet(j_*\sheaf{\varPi},j_*\sheaf{\varPi})$, where $j\colon \varPi \to Y$ is the inclusion map. We have the spectral sequence
    \begin{equation}
        E_2^{p,q} \coloneqq \Hlg^p(Y, \SExt^q(j_*\sheaf{\varPi}, j_*\sheaf{\varPi})) = \Hlg^p{\left( \varPi, {\textstyle \bigwedge}^q \Nor{\varPi}{Y} \right)} \implies \Ext^{p+q}(j_*\sheaf{\varPi}, j_*\sheaf{\varPi}).
    \end{equation}
    From the inclusions $\varPi \subset Y \subset \pro^6$ of smooth projective varieties, we find that $\Nor{\varPi}{Y}$
    is a rank $3$ vector bundle that fits into the relative normal sequence:
    \begin{equation}
        \begin{tikzcd}[ampersand replacement=\&,cramped]
            0 \& \Nor{\varPi}{Y} \& \Nor{\varPi}{\pro^6} = \sheaf{\varPi}(1)^{\oplus 4} \& \Nor{Y}{\pro^6}\big|_{\varPi} = \sheaf{\varPi}(3) \& 0.
            \arrow[from=1-1, to=1-2]
            \arrow[from=1-2, to=1-3]
            \arrow[from=1-3, to=1-4]
            \arrow[from=1-4, to=1-5]
        \end{tikzcd}\label{seq:normal}
    \end{equation}
    It follows that $ \Hlg^i(\Nor{\varPi}{Y})= 0$ for $i \ne 0,1$ and $\det \Nor{\varPi}{Y} = \sheaf{\varPi}(1)$. In particular $\bigwedge^2 \Nor{\varPi}{Y} \cong \Nor{\varPi}{Y}^{\dual}(1)$. Dualizing \eqref{seq:normal} and twisting by $\sheaf{}(1)$, we obtain the relative conormal sequence
    \begin{equation}
        \begin{tikzcd}[ampersand replacement=\&,cramped]
            0 \& \sheaf{\varPi}(-2) \& \sheaf{\varPi}^{\oplus 4} \& \bigwedge^2 \Nor{\varPi}{Y} \& 0.
            \arrow[from=1-1, to=1-2]
            \arrow[from=1-2, to=1-3]
            \arrow[from=1-3, to=1-4]
            \arrow[from=1-4, to=1-5]
        \end{tikzcd}\label{seq:conormal}
    \end{equation}
    In particular we have $\Hlg^0(\bigwedge^2 \Nor{\varPi}{Y}) \cong \Hlg^0(\sheaf{\varPi}^{\oplus 4}) = \C^4$ and $\Hlg^i(\bigwedge^2 \Nor{\varPi}{Y}) = 0$ for $i \ne 0$. The $E_2$-page of the spectral sequence is given by
    \[
        \begin{tabular}{|ccc}
            {$\C^3$}                    & 0                           & 0 \\
            {$\C^4$}                    & 0                           & 0 \\
            {$\Hlg^0(\Nor{\varPi}{Y})$} & {$\Hlg^1(\Nor{\varPi}{Y})$} & 0 \\
            {$\C$}                      & 0                           & 0 \\ \hline
        \end{tabular}\]
    Note that the sequence degenerates at the $E_2$-page because all differentials are zero. It follows that
    \[ \Ext^\bullet(j_*\sheaf{\varPi}, j_*\sheaf{\varPi}) = \C[0] \oplus \Hlg^0(\Nor{\varPi}{Y})[-1] \oplus \left(\Hlg^1(\Nor{\varPi}{Y}) \oplus \C^4\right)[-2] \oplus \C^3[-3]. \]

    \medskip
    The next step is to compute $\Ext^\bullet(\ideal_{\varPi},\ideal_{\varPi})$. We have the double complex of long exact sequences:
    \begin{equation}
        \begin{tikzcd}[ampersand replacement=\&,cramped,row sep=small]
                \&\& {\Ext^i(\sheaf{Y},\ideal_{\varPi})=0} \\
                \&\& {\Ext^i(\ideal_{\varPi},\ideal_{\varPi})} \\
                {\Ext^{i}(j_*\sheaf{\varPi},\sheaf{Y})} \& {\Ext^{i}(j_*\sheaf{\varPi},j_*\sheaf{\varPi})} \& {\Ext^{i+1}(j_*\sheaf{\varPi},\ideal_{\varPi})} \& {\Ext^{i+1}(j_*\sheaf{\varPi},\sheaf{Y})} \\
                \&\& {\Ext^{i+1}(\sheaf{Y},\ideal_{\varPi})=0}
                \arrow[from=1-3, to=2-3]
                \arrow["\sim", from=2-3, to=3-3]
                \arrow[from=3-1, to=3-2]
                \arrow[from=3-2, to=3-3]
                \arrow[from=3-3, to=3-4]
                \arrow[from=3-3, to=4-3]
            \end{tikzcd} \label{les:Ext_I_Pi}
    \end{equation}
    Since $\Ext^\bullet(\sheaf{Y},\ideal_{\varPi}) = 0$, we have $\Ext^i(\ideal_{\varPi},\ideal_{\varPi}) \cong \Ext^{i+1}(j_*\sheaf{\varPi},\ideal_{\varPi})$ for all $i$. By Grothendieck--Verdier duality, we have
    \[ \Ext^{\bullet}(j_*\sheaf{\varPi},\sheaf{Y}) = \Ext^{\bullet}(\sheaf{\varPi},j^!\sheaf{Y}) = \Ext^{\bullet}(\sheaf{\varPi},\sheaf{\varPi}(1))[-3] = \C^3[-3].\]
    Therefore $\Ext^{i}(j_*\sheaf{\varPi},j_*\sheaf{\varPi}) \cong \Ext^i(\ideal_{\varPi},\ideal_{\varPi})$ for $i \ne 2,3$. For $i = 2,3$, consider the horizontal long exact sequence:
    \[\begin{tikzcd}[ampersand replacement=\&,cramped,row sep=0]
            0 \& {\Ext^{2}(j_*\sheaf{\varPi},j_*\sheaf{\varPi})} \& {\Ext^{2}(\ideal_{\varPi},\ideal_{\varPi})} \& {\Ext^{3}(j_*\sheaf{\varPi},\sheaf{Y})} \\
            {} \& {\Ext^{3}(j_*\sheaf{\varPi},j_*\sheaf{\varPi})} \& {\Ext^{3}(\ideal_{\varPi},\ideal_{\varPi})} \& 0.
            \arrow[from=1-1, to=1-2]
            \arrow[from=1-2, to=1-3]
            \arrow[from=1-3, to=1-4]
            \arrow["\eta", from=2-1, to=2-2]
            \arrow[from=2-2, to=2-3]
            \arrow[from=2-3, to=2-4]
        \end{tikzcd}\]
    Note that by Grothendieck--Verdier duality, the map $\eta\colon {\Ext^{3}(j_*\sheaf{\varPi},\sheaf{Y})} \to {\Ext^{3}(j_*\sheaf{\varPi},j_*\sheaf{\varPi})}$ is functorially isomorphic to the map $\Hlg^0(\sheaf{\varPi}(1)) \to \Hlg^0(j^*j_*\sheaf{\varPi}(1))$, which is an isomorphism. Hence we deduce that
    \[ \Ext^\bullet(\ideal_{\varPi}, \ideal_{\varPi}) = \C[0] \oplus \Hlg^0(\Nor{\varPi}{Y})[-1] \oplus \left(\Hlg^1(\Nor{\varPi}{Y}) \oplus \C^4\right)[-2]. \]

    \medskip
    Finally, we compute $\Ext^\bullet(\cF_{\varPi},\cF_{\varPi})$. Using \eqref{seq:F_Pi_def}, we have the double complex of long exact sequences:
    \begin{equation}
        \begin{tikzcd}[ampersand replacement=\&,cramped,row sep=small]
                \&\& {\Ext^i(\sheaf{Y}^{\oplus 4},\cF_{\varPi})=0} \\
                \&\& {\Ext^i(\cF_{\varPi},\cF_{\varPi})} \\
                {\Ext^{i}(\ideal_{\varPi}(1),\sheaf{Y}^{\oplus 4})} \& {\Ext^{i}(\ideal_{\varPi}(1),\ideal_{\varPi}(1))} \& {\Ext^{i+1}(\ideal_{\varPi}(1),\cF_{\varPi})} \& {\Ext^{i+1}(\ideal_{\varPi}(1),\sheaf{Y}^{\oplus 4})} \\
                \&\& {\Ext^{i+1}(\sheaf{Y}^{\oplus 4},\cF_{\varPi})=0}
                \arrow[from=1-3, to=2-3]
                \arrow["\sim", from=2-3, to=3-3]
                \arrow[from=3-1, to=3-2]
                \arrow[from=3-2, to=3-3]
                \arrow[from=3-3, to=3-4]
                \arrow[from=3-3, to=4-3]
            \end{tikzcd}   \label{les:Ext_F_Pi}
    \end{equation}
    Since $\cF_{\varPi}$ is acyclic, we have $\Ext^i(\cF_{\varPi},\cF_{\varPi}) \cong \Ext^{i+1}(\ideal_{\varPi}(1),\cF_{\varPi})$ for all $i$. By the proof of \cref{prop:Ku_objs}.(2), we know that $\Ext^{\bullet}(\ideal_{\varPi}(1),\sheaf{Y}) = \C[-2]$. Hence $\Ext^{i}(\ideal_{\varPi}(1),\ideal_{\varPi}(1)) \cong \Ext^i(\cF_{\varPi},\cF_{\varPi})$ for $i \ne 1,2$. For $i = 1,2$, consider the horizontal long exact sequence:
    \begin{equation}
        \begin{tikzcd}[ampersand replacement=\&,cramped,row sep=0]
            0 \& {\Ext^{1}(\ideal_{\varPi}(1),\ideal_{\varPi}(1))} \& {\Ext^{1}(\cF_{\varPi},\cF_{\varPi})} \& {\Ext^{2}(\ideal_{\varPi}(1),\sheaf{Y}^{\oplus 4})} \\
            {} \& {\Ext^{2}(\ideal_{\varPi}(1),\ideal_{\varPi}(1))} \& {\Ext^{2}(\cF_{\varPi},\cF_{\varPi})} \& {0.}
            \arrow[from=1-1, to=1-2]
            \arrow[from=1-2, to=1-3]
            \arrow[from=1-3, to=1-4]
            \arrow["\delta", from=2-1, to=2-2]
            \arrow[from=2-2, to=2-3]
            \arrow[from=2-3, to=2-4]
        \end{tikzcd}\label{les:F_Pi}
    \end{equation}
    The map $\delta\colon \Ext^{2}(\ideal_{\varPi}(1),\sheaf{Y}^{\oplus 4}) \to \Ext^{2}(\ideal_{\varPi},\ideal_{\varPi})$ is the same as $\Ext^{3}(j_*\sheaf{\varPi}(1),\sheaf{Y}^{\oplus 4}) \to \Ext^{2}(j_*\sheaf{\varPi},j_*\sheaf{\varPi})$, which by Grothendieck--Verdier duality is isomorphic to the map $\Hlg^0(\sheaf{\varPi}^{\oplus 4}) \to \Hlg^{-1}(j^*j_*\sheaf{\varPi}(1))$. Note that
    \[ \Hlg^{-1}(j^*j_*\sheaf{\varPi}(1)) \cong \Hlg^0(\Nor{\varPi}{Y}^\dual(1)) \oplus \Hlg^1(\textstyle\bigwedge^2\Nor{\varPi}{Y}^\dual(1)) \cong  \Hlg^0(\Nor{\varPi}{Y}^\dual(1)) \oplus \Hlg^1(\Nor{\varPi}{Y}).\]
    The image of the map $\delta\colon \Hlg^0(\sheaf{\varPi}^{\oplus 4}) \to \Hlg^0(\Nor{\varPi}{Y}^\dual(1)) \oplus \Hlg^1(\Nor{\varPi}{Y})$ lands in the first summand; in fact it is the isomorphism on global sections induced by the conormal sequence \eqref{seq:conormal}. We conclude from the long exact sequence that
    \[ \Ext^\bullet(\cF_{\varPi}, \cF_{\varPi}) = \C[0] \oplus \Hlg^0(\Nor{\varPi}{Y})[-1] \oplus \Hlg^1(\Nor{\varPi}{Y})[-2]. \]
    Finally, for general $Y$, we have $\Hlg^0(\Nor{\varPi}{Y}) \cong \C^2$ and $\Hlg^1(\Nor{\varPi}{Y}) = 0$ for all $\varPi \in \cF_2(Y)$. Hence the result follows.
\end{proof}

\begin{proof}[Proof of \cref{prop:moduli_dim2}.]
    First assume that $Y$ is a smooth cubic 5-fold. The Serre functor places the
    three classes in the same orbit, and hence
    \[
        \Mods(\Ku(Y),\kappa_2)
        \cong \Mods(\Ku(Y),\kappa_2-\kappa_1)
        \cong \Mods(\Ku(Y),-\kappa_1).
    \]

    Let $M$ be the Gieseker moduli space of sheaves on $Y$ parametrizing twisted ideal sheaves $\ideal_{\varPi}(1)$ for 2-planes $\varPi \subset Y$. It is known that $M$ is isomorphic to the Fano surface $\mathcal F_2(Y)$. Let $\mathcal{L} \in \Dcatb(Y \times M)$ be the universal sheaf on $Y \times M$. By \cite[Theorem 5.8]{kuz11}, we have an SOD of the form
    \[\Dcatb(Y \times M) = \ord{\sheaf{Y}(-2) \boxtimes \Dcatb(M),\ \sheaf{Y}(-1) \boxtimes \Dcatb(M),\ \Ku(Y \times M),\ \sheaf{Y} \boxtimes \Dcatb(M),\ \sheaf{Y}(1) \boxtimes \Dcatb(M)}.\]
    Let $\mathcal{L}'$ be the projection of $\mathcal{L}$ to $\Ku(Y \times M)$. Since $\pr_Y(\ideal_{\varPi}(1)) = \cF_{\varPi}[1]$, the object $\mathcal{L}'$ is a family of $\sigma_Y$-stable objects in $\Ku(Y)$ of character $-\kappa_1$. It induces a morphism $M \cong \cF_2(Y) \to \Modss(\Ku(Y),-\kappa_1)$ whose image consists of the objects $\cF_{\varPi}[1]$. The morphism is injective, since for distinct planes $\varPi$, $\varPi' \subset Y$, we have
    \begin{equation}\label{equ:F_Pi_distinct}
        \Hom(\cF_{\varPi}[1],\cF_{\varPi'}[1]) \cong \Hom(\ideal_{\varPi},\ideal_{\varPi'}) \cong \Hom(j_*\sheaf{\varPi},j_*\sheaf{\varPi'}) = 0. 
    \end{equation}
    In the proof of \cref{lem:Ext_F_Pi}, the sequence \eqref{les:F_Pi} shows that the morphism $\cF_2(Y) \to \Modss(\Ku(Y),-\kappa_1)$ induces isomorphisms on the Zariski tangent spaces: $\Ext^1(\ideal_{\varPi}(1), \ideal_{\varPi}(1)) \cong \Ext^1(\cF_{\varPi}[1], \cF_{\varPi}[1])$. Since the source is proper and the target is separated, this injective morphism is proper; together with the tangent-space isomorphisms, it is a closed immersion.

    Moreover, for general $Y$, by \cref{lem:Ext_F_Pi} and \cite[Corollary 1.4]{col86}, we have, for any plane $\varPi \subset Y$,
    \[\Ext^2(\cF_{\varPi}[1], \cF_{\varPi}[1]) \cong \Hlg^1(\varPi,\Nor{\varPi}{Y}) = 0, \]
    which implies that $\Modss(\Ku(Y),-\kappa_1)$ is smooth at $[\cF_{\varPi}[1]]$ (\cite[Corollary 4.5.2]{huybMS}). It follows that the morphism $\cF_2(Y) \to \Modss(\Ku(Y),-\kappa_1)$ is also an open immersion, which identifies $\cF_2(Y)$ with a connected component of $\Modss(\Ku(Y),-\kappa_1)$. \qedhere
\end{proof}

As a corollary, we give a new proof of a classical result \cite[\S 2.2.2]{IM08} which relates the Fano surface $\cF_2(Y)$ to the Fano variety of lines on a hyperplane section of $Y$.

\begin{definition}
    Let $X$ be a general cubic 4-fold not containing a plane, and let $Y$ be a general cubic 5-fold containing $X$ as a hyperplane section. Let $i\colon X \hookrightarrow Y$ denote the inclusion. For any plane $\varPi \subset Y$,  $\ell \coloneqq X \cap \varPi$ is a line in $X$. It is known that the Fano variety $\cF_1(X)$ of lines on $X$ is a smooth projective hyper-K\"ahler manifold of dimension $4$. This gives a map:
    \[\begin{tikzcd}[ampersand replacement=\&,cramped,row sep=0]
            {r_1 \colon \cF_{2}(Y)} \& {\cF_1(X)} \\
            {[\varPi]} \& {[X \cap \varPi]}
            \arrow[from=1-1, to=1-2]
            \arrow[maps to, from=2-1, to=2-2]
        \end{tikzcd}\]
\end{definition}

\begin{corollary}[][cor:Lag_imm]
    For $X$ and $Y$ as above, the morphism $r_1 \colon \cF_2(Y) \to \cF_1(X)$ is generically injective and unramified; its image is a (singular) Lagrangian surface in the hyper-K\"ahler 4-fold $\cF_1(X)$.
\end{corollary}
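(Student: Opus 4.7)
The plan is to reinterpret $i_{\FFF}$ moduli-theoretically, combining \cref{prop:moduli_dim2} with the classical identification of the Fano variety of lines on a cubic fourfold $X$ as a moduli space of stable objects in its K3-type Kuznetsov component $\Ku(X)$ (Addington--Thomas, \cite{BLMS}). Under that identification, a line $\ell\subset X$ corresponds to a projected object $\FFF_{\ell}\in\Ku(X)$ defined in perfect analogy with $\FFF_{\varPi}$, and $\FFF_1(X)$ is smooth of dimension $4$ at $[\FFF_{\ell}]$ with tangent space $\Ext^1_{\Ku(X)}(\FFF_{\ell},\FFF_{\ell})$.

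The first step is to construct a functor $\Phi\colon\Ku(Y)\to\Ku(X)$ realising $i_{\FFF}$ categorically, by composing the derived pullback $i^*\colon\Dcatb(Y)\to\Dcatb(X)$ with the projection onto $\Ku(X)$. Restricting the defining short exact sequence \eqref{seq:F_Pi_def} along $i$ and using that $X$ meets $\varPi$ transversally in $\ell=X\cap\varPi$, one verifies $\Phi(\FFF_{\varPi})\simeq\FFF_{\ell}$ up to a shift; hence $i_{\FFF}$ is the morphism of moduli components induced by $\Phi$.

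Next I would compute the tangent map. Using \cref{lem:Ext_F_Pi} and its K3-type analogue for lines, the derivative at $[\varPi]$ is
\begin{equation*}
    di_{\FFF}\colon H^0(\varPi,\Nor{\varPi}{Y})\cong\Ext^1_{\Ku(Y)}(\FFF_{\varPi},\FFF_{\varPi}) \longrightarrow \Ext^1_{\Ku(X)}(\FFF_{\ell},\FFF_{\ell})\cong H^0(\ell,\Nor{\ell}{X}),
\end{equation*}
and, after tracing through the two isomorphisms, it coincides with the natural restriction induced by the normal bundle sequences of $\ell\subset\varPi\subset Y$ and $\ell\subset X\subset Y$. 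Injectivity of this map for a general $\varPi$, which gives unramifiedness, follows from a short cohomological computation using the vanishing $H^1(\varPi,\Nor{\varPi}{Y}(-\ell))=0$. Generic injectivity of $i_{\FFF}$ itself is a standard dimension count: for general $Y$, a general line of $X$ is contained in at most one plane of $Y$.

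Finally, to show that the image is Lagrangian, I would use that the holomorphic symplectic form on $\FFF_1(X)$ at $[\ell]$ is the Mukai--Serre pairing
\begin{equation*}
    \omega_{\ell}\colon \Ext^1_{\Ku(X)}(\FFF_{\ell},\FFF_{\ell})^{\otimes 2} \xrightarrow{\;\cup\;} \Ext^2_{\Ku(X)}(\FFF_{\ell},\FFF_{\ell}) \xrightarrow{\;\mathrm{tr}\;}\C,
\end{equation*}
that is, the Yoneda product followed by the Serre-duality trace. Functoriality of $\Phi$ implies that $(di_{\FFF})^*\omega_{\ell}$ factors through the Yoneda square on $\Ext^1_{\Ku(Y)}(\FFF_{\varPi},\FFF_{\varPi})$, whose target is $\Ext^2_{\Ku(Y)}(\FFF_{\varPi},\FFF_{\varPi})\cong H^1(\varPi,\Nor{\varPi}{Y})$ by \cref{lem:Ext_F_Pi}. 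The latter vanishes for general $Y$ by \cite[Corollary 1.4]{col86}, so $(di_{\FFF})^*\omega\equiv 0$; combined with the equality $\dim\FFF_2(Y)=2=\tfrac12\dim\FFF_1(X)$, isotropy upgrades to the Lagrangian property. The main obstacle will be the identification $\Phi(\FFF_{\varPi})\simeq\FFF_{\ell}$: because $i^*$ does not preserve the semi-orthogonal decompositions used in the projections, the error terms produced by the mutations have to be tracked carefully, much as in the hyperplane-restriction arguments of \cite{LLPZ,FGLZ}.
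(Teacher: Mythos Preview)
Your strategy is essentially the paper's: reinterpret $i_{\FFF}$ moduli-theoretically and deduce isotropy from the vanishing of $\Ext^2$ on the source. The Lagrangian paragraph is exactly what the paper does. Three points deserve adjustment.

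\textbf{Choice of objects and the identification.} The paper works with $\PPP_{\varPi}$ (character $\kappa_2$) rather than $\FFF_{\varPi}$, and this sidesteps precisely the obstacle you flag. Applying $i^*$ to the triangle \eqref{seq:P_Pi_def} and using that $X$ meets $\varPi$ transversally in $\ell$ gives $i^*\ideal_{\varPi}\cong\ideal_{\ell\mid X}$, so $i^*\PPP_{\varPi}\cong\PPP_{\ell}$ directly and no projection onto $\Ku(X)$ is needed. With $\FFF_{\varPi}$ you would indeed have to track mutation terms; switching to $\PPP$ is cheaper.

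\textbf{Unramifiedness.} The paper's argument is purely categorical: from the triangle $\PPP_{\varPi}(-1)\to\PPP_{\varPi}\to i_*i^*\PPP_{\varPi}$ one gets that the kernel of the tangent map sits inside $\Ext^1(\PPP_{\varPi}(1),\PPP_{\varPi})$, which by Serre duality in $\Ku(Y)$ is $\Ext^2(\FFF_{\varPi},\FFF_{\varPi})^\dual\cong\Hlg^1(\Nor{\varPi}{Y})^\dual=0$. Your normal-bundle restriction argument can be made to work, but be careful: unramified requires injectivity on tangent spaces at \emph{every} $[\varPi]$, not only a general one, and the relevant vanishing is $\Hlg^1(\Nor{\varPi}{Y})=0$ for all $\varPi$ in a general $Y$ (\cite[Corollary 1.4]{col86}), the same input you already use for Lagrangianity. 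The twist by $-\ell$ is not what is needed.

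\textbf{Generic injectivity.} This is not a dimension count. A dimension count only shows that the generic fibre is finite; it does not show it is a single point. The paper cites \cite[Proposition 7]{IM08}, which in fact enumerates the finitely many pairs of planes in $Y$ sharing a line of $X$ (there are exactly $47061$ such pairs), hence $i_{\FFF}$ is injective away from those.
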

\begin{proof}
    The fact that $r_1$ is generically injective is proved in \cite[Proposition 7]{IM08}. 
                Here we give a different proof of the unramified and Lagrangian properties using a modular interpretation of $\cF_2(Y)$ and $\cF_1(X)$.

    Recall from \cite{BLMS,LPZ23} that for a line $\ell \subset X$, we can associate the objects $\FFF_{\ell}, \PPP_{\ell} \in \Ku(X)$, given in distinguished triangles
    \begin{equation}
        \begin{tikzcd}[ampersand replacement=\&,cramped,row sep = 0]
            {\sheaf{X}(-1)[1]} \& {\PPP_{\ell}} \& {\ideal_{\ell \mid X}} \& {} \\
            {\FFF_{\ell}} \& {\sheaf{X}^{\oplus 4}} \& {\ideal_{\ell \mid X}(1)} \& {}
            \arrow[from=1-1, to=1-2]
            \arrow[from=1-2, to=1-3]
            \arrow["{+1}", from=1-3, to=1-4]
            \arrow[from=2-1, to=2-2]
            \arrow[from=2-2, to=2-3]
            \arrow["{+1}", from=2-3, to=2-4]
        \end{tikzcd} \label{seq:P_ell}
    \end{equation}
    They are stable with respect to the stability condition $\sigma_X$ on $\Ku(X)$ as constructed in \cite{BLMS}. By \cite{LPZ23}, the Bridgeland moduli space $M_1(X) \coloneqq \mathrm{M}_{\sigma_X}(\Ku(X),i^*\kappa_2)$ parametrizing the objects $\PPP_{\ell}$, is isomorphic to the Fano variety $\cF_1(X)$. The holomorphic symplectic form $\omega$ on $M_1(X)$ is given by the Yoneda pairing (\cite{KM09}):
    \[\begin{tikzcd}[ampersand replacement=\&,cramped,row sep=0]
	{\omega\colon\Ext^1_X(F,F) \times \Ext^1_X(F,F)} \& {\Ext^2_X(F,F)} \& \C \\
	{(F \xrightarrow{\alpha} F[1],\ F \xrightarrow{\beta} F[1])} \& {(F \xrightarrow{\beta[1]\circ\alpha} F[2])}
	\arrow[from=1-1, to=1-2]
	\arrow["\sim", from=1-2, to=1-3]
	\arrow[maps to, from=2-1, to=2-2]
    \end{tikzcd}\]
    Similarly, by \cref{prop:moduli_dim2}, $\cF_2(Y)$ is isomorphic to a connected component $M_2(Y)$ of the moduli space $\Modss(\Ku(Y),\kappa_2)$ parametrizing the objects $\PPP_{\varPi}$. In fact the morphism $r_1\colon M_2(Y) \to M_1(X)$ is induced by the functor $i^*\colon \Ku(Y) \to \Ku(X)$. 
    For $[F] = [i^*E] \in M_1(X)$, and $\alpha = i^*\alpha'$, $\beta = i^*\beta' \in i^*(\Ext^1_Y(E,E))$, by functoriality of $i^*$ we have 
    \[\omega(\alpha,\beta) = \beta[1] \circ \alpha = i^*\beta'[1] \circ i^*\alpha' = i^*(\beta'[1] \circ \alpha') = 0,\]
    where the last equality follows from the fact that $\Ext^2_Y(E,E) = 0$. Hence $\omega|_{i^*M_2(Y)} = 0$, which proves that $\img r_1$ is Lagrangian.

    To show that $r_1$ is unramified, it suffices to show that it induces injective maps on tangent spaces. Consider the distinguished triangle on $Y$:
    \begin{equation}
        \begin{tikzcd}[ampersand replacement=\&,cramped]
            {\PPP_{\varPi}(-1)} \& {\PPP_{\varPi}} \& {i_*i^*\PPP_{\varPi}} \& {}
            \arrow[from=1-1, to=1-2]
            \arrow[from=1-2, to=1-3]
            \arrow["{+1}", from=1-3, to=1-4]
        \end{tikzcd}
    \end{equation}
    Applying $\Hom(\PPP_{\varPi},-)$ we obtain the long exact sequence:
    \begin{equation}
        \begin{tikzcd}[ampersand replacement=\&,cramped]
            {\Ext^1(\PPP_{\varPi}(1),\PPP_{\varPi})} \& {\Ext^1(\PPP_{\varPi},\PPP_{\varPi})} \& {\Ext^1(i^*\PPP_{\varPi},i^*\PPP_{\varPi})} \& {}
            \arrow[from=1-1, to=1-2]
            \arrow[from=1-2, to=1-3]
            \arrow[from=1-3, to=1-4]
        \end{tikzcd}
    \end{equation}
    Using Serre duality (\cref{rmk:S&O:PFK}) and \cref{lem:Ext_F_Pi}, we have 
    \begin{equation}
        \begin{aligned}
            \Ext^1(\PPP_{\varPi}(1),\PPP_{\varPi}) &\cong \Ext^1(\lmut{\sheaf{Y}}\PPP_{\varPi}(1),\PPP_{\varPi}) \cong \Ext^1(\cF_{\varPi}[1],\PPP_{\varPi})\\ 
            &\cong \Hom(\mathsf{S}_{\Ku(Y)}^{-1}\PPP_{\varPi}, \cF_{\varPi})^\dual \cong \Ext^2(\cF_{\varPi},\cF_{\varPi})^\dual = 0.
        \end{aligned}
    \end{equation}
    Hence $\Ext^1(\PPP_{\varPi},\PPP_{\varPi}) \to \Ext^1(i^*\PPP_{\varPi},i^*\PPP_{\varPi})$ is injective and the morphism $r_1$ is indeed unramified.
\end{proof}

\subsection{Plane incidence in the cubic 5-fold}\label{subsec:plane_incid}

In this subsection we return to the classical geometry of the Fano surface $\cF_2(Y)$ of planes, and discuss how two planes $\varPi$ and $\varPi'$ intersect. This serves as a preparation of various computations in the next section. Define the incidence locus 
\begin{equation}
    \sI_0 \coloneqq \ab\{ (\varPi,\, \varPi') \mid \varPi \cap \varPi' \ne \varnothing \} \subset \cF_2(Y) \times \cF_2(Y) .
\end{equation}
For $k = 1,2$, define also its strata
\begin{equation}
    \sI_k \coloneqq \ab\{ (\varPi,\, \varPi') \mid \dim(\varPi \cap \varPi') \geq k \}  \subset \cF_2(Y) \times \cF_2(Y) .
\end{equation}
In particular $\sI_2 = \Delta_{\cF}$ is just the diagonal in $\cF_2(Y)$. For any $\varPi_0 \in \cF_2(Y)$, let $\sI_k(\varPi_0)$ denotes the locus $\{\varPi \in \cF_2(Y) \mid (\varPi,\, \varPi_0) \in \sI_k\} \subset \cF_2(Y)$.

The following result shows that $\sI_0(\varPi) \ne \cF_2(Y)$ for any $\varPi \subset Y$.
\begin{lemma}[][lem:disjoint_planes]
Let $Y$ be a smooth cubic $5$-fold, and let
$\varPi\subset Y$ be a plane. Then there exists a plane
$\varPi'\subset Y$ such that $\varPi\cap\varPi'=\varnothing.$
\end{lemma}
\begin{proof}
First we claim that $\cF$ is reduced. By \cite[(1.1)]{col86}, the equation of $Y$ containing a plane $\varPi = \bV(x_0,...,x_3)$ can be written as 
\begin{equation}
    f = \sum_{i=0}^3 x_iq_i(x_4,x_5,x_6) + \sum_{i,j=0}^3x_ix_j\ell_{ij}(x_4,x_5,x_6) + c(x_0,x_1,x_2,x_3),
\end{equation}
where $q_i$ are quadratic forms, $\ell_{ij}$ lienar forms and $c$ a cubic form. If $q_0,...,q_3$ are linearly independent, then $\cF$ is smooth at $[\varPi]$ (\cite[Corollary 1.4]{col86}) and hence is reduced at $[\varPi]$. Along the special plane $\varPi$ where $q_0,...,q_3$ are linearly dependent, since $Y$ is smooth along $\varPi$, $q_0,...,q_3$ spans a base-point-free linear system (\cite[p.\ 6]{Mbo23}). After a change of coordinates we may assume that $q_0 = 0$ and that $q_1,q_2,q_3$ are independent. Let $R = \C[x_4,x_5,x_6]$ and $I = \ord{q_1,q_2,q_3}$, and pick an isomorphism $\lambda\colon R_3/I_3 \xrightarrow{\sim}\C$. A plane near $\varPi$ is given by the linear forms $x_i = L_i(x_4,x_5,x_6) = \sum_{j=1}^3 a_{ij}x_{j+3}$ for $i=0,1,2,3$. The parameters $a_{ij}$ for $1 \leq i,j \leq 3$ can be formally inverted as power series of $a_{01},a_{02},a_{03}$. Then the complete local ring of $\cF$ at $[\varPi]$ can be expressed as 
\begin{equation}
    \widehat{\sO}_{\cF,[\varPi]} = \C[\![a_{01},a_{02},a_{03}]\!]/\ord{g},
\end{equation}
where the lowest term of $g$ is either $\lambda(\ell_{00}L_0^2)$ or $\lambda(c_{000}L_0^3)$, which are square-free. Hence $\cF$ is reduced at $[\varPi]$, proving the claim.

Write $Y \subset \pro(V) \cong \pro^6$ and set $\cG=\Gr(3,V)$ and $\cF=\cF_2(Y)\subset \cG$. The Fano surface $\cF$ is locally a reduced hypersurface of dimension $2$, and in particular it is a local complete intersection: twisting its Koszul resolution
(\cite[(2.1)]{Mbo23}) by $\sheaf{\cG}(1)$ and applying the
Borel--Weil--Bott theorem \cite[Theorem 3.2]{Mbo23} gives
    \begin{equation}\label{equ:Fano_plane_linear_forms}
        \Hlg^0(\cG,\sheaf{\cG}(1))
        \cong
        \Hlg^0(\cF,\sheaf{\cF}(1)).
    \end{equation}
Thus $\cF$ is contained in no Pl\"ucker hyperplane.

Write $\varPi=\pro(A)$. The Schubert locus $ \Omega_{\varPi} \coloneqq \ab\{U\in \cG\mid U\cap A\neq 0\}$ is the common zero locus of the 4-dimensional subspace
\[
    \textstyle\bigwedge^3(V/A)^\dual
    \hookrightarrow
    \textstyle\bigwedge^3V^\dual
    =
    \Hlg^0(\cG,\sheaf{\cG}(1)).
\]
If every plane contained in $Y$ met $\varPi$, then
$\cF\subset\Omega_{\varPi}$ set-theoretically. Since $\cF$ is reduced,
every element of $\bigwedge^3(V/A)^\dual$ would vanish on $\cF$,
contradicting the isomorphism \eqref{equ:Fano_plane_linear_forms}. Hence some plane
$\varPi'\subset Y$ is disjoint from $\varPi$.
\end{proof}

For the following results we need $Y$ to be a general cubic 5-fold so that $\cF_2(Y)$ is a smooth irreducible projective surface (\cref{rmk:Fano_surf}). 
\begin{lemma}[][lem:plane_incid]
Let $Y$ be a general cubic 5-fold.
\begin{enumerate}[dense]
    \item The incidence locus $\sI_0$ is a closed subset of
    dimension $3$ in $\cF_2(Y) \times \cF_2(Y)$ such that the projections $\sI_0 \setminus \Delta_{\cF_2(Y)} \to \cF_2(Y)$ to each component is surjective. 
    \item Fix any $\varPi \in \cF_2(Y)$; then $\sI_0(\varPi)$ is a closed subset of dimension $1$ in $\cF_2(Y)$.
\end{enumerate}
\end{lemma}
\begin{proof}
    \begin{enumerate}
        \item It is clear that $\sI_0$ is a proper closed subscheme of the 4-dimensional smooth projective variety $\cF_2(Y) \times \cF_2(Y)$. For the first claim it suffices to show that $\sI_0$ is 3-dimensional. This is a standard dimension counting argument by considering the universal family. First we note that the set of intersecting planes in $\pro^6$, 
        \begin{equation}
            \ab\{(\varPi,\, \varPi') \in \Gr(3,7)^2 \mid \varPi \cap \varPi' \ne \varnothing\}
        \end{equation}
        has dimension $\alpha = \dim \Gr(1,7) + 2\dim\Gr(2,6) = 22$. Indeed, the dimension is counted by first choosing a point in $\pro^6$ and then choosing two 2-planes containing that point. On the other hand, requiring that a smooth cubic 5-fold $Y$ contain a fixed pair of planes $(\varPi, \varPi')$ intersecting at a point $p$ imposes $\beta$ independent linear conditions on its equation, where 
        \begin{equation}
            \beta = \hlg^0(\sheaf{\varPi}(3)) + \hlg^0(\sheaf{\varPi'}(3)) - \hlg^0(\sheaf{p}(3)) = 19.
        \end{equation} 
        Therefore the universal incidence correspondence has relative dimension $\alpha-\beta=3$ over the space of cubic 5-folds. We deduce that $\dim \sI_0 = 3$. The same count with the first plane fixed shows that the projection $\operatorname{pr}_2\colon \sI_0 \to \cF_2(Y)$ is dominant. Since the projection is proper, it is surjective. Since the projection has positive-dimensional fibres, whereas $\Delta_{\cF}$ intersects a fibre at a single point, $\sI_0 \setminus \Delta_{\cF} \to \cF_2(Y)$ is still surjective.

        \item By a dimension count, for a general $\varPi \in \cF_2(Y)$, $\sI_0(\varPi) \cong \operatorname{pr}_2^{-1}(\varPi)$ is a fibre of dimension 1; then by upper semi-continuity we have $\dim \sI_0(\varPi) \geq 1$ for all $\varPi \in \cF_2(Y)$. But since every plane $\varPi$ has another plane $\varPi'$ disjoint from it, $\sI_0(\varPi)$ is a proper closed subset of $\cF_2(Y)$, which is an irreducible surface. Therefore every $\sI_0(\varPi)$ is of dimension 1. \qedhere
    \end{enumerate}
    
\end{proof}
\begin{corollary}[Avoiding finitely many planes][cor:avoid_planes]
    Let $Y$ be a general cubic 5-fold. Let $S = (\varPi_1,...,\varPi_m) \subset \cF_2(Y)$ be a finite subset and let $U \subset \cF_2(Y)$ be a dense subset. There exists some $\varPi \in U \setminus S$ such that $\varPi \cap \varPi_i = \varnothing$ for $i \in \{1,...,m\}$. 
\end{corollary}
\begin{proof}
    Simply take $\varPi \in U \setminus \bigcup_{i=1}^m \sI_0(\varPi_i)$, which is non-empty by \cref{lem:plane_incid}.
\end{proof}

Next we consider the stratum $\sI_1 \subset \sI_0$. Suppose that $\varPi, \varPi' \subset Y$ are two planes such that $\varPi \cap \varPi' = Z \cong \pro^1$. We may choose coordinates on $\pro^6$ such that
\begin{equation}
    \varPi = \bV(x_0,x_1,x_2,x_3), \qquad \varPi' = \bV(x_0,x_1,x_2,x_4).
\end{equation}
Then the equation of $Y$ can be written as
\begin{equation}\label{equ:plane_incid_first_type}
    f = \sum_{i=0}^2 x_i\bar{q}_i(x_5,x_6) + \sum_{i=0}^2\sum_{j=i}^4 x_ix_j\ell_{ij}(x_5,x_6) + x_3x_4\ell_{34}(x_5,x_6) + c(x_0,...,x_4),
\end{equation}
where $\bar{q}_i$ are quadratic forms, $\ell_{ij}$ linear forms and $c$ a cubic form. 
We say that\footnote{This terminology is used only in this paper.} the intersection of $(\varPi,\ \varPi')$ is 
\begin{itemize}[dense]
    \item \textbf{of the first type}, if the quadrics $\bar q_0,\bar q_1,\bar q_2$ are linearly dependent;
    \item \textbf{of the second type}, if the quadrics $\bar q_0,\bar q_1,\bar q_2$ are linearly independent.
\end{itemize}
Denote these loci in $\cF_2(Y) \times \cF_2(Y)$ by $\sI_1^{(1)}$ and $\sI_1^{(2)}$ respectively. Then $\sI_1 = \sI_1^{(1)} \sqcup \sI_1^{(2)} \sqcup \Delta_{\cF}$. In the proof of \cref{lem:ext1_P_F} we will also characterize these two types of line-intersection of planes by the excess normal bundle $\cM_Z$.

By the discussion in \cite[Proposition 7]{IM08}, for a general $\varPi \in \cF_2(Y)$, $\sI_1(\varPi) \subset \cF_2(Y)$ is finite and has exactly $\delta = 47061$ elements, corresponding to $\delta$ nodes in the image $r_1(\cF_2(Y)) \subset \cF_1(X)$ in \cref{cor:Lag_imm}. The following fact is interesting to note but we will not attempt to prove here\footnote{In fact, this can be deduced from \cref{lem:ext1_P_F} and the upper semi-continuity of the dimension of Ext groups.}: $\sI_1^{(1)}(\varPi)$ lies in the closure of $\sI_0(\varPi) \setminus \sI_1(\varPi)$, which is a (union of) reduced curve(s) in $\cF_2(Y)$, whereas $\sI_1^{(2)}(\varPi)$ does not. In other words, $\sI_0(\varPi) \subset \cF_2(Y)$ consists of the union of reduced curves and finitely many isolated points. At any of these isolated points, any infinitesimal deformation of two planes $\varPi$ and $\varPi'$ intersecting in a line (of the second type) pulls the two planes apart directly.

\section{Higher-dimensional moduli spaces}\label{sec:high_moduli}
Readers are reminded that we will only work with the stability condition $\sigma_Y$ constructed in \cref{cor:gl_dim}, which satisfies that $\mathsf{S}_{\Ku(Y)}\cdot \sigma_Y = \sigma_Y \cdot \left[\frac{7}{3}\right]$.
Note that the central charge $Z_{\sigma_Y}$ of $\sigma_Y$ maps the lattice $\Knum(\Ku(Y))$ to the standard hexagonal lattice $\varLambda \subset \C$, as shown in \cref{fig:hexa_central_charge}. In particular, we have $Z_{\sigma_Y}(\kappa_1) = 1$ and $Z_{\sigma_Y}(\kappa_2) = \e^{\ii\pi/3}$. We have $\phi_{\sigma_Y}(\PPP_{\varPi}) - \phi_{\sigma_Y}(\FFF_{\varPi'}) = \frac{1}{3}$.

For $v \in \Knum(\Ku(Y))$, let $\phi(v) \in \R/2\Z$ denote the phase of an object in $\Modss(\Ku(Y),v)$. For bookkeeping, we write $v[n]$ for the formal pair consisting of $v$ and an integer $n$, and set $\phi(v[n])\coloneqq\phi(v)+n\in\R$ after choosing a lift of $\phi(v)$. 

For $v = a\kappa_1 + b\kappa_2$ and $w = c\kappa_1 + d\kappa_2$ in $\varLambda$, we define the wedge product $v \wedge w \coloneqq ad - bc \in \Z$. By Pick's theorem, for primitive vectors $v , w \in \varLambda$, $|v \wedge w| = 1$ if and only if the parallelogram spanned by $v$ and $w$ contains no other lattice points in its interior.

\subsection{An extension lemma}

We present a technical computation of the extension groups between two objects of adjacent $(-1)$-classes. This will be used in \cref{subsec:nonempty,subsec:Hom_van}.

\begin{lemma}[Extensions for $(-1)$-classes][lem:ext1_P_F]
    Let $j\colon \varPi \hookrightarrow Y$ and $j'\colon \varPi' \hookrightarrow Y$ be two planes. Consider the objects $\FFF_{\varPi'},\  \PPP_{\varPi} \in \Ku(Y)$ associated with $\varPi$ and $\varPi'$ as defined in \cref{def:Ku_objs}. Then: 
    \[ \Ext^\bullet(\PPP_{\varPi}, \FFF_{\varPi'}) \cong \begin{cases}
        \C^3[-1] \oplus \C^3[-2], & \varPi = \varPi'; \\ 
        \C\phantom{{}^3}[-1] \oplus \C\phantom{{}^3}[-2], & \varPi \cap \varPi' = \{p\}, \text{ or }\varPi \cap \varPi' \cong \pro^1 \text{ is of the first type}; \\
        0, & \text{otherwise.}
    \end{cases} \]
\end{lemma}
To compute this we need the following observation:
\begin{lemma}[][lem:ext1_aux]
    $\Ext^\bullet(\PPP_{\varPi}, \FFF_{\varPi'}) \cong \Ext^\bullet(j_*\sheaf{\varPi}, j'_*\sheaf{\varPi'}(-2)[1])$.
\end{lemma}
\begin{proof}
    Since $\PPP_{\varPi} = \mathsf{S}_{\Ku(Y)}\FFF_{\varPi}[-2]$ and $\FFF_{\varPi} = \mathsf{S}_{\Ku(Y)}\KKK_{\varPi}[-2]$ (see \cref{rmk:S&O:PFK}), we have that 
    \[ \Ext^\bullet(\PPP_{\varPi}, \FFF_{\varPi'}) \cong \Ext^\bullet(\FFF_{\varPi}, \KKK_{\varPi'}). \]
    Using the definition $\KKK_{\varPi'} \coloneqq \rmut{\sheaf{Y}(-1)}\rmut{\sheaf{Y}(-2)}(\ideal_{\varPi'}(-1))$ and the fact that $\FFF_{\varPi} \in {}^{\perp}\!\ord{\sheaf{Y}(-2), \sheaf{Y}(-1)}$, we obtain
    \[ \Ext^\bullet(\FFF_{\varPi}, \KKK_{\varPi'}) \cong \Ext^\bullet(\FFF_{\varPi}, \ideal_{\varPi'}(-1)). \]
    The defining sequence \eqref{seq:F_Pi_def}, together with $\Ext^\bullet(\sheaf{Y},\ideal_{\varPi'}(-1)) = 0$, gives
    \[ \Ext^\bullet(\FFF_{\varPi}, \ideal_{\varPi'}(-1)) \cong \Ext^\bullet(\ideal_{\varPi}(1)[-1], \ideal_{\varPi'}(-1)) \cong \Ext^\bullet(\ideal_{\varPi}, \ideal_{\varPi'}(-2)[1]). \]
    Similarly, since $\Ext^\bullet(\sheaf{Y},\ideal_{\varPi'}(-2)) = 0$, it is isomorphic to $\Ext^\bullet(j_*\sheaf{\varPi}, \ideal_{\varPi'}(-2)[2])$. Finally, Grothendieck--Verdier duality gives $j^!\sheaf{Y}(-2)\cong\sheaf{\varPi}(-1)[-3]$, because $\det\Nor{\varPi}{Y}\cong\sheaf{\varPi}(1)$. Hence
    \[ \Ext^\bullet(j_*\sheaf{\varPi},\sheaf{Y}(-2)) \cong \Ext^\bullet(\sheaf{\varPi},\sheaf{\varPi}(-1)[-3])=0. \]
    We conclude that $\Ext^\bullet(\PPP_{\varPi}, \FFF_{\varPi'}) \cong \Ext^\bullet(j_*\sheaf{\varPi}, \ideal_{\varPi'}(-2)[2]) \cong \Ext^\bullet(j_*\sheaf{\varPi}, j'_*\sheaf{\varPi'}(-2)[1]).$
\end{proof}

\begin{proof}[Proof of \cref{lem:ext1_P_F}]
        Note that $\FFF_{\varPi}$ and $\PPP_{\varPi}$ are $\sigma_Y$-stable and $\phi_{\sigma_Y}(\PPP_{\varPi}) - \phi_{\sigma_Y}(\FFF_{\varPi'}) = \frac{1}{3}$. Hence if $i  \leq 0$ then $\phi_{\sigma_Y}(\PPP_{\varPi}) > \phi_{\sigma_Y}(\FFF_{\varPi'}[i])$ and $\Hom(\PPP_{\varPi}, \FFF_{\varPi'}[i]) = 0$. Moreover, we have $\operatorname{gl.dim}(\sigma_Y) = \frac{7}{3}$. Hence $\Hom(\PPP_{\varPi},\FFF_{\varPi'}[i]) = 0$ for $i \geq 3$. We deduce that the only possible non-zero terms $\Ext^i(\PPP_{\varPi}, \FFF_{\varPi'})$ are $i = 1,2$. Moreover, since $\chi(\PPP_{\varPi}, \FFF_{\varPi'}) = \chi(\kappa_2,\kappa_1) = 0$, we have $\Ext^1(\PPP_{\varPi}, \FFF_{\varPi'}) = \Ext^2(\PPP_{\varPi}, \FFF_{\varPi'})$.

    By \cref{lem:ext1_aux}, $\Ext^\bullet(\PPP_{\varPi}, \FFF_{\varPi'}) \cong \Ext^\bullet(j_*\sheaf{\varPi}, j'_*\sheaf{\varPi'}(-2)[1])$. Consequently, $\Ext^i(j_*\sheaf{\varPi}, j'_*\sheaf{\varPi'}(-2)) = 0$ for $i \ne 2,3$, and the dimensions of the groups in degrees $2$ and $3$ agree. The proof is thus reduced to case-by-case local computations.
    \begin{enumerate}
        \item Assume that $\varPi \cap \varPi' = \varnothing$. Then $\Ext^\bullet(j_*\sheaf{\varPi}, j'_*\sheaf{\varPi'}(-2)) = 0$, since these groups are supported on $\varPi \cap \varPi'$.

        \item Assume that $\varPi = \varPi'$. To compute $\Ext^2(j_*\sheaf{\varPi}, j_*\sheaf{\varPi}(-2))$ we use the local-to-global spectral sequence 
        \begin{equation}
        E_2^{p,q} \coloneqq \Hlg^p{\left( \varPi, {\textstyle \bigwedge}^q \Nor{\varPi}{Y}(-2) \right)} \implies \Ext^{p+q}(j_*\sheaf{\varPi}, j_*\sheaf{\varPi}(-2)).
        \end{equation}
        To compute $\Hlg^p{\left( \varPi, {\textstyle \bigwedge}^q \Nor{\varPi}{Y}(-2) \right)}$ we use the sequences \eqref{seq:normal} and \eqref{seq:conormal}. The $E_2$-page is given by 
        \[
        \begin{tabular}{|ccc}
            {0}  & 0     & 0 \\
            {0}   & {$\C^3$}    & 0 \\
            {0} & {$\C^3$} & 0 \\
            {0}    & 0    & 0 \\ \hline
        \end{tabular}\]
        The spectral sequence degenerates at this page and hence we have $\Ext^\bullet(j_*\sheaf{\varPi}, j'_*\sheaf{\varPi'}(-2)) \cong \C^3[-2] \oplus \C^3[-3]$.

        \item Assume that $0 \leq \dim(\varPi \cap \varPi') \leq 1$. By Grothendieck--Verdier duality, we have that 
        \begin{align}
            \Ext^\bullet(j_*\sheaf{\varPi}, j'_*\sheaf{\varPi'}(-2)) 
            &\cong \Ext^\bullet(\sheaf{\varPi}, j^!j'_*\sheaf{\varPi'}(-2)) \\
            &\cong \Ext^\bullet(\sheaf{\varPi}, j^*j'_*\sheaf{\varPi'}(-1)[-3]) \\
            &\cong \mathbb{H}^\bullet(\sheaf{\varPi} \otimes^{\mathsf{L}} \sheaf{\varPi'}(-1)[-3]).
        \end{align}
                We use the Tor hyper-cohomology sequence:
        \[ F_2^{p,q} \coloneqq \Hlg^p(\STor_q(\sheaf{\varPi}, \sheaf{\varPi'}(-1))) \implies \mathbb{H}^{p-q}(\sheaf{\varPi} \otimes^{\mathsf{L}} \sheaf{\varPi'}(-1)). \]

        Let $\delta \coloneqq \dim(\varPi \cap \varPi') - \dim \varPi - \dim \varPi' + \dim Y$ be the excess dimension of the intersection. We have that $\STor_q(\sheaf{\varPi}, \sheaf{\varPi'}(-1)) = 0$ for $q < 0$ or $q > \delta$.

        Let $Z \coloneqq \varPi \cap \varPi'$. If $Z$ is a point, then $\delta = 1$. Since $\STor_q(\sheaf{\varPi}, \sheaf{\varPi'}(-1))$ is supported on $Z$, we have that $F_2^{p,q} = 0$ for $p \ne 0$. Hence the only contribution to $\mathbb{H}^{0}(\sheaf{\varPi} \otimes^{\mathsf{L}} \sheaf{\varPi'}(-1))$ is $F^{0,0}_2$, which is $\Hlg^{0}(\STor_0(\sheaf{\varPi},\sheaf{\varPi'}(-1))) \cong \Hlg^0(\sheaf{Z}(-1)) \cong \C$. 

        If $Z$ is a line, then $\delta = 2$. Although we will not be using the case in subsequent computations, we include it here for completeness. The term $F^{p,q}_2$ can be non-zero only if $0 \leq p \leq 1$ and $0 \leq q \leq 2$. It follows that the spectral sequence degenerates at the $F_2$-page and we have that 
        \[\mathbb{H}^0(\sheaf{\varPi} \otimes^{\mathsf{L}} \sheaf{\varPi'}(-1)) \cong \Hlg^0(\sheaf{Z}(-1)) \oplus \Hlg^1(\STor_1(\sheaf{\varPi},\sheaf{\varPi'}(-1))).\]
        It is clear that $\Hlg^0(\sheaf{Z}(-1)) = 0$. For the second term, $\STor_1(\sheaf{\varPi},\sheaf{\varPi'}(-1)) \cong \cM_Z^{\dual}(-1)$, where $\cM_Z$ is the excess normal bundle (see \cite[\S 6.3]{FultonInt}) defined by the short exact sequence:
        \[\begin{tikzcd}[ampersand replacement=\&,cramped]
            0 \& {\Nor{Z}{\varPi}} \& {\Nor{\varPi'}{Y}\big|_{Z}} \& {\MMM_Z} \& 0.
            \arrow[from=1-1, to=1-2]
            \arrow[from=1-2, to=1-3]
            \arrow[from=1-3, to=1-4]
            \arrow[from=1-4, to=1-5]
        \end{tikzcd}\]
        We have $\Nor{Z}{\varPi} \cong \sheaf{Z}(1)$ and $\Nor{\varPi'}{Y}\big|_{Z} = \ker{\left(\sheaf{Z}(1)^{\oplus 4} \twoheadrightarrow \sheaf{Z}(3)\right)}$ (see \eqref{seq:normal}). Hence $\cM_Z$ fits into the short exact sequence:
        \[\begin{tikzcd}[ampersand replacement=\&,cramped]
            0 \& {\cM_Z} \& {\sheaf{Z}(1)^{\oplus 3}} \&\& {\sheaf{Z}(3)} \& 0,
            \arrow[from=1-1, to=1-2]
            \arrow[from=1-2, to=1-3]
            \arrow["{(\bar q_0,\bar q_1,\bar q_2)}", from=1-3, to=1-5]
            \arrow[from=1-5, to=1-6]
        \end{tikzcd}\]
        where $\bar q_0,\bar q_1,\bar q_2$ are the restricted quadratic forms on $Z$ defined after \eqref{equ:plane_incid_first_type}.
        Since $Z \cong \pro^1$, the bundle $\cM_Z$ splits as a direct sum of line bundles. If $\varPi \cap \varPi' = Z$ is of the second type, that is, $\bar q_0,\bar q_1,\bar q_2$ are linearly independent, then $\cM_Z \cong \sheaf{Z}^{\oplus 2}$. Hence
        \[ \mathbb{H}^0(\sheaf{\varPi} \otimes^{\mathsf{L}} \sheaf{\varPi'}(-1)) \cong \Hlg^1(\sheaf{Z}(-1)^{\oplus 2}) = 0. \]
        On the other hand, if $\varPi \cap \varPi' = Z$ is of the first type, then a change of coordinates kills $\bar q_0$ and we have $\cM_Z \cong \sheaf{Z}(1) \oplus \sheaf{Z}(-1)$. In this case we have 
        \[ \mathbb{H}^0(\sheaf{\varPi} \otimes^{\mathsf{L}} \sheaf{\varPi'}(-1)) \cong \Hlg^1(\sheaf{Z} \oplus \sheaf{Z}(-2)) \cong \C, \]
        which finishes the proof. \qedhere
                                    \end{enumerate}
\end{proof}

\subsection{Non-emptiness of moduli spaces}\label{subsec:nonempty}

In this part we establish the non-emptiness of all moduli spaces on $\Ku(Y)$. 

\begin{lemma}[{\cite[Lemma 2.12]{LLPZ}}][lem:Ext1_nonzero]
    Let $v,w \in \Knum(\Ku(Y))$ be primitive characters with $v \wedge w = 1$. Let $E \in \Mods(\Ku(Y),v)$ and $F \in \Mods(\Ku(Y),w)$ such that $0 < \phi_{\sigma_Y}(F) - \phi_{\sigma_Y}(E) < 1$ and $\Ext^1(F,E) \ne 0$. Then a non-trivial extension $f \in \Ext^1(F,E)$ defines a $\sigma_Y$-stable object $G$. In particular, we have $\Mods(\Ku(Y),v+w) \ne \varnothing$.
\end{lemma}
\begin{proof}
    This is in fact a special case of \cite[Lemma 2.12]{LLPZ}. We adapt the proof here for completeness. First note that $v+w$ is a primitive character. Hence it suffices to show that there is some $\sigma_Y$-semistable $G \in \Ku(Y)$ with character $v+w$. Since $\Ext^1(F,E) \ne 0$, we pick a non-trivial extension $G$ corresponding to $f \in \Hom(F,E[1]) \setminus \{0\}$:
    \begin{equation}
        \begin{tikzcd}[ampersand replacement=\&,cramped]
            E \& G \& F \& {E[1]}
            \arrow[from=1-1, to=1-2]
            \arrow[from=1-2, to=1-3]
            \arrow["f", from=1-3, to=1-4]
        \end{tikzcd} \label{seq:ext_F_E}
    \end{equation}
    Suppose that $G$ is not $\sigma_Y$-semistable. Since $E$, $F$ are $\sigma_Y$-stable, we have
    \[\phi_{\sigma_Y}(E)\leq\phi^-_{\sigma_Y}(G)<\phi^+_{\sigma_Y}(G)\leq\phi_{\sigma_Y}(F).\]
    In particular, every Harder--Narasimhan factor $G_i$ of $G$ has character $a_iv+b_iw$ for $a_i,b_i \geq 0$ and $a_i+b_i \leq 1$. Since $v \wedge w = 1$, it follows that $G$ has exactly two HN factors $G_+$ and $G_-$ with characters $w$ and $v$ respectively. Since $v,w$ are primitive, $G_+$ and $G_-$ are $\sigma_Y$-stable. In fact, $G_+ \cong F$ and $G_- \cong E$. It follows that the Harder--Narasimhan filtration of $G$ is
    \[\begin{tikzcd}[ampersand replacement=\&,cramped]
            0 \& {G_+} \& G \& {G/G_+ = G_-} \& 0.
            \arrow[from=1-1, to=1-2]
            \arrow[from=1-2, to=1-3]
            \arrow[from=1-3, to=1-4]
            \arrow[from=1-4, to=1-5]
        \end{tikzcd}\]
    provides a splitting of the sequence \eqref{seq:ext_F_E}, which implies that $f = 0$. This is a contradiction.
\end{proof}

Building on the existence of $(-1)$-classes (\cref{cor:F&K_stable}), we next construct explicit non-trivial extensions realizing the $(-3)$-classes, and then apply the general lattice-theoretic criterion of \cite{LLPZ}. Note that $|\kappa_1 + \kappa_2|^2 = 3$ and every $(-3)$-class lies in the same Serre orbit as $\kappa_1 + \kappa_2$. It is enough to show the following:

\begin{proposition}[][prop:moduli_v+w]
    Consider the character $\kappa_1 + \kappa_2 \in \Knum(\Ku(Y))$ with Chern character $\Chern{}(\kappa_1+\kappa_2) = 3 - H^2 + \frac{1}{4}H^4$ (see \cref{prop:Knum_Ku}). The moduli space of $\sigma_Y$-stable objects $\Mods(\Ku(Y), \kappa_1+\kappa_2)$ is non-empty.
\end{proposition}
\begin{proof}
    Let $\varPi \subset Y$ be a plane. By \cref{prop:Knum_Ku} and \cref{cor:F&K_stable}, we have $\FFF_{\varPi} \in \Mods(\Ku(Y),\kappa_1)$ and $\PPP_{\varPi} \in \Mods(\Ku(Y),\kappa_2)$, and by \cref{lem:ext1_P_F}, $\Ext^1(\PPP_{\varPi}, \FFF_{\varPi}) \ne 0$. Pick a non-trivial extension $f \in \Ext^1(\PPP_{\varPi}, \FFF_{\varPi})$. Then \cref{lem:Ext1_nonzero} shows that $f$ defines a $\sigma_Y$-stable object of character $\kappa_1+\kappa_2$.
\end{proof}

We are ready to prove the general non-emptiness result:

\begin{theorem}[][thm:moduli_nonempty]
    Let $Y$ be a smooth cubic 5-fold. Let $\sigma_Y$ be a stability condition on $\Ku(Y)$ constructed in \cref{cor:gl_dim}. For any non-zero numerical character $v \in \Knum(\Ku(Y))$, the moduli space $\Modss(\Ku(Y),v)$ of $\sigma_Y$-semistable objects of character $v$ is non-empty.
\end{theorem}
\begin{proof}
    The strategy of proof is similar to \cite[Proposition 2.14]{LLPZ}. It suffices to show that $\Mods(\Ku(Y),v)$ is non-empty for all primitive character $v \in \Knum(\Ku(Y))$. Recall that $\varLambda \coloneqq Z_{\sigma_Y}(\Knum(\Ku(Y))) \subset \C$ is the standard hexagonal lattice.
            We denote by $\arg(v,w)$ the angle between $Z_{\sigma_Y}(v)$ and $Z_{\sigma_Y}(w)$ in $\C$; that is, $\arg(v,w) = \pi(\phi(w) - \phi(v))$. In addition, we define the norm $|v|^2 \coloneqq -\chi(v,v)$. By Pick's theorem, for every primitive $v \in \varLambda$ with $|v|^2 > 1$ there exists a unique pair of primitive vectors $v_{\pm} \in \varLambda$ such that $v = v_+ + v_-$, $|v|^2 > |v_{\pm}|^2$, and $v_- \wedge v_+ = 1$.
    
    We prove $\Mods(\Ku(Y),v) \ne \varnothing$ for primitive $v \in \Knum(\Ku(Y))$ by induction on the norm $|v|^2$. For $|v|^2 = 1$, the non-emptiness is given by \cref{cor:F&K_stable}, and for $|v|^2 = 3$ given by \cref{prop:moduli_v+w}. For $|v|^2 > 3$, we choose the pair $(v_+,v_-)$ of primitive vectors according to Pick's theorem. Note that we have $0 < \arg(v_-,v_+) < \pi/3$. By induction hypothesis we take stable objects $E_{\pm} \in \Mods(\Ku(Y),v_{\pm})$; then $0 < \phi_{\sigma_Y}(E_+) - \phi_{\sigma_Y}(E_-) < 1/3$. Since $\phi_{\sigma_Y}(\mathsf{S}_{\Ku(Y)}E_-) = \phi_{\sigma_Y}(E_-) + \frac{7}{3}$, we have $\Ext^i(E_+,E_-) = 0$ for $i \leq 0$ or $i \geq 3$. Meanwhile, for the given Euler form
    \[
        \chi = \begin{pmatrix}
            -1 & -1 \\
            0 & -1
        \end{pmatrix}
    \]
    on the lattice $\Knum(\Ku(Y))$, from the classification of non-degenerate bilinear forms on a rank 2 lattice given in \cite[Lemma A.1]{LLPZ}, for $|v|^2 > 3$, we have $\chi(v_+,v_-) < 0$, which implies that $\Ext^1(E_+,E_-) \ne 0$. By \cref{lem:Ext1_nonzero}, we deduce that $\Mods(\Ku(Y),v) \ne \varnothing$. Finally, for non-primitive $v \in \Knum(\Ku(Y))$, write $v = mw$ where $w$ is primitive and $m \in \Z_{>0}$. Take $E \in \Mods(\Ku(Y),w)$, and then $E^{\oplus m} \in \Modss(\Ku(Y),v)$. The non-emptiness is proved.
\end{proof}

\begin{remark}[][rmk:pick_decomp]
    For convenience, we will call the pair $(v_+,v_-)$ in the proof of \cref{thm:moduli_nonempty} the \textbf{Pick's decomposition} of the vector $v \in \Z^2$. If $v = mw$ is not primitive, then any $(m_1w, m_2w)$ with $m_1,m_2 \in \Z_{>0}$ and $m_1+m_2 = m$ is also referred to as \emph{a} Pick's decomposition of $v$.
\end{remark}

\subsection{Generic Hom-vanishing results}\label{subsec:Hom_van}

For subsequent discussion, we introduce the following notions.

\begin{definition}
    For $v,w \in \Knum(\Ku(Y))$ we consider the \textbf{non-Brill--Noether locus}, consisting of pairs of objects with vanishing Hom:
    \[\ZZZs(v,w) \coloneqq \left\{ (E_v,E_w)  \mid \Hom(E_v,E_w) = 0 \right\} \subset \Mods(\Ku(Y),v) \times \Mods(\Ku(Y),w).\]
    By upper semi-continuity, $\ZZZs(v,w)$ is open in $\Mods(\Ku(Y),v) \times \Mods(\Ku(Y),w)$. Let $\pi_1\colon \ZZZs(v,w) \to \Mods(\Ku(Y),v)$ and $\pi_2\colon \ZZZs(v,w) \to \Mods(\Ku(Y),w)$ be the corresponding projection maps.
\end{definition}

For convenience we will also use the following notions: for $E_v \in \Mods(\Ku(Y),v)$ and $E_w \in \Mods(\Ku(Y),w)$,
\begin{equation}
    \begin{aligned}
         \ZZZs(v,E_w) &\coloneqq \left\{ E_v \in \Mods(\Ku(Y),v) \mid \Hom(E_v,E_w) = 0 \right\}; \\ 
        \ZZZs(E_v,w) &\coloneqq \left\{ E_w \in \Mods(\Ku(Y),w) \mid \Hom(E_v,E_w) = 0 \right\}.
    \end{aligned}
\end{equation}

\newcommand{\good}{good}
\begin{definition}
    Let $v \in \Knum(\Ku(Y))$ and $E \in \Mods(\Ku(Y),v)$. 
    \begin{itemize}[nosep, before = \vspace*{-\parskip}]
        \item If $\chi(v,v) = -1$, then we say that $E$ is \textbf{\good} if $E$ is isomorphic to any of $\FFF_{\varPi}$, $\PPP_{\varPi}$, $\KKK_{\varPi}$ or their shifts for some 2-plane $\varPi \subset Y$;
        \item If $\chi(v,v) < -1$, then we say that $E$ is \textbf{\good} if there exists an extension $0 \to E_- \to E \to E_+ \to 0$, where $E_{\pm} \in \Mods(\Ku(Y), v_{\pm})$ are {\good} objects, and $(v_+,v_-)$ is the Pick's decomposition of $v$. The locus of {\good} objects in $\Mods(\Ku(Y),v)$ is denoted by $\Modss^{\circ}(\Ku(Y),v)$.
    \end{itemize}
\end{definition}

\begin{definition}
    For a good object $E \in \Modss^{\circ}(\Ku(Y),v)$, we associate a finite subset $\operatorname{pl}(E) \subset \cF_2(Y)$ of planes by the following rules:
    \begin{itemize}[nosep, before = \vspace*{-\parskip}]
        \item If $E$ is isomorphic to any of $\FFF_{\varPi}$, $\PPP_{\varPi}$, $\KKK_{\varPi}$ or their shifts for some 2-plane $\varPi \subset Y$, then $\operatorname{pl}(E) \coloneqq \{\varPi\}$;
        \item If $E$ is obtained by the extension $0 \to E_- \to E \to E_+ \to 0$, where $E_{\pm} \in \Modss^{\circ}(\Ku(Y), v_{\pm})$, and $(v_+,v_-)$ is the Pick's decomposition of $v$, then $\operatorname{pl}(E) \coloneqq \operatorname{pl}(E_+) \cup \operatorname{pl}(E_-)$.
    \end{itemize}
\end{definition}
\begin{remark}
    Note that the set of planes $\operatorname{pl}(E)$ associated to $E$ \emph{may not be unique}. In the following statements, when we write $\operatorname{pl}(E)$ we mean any choice of such a set.
\end{remark}
\begin{remark}
    It is clear from the action of the Serre functor $\mathsf{S}_{\Ku(Y)}$ that $E$ is {\good} if and only if $\mathsf{S}_{\Ku(Y)}E$ is {\good}, and that $\operatorname{pl}(E) = \operatorname{pl}(\mathsf{S}_{\Ku(Y)}(E))$.
\end{remark}

We state the first result concerning the non-emptiness of the non-Brill--Noether loci.
\begin{proposition}[Generic Hom-vanishing in the same or adjacent sextants][prop:phase_Hom_c]
    For a numerical class $v \in \Knum(\Ku(Y))$, we say that $v$ lies in the \textbf{$\bm n$-th sextant} if $\phi(\kappa_1) + \frac{n-1}{3} \leq \phi(v) \leq \phi(\kappa_1) + \frac{n}{3}$, where $1 \leq n \leq 6$.\footnote{By convention, the boundary ``axes'' $\phi_n = n/3$ between two sextants are included in both of them. This is only to simplify the statements and proofs in this subsection.} 
     
    Let $v,w \in \Knum(\Ku(Y))$ be primitive characters where $v$ lies in the $n$-th sextant, $w$ in the $m$-th sextant, and $0 \leq m-n \leq 1$. For $E_v \in \Modss^{\circ}(\Ku(Y),v)$ and $E_w \in \Modss^{\circ}(\Ku(Y),w)$, we have $\Hom(E_v,E_w) = 0$ if $\varPi_v \cap \varPi_w =  \varnothing$ for every $\varPi_v \in \operatorname{pl}(E_v)$ and $\varPi_w \in \operatorname{pl}(E_w)$. In particular, for $Y$ general, $\ZZZs(E_v,w)$ and $\ZZZs(v,E_w)$
    are non-empty open subsets of $\Modss^{\circ}(\Ku(Y),w)$ and $\Modss^{\circ}(\Ku(Y),v)$ respectively.
\end{proposition}

The proof will be induction arguments on the lattice of $\Knum(\Ku(Y))$. As the base case, we first establish the result for $(-1)$-classes.

\begin{lemma}[][lem:phase_Hom_a]
     Let $j\colon \varPi \hookrightarrow Y$ and $j'\colon \varPi' \hookrightarrow Y$ be two planes such that $\varPi \cap \varPi' = \varnothing$. We have the following Hom-vanishing results:
     \begin{enumerate}[nosep, before = \vspace*{-\parskip}]
        \item $\Hom(\FFF_{\varPi}, \FFF_{\varPi'}) = 0$.
        \item $\Hom(\FFF_{\varPi}, \PPP_{\varPi'}) = 0$.
        \item $\Hom(\FFF_{\varPi}, \KKK_{\varPi'}[1]) = 0$.
     \end{enumerate}
\end{lemma}
\begin{proof}
    \begin{enumerate}
        \item Note that $\FFF_{\varPi}$ and $\FFF_{\varPi'}$ are $\sigma_Y$-stable with the same phase. Since $\FFF_{\varPi} \not\cong \FFF_{\varPi'}$, then $\Hom(\FFF_{\varPi}, \FFF_{\varPi'}) = 0$.
        \item Using the fact that $\PPP_{\varPi'} = \SKu \FFF_{\varPi'}[-2]$, we have $\Hom(\FFF_{\varPi}, \PPP_{\varPi'}) \cong \Ext^2(\FFF_{\varPi}, \FFF_{\varPi'})^\dual$. Then we can proceed as in the proof of \cref{lem:Ext_F_Pi}. Since $\varPi \cap \varPi' = \varnothing$, we have $\Ext^\bullet(j_*\sheaf{\varPi}, j'_*\sheaf{\varPi'}) = 0$. Using the double complex of long exact sequences \eqref{les:Ext_I_Pi} (with the target $\varPi$ replaced by $\varPi'$) and that $\Ext^\bullet(j_*\sheaf{\varPi}, \sheaf{Y}) = \C^3[-3]$, we obtain that 
        \[\Ext^\bullet(\ideal_{\varPi}, \ideal_{\varPi'}) = \C^3[-2].\]
        Next, using the double complex of long exact sequences \eqref{les:Ext_F_Pi} (with the target $\varPi$ replaced by $\varPi'$) and that $\Ext^\bullet(\ideal_{\varPi}(1), \sheaf{Y}) = \C[-2]$, we have that $\Ext^i(\FFF_{\varPi}, \FFF_{\varPi'}) = 0$ for $i \ne 1,2$ and the long exact sequence
        \[\begin{tikzcd}[ampersand replacement=\&,cramped, column sep = small]
            0 \& {\Ext^{1}(\FFF_{\varPi},\FFF_{\varPi'})} \& {\Ext^{2}(\ideal_{\varPi}(1),\sheaf{Y}^{\oplus 4})} \& {\Ext^{2}(\ideal_{\varPi}(1),\ideal_{\varPi'}(1))} \& {\Ext^{2}(\FFF_{\varPi},\FFF_{\varPi'})} \& {0.}
            \arrow[from=1-1, to=1-2]
            \arrow[from=1-2, to=1-3]
            \arrow["\delta", from=1-3, to=1-4]
            \arrow[from=1-4, to=1-5]
            \arrow[from=1-5, to=1-6]
        \end{tikzcd}\]
        The map $\delta\colon \Ext^{2}(\ideal_{\varPi}(1),\sheaf{Y}^{\oplus 4}) \to \Ext^{2}(\ideal_{\varPi}(1),\ideal_{\varPi'}(1))$ is given by post-composition with the evaluation map $\sheaf{Y}^{\oplus 4} \to \ideal_{\varPi'}(1)$. Note that $\delta$ is the same as $\Ext^{3}(\sheaf{\varPi}(1),\sheaf{Y}^{\oplus 4}) \to \Ext^{3}(\sheaf{\varPi}(1),\ideal_{\varPi'}(1))$, which by Grothendieck--Verdier duality is isomorphic to the map $\Hlg^0(\sheaf{\varPi}^{\oplus 4}) \to \Hlg^0(\ideal_{\varPi'}(1)|_{\varPi})$. Since $\varPi \cap \varPi' = \varnothing$, then $\ideal_{\varPi'}(1)|_{\varPi} = \sheaf{\varPi}(1)$, and $\Hlg^0(\sheaf{\varPi}^{\oplus 4}) \to \Hlg^0(\sheaf{\varPi}(1))$ is simply given by $\operatorname{ev} \oplus \mathop{0}$, which comes from the restricting the global sections of $\ideal_{\varPi'}(1)$ to $\varPi$. In particular $\delta$ is surjective with one-dimensional kernel. Hence we have 
        \[ \Ext^\bullet(\FFF_{\varPi}, \FFF_{\varPi'}) = \C[-1], \]
        which proves the claim.

        \item By \cref{lem:ext1_P_F} we have 
        \[\Hom(\FFF_{\varPi}, \KKK_{\varPi'}[1]) = \Hom(\mathsf{S}_{\Ku(Y)}\FFF_{\varPi}, \mathsf{S}_{\Ku(Y)}\KKK_{\varPi'}[1]) =\Ext^1(\PPP_{\varPi}, \FFF_{\varPi'}) = 0. \qedhere\]
    \end{enumerate}
\end{proof}
\begin{proof}[Proof of \cref{prop:phase_Hom_c}.]
    We prove the statement by induction first on $|w|^2 = -\chi(w,w)$ and then on $|v|^2 = -\chi(v,v)$. For the base case, suppose that $v,w$ are both $(-1)$-classes. 
    
    Then the statement is given by \cref{lem:phase_Hom_a}. After applying the rotation functor, we may assume that $v = \kappa_1$. Then by assumption $w$ is one of $\kappa_1$, $\kappa_2$, and $\kappa_2 - \kappa_1$. The {\good} objects in $\Modss^{\circ}(\Ku(Y),\kappa_1)$, $\Modss^{\circ}(\Ku(Y),\kappa_2)$ and $\Modss^{\circ}(\Ku(Y),\kappa_2 - \kappa_1)$ are of the form $\FFF_{\varPi}$, $\PPP_{\varPi}$ and $\KKK_{\varPi}[1]$ respectively. The Hom-vanishing results then follow from \cref{lem:phase_Hom_a}.

    Next we fix $v = \kappa_1$, $E_v = \FFF_{\varPi} \in \Modss^{\circ}(\Ku(Y), v)$, and prove the statement for general $w$ by induction on $|w|^2$. For $|w|^2 > 1$, we pick a Pick's decomposition $(w_+,w_-)$ of $w$, and $w_{\pm}$ lie in the same sextant as $w$. For {\good} objects $E_w \in \Modss^{\circ}(\Ku(Y),w)$, take a corresponding extension $0 \to E_- \to E_w \to E_+ \to 0$, where $E_{\pm} \in \Modss^{\circ}(\Ku(Y),w_{\pm})$, and $\operatorname{pl}(E_w) = \operatorname{pl}(E_+) \cup \operatorname{pl}(E_-)$. By assumption we have $\varPi \cap \varPi' = \varnothing$ for any $\varPi' \in \operatorname{pl}(E_\pm)$. The induction hypothesis implies that $\Hom(E_v, E_\pm) = 0$. Applying $\Hom(E_v,-)$ to the extension sequence, we have $\Hom(E_v, E_w) = 0$, which proves the claim.

    Finally, we fix any $w$ and prove the statement for general $v$ by induction on $|v|^2$. The proof is similar to above.

    For the non-emptiness of $\ZZZs(E_v,w)$ and $\ZZZs(v,E_w)$, since $Y$ is general, by \cref{cor:avoid_planes}, the base planes used in the inductive construction of a good object $E_w$ can be chosen in $\cF_2(Y)$ and disjoint from every plane in $\operatorname{pl}(E_v)$. The vanishing just proved then shows that $E_w\in\ZZZs(E_v,w)$. The proof for $\ZZZs(v,E_w)$ is symmetric.
\end{proof}

\subsection{Jumping loci in 4-dimensional moduli spaces}

\begin{lemma}[][lem:k1+k2_ext2]
    Let $Y$ be a general cubic 5-fold. Let $\varPi, \varPi' \subset Y$ be general 2-planes in $Y$ such that $\varPi \cap \varPi' = \{p\}$ is a point. Let $\EEE_{\eta} \in \Modss(\Ku(Y), \kappa_1 + \kappa_2)$ be the object associated to a non-trivial extension $\eta \in \Ext^1(\PPP_{\varPi},\FFF_{\varPi'})$. Then the map $(-\circ\eta)\colon \Ext^1(\PPP_{\varPi}, \PPP_{\varPi}) \to \Ext^2(\PPP_{\varPi}, \FFF_{\varPi'})$ is surjective. In particular, $\Ext^2 (\EEE_{\eta}, \EEE_{\eta}) = 0$.
\end{lemma}
\begin{proof}
    The object $\EEE_{\eta}$ is defined by the short exact sequence $0 \to \FFF_{\varPi'} \to \EEE_{\eta} \to \PPP_{\varPi} \to 0$, and we have the following double complex of long exact sequences:
    \begin{equation}
        \begin{tikzcd}[ampersand replacement=\&,cramped]
    	{\Ext^1(\PPP_{\varPi},\PPP_{\varPi})} \& {\Ext^2(\PPP_{\varPi},\FFF_{\varPi'})} \& {\Ext^2 (\PPP_{\varPi}, \EEE_{\eta})} \& {\Ext^2(\PPP_{\varPi},\PPP_{\varPi})} \\
    	\& {\Ext^2 (\EEE_{\eta}, \FFF_{\varPi'})} \& {\Ext^2 (\EEE_{\eta}, \EEE_{\eta})} \& {\Ext^2 (\EEE_{\eta}, \PPP_{\varPi})} \& 0 \\
    	\& {\Ext^2(\FFF_{\varPi'},\FFF_{\varPi'})} \& {\Ext^2 (\FFF_{\varPi'}, \EEE_{\eta})} \& {\Ext^2(\FFF_{\varPi'},\PPP_{\varPi})} \& 0 \\
    	\&\&\& 0
    	\arrow["f", from=1-1, to=1-2]
    	\arrow["g", from=1-2, to=1-3]
    	\arrow[from=1-2, to=2-2]
    	\arrow[from=1-3, to=1-4]
    	\arrow["h", from=1-3, to=2-3]
    	\arrow[from=1-4, to=2-4]
    	\arrow[from=2-2, to=2-3]
    	\arrow[from=2-2, to=3-2]
    	\arrow[from=2-3, to=2-4]
    	\arrow[from=2-3, to=3-3]
    	\arrow[from=2-4, to=2-5]
    	\arrow[from=2-4, to=3-4]
    	\arrow[from=3-2, to=3-3]
    	\arrow[from=3-3, to=3-4]
    	\arrow[from=3-4, to=3-5]
    	\arrow[from=3-4, to=4-4]
    \end{tikzcd} \label{seq:E_eta}
    \end{equation}
By \cref{lem:Ext_F_Pi}, we have $\Ext^2(\FFF_{\varPi'}, \FFF_{\varPi'}) = 0$ and $\Ext^2(\PPP_{\varPi}, \PPP_{\varPi}) = \Ext^2(\FFF_{\varPi}, \FFF_{\varPi}) = 0$. By Serre duality we also have $\Ext^2(\FFF_{\varPi'}, \PPP_{\varPi}) \cong \Hom(\PPP_{\varPi}, \PPP_{\varPi'})^{\dual} = 0$ since $\PPP_{\varPi} \not\cong \PPP_{\varPi'}$. Therefore the maps $g$ and $h$ are surjective. By diagram chase, we observe that $\Ext^{2}(\EEE_{\eta}, \EEE_{\eta}) = 0$ if $f\colon \Ext^1(\PPP_{\varPi}, \PPP_{\varPi}) \to \Ext^2(\PPP_{\varPi}, \FFF_{\varPi'})$, which is induced by taking the Yoneda product with $\eta \in \Ext^1(\PPP_{\varPi}, \FFF_{\varPi'})$, is surjective.

By \cref{lem:ext1_aux} and the proof of \cref{lem:Ext_F_Pi}, we have $\Ext^{1}(\PPP_{\varPi}, \PPP_{\varPi}) \cong \Ext^1(j_*\sheaf{\varPi},j_*\sheaf{\varPi})$, and, for $i=1,2$,
\[\Ext^i(\PPP_{\varPi}, \FFF_{\varPi'}) \cong \Ext^{i+1}(j_*\sheaf{\varPi}, j'_*\sheaf{\varPi'}(-2)) \cong \Ext^{i+1}(j_*\sheaf{\varPi}, j'_*\sheaf{\varPi'})  \cong \C.\] 
Then the map $f$ is the same as the map \[f'\colon \Ext^1(j_*\sheaf{\varPi},j_*\sheaf{\varPi}) \to \Ext^{3}(j_*\sheaf{\varPi}, j'_*\sheaf{\varPi'}),\] 
induced by taking the Yoneda product with $\eta' \in \Ext^2(j_*\sheaf{\varPi}, j'_*\sheaf{\varPi'})$. 

At a general point $(\varPi,\varPi')$ of the point-incidence stratum of $\sI_0$, the divisor $\sI_0\subset\cF_2(Y)^2$ is smooth and both projections are smooth, by generic smoothness and \cref{lem:plane_incid}. Consider the excess normal bundle $\cM_p$, which is a 1-dimensional vector space over $p$. The restriction map $\Ext^1(j_*\sheaf{\varPi},j_*\sheaf{\varPi}) \cong \Hlg^0(\Nor{\varPi}{Y}) \to \Hlg^0(\cM_p)$ is surjective; hence we can choose some $u_1 \in \Ext^1(j_*\sheaf{\varPi},j_*\sheaf{\varPi})$ whose image generates $\cM_p$. Explicitly, formal-locally at $\varPi \cap \varPi' = \{p\}$ we have $\widehat{\sheaf{Y,p}} \cong \C[\![x_1,x_2,x_3,x_4,x_5]\!]$, and locally $\varPi$ and $\varPi'$ are defined by $x_1=x_2=x_3=0$ and $x_1=x_4=x_5=0$. The local image of $u_1$ is dual to $x_1$. Then the Yoneda product $\eta' \circ u_1$ sends the generator $[\eta'] = [u_2 \wedge u_3]$ of $\Ext^{2}(j_*\sheaf{\varPi}, j'_*\sheaf{\varPi'})$ to the generator $[u_1 \wedge u_2 \wedge u_3]$ of $\Ext^{3}(j_*\sheaf{\varPi}, j'_*\sheaf{\varPi'})$. In particular, the map $f'$ is non-zero. Since it has one-dimensional target, it is surjective. This finishes the proof.
\end{proof}

\begin{lemma}[][lem:k1+k2_ext2_2]
    Let $\varPi\subset Y$ be a 2-plane in $Y$. Let $\EEE_{\eta} \in \Modss(\Ku(Y), \kappa_1 + \kappa_2)$ be the object associated to a non-trivial extension $\eta \in \Ext^1(\PPP_{\varPi},\FFF_{\varPi})$. Then $\Ext^2 (\EEE_{\eta}, \EEE_{\eta}) \ne 0$.
\end{lemma}
\begin{proof}
    Consider the double complex given as in \eqref{seq:E_eta} with $\varPi = \varPi'$. We have $\Ext^2(\FFF_{\varPi'}, \PPP_{\varPi}) \cong \Hom(\PPP_{\varPi}, \PPP_{\varPi'})^{\dual} \cong \C$. Then $\Ext^2(\EEE_{\eta}, \PPP_{\varPi}) \cong \Ext^2(\FFF_{\varPi'}, \PPP_{\varPi}) \cong \C$. Since $\Ext^2(\EEE_{\eta}, \EEE_{\eta}) \to \Ext^2(\EEE_{\eta}, \PPP_{\varPi})$ is surjective, we must have $\Ext^2 (\EEE_{\eta}, \EEE_{\eta}) \ne 0$.
\end{proof}

\begin{remark}
        \cref{lem:k1+k2_ext2,lem:k1+k2_ext2_2} show that $\ext^2(E,E)$ is non-constant on a connected component of the moduli space $\Mods(\Ku(Y), \kappa_1 + \kappa_2)$. This indicates that $\Mods(\Ku(Y), \kappa_1 + \kappa_2)$ is singular. This phenomenon is notably different from the case of cubic 3-folds, where all moduli spaces of primitive characters on the Kuznetsov component are smooth \cite[Corollary 3.9]{LLPZ}, which follows from $\Ext^2(E,E)$ vanishing for the easy reason of global dimension being smaller than $2$.
\end{remark}

\subsection{Restriction to cubic 4-folds}\label{subsec:res_to_cubic4}

In this subsection, we continue using the notation in \cref{subsec:Fano}, where $Y$ denotes a general cubic 5-fold, and $X \subset Y$ a very general hyperplane section. The stability conditions on $\Ku(X)$ and $\Ku(Y)$ are denoted by $\sigma_X$ and $\sigma_Y$ respectively. 

\begin{proposition}[Non-emptiness of smooth loci][prop:Ext2_zero_general]
    For any primitive character $v \in \Knum(\Ku(Y))$, the open locus in $\Mods(\Ku(Y),v)$ defined by $\Ext^2(E,E)=0$ is non-empty. It contains a {\good} object.
\end{proposition}

\begin{theorem}[Restriction Theorem][thm:res]
  Let $Y$ be a general cubic 5-fold, and $X \subset Y$ a very general hyperplane section. Let $i\colon X \hookrightarrow Y$ be the inclusion, and choose a stability condition $\sigma_X$ in the $\widetilde{\GL_2^+}(\R)$-orbit\footnote{The orbit is in fact unique by \cite[Corollary 3.9]{FP23}.} on $\Ku(X)$ constructed in \cite{BLMS}, normalized compatibly with the basis $\{i^*\kappa_1,i^*\kappa_2\}$. For any primitive character $v \in \Knum(\Ku(Y))$, there exists a non-empty open subset $\UUUs_Y(v) \subset \Modss(\Ku(Y),v)$ such that the pull-back object $i^*E \in \Ku(X)$ is $\sigma_X$-stable for all $E \in \UUUs_Y(v)$.
\end{theorem}

We will prove \cref{prop:Ext2_zero_general} and Restriction \cref{thm:res} simultaneously by induction on $|v|^2$. We need the following observation.

We first check that $i^*E \in \Ku(X)$ for any $E \in \Ku(Y)$.
Consider the restriction triangle:
    \begin{equation}\label{ses:res_trig_E}
                  \begin{tikzcd}[ampersand replacement=\&,cramped]
                      {E(-1)} \& {E} \& {i_* i^*E} \& {}
                      \arrow[from=1-1, to=1-2]
                      \arrow[from=1-2, to=1-3]
                      \arrow["{+1}", from=1-3, to=1-4]
                  \end{tikzcd} 
              \end{equation}
    For $k \in \{0,1,2,3\}$ we have $\Ext^\bullet(\sheaf{Y}(k), E) = 0$. Then for $k \in \{0,1,2\}$, the triangle above gives $\Ext^\bullet(\sheaf{Y}(k), i_*i^*E) = \Ext^\bullet(\sheaf{X}(k), i^*E) = 0$. Hence $i^*E \in \Ku(X)$. Note that this gives a pair of adjoint functors $i^* \dashv \pr_Y \circ i_*$ between $\Ku(Y)$ and $\Ku(X)$.

\begin{lemma}[][]
    Let $E \in \Ku(Y)$. Then the object $\operatorname{pr}_Yi_*i^*E \in \Ku(Y)$ fits into the distinguished triangle:
    \begin{equation}
                  \begin{tikzcd}[ampersand replacement=\&,cramped]
                      {E} \& {\operatorname{pr}_Y i_* i^*E} \& {\mathsf{S}_{\Ku(Y)}E[-2]} \& {}
                      \arrow[from=1-1, to=1-2]
                      \arrow[from=1-2, to=1-3]
                      \arrow["{+1}", from=1-3, to=1-4]
                  \end{tikzcd} \label{ses:res_general}
              \end{equation}
\end{lemma}
\begin{proof}
    Apply the projection functor $\operatorname{pr}_Y = \rmut{\sheaf{Y}(-1)}\circ\rmut{\sheaf{Y}(-2)}\circ\lmut{\sheaf{Y}}\circ \lmut{\sheaf{Y}(1)}$ to the triangle \eqref{ses:res_trig_E}. Note that $E(-1) \in \ord{\sheaf{Y},\sheaf{Y}(1)}^\perp \cap {}^{\perp}\ord{\sheaf{Y}(-2)}$. We have that 
    \[\operatorname{pr}_Y E(-1) = \rmut{\sheaf{Y}(-1)}E(-1) = \rmut{\sheaf{Y}}E \otimes \sheaf{Y}(-1) = \mathsf{O}^{-1}_{\Ku(Y)}E = \mathsf{S}_{\Ku(Y)}E[-3],\]
    where we have used the definition of $\mathsf{O}_{\Ku(Y)}$ and \cref{lem:O_S_KuY}. Then we have 
    \begin{equation}
                  \begin{tikzcd}[ampersand replacement=\&,cramped]
                      {\mathsf{S}_{\Ku(Y)}E[-3]} \& {E} \& {\operatorname{pr}_Y i_* i^*E} \& {}
                      \arrow[from=1-1, to=1-2]
                      \arrow[from=1-2, to=1-3]
                      \arrow["{+1}", from=1-3, to=1-4]
                  \end{tikzcd} 
              \end{equation}
    After rotation we obtain the desired triangle.
\end{proof}
Recall that $i^*\kappa_1$ and $i^*\kappa_2$ parametrize the objects $\FFF_{\ell}$ and $\PPP_{\ell}$ in $\Ku(X)$ respectively (see \eqref{seq:P_ell}). Then $\{i^*\kappa_1, i^*\kappa_2\}$ forms a basis of the lattice $\Knum(\Ku(X))$, with the Euler pairing given by (see \cite{BLMS})
\[\chi_{\Ku(X)} = \begin{pmatrix} -2 & -1 \\ -1 & -2 \end{pmatrix}.\]

\begin{proof}[Proof of \cref{prop:Ext2_zero_general} and Restriction \cref{thm:res}]
    Fix a primitive class $v$ in $\Knum(\Ku(Y))$. Define
    \[
        \UUUs_Y(v) \coloneqq
        \ab\{E \in \ModsY(\Ku(Y),v) \mid \Ext^2(E,E)=0,\ i^*E \in \ModsX(\Ku(X),i^*v)
        \}.
    \]
    By upper semicontinuity of $\ext^2$ and openness of stability, $\UUUs_Y(v)$ is open in $\ModsY(\Ku(Y),v)$. We prove by induction on $|v|^2 = -\chi(v,v)$ the stronger statement that, for every non-empty open subset $U \subset \cF_2(Y)$, there is a {\good} object $E \in \UUUs_Y(v)$ with $\operatorname{pl}(E) \subset U$.

    For $|v|^2 = 1$, after rotation we may assume that 
    $v = \kappa_1$. We can take any object $E = \FFF_{\varPi} \in \Modss^\circ(\Ku(Y),\kappa_1)$ with $\varPi \in U$. Then the statements follow from \cref{lem:Ext_F_Pi} and the proof of \cref{cor:Lag_imm}.
    
    For $|v|^2 = 3$, after rotation we may assume that $v = \kappa_1 + \kappa_2$. By \cref{lem:plane_incid} and the dimension estimate for $\sI_1$, we may choose a general pair $(\varPi,\varPi')\in\sI_0\cap(U\times U)$ with $\varPi\cap\varPi'=\{p\}$. The extension $\EEE_{\eta} \in \Modss^\circ(\Ku(Y),\kappa_1 + \kappa_2)$ in \cref{lem:k1+k2_ext2} then satisfies $\Ext^2(\EEE_{\eta}, \EEE_{\eta}) = 0$. Its restriction is stable by the argument below, applied to $E_+=\PPP_{\varPi}$ and $E_-=\FFF_{\varPi'}$; here $\Ext^2(E,E_+)=0$ follows from the exact sequences in \eqref{seq:E_eta}.

    For $|v|^2 > 3$, we proceed by induction. Pick the Pick's decomposition $(v_+,v_-)$ of $v$. By the induction hypothesis, we first take a {\good} object $E_{+} \in \UUUs_Y(v_{+})$ with $\operatorname{pl}(E_+)\subset U$. Note that 
    \[\{\varPi' \in U \mid \exists\, \varPi \in \operatorname{pl}(E_+),\ \varPi \cap \varPi' \ne \varnothing\}\]
    is a finite union of one-dimensional closed subsets in $U$. Let $U'$ be its complement. By the induction hypothesis, we can take a {\good} object $E_- \in \UUUs_Y(v_-)$ with $\operatorname{pl}(E_-) \subset U'$. In particular, $\varPi_+ \cap \varPi_- = \varnothing$ for all $\varPi_+ \in \operatorname{pl}(E_+)$ and $\varPi_- \in \operatorname{pl}(E_-)$. Note that $v_+$ and $v_-$ lie in the same sextant; then $E_+$ and $\mathsf{S}_{\Ku(Y)}E_-[-2]$ lie in adjacent sextants, and likewise for $E_-$ and $\mathsf{S}_{\Ku(Y)}E_+[-2]$. Since the Serre functor action does not change $\operatorname{pl}(E_{\pm})$, \cref{prop:phase_Hom_c} gives $\Hom(E_+, \mathsf{S}_{\Ku(Y)}E_-[-2]) = 0$ and $\Hom(E_-, \mathsf{S}_{\Ku(Y)}E_+[-2]) = 0$. By Serre duality, we also have 
    \begin{equation}
	\Ext^2(E_+, E_-) = \Ext^2(E_-, E_+) = 0. \label{eq:ext2_pm_zero}
    \end{equation}
    By the proof of \cref{thm:moduli_nonempty} we have $\Ext^1(E_+,E_-) \ne 0$. A non-trivial extension $0 \to E_- \to E \to E_+ \to 0$ gives a {\good} object $E \in \Modss^{\circ}(\Ku(Y),v)$. Since $E_{\pm} \in \UUUs_Y(v_{\pm})$ we have $\Ext^2(E_+,E_+) = 0$ and $\Ext^2(E_-,E_-) = 0$. Combining with \eqref{eq:ext2_pm_zero}, we have $\Ext^2(E,E) = 0$. Moreover we have that $\operatorname{pl}(E) = \operatorname{pl}(E_+) \cup \operatorname{pl}(E_-) \subset U$. 

    Finally we check that $i^*E$ is $\sigma_X$-stable. Applying  $\Hom(E_+,-)$ to the triangle \eqref{ses:res_general}, we have the long exact sequence
    \begin{equation}
                  \begin{tikzcd}[ampersand replacement=\&,cramped]
                       {} \& {\Hom(E_+,E)} \& {\Hom(E_+, \operatorname{pr}_Yi_*i^*E)} \& {\Hom(E_+, \mathsf{S}_{\Ku(Y)}E[-2])} \& {}
                      \arrow[from=1-1, to=1-2]
                      \arrow[from=1-2, to=1-3]
                      \arrow[from=1-3, to=1-4]
                      \arrow[from=1-4, to=1-5]
                  \end{tikzcd} 
    \end{equation}
    We have $\Hom(E_+,E) = 0$ since $\phi(E_+) > \phi(E)$, and $\Hom(E_+, \mathsf{S}_{\Ku(Y)}E[-2]) \cong \Ext^2(E, E_+)^\dual = 0$ since $\Ext^2(E_+, E_+) = \Ext^2(E_-, E_+) = 0$. It follows that 
    \[ \Hom(E_+, \operatorname{pr}_Yi_*i^*E) \cong \Hom(i^*E_+, i^*E) = 0. \]
    By the induction hypothesis, $i^*E_{\pm}$ are $\sigma_X$-stable. Note that $(i^*v_+, i^*v_-)$ is also a Pick's decomposition of the primitive character $i^*v \in \Knum(\Ku(X))$. Moreover, the vanishing $\Hom(i^*E_+,i^*E)=0$ shows that the restricted extension does not split. The argument of \cref{lem:Ext1_nonzero} therefore shows that $i^*E$ is $\sigma_X$-stable. This concludes the induction.
\end{proof} 
 
The major consequence of \cref{thm:res} is the following generalization of \cref{cor:Lag_imm} to higher-dimensional moduli spaces. This result is in the same spirit as \cite[Theorem 8.3]{LLPZ} or \cite[Theorem A.4]{FGLZ} for cubic 4-folds and cubic 3-folds. Recall that for a primitive character $w \in \Knum(\Ku(X))$ and a generic stability condition $\sigma_X$ on $\Ku(X)$, the moduli space $\ModssX(\Ku(X), w)$ is a smooth projective hyper-K\"ahler variety of dimension $2 - \chi_X(w,w)$ (\cite[Theorem 29.2]{BLMNPS}).
\begin{corollary}[][cor:Lag_imm_general]
  For primitive $v \in \Knum(\Ku(Y))$, the moduli space $\ModssY(\Ku(Y), v)$ contains a smooth open subset $\UUUs_Y(v)$ such that there exists a rational map $r_v\colon \bar{\UUUs_Y(v)} \dashrightarrow \ModssX(\Ku(X), i^*v)$ induced by the pull-back $i^*\colon \Ku(Y) \to \Ku(X)$, whose image is a Lagrangian subvariety of the hyper-K\"ahler variety $\ModssX(\Ku(X), i^*v)$. Moreover, $r_v|_{\UUUs_Y(v)}$ is unramified and hence $r_v$ is generically finite onto its image.
\end{corollary}
\begin{proof}
    Let $\UUUs_Y(v)$ be the open subset in \cref{thm:res}. The vanishing $\Ext^2(E,E) = 0$ for all $E \in \UUUs_Y(v)$ implies that $\UUUs_Y(v)$ is smooth. The rest of the proof is similar to that of \cref{cor:Lag_imm}. Applying $\Hom(E,-)$ to the triangle \eqref{ses:res_general}, we have the long exact sequence
    \begin{equation}
        \begin{tikzcd}[ampersand replacement=\&,cramped]
         {} \& {\Hom(E, \mathsf{S}_{\Ku(Y)}E[-2])} \& {\Ext^1(E,E)} \& {\Ext^1(i^*E, i^*E)} \& {}
        \arrow[from=1-1, to=1-2]
        \arrow[from=1-2, to=1-3]
        \arrow[from=1-3, to=1-4]
        \arrow[from=1-4, to=1-5]
        \end{tikzcd}
    \end{equation}
    By Serre duality we have $\Hom(E, \mathsf{S}_{\Ku(Y)}E[-2]) \cong \Ext^2(E,E)^\dual = 0$. Therefore the tangent map of $r_v$ at $E \in \UUUs_Y(v)$, which is given by the map $\Ext^1(E,E) \to \Ext^1(i^*E, i^*E)$, is injective. Hence $r_v|_{\UUUs_Y(v)}$ is unramified and therefore generically finite onto $\img r_v$. 
    For $v = a\kappa_1 + b\kappa_2$, we have 
    \begin{align*}
        &\dim \UUUs_Y(v) = 1 - \chi_Y(v,v) = a^2 + ab + b^2 + 1;\\ 
        &\dim \ModssX(\Ku(X), i^*v) = 2 - \chi_X(i^*v,i^*v) = 2a^2 + 2ab + 2b^2 + 2.
    \end{align*}
    We deduce that $\img r_v$ has dimension exactly half of that of $\ModssX(\Ku(X), i^*v)$. For the Lagrangian property, it remains to check that the holomorphic symplectic form restricts to zero on $r_v(\UUUs_Y(v))$, which also follows from that $\Ext^2(E,E) = 0$ for all $E \in \UUUs_Y(v)$.
\end{proof} 

\printbibliography[heading=bibintoc]

\end{document}